\documentclass[11pt]{article}

\usepackage[T1]{fontenc}
\usepackage{lmodern}
\usepackage[margin=2cm]{geometry}
\usepackage{microtype}
\usepackage{amsmath,amssymb,mathtools,mathrsfs}
\usepackage{amsthm}
\usepackage{enumitem}
\usepackage{booktabs}
\usepackage{aliascnt}
\usepackage[colorlinks=true,linkcolor=blue,citecolor=blue,urlcolor=blue]{hyperref}
\usepackage[nameinlink,capitalise,noabbrev]{cleveref}

\numberwithin{equation}{section}
\newcommand{\subjclass}[2][]{%
  \par\smallskip\noindent
  \textit{\if\relax\detokenize{#1}\relax Mathematics Subject Classification\else #1 Mathematics Subject Classification\fi.} #2\par}
\newcommand{\keywords}[1]{%
  \noindent\textit{Keywords.} #1\par\medskip}

\let\articletitle\title
\renewcommand{\title}[2][]{\articletitle{#2}}

\newtheorem{theorem}{Theorem}[section]
\newaliascnt{proposition}{theorem}
\newtheorem{proposition}[proposition]{Proposition}
\aliascntresetthe{proposition}
\newaliascnt{lemma}{theorem}
\newtheorem{lemma}[lemma]{Lemma}
\aliascntresetthe{lemma}
\newaliascnt{corollary}{theorem}
\newtheorem{corollary}[corollary]{Corollary}
\aliascntresetthe{corollary}
\newaliascnt{remark}{theorem}
\newtheorem{remark}[remark]{Remark}
\aliascntresetthe{remark}
\newaliascnt{question}{theorem}

\aliascntresetthe{question}

\crefname{theorem}{Theorem}{Theorems}
\crefname{proposition}{Proposition}{Propositions}
\crefname{lemma}{Lemma}{Lemmas}
\crefname{corollary}{Corollary}{Corollaries}
\crefname{remark}{Remark}{Remarks}
\crefname{question}{Question}{Questions}

\newcommand{\R}{\mathbb{R}}
\newcommand{\Sph}{\mathbb{S}}
\newcommand{\Id}{\mathrm{Id}}
\newcommand{\tr}{\operatorname{tr}}
\newcommand{\diam}{\operatorname{diam}}
\newcommand{\Lip}{\operatorname{Lip}}
\newcommand{\op}{\mathrm{op}}
\newcommand{\dd}{\,\mathrm{d}}
\newcommand{\supp}{\operatorname{supp}}

\newcommand{\cL}{\mathcal{L}}
\newcommand{\cH}{\mathscr{H}}

\newcommand{\essosc}{\operatorname*{ess\,osc}}

\newcommand{\one}{\mathbf 1}

\newcommand{\Cov}{\operatorname{Cov}}

\title[Dimension-free Brenier bounds]
{Dimension-Free Lipschitz Bounds for Brenier Maps to Compactly Supported
Log-Concave Targets}
\author{Maja Gw\'{o}\'{z}d\'{z}\\ETH Z\"urich\\\texttt{mgwozdz@ethz.ch}}
\date{}

\begin{document}

\maketitle

\begin{abstract}
We fix an integer \(d\ge1\) and a symmetric positive-definite matrix
\(Q\in\mathbb R^{d\times d}\). Let \(V:\mathbb R^d\to\mathbb R\) be finite, set
\[
Z_\mu:=\int_{\mathbb R^d}e^{-V(x)}\,\dd x\in(0,\infty),
\qquad
\dd\mu(x):=Z_\mu^{-1}e^{-V(x)}\,\dd x,
\]
and assume that \(\mu\) has finite second moment and that
\[
x\longmapsto \frac12\langle Qx,x\rangle-V(x)
\]
is convex. Let \(\nu\) be a compactly supported log-concave probability measure with support \(K\), and let \(\nabla\Phi\) be the Brenier map from \(\mu\) to \(\nu\). For \(v\in\mathbb R^d\), define
\[
w_K(v):=
\sup_{y\in K}\langle y,v\rangle
-
\inf_{y\in K}\langle y,v\rangle.
\]
We prove that
\[
\partial_{vv}\Phi
\le
0.587
\sqrt{\langle Qv,v\rangle}\,w_K(v)
\qquad(v\in\mathbb R^d)
\]
in the sense of distributions. We show that \(\nabla\Phi\) has an everywhere-defined globally Lipschitz representative such that
\[
\Lip(\nabla\Phi)
\le
0.587
\sqrt{\|Q\|_{\mathrm{op}}}\,\diam(K).
\]
The directional Hessian estimate is affinely covariant, whereas the global Lipschitz estimate is dimension-free. The result also applies to singular or lower-dimensional targets. In particular, it removes the \(\sqrt d\) loss in Kolesnikov's estimate for the Brenier map from Gaussian measure to normalised Lebesgue measure on a convex body. We also prove new bounds that depend only on the support for compactly supported semi-log-concave targets, which includes targets with bounded negative curvature.
\end{abstract}

\subjclass[2020]{Primary 49Q22; Secondary 35J96, 35B45, 60E15}
\keywords{Brenier maps, optimal transport, Monge--Amp\`ere equation,
dimension-free Lipschitz bounds, directional Hessian bounds, compactly supported log-concave measures, convex bodies, semi-log-concave measures}

\section{Introduction}

We take an integer \(d\ge1\) and use \(|\cdot|\) to denote the Euclidean norm. For symmetric matrices, \(\preceq\) and \(\succeq\) denote the Loewner order, and \(M\succ0\) means that \(M\) is positive definite. We write \(\Id\) for the identity matrix,
\[
\Sph^{d-1}:=\{e\in\mathbb R^d:|e|=1\},
\]
and \(\|\cdot\|_{\mathrm{op}}\) for the operator norm. For an integer \(m\ge1\), a set
\(E\subset\mathbb R^d\), and a map \(F:E\to\mathbb R^m\), we write
\[
\diam(E):=\sup_{x,y\in E}|x-y|,
\qquad
\Lip(F):=
\sup_{\substack{x,y\in E\\x\ne y}}
\frac{|F(x)-F(y)|}{|x-y|}.
\]
Whenever a probability density is written in the form \(Z^{-1}f\), the symbol \(Z\) denotes \(\int f\). For a set \(E\), \(\one_E\) denotes its characteristic function. For a finite convex function \(\Psi\) and \(v\in\mathbb R^d\), \(\partial\Psi\) denotes its convex subdifferential, whereas \(\partial_{vv}\Psi\) denotes its second distributional derivative in the direction \(v\). If \(A\subset\mathbb R^d\), we write
\[
\partial\Psi(A)
:=
\bigcup_{x\in A}\partial\Psi(x).
\]
For a \(C^2\) function \(f\) and \(v,w\in\mathbb R^d\), we set
\[
f_v:=\langle\nabla f,v\rangle,
\qquad
f_{vw}:=D^2f[v,w].
\]
We denote by \(T=\nabla\Phi\) the quadratic-cost Brenier map from the standard Gaussian measure \(\gamma_d\) on \(\mathbb R^d\) to normalised Lebesgue measure on a convex body \(K\) \cite{Brenier1991,McCann1995}. Whenever a Brenier map has an everywhere-defined continuous representative, we use the same symbol for this representative. We apply this convention to all global Lipschitz constants below.

Let us now introduce the problem. Kolesnikov proved in \cite{Kolesnikov2010} that there exists a universal numerical constant \(C>0\) such that
\[
\Lip(T)
\le
C\sqrt d\,\diam(K)
\]
(see also \cite[Theorem 4.2]{Kolesnikov2011}). He then asked whether the
factor \(\sqrt d\) could be removed \cite[Problem 4.3]{Kolesnikov2011}. The same question was reiterated in \cite[Section 1.1]{MikulincerShenfeld2024}. This problem focuses on the endpoint of the contraction principle \cite{Caffarelli2000,Caffarelli2002} where the target curvature vanishes. At this endpoint, curvature estimates do not provide a positive coercivity scale, while Kolesnikov's support argument gives a trace loss that depends on the dimension. We remove this loss and obtain
\[
\Lip(T)
\le
0.587\,\diam(K).
\]
In fact, the result can be extended beyond transport from a Gaussian source to a uniform target. More precisely, it is directional and affinely covariant, with the source curvature in a direction \(v\) paired with the target width in the same direction. It applies to every compactly supported log-concave target, which includes measures supported on proper affine subspaces. Our approach is inspired by Kolesnikov's formal one-dimensional idea of joint maximisation in the spatial and translation variables \cite[Remark 2.1 and the part before Theorem 2.2]{Kolesnikov2010}. We make this method rigorous in several dimensions via an estimate based on the Schur complement of the block Hessian in the spatial and translation variables. We describe the details of this structure in \Cref{sec:strategy}. The main idea here is that the full block Hessian produces a term from the Schur complement that controls the active directional curvature, and we then combine this term with the Bregman divergence associated with the negative log determinant \cite{Bregman1967}. This method allows us to replace the trace estimate responsible for the $\sqrt d$ loss in Kolesnikov's argument \cite[Section 4, the part before Theorem 4.2]{Kolesnikov2011} by a scalar dimension-free coercivity inequality.

For a compact convex set \(K\subset\mathbb R^d\), let us define the directional
width by
\[
w_K(v):=\sup_{y\in K}\langle y,v\rangle
-\inf_{y\in K}\langle y,v\rangle,
\qquad v\in\mathbb R^d,
\]
and set
\begin{equation}\label{eq:Cnq-intro}
C_{\mathrm{nq}}
:=0.587.
\end{equation}
We use \(\mathcal P_2(\mathbb R^d)\) for the Borel probability measures with
finite second moment.

\begin{theorem}[Support and curvature]\label{thm:main}
Let \(Q\in\mathbb R^{d\times d}\) be symmetric positive definite. Let
\(V:\mathbb R^d\to\mathbb R\) be finite, set
\[
Z_\mu:=\int_{\mathbb R^d}e^{-V(x)}\,\dd x\in(0,\infty),
\qquad
\dd\mu(x):=Z_\mu^{-1}e^{-V(x)}\,\dd x
\in\mathcal P_2(\mathbb R^d).
\]
Further assume that
\begin{equation}\label{eq:distributional-source-Q}
x\longmapsto
\frac12\langle Qx,x\rangle-V(x)
\end{equation}
is convex. Let \(\nu\) be a compactly supported log-concave probability measure, and set \(K:=\supp\nu\). Let \(\Phi\) be the finite Brenier potential that transports \(\mu\) to \(\nu\), uniquely normalised by \(\Phi(0)=0\) and the range condition
\[
\partial\Phi(\mathbb R^d)\subset K.
\]
It follows that, for every \(v\in\mathbb R^d\),
\begin{equation}\label{eq:main-directional}
\partial_{vv}\Phi
\le
C_{\mathrm{nq}}
\sqrt{\langle Qv,v\rangle}\,w_K(v)
\end{equation}
in the sense of distributions, where \(C_{\mathrm{nq}}\) is defined in \eqref{eq:Cnq-intro}. In particular, \(\nabla\Phi\) has an everywhere-defined globally Lipschitz representative, which we still denote by \(\nabla\Phi\), and
\begin{equation}\label{eq:main-operator}
\Lip(\nabla\Phi)
\le
C_{\mathrm{nq}}
\sqrt{\|Q\|_{\mathrm{op}}}\,
\diam(K).
\end{equation}
\end{theorem}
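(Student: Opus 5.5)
Writing $T=\nabla\Phi$, I will first reduce to a smooth setting and then run a maximum-principle argument on a function of the spatial variable $x$ and an auxiliary translation variable $h$; the Schur complement of the resulting block Hessian produces a scalar coercivity inequality. All constants must be independent of $d$.

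\textbf{Reduction and affine covariance.} Replace $x\mapsto Q^{-1/2}x$: this changes $\mu$ into a measure with $\frac12|x|^2-\tilde V$ convex, and turns the potential into $\Phi\circ Q^{-1/2}$. The quantity $\sqrt{\langle Qv,v\rangle}\,w_K(v)$ is invariant under this change when $v$ is replaced by $Q^{1/2}v$. So it suffices to treat $Q=\Id$ and a unit vector $e$, and to prove $\partial_{ee}\Phi\le C_{\mathrm{nq}}w_K(e)$.

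\textbf{Approximation.} Approximate $\nu$ by smooth, strictly log-concave densities $\nu_\varepsilon=Z^{-1}e^{-W_\varepsilon}$ supported on smooth uniformly convex bodies $K_\varepsilon\supset K$ with $w_{K_\varepsilon}\to w_K$. For lower-dimensional $K$, first convolve with a small uniform ball. Approximate $\mu$ by smooth sources with uniformly convex, Gaussian-dominated tails that preserve the convexity of $\frac12|x|^2-V$. Caffarelli's regularity theory then gives smooth potentials $\Phi_\varepsilon$ solving
\[
e^{-V}=e^{-W(\nabla\Phi)}\det D^2\Phi.
\]
Stability of Brenier maps, together with the normalisation $\Phi(0)=0$ and $\partial\Phi\subset K$, gives local uniform convergence of the potentials, so distributional second-derivative bounds pass to the limit. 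The final step is then standard: a uniform bound $\partial_{vv}\Phi\le C|v|^2$ implies that $\Phi-\frac C2|x|^2$ is concave. This makes $\Phi$ both semiconvex and semiconcave, hence $C^{1,1}$ with $\Lip\nabla\Phi\le C$. Here $C=C_{\mathrm{nq}}\sup_{|v|=1}w_K(v)=C_{\mathrm{nq}}\diam K$.

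\textbf{Core estimate.} Fix the direction $e$ and consider
\[
F(x,t)=\Phi(x+te)+\Phi(x-te)-2\Phi(x)-\lambda t^2 \qquad (t\ge 0),
\]
or a variant with a weight $g(t)$. Since $\nabla\Phi\in K$, the first-order difference in $t$ is controlled by $2t\,w_K(e)$, so $F$ is bounded by $2t\,w_K(e)-\lambda t^2$. Hence $F$ attains its supremum in a bounded region of $t$; in $x$ the Gaussian-type tail behaviour of the approximants should keep the maximiser from escaping to infinity.

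At an interior maximum, the full Hessian in $(x,t)$ is nonpositive. Taking the Schur complement eliminates the $t$-direction and gives a matrix inequality on the spatial block, improved by a term involving $\Phi_{ee}(x\pm te)$.

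Next, take the logarithm of the Monge--Amp\`ere equation at the three points $x+te$, $x-te$, $x$. Use concavity of $\log\det$ via the Bregman divergence of $-\log\det$: the spatial block inequality bounds
\[
\log\det D^2\Phi(x+te)+\log\det D^2\Phi(x-te)-2\log\det D^2\Phi(x)
\]
from above by a negative scalar, namely the Bregman gap in the single active direction, rather than a trace. On the other side, the source condition gives
\[
V(x+te)+V(x-te)-2V(x)\le t^2 .
\]
Log-concavity of the target gives
\[
-W(\nabla\Phi(x\pm te))+\dots\le
\]
a term that is linear in the first-order condition. Combining these yields a scalar inequality of the form $\lambda^2\le c\,w_K(e)^2$, with the explicit optimisation producing $0.587$.

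\textbf{Main obstacle.} I expect the main difficulty to be this last step. One must choose the test function, the weight in $t$, and the handling of the Bregman term so that only the one-dimensional active curvature enters, with no trace and hence no factor of $d$. A second difficulty is justifying that the maximum is attained, which requires the approximation scheme to control behaviour at spatial infinity uniformly. I would first try the symmetric second difference above, with a weight $g(t)$ optimised at the end, and accept whatever numerical constant results.
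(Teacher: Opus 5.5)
Your outer layers (reduction to $Q=\Id$ via $Q^{1/2}$, approximation of target and source, stability of compact-range potentials, closing with a $C^{1,1}$ bound) broadly match the paper. There is a genuine gap in the core estimate, however.

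\textbf{The gap.} The functional you maximise does not contain the quantity you want to bound, and the local information at its maximum gives no contradiction. At an interior maximum $(x,t)$, $t>0$, of $F(x,t)=\Phi(x+te)+\Phi(x-te)-2\Phi(x)-\lambda t^2$, write $u_\pm:=\Phi_{ee}(x\pm te)$ and $u_0:=\Phi_{ee}(x)$. You obtain:
\begin{itemize}
\item the first-order conditions $T_++T_-=2T_0$ and $\langle T_+-T_-,e\rangle=2\lambda t\le w_K(e)$;
\item the second-order condition $u_++u_-\le2\lambda$, together with the Schur-improved spatial block;
\item for a merely log-concave target, only $0\le W(T_+)+W(T_-)-2W(T_0)$ on the target side.
\end{itemize}
So the log-det/Schur gap is bounded by $\Lambda t^2\le\Lambda w_K(e)^2/(4\lambda^2)$. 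The most this yields is $u_0\le\lambda e^{\Lambda t^2/2}$ at the maximiser, which does not contradict $F(x,t)>0$.

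Already in $d=1$, all your conditions are compatible with a positive maximum. Take $u_\pm=\lambda'$ with $\lambda/(1+\eta)<\lambda'<\lambda$, $u_0=\lambda'(1+\eta)$ with $1+\eta\le e^{\Lambda t^2/2}$, and a symmetric profile of $\Phi_{ee}$ on $[x-t,x+t]$, decreasing from the centre, with mean $\lambda$. Choose the source second difference equal to $2\log(1+\eta)$ and the target locally affine. Then:
\begin{itemize}
\item $\partial_tF=0$ and $\partial_{tt}F<0$;
\item the spatial block is negative and the Schur term vanishes;
\item the Monge--Amp\`ere identity holds;
\item $F(x,t)>0$.
\end{itemize}
Hence the inequality $\lambda^2\le c\,w_K(e)^2$ does not follow. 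More fundamentally, $\sup F$ is not tied quantitatively to $\sup\Phi_{ee}$.

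\textbf{The missing idea.} Put $\log\Phi_{ee}$ itself into the functional. The paper maximises $\log u(x)+\beta\mathcal F_s(x)-\tfrac{\beta c}{2}s^2-\varepsilon\Phi(x)$ jointly in $(x,s)$, with the one-sided corrector $\mathcal F_s=\Phi(x+se)-\Phi(x)-s\Phi_e(x)$. The argument has three steps.
\begin{enumerate}
\item The inequalities $\cL\log u\ge-\Lambda/u$ and $\cL\mathcal F_s\ge\cH(R_s)-\Lambda s^2/2$, through the Schur complement of the $(x,s)$-Hessian, give $\cH(R)+\mathscr S_e(A,C;c)\le\Lambda/(\beta u)+\Lambda s^2/2+o(1)$ at the maximiser.
\item The Bregman--Schur coercivity $\cH(R)+\mathscr S_e\ge\Xi(u/c)$ then bounds $u$ at the maximiser by a multiple of $c$.
\item Since $\beta(\mathcal F_s-\tfrac c2s^2)\le\beta w^2/(2c)$, comparing with $s=0$ gives $\sup u\le u(x_\varepsilon)e^{b/(2a)}$ in the limit.
\end{enumerate}
Step 3 is exactly the link between the maximum and $\sup\Phi_{ee}$ that your functional lacks.

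\textbf{Further points.} This scheme gives $1.828$. The constant $0.587$ does not come from an explicit optimisation; it is the output of a nonquadratic bootstrap iterated through a computer-verified certificate. Also, attainment of the maximum for this functional uses an a priori bound $\sup\|D^2\Phi\|_{\op}<\infty$. The paper obtains it by adding $\delta|y|^2/2$ to the target potential, applying the generalised Caffarelli theorem, and letting $\delta\downarrow0$ by stability.
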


\begin{remark}[Certified numerics]
\label{rem:certified-constant}
The argument based on translations, a Schur complement, and the exact rational inequalities in \Cref{app:quadratic-certificate} establishes every conclusion of
\Cref{thm:main} with the constant \(1.828\). We obtained the sharper value
\[
C_{\mathrm{nq}}=0.587
\]
by the iteration of the analytic bootstrap criteria with the finite certificate based on interval arithmetic in \Cref{app:certificate}. In this case, \(0.587\) is simply a rigorously certified constant, and does not have an effect on the analytic argument. We leave the identification of the optimal constant open.
\end{remark}

Notice that the condition \eqref{eq:distributional-source-Q} means, in other terms, that the density of \(\mu\) is log-convex relative to the centred Gaussian with a precision matrix \(Q\). However, the source itself need not be log-concave. For the standard Gaussian, we have \(V(x)=|x|^2/2\) and \(Q=\Id\). We then obtain the following direct answer to the problem for a Gaussian source.

\begin{corollary}[Gaussian source]\label{cor:gaussian}
Let \(\gamma_d\) be the standard Gaussian measure on \(\mathbb R^d\), and
let \(\nu\) be a compactly supported log-concave probability measure with support \(K\). We denote by \(T=\nabla\Phi\) the Brenier map from \(\gamma_d\) to \(\nu\). It follows that
\[
\partial_{ee}\Phi
\le
C_{\mathrm{nq}}w_K(e)
\qquad
(e\in\Sph^{d-1})
\]
in the sense of distributions, and
\[
\Lip(T)
\le
C_{\mathrm{nq}}\diam(K).
\]
In particular, the same conclusions hold when \(\nu\) is normalised Lebesgue measure on a convex body \(K\).
\end{corollary}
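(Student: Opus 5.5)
This follows from \Cref{thm:main} by specialising to the standard Gaussian. I take \(V(x)=|x|^2/2\), so that \(Z_\mu=(2\pi)^{d/2}\in(0,\infty)\), \(\mu=\gamma_d\), and \(\mu\in\mathcal P_2(\mathbb R^d)\). With \(Q=\Id\), which is symmetric positive definite, the function
\[
x\mapsto \tfrac12\langle Qx,x\rangle-V(x)
\]
is identically zero and hence convex. So every hypothesis on the source holds, and the hypotheses on \(\nu\) are exactly those of \Cref{thm:main}.

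Next I would check that the Brenier map \(T=\nabla\Phi\) in the corollary agrees with the one in the theorem. The Brenier map is determined \(\gamma_d\)-a.e. by uniqueness of optimal transport \cite{Brenier1991,McCann1995}. Since \(\gamma_d\) has positive density, the potential is determined up to an additive constant. I fix this constant by requiring \(\Phi(0)=0\). The range condition \(\partial\Phi(\mathbb R^d)\subset K\) then selects the normalised potential of \Cref{thm:main}. This is exactly the uniqueness statement built into that theorem, so I invoke it there rather than reprove it. The distributional inequality \(\partial_{ee}\Phi\le\cdots\) does not depend on the additive constant, and \(\nabla\Phi\) is unchanged off a null set. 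By the convention in the introduction, the continuous representative is the one we use.

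For the directional estimate, I apply \eqref{eq:main-directional} with \(v=e\in\Sph^{d-1}\). Since \(\langle Qe,e\rangle=|e|^2=1\), this gives
\[
\partial_{ee}\Phi\le C_{\mathrm{nq}}\,w_K(e).
\]
For the Lipschitz estimate, I use \eqref{eq:main-operator} together with \(\|\Id\|_{\op}=1\), which gives \(\Lip(T)\le C_{\mathrm{nq}}\diam(K)\).

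For the final assertion, let \(K\) be a convex body. Normalised Lebesgue measure \(\one_K\,\dd x/|K|\) is log-concave, because its density is the exponential of the concave function that equals \(0\) on \(K\) and \(-\infty\) off \(K\). It is compactly supported, and its support is \(\overline{K}=K\). The previous conclusions therefore apply to it.

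Every step here is a direct verification. The only point needing care is that the corollary's \(T\) matches the theorem's normalised potential, and this is handled by the uniqueness of the Brenier map and the representative convention.
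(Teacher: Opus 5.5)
Your proposal is correct and is exactly the paper's argument: the paper also just applies \Cref{thm:main} with \(V(x)=|x|^2/2\) and \(Q=\Id\), so that \(\langle Qe,e\rangle=1\) and \(\|Q\|_{\op}=1\). Your additional checks — that the corollary's \(\Phi\) agrees with the normalised potential up to an additive constant, and that uniform measure on a convex body is log-concave with support \(K\) — are the routine verifications the paper leaves implicit.
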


The coercivity created by the support also persists under bounded negative target curvature. As a result, we obtain our second main theorem. Let \(K\subset\mathbb R^d\) be compact and convex, and set \(\Omega:=\operatorname{int}K\). Let \(W:\Omega\to\mathbb R\) be finite and define
\[
Z_\nu:=\int_\Omega e^{-W(y)}\,\dd y.
\]
We assume that \(Z_\nu\in(0,\infty)\), and consider
\[
\dd\nu(y)
=
Z_\nu^{-1}e^{-W(y)}
\one_{\Omega}(y)\,\dd y.
\]
For \(\rho\ge0\), we say that \(\nu\) is \(\rho\)-semi-log-concave when
\[
y\longmapsto W(y)+\frac{\rho}{2}|y|^2
\]
is convex on \(\Omega\).

\begin{theorem}[Semi-log-concave targets]
\label{thm:semilogconcave}
Let \(V:\mathbb R^d\to\mathbb R\) be finite and define
\[
Z_\mu:=\int_{\mathbb R^d}e^{-V(x)}\,\dd x.
\]
Assume that \(Z_\mu\in(0,\infty)\), and set
\[
\dd\mu(x):=Z_\mu^{-1}e^{-V(x)}\,\dd x
\in\mathcal P_2(\mathbb R^d).
\]
Also assume that, for some \(\Lambda>0\),
\[
x\longmapsto\frac{\Lambda}{2}|x|^2-V(x)
\]
is convex. Let \(\nu\) be a full-dimensional compactly supported probability measure with compact convex support \(K\). Let \(W:\operatorname{int}K\to\mathbb R\) be finite and define
\[
Z_\nu:=\int_{\operatorname{int}K}e^{-W(y)}\,\dd y.
\]
Assume that \(Z_\nu\in(0,\infty)\), and suppose that
\[
\dd\nu(y)
=
Z_\nu^{-1}e^{-W(y)}
\one_{\operatorname{int}K}(y)\,\dd y,
\]
and, for some \(\rho\ge0\),
\[
y\longmapsto W(y)+\frac{\rho}{2}|y|^2
\]
is convex. Let \(T=\nabla\Phi\) be the Brenier map from \(\mu\) to \(\nu\). We write
\[
r:=\rho\diam(K)^2,
\qquad
C_{\mathrm{sl}}(r)
:=
\frac{(1+r)(5+r)}{2}
\exp\!\left(\frac{1+r}{2}\right).
\]
It follows that \(T\) has an everywhere-defined globally Lipschitz representative and
\[
\Lip(T)
\le
C_{\mathrm{sl}}(r)
\sqrt{\Lambda}\,\diam(K).
\]
\end{theorem}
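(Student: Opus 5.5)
We prove \Cref{thm:semilogconcave} by running the same translation and Schur-complement maximum-principle argument as for \Cref{thm:main}. The only new ingredient is the bounded negative target curvature \(\rho\), which enters as a perturbation of the support coercivity.

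First we reduce to a smooth, strictly convex setting. Approximate \(\mu\) by sources with smooth \(V_\varepsilon\) that keep \(\frac{\Lambda}{2}|x|^2-V_\varepsilon\) convex; Gaussian convolution of the log-density preserves this, and we add a tiny confining term. Approximate \(\nu\) by smooth densities on smooth uniformly convex bodies \(K_\varepsilon\downarrow K\), with \(W_\varepsilon+\frac{\rho}{2}|y|^2\) convex. By Caffarelli's regularity theory the potentials \(\Phi_\varepsilon\) are smooth and strictly convex, and they solve
\[
V_\varepsilon(x)=W_\varepsilon(\nabla\Phi_\varepsilon(x))-\log\det D^2\Phi_\varepsilon(x)
\]
with \(\nabla\Phi_\varepsilon(\mathbb R^d)\subset K_\varepsilon\). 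Uniform Lipschitz bounds pass to the limit by stability of Brenier maps. Together with Alexandrov's a.e.\ second derivatives, this gives the everywhere-defined Lipschitz representative.

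Next, for a unit vector \(e\) and a translation \(h\in\mathbb R^d\), we consider a function of \((x,h)\) in the spirit of Kolesnikov's joint maximisation. The natural choice is
\[
F(x,h)=\Phi(x+h)+\Phi(x-h)-2\Phi(x)-\tfrac{\beta}{2}|h|^2,
\]
or the directional quantity \(\partial_{ee}\Phi\) multiplied by a weight built from \(\langle\nabla\Phi,e\rangle\). The weight we would try is \(\exp\bigl(\alpha\langle\nabla\Phi,e\rangle/w\bigr)\), and this is where the factor \(\exp((1+r)/2)\) in \(C_{\mathrm{sl}}(r)\) comes from. Its range is controlled by \(\diam(K)\) through the range condition. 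At an interior maximum we differentiate the Monge--Amp\`ere equation twice in the direction \(e\):
\[
-V_{ee}=-\tr\bigl((D^2\Phi)^{-1}D^2\Phi_e(D^2\Phi)^{-1}D^2\Phi_e\bigr)+\tr\bigl((D^2\Phi)^{-1}D^2\Phi_{ee}\bigr)-D^2W[\nabla\Phi_e,\nabla\Phi_e]-\langle\nabla W,\nabla\Phi_{ee}\rangle.
\]
We use \(V_{ee}\le\Lambda\) and \(D^2W\succeq-\rho\Id\). The \(\rho\) term contributes at most \(\rho|\nabla\Phi_e|^2\le\rho\,\|D^2\Phi\|\,\Phi_{ee}\). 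The first-order condition at the maximum eliminates \(\nabla\Phi_{ee}\) in terms of \(\nabla\Phi_e\) and the weight. The Schur complement of the block Hessian in \((x,h)\), and equivalently the Cauchy--Schwarz inequality \(\tr(A^{-1}BA^{-1}B)\ge (B_{ee})^2/A_{ee}\) applied in a rank-one fashion, gives a coercive term \(M^2\) with \(M=\Phi_{ee}\). This avoids the trace loss of \(\sqrt d\).

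Boundary behaviour: maxima cannot escape to infinity, because the target is compact and \(\nabla\Phi_e\) is bounded by \(\diam(K)\). A cutoff or an a priori growth argument then shows that the supremum is attained or approximated. At such a point the inequality takes the form
\[
M^2\le \Lambda\,w^2 c_1(r)+c_2\,\rho w^2 M^2/\dots,
\]
which after scaling by \(w=\diam(K)\) and \(r=\rho w^2\) gives \(M\le C_{\mathrm{sl}}(r)\sqrt{\Lambda}\,w\). The polynomial factor \((1+r)(5+r)/2\) should arise from optimising the weight parameter \(\alpha\) against the \(\rho\) term. Finally we take the supremum over directions \(e\) to get the operator norm.

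The main obstacle is the \(\rho\) term. Because \(D^2W\) is only bounded below by \(-\rho\), the term \(-D^2W[\nabla\Phi_e,\nabla\Phi_e]\) has the wrong sign, and it involves the full gradient \(\nabla\Phi_e\) rather than \(\Phi_{ee}\) alone. I would first try to absorb it using \(|\nabla\Phi_e|^2\le \|D^2\Phi\|_{\op}\Phi_{ee}\) at a point where \(e\) is the top eigenvector. Choosing \((x,e)\) jointly to maximise the weighted quantity gives this for free. The exponential weight, whose oscillation over \(K\) is at most \(e^{(1+r)/2}\), then absorbs the resulting linear term. This is why the constant is only exponential in \(r\) rather than sharp.
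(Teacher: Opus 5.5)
Your reduction and limiting steps are fine in spirit, but there is a genuine gap at the core: you never specify a functional that produces coercivity, and the one you commit to cannot.

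\textbf{The proposed weight produces no coercive term.} Take $h=\log u+\alpha\Phi_e/w$ with $u=\Phi_{ee}$ at an interior maximum.
\begin{enumerate}
\item The once-differentiated Monge--Amp\`ere equation gives $\cL\Phi_e=-V_e$. This is unbounded: for a Gaussian source, $V_e(x)=\Lambda\langle x,e\rangle$. So the weight injects an uncontrolled drift term.
\item The first-order condition $\nabla u=-\frac{\alpha}{w}uAe$ makes $A^{1/2}e$ an eigenvector of $A^{-1/2}A_eA^{-1/2}$. Hence the rank-one Cauchy--Schwarz lower bound for $\tr(A^{-1}A_eA^{-1}A_e)$ that you invoke is cancelled exactly by the term $-\langle A^{-1}\nabla u,\nabla u\rangle/u^2$ in $\cL\log u$. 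What remains is $0\ge\cL h\ge-\Lambda/u-\rho|Ae|^2/u-\frac{\alpha}{w}V_e$, with no positive power of $u$ anywhere.
\item The weight is $e^{\psi(T)}$ with $\psi$ affine, so there is no Pogorelov gain $u\tr(D^2\psi(T)A)$ either.
\end{enumerate}
This functional has no translation variable, so there is no block Hessian and no Schur complement to invoke. The displayed inequality ending in ``$\dots$'', and the attribution of $e^{(1+r)/2}$ and $(1+r)(5+r)/2$ to optimising $\alpha$, are not backed by any computation. Your other candidate, the centred second difference minus $\frac\beta2|h|^2$, only yields $W(T_+)+W(T_-)-2W(T_0)\le\Lambda|h|^2$ through concavity of $\log\det$. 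That is vacuous unless $W$ is strongly convex; the paper uses this device only inside a strongly convex collar.

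\textbf{The missing idea is the compensated translation corrector.} Set $\mathcal G_m(x)=\Phi(x+m)-\Phi(x)-\langle m,\nabla\Phi(x)\rangle$. Normalise $\Lambda=\diam(K)=1$, so that $\rho$ becomes $r$. Maximise $\log\Phi_{ee}+\beta\mathcal G_m-\frac\beta2|m|^2$ jointly over $(x,e,m)$, with $\beta=1+r$. Each piece then does specific work:
\begin{enumerate}
\item The subtracted linear term cancels the unbounded $\nabla V$ drift, leaving a Taylor remainder at most $|m|^2/2$.
\item $\cL\mathcal G_m$ produces the log-determinant Bregman divergence $\cH(A^{-1/2}A(x+m)A^{-1/2})$.
\item Stationarity in $m$ gives $T(x+m)-T(x)=m$. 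Hence $|m|\le1$, and the semiconvexity loss is only $\frac r2|m|^2$.
\item The $(x,m)$ cross block, through \Cref{lem:generalised-Schur}, adds $\mathscr S(A,C)$.
\end{enumerate}
\Cref{lem:logdet-Schur} with $v=A^{1/2}e$ then bounds $\Xi_+(\ell_e)$, where $\ell_e=|Ae|^2/u$. This grows linearly in $\ell_e$, so it absorbs the term $r\ell_e/\beta$ coming from \Cref{lem:log-u}, precisely because $r/\beta<1$. The result is $u\le\ell_e\le\beta(2+\beta/2)$. The factor $e^{\beta/2}$ is the translation budget $\beta(|m|-|m|^2/2)\le\beta/2$ lost when comparing with $m=0$.

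\textbf{The a priori Hessian bound is also missing.} Your claim that maxima cannot escape to infinity because $\nabla\Phi_e$ is bounded is unjustified. Bounded gradients say nothing about $D^2\Phi$ at infinity, and the $\varepsilon\Phi$-penalised maximum principle needs $\sup\|D^2\Phi\|_{\op}<\infty$ as input. For $\rho>0$ this cannot be borrowed from Caffarelli as in the log-concave case. That is why the paper builds inner uniformly convex approximants with a compensated collar, and proves \Cref{prop:qualitative-starting-bound} via a Pogorelov multiplier and the vanishing of centred second differences at infinity.
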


For \(\rho=0\), \Cref{thm:main} gives the sharper constant \(C_{\mathrm{nq}}\). Informally, the purpose of this theorem is to obtain dimension-free control under bounded negative target curvature, with the dependence on \(r=\rho\diam(K)^2\).

\subsection{Sharpness and constants}

In the isotropic case \(Q=\Lambda\Id\), scaling leads to the dependence on
\(\sqrt{\Lambda}\) and on the target width. In dimension one, with \(D:=\diam(K)\), \Cref{thm:sharp-one-dimensional} gives the sharp estimate
\[
\Lip(T)
\le
\frac{\sqrt{\Lambda}\,D}{\sqrt{2\pi}}
\]
whenever the log-concave target is supported on an interval of length \(D\). Equality is true for a Gaussian source with variance \(\Lambda^{-1}\) and the uniform target. At this endpoint, it is possible to compute the monotone transport from the Gaussian source with variance one and its sharp Lipschitz constant \(D/\sqrt{2\pi}\) explicitly (compare \cite[proof of Lemma 1.7]{MilmanSlabs2026}). If \(C_*\) denotes the optimal universal constant in \eqref{eq:main-directional}, it follows that
\[
\frac1{\sqrt{2\pi}}
\le C_*
\le C_{\mathrm{nq}}
=0.587.
\]
We leave it as an interesting open question whether \(C_*=1/\sqrt{2\pi}\).

For semi-log-concave targets, let us write
\[
r:=\rho\diam(K)^2.
\]
By \Cref{thm:one-dimensional-full,prop:sharp-exponential-rate}, we cannot avoid an exponential dependence on \(r\). Let \(c_{\mathrm{sl}}^*\) denote the optimal exponential rate when arbitrary polynomial prefactors in \(r\) are allowed. We show that
\[
\frac18\le c_{\mathrm{sl}}^*\le\frac12.
\]

\subsection{Organisation}
The paper is organised as follows. In \Cref{sec:strategy}, we describe the proof strategy. In \Cref{sec:prelim}, we present the smooth framework and the differentiated Monge--Amp\`ere identities. \Cref{sec:apriori} contains the quadratic seed estimate obtained by combining translations with a Schur complement, the nonquadratic bootstrap, and its parametrisation by the inverse of the penalty derivative. In \Cref{sec:regularisation}, we remove the global Hessian bound, and in \Cref{sec:approximation}, we analyse arbitrary full-dimensional compact log-concave targets. We study reverse covariance and smoothing of the source, stability under variation of the source, lower-dimensional targets, and the affine reduction in \Cref{sec:affine-nonsmooth-source}. In \Cref{sec:semilogconcave}, we prove the semi-log-concave theorem by means of the vector corrector involving translations, the qualitative starting bound, and the approximation with a compensated collar. \Cref{sec:sharp-one-dimensional} gives the one-dimensional sharpness results. Finally,
\Cref{app:quadratic-certificate,app:certificate} contain exact certificates for the two constants.

\subsection{Related works}

The theorem that relates support and curvature is meant to complement Caffarelli's contraction principle. Using the standard absolute continuity hypothesis, Brenier's
theorem gives the quadratic-cost optimal map \cite{Brenier1991,McCann1995}. Caffarelli's normalised theorem from a Gaussian to a more log-concave target can be stated as follows. Suppose that \(F\) is convex on \(\mathbb R^d\), the source potential is \(x\mapsto\frac12|x|^2\), and the target potential is \(x\mapsto\frac12|x|^2+F(x)\), then
\[
0\preceq D^2\Phi\preceq\Id
\qquad\text{almost everywhere}
\]
(see \cite[Theorem 11]{Caffarelli2000} and the correction \cite{Caffarelli2002}). The general Hessian statement assumes that the source and target potentials \(V\) and \(W\) satisfy, for some
\(\Lambda,\kappa>0\),
\[
D^2V\preceq\Lambda\Id,
\qquad
D^2W\succeq\kappa\Id.
\]
It follows that
\[
\Lip(\nabla\Phi)
\le
\sqrt{\Lambda/\kappa}
\]
(\cite[Theorem 3.2]{ColomboFigalliJhaveri2017} and
\cite[Theorem 1]{FathiGozlanProdhomme2020}). However, when \(\kappa=0\), this curvature estimate no longer has a coercive scale. By contrast, we obtain in \Cref{thm:main} coercivity from compact support and couple it directionally with the source curvature. In this case, positive target curvature and bounded target support give two different mechanisms for global Lipschitz control of Brenier maps.

Klartag and Kolesnikov \cite{KlartagKolesnikov2015} obtained dimension-free distributional information on the Hessian spectrum. Working under the assumption that the Brenier potential is of class \(C^2\) on \(\operatorname{int}(\supp\mu)\), they managed to prove concentration estimates on multiplicative scales for the eigenvalues of \(D^2\Phi\) between absolutely continuous log-concave measures. These estimates are very useful for controlling the fluctuations of the eigenvalues, but not their pointwise scale or an \(L^\infty\) operator-norm bound.

In a similar context, Mikulincer and Shenfeld \cite{MikulincerShenfeld2024} established dimension-free contraction estimates that are controlled by the support for the Brownian transport map when the target is bounded and \(\kappa\)-log-concave, also in the case \(\kappa<0\). When \(\kappa=0\), they mentioned that the estimate for the Brenier map was an open problem. For \(\kappa<0\), they also noted that no analogous estimate was known for other transport maps \cite[Section 1.1]{MikulincerShenfeld2024}. In \Cref{thm:main}, we address the problem for the Brenier map in the log-concave case, while \Cref{thm:semilogconcave} gives an estimate for the Brenier map controlled by the support under bounded negative target curvature.

Recent global growth and regularity estimates for Brenier maps rely on target convexity moduli or on further quantitative assumptions for the source and target densities or potentials \cite{Kolesnikov2014,ColomboFathi2021,Fathi2024,
DePhilippisShenfeld2025,CarlierFigalliSantambrogio2026}. As regards the closest recent results, Gozlan and Sylvestre \cite[Theorem 4.2, Corollary 4.3, Theorem 4.4, and Corollary 4.5]{GozlanSylvestre2025} prove global directional and anisotropic modulus estimates from quantitative moduli of smoothness of the source potential and convexity of the target potential. Their results also include measures supported on affine subspaces. For normalised Lebesgue measure on a convex body, the target convexity modulus vanishes at every scale below the support diameter. Their estimate
provides an oscillation bound, but not a local Lipschitz estimate. Bidoia \cite{Bidoia2026} obtains dimension-free estimates for good convex quantities of \(D^2\Phi\), namely, quantities that are Loewner-monotone on the positive-semidefinite cone and positively homogeneous. This also includes matrix norms with that monotonicity property. On the other hand, Ammari and Figalli \cite{AmmariFigalli2026} prove global Lipschitz estimates for a concrete structured family.
We note that the hypotheses which give local Lipschitz control in these three works do not follow from compactness of the target support. In particular, none of them
gives, for the standard Gaussian source and normalised Lebesgue measure on an
arbitrary convex body \(K\), a global Lipschitz estimate depending only on \(\diam(K)\).

Yet another line of research studies nonoptimal heat-flow transports, which includes dimension-free Lipschitz estimates in several regimes. This begins with the reverse heat-flow construction of Kim and Milman \cite{KimMilman2012}. For other Lipschitz-regularity and propagation results, see
\cite{Neeman2022,KlartagPutterman2023,MikulincerShenfeld2023,
FathiMikulincerShenfeld2024,BrigatiPedrotti2025,
ConfortiEichinger2025,ChaintronConfortiEichinger2025}.
Consult \cite{ChewiEichingerPooladian2026} for the recent existence and stability results for the Kim--Milman flow map. Interestingly, in general, the Kim--Milman heat-flow map is different from the Brenier map \cite{Tanana2021}.

\section{Proof idea}\label{sec:strategy}

In this section, we outline the most important parts of the proofs. Let us begin with smooth full-dimensional source and target densities. We write \(V\) and \(W\) for the source and target potentials, respectively, \(\Phi\) for the corresponding Brenier potential, and \(K\) for the target support, and assume that, for some \(\Lambda>0\),
\[
D^2V\preceq\Lambda\Id.
\]
For now, let us assume that \(D^2\Phi\) is globally bounded. Put
\[
T:=\nabla\Phi,
\qquad
A:=D^2\Phi,
\qquad
\mathfrak b:=\nabla W(T),
\]
and define
\[
\mathcal Lf:=
\operatorname{tr}(A^{-1}D^2f)
-
\langle\mathfrak b,\nabla f\rangle.
\]
We then fix \(e\in\Sph^{d-1}\) and set
\[
u:=\Phi_{ee}.
\]
Once we have differentiated the Monge--Amp\`ere equation twice, we obtain a drifted
linearised inequality of the form
\[
\cL\log u\ge-\frac{\Lambda}{u}
\]
when the target potential is convex and \(V_{ee}\le\Lambda\) \cite{Caffarelli2000,Kolesnikov2010}. For a centred analogue, see also \cite[Section 2]{Valdimarsson2007}.

We use compact support via the translated corrector
\[
\mathcal F_s(x)
:=
\Phi(x+se)-\Phi(x)-s\Phi_e(x),
\qquad
x\in\mathbb R^d,\quad s\in\mathbb R.
\]
Its structure with finite differences involves translations inspired by
\cite{Valdimarsson2007,Kolesnikov2010}, and we later combine it with the Bregman divergence for the negative log determinant \cite{Bregman1967}. Convexity implies that \(\mathcal F_s\ge0\), and the range constraint for \(\nabla\Phi\) gives
\[
\mathcal F_s(x)\le w_K(e)|s|.
\]
The linearised Monge--Amp\`ere equation then gives a lower bound for
\(\cL\mathcal F_s\) in terms of a log-determinant divergence.

The important part is the maximisation of a penalised functional jointly in the source
point \(x\) and the translation parameter \(s\), which follows Kolesnikov's
formal one-dimensional idea of joint maximisation \cite[Remark 2.1 and the part before Theorem 2.2]{Kolesnikov2010}. Stationarity with respect to \(s\) then turns the support bound into an active directional constraint. To this end, we use the full Hessian inequality in the variables \((x,s)\), rather than only the block corresponding to \(x\). One advantage is that the new Schur complement controls the active directional Hessian. Finally, along with the Bregman divergence for the negative log determinant, it reduces the argument by the maximum principle in several dimensions to a manageable scalar coercivity inequality.

We first use a quadratic translation penalty and obtain an analytic bound with constant \(1.828\). We then apply a nonquadratic bootstrap, which uses the available directional estimate to improve the translation budget. By the parametrisation of the penalty via the inverse of its derivative, we reduce each bootstrap step to a finite scalar criterion. The analysis shows that every admissible scalar certificate improves the directional
Hessian bound. We use this observation to obtain the final finite certificate in \Cref{app:certificate} that gives the improved constant \(0.587\).

\section{Linearised Monge--Amp\`ere identities}\label{sec:prelim}

\subsection{Notation and conventions}\label{subsec:notation}

We recall and extend some of the notation introduced at the beginning. The symbols \(\preceq\) and \(\succeq\) denote the Loewner order on symmetric matrices, and \(M\succ0\) means that \(M\) is symmetric positive definite. If \(M\succeq0\), we denote
by \(M^{1/2}\) the unique symmetric positive-semidefinite square root of
\(M\). For \(M\succ0\), we set
\[
M^{-1/2}:=(M^{1/2})^{-1}.
\]
Let \(\mathcal O\subset\mathbb R^d\) be open, let \(f\in C^2(\mathcal O)\), and fix \(x\in\mathcal O\) and \(\xi,\eta\in\mathbb R^d\). We use the notation
\[
f_\xi(x):=\langle\nabla f(x),\xi\rangle,
\qquad
f_{\xi\eta}(x):=D^2f(x)[\xi,\eta]
:=\langle D^2f(x)\xi,\eta\rangle.
\]
We omit \(x\) whenever the evaluation point is clear.

Let \(E\) be a finite-dimensional Euclidean space, and let \(r>0\) and
\(x\in E\). We set
\[
B_r^E(x):=\{z\in E:|z-x|<r\}.
\]
We will omit the superscript \(E\) when the ambient space is clear. In particular,
for \(x\in\R^d\), we write
\[
B_r(x):=B_r^{\R^d}(x),
\qquad
B_r:=B_r(0).
\]
For subsets \(A,B\subset\mathbb R^d\), the notation \(A\Subset B\) means
that \(\overline A\) is compact and contained in \(B\). We recall that \(\Sph^{d-1}:=\{e\in\R^d:|e|=1\}\). If \(E\subset\R^d\) and \(F:E\to\R^m\), where \(m\ge1\), then, as above,
\[
\diam(E)
:=
\sup_{x,y\in E}|x-y|,
\qquad
\Lip(F)
:=
\sup_{\substack{x,y\in E\\x\ne y}}
\frac{|F(x)-F(y)|}{|x-y|}.
\]
The symbols \(Z_\mu,Z_\nu,\ldots\) stand for the normalising constants. We use \(\Id\) for the identity matrix on the relevant Euclidean space. For integers \(m\ge1\) and \(t>0\), let \(\gamma_{m,t}\) be the centred Gaussian probability measure on \(\R^m\) with covariance \(t\Id\). We further write
\[
\gamma_m:=\gamma_{m,1}.
\]
For a measurable map \(S:X\to Y\) between Euclidean spaces and a Borel
probability measure \(\sigma\) on \(X\), we use \(S_\#\sigma\) for the pushforward of \(\sigma\) by \(S\). The symbol \(W_2\) stands for the quadratic Wasserstein distance, and \(\sigma_n\rightharpoonup\sigma\) means weak convergence of probability
measures. For a finite convex function \(\Psi:\mathbb R^d\to\mathbb R\), recall that
\[
\partial\Psi(x)
:=
\left\{
p\in\mathbb R^d:
\Psi(z)\ge
\Psi(x)+\langle p,z-x\rangle
\text{ for every }z\in\mathbb R^d
\right\}.
\]
For \(v\in\mathbb R^d\), the symbol \(\partial_{vv}\Psi\) denotes the
second distributional derivative of \(\Psi\) in the direction \(v\). We denote
the Legendre--Fenchel transform of \(\Psi\) by
\[
\Psi^*(y):=
\sup_{x\in\mathbb R^d}
\{\langle x,y\rangle-\Psi(x)\}.
\] If
\(E\subset\mathbb R^m\) is closed, \(C^\infty(E)\) denotes the restrictions
to \(E\) of \(C^\infty\) functions defined on some open neighbourhood of
\(E\). For an arbitrary set \(E\), let \(\one_E\) be its \(0\)-\(1\) characteristic function. If \(E\) is convex, let \(\iota_E\) be its extended-valued convex indicator, namely,
\[
\iota_E(x):=
\begin{cases}
0,&x\in E,\\
+\infty,&x\notin E.
\end{cases}
\]
For an extended-real-valued function \(f\) on a finite-dimensional Euclidean
space, the notation \(\operatorname{lsc}f\) means the lower-semicontinuous
envelope of \(f\). For \(a\in\mathbb R\) and a real linear map \(M\) between finite-dimensional Euclidean spaces, we write
\[
a_+:=\max\{a,0\},
\qquad
\|M\|_{\mathrm{op}}
:=
\sup_{|z|=1}|Mz|,
\qquad
\|M\|_{\mathrm{HS}}^2
:=
\operatorname{tr}(M^{\mathsf T}M).
\]
For a set \(E\subset\mathbb R^d\) and a function \(f:E\to\mathbb R\), we further use
\[
\operatorname{osc}_E f:=\sup_E f-\inf_E f,
\qquad
\operatorname*{ess\,osc}_E f
:=
\operatorname*{ess\,sup}_E f-\operatorname*{ess\,inf}_E f.
\]
Unless explicitly stated otherwise, we take essential suprema and infima over subsets
of \(\mathbb R^d\) with respect to Lebesgue measure. For a probability measure \(\sigma\) on \(\mathbb R\), we set
\[
F_\sigma(x):=\sigma((-\infty,x]),
\qquad
F_\sigma^{-1}(p):=
\inf\{x\in\mathbb R:F_\sigma(x)\ge p\},
\qquad 0<p<1.
\]
For \(a,b\in\R^d\), we write
\[
a\otimes b:=ab^{\mathsf T}.
\]
If \(\sigma\in\mathcal P_2(\mathbb R^d)\), let
\[
m_\sigma:=\int_{\mathbb R^d}x\,\dd\sigma(x),
\qquad
\operatorname{Cov}(\sigma)
:=
\int_{\mathbb R^d}
(x-m_\sigma)\otimes(x-m_\sigma)\,\dd\sigma(x).
\]
For a square-integrable random vector \(X\), we also use the notation
\[
\Cov(X)
:=
\mathbb E\bigl[
(X-\mathbb EX)\otimes(X-\mathbb EX)
\bigr].
\]
If \(X\) and \(Y\) are square-integrable random vectors, we set
\[
\Cov(X\mid Y)
:=
\mathbb E\!\left[
(X-\mathbb E[X\mid Y])\otimes(X-\mathbb E[X\mid Y])
\mid Y
\right].
\]
The above is as an equivalence class modulo almost-sure equality. Note that
we use pointwise notation at \(Y=y\) only after fixing a particular version
of the conditional law. Whenever we mollify a function or a measure, \(\eta\in C_c^\infty(\R^d)\) denotes a nonnegative radial function such that
\[
\int_{\R^d}\eta(z)\,\dd z=1,
\qquad
\supp\eta\subset B_1(0),
\]
and, for \(\varepsilon>0\),
\[
\eta_\varepsilon(z)
:=
\varepsilon^{-d}\eta(z/\varepsilon).
\]

Whenever the target is supported in a compact convex set
\(C\subset\mathbb R^d\), we choose the finite Brenier potential such that
\[
\partial\Phi(\mathbb R^d)\subset C
\]
and call this choice the \(C\)-range representative. Once an additive normalisation is fixed, the existence of this representative follows from \Cref{lem:compact-range-representative}. Unless we specify a different normalisation, we normalise a compact-range Brenier potential by
\[
\Phi(0)=0.
\]

\subsection{Log-concavity and support}

We call a Borel probability measure \(\nu\) on \(\R^d\) log-concave when,
for all compact Borel sets \(A,B\subset\R^d\) and every \(t\in(0,1)\),
\[
\nu((1-t)A+tB)
\ge \nu(A)^{1-t}\nu(B)^t.
\]
Let us assume that \(\nu\) is full-dimensional, and set \(\Omega:=\operatorname{int}(\supp\nu)\). Borell's characterisation \cite{Borell1975} implies that \(\Omega\) is a nonempty open convex set and that there is a finite convex function \(W:\Omega\to\R\) such that
\[
Z_\nu:=\int_\Omega e^{-W(y)}\,\dd y\in(0,\infty)
\]
and
\begin{equation}\label{eq:logconcave-density}
\dd\nu(y)=Z_\nu^{-1}e^{-W(y)}\one_\Omega(y)\,\dd y,
\end{equation}
We also define the extended-valued function \(W+\iota_\Omega:\R^d\to\R\cup\{+\infty\}\) by
\[
(W+\iota_\Omega)(y)
:=
\begin{cases}
W(y),&y\in\Omega,\\
+\infty,&y\notin\Omega.
\end{cases}
\]
We shall use its proper lower-semicontinuous convex envelope
\begin{equation}\label{eq:Wbar-def}
\overline W:=\operatorname{lsc}\bigl(W+\iota_\Omega\bigr).
\end{equation}
Notice that this function agrees with \(W\) on \(\Omega\), and its values on
\(\partial\Omega\) do not affect the measure. In every full-dimensional target argument, \(\Omega\) denotes the Euclidean interior of the target support, and we write
\[
K:=\overline\Omega=\supp\nu.
\]
Whenever lower-dimensional targets are allowed, we set \(K:=\supp\nu\) and do not
use \(\Omega\) for the ambient Euclidean interior. We compute every width and diameter with respect to \(K\). For the smoothness argument, let \(V\in C^\infty(\R^d)\), and set
\[
Z_\mu:=\int_{\R^d}e^{-V(x)}\,\dd x\in(0,\infty).
\]
The source has density
\begin{equation}\label{eq:source-main}
\dd\mu(x)=Z_\mu^{-1}e^{-V(x)}\,\dd x\in\mathcal P_2(\R^d).
\end{equation}
We assume that, for some \(\Lambda>0\),
\[
D^2V\preceq\Lambda\Id.
\]
We always assume the probability condition above.

\subsection{Regularity}

Let us first describe the target regularity assumptions needed for the pointwise Monge--Amp\`ere calculations. We assume, along with \eqref{eq:source-main}, that \(\Omega\subset\R^d\) is a bounded open convex set, \(K:=\overline\Omega\), and \(\mathcal U\supset K\) is an open neighbourhood such that
\[
W\in C^\infty(\mathcal U).
\]
We also define
\[
Z_\nu:=\int_\Omega e^{-W(y)}\,\dd y\in(0,\infty)
\]
and assume that the target has the form
\begin{equation}\label{eq:smooth-target}
\dd\nu(y)=Z_\nu^{-1}e^{-W(y)}\one_\Omega(y)\,\dd y,
\end{equation}
In \eqref{eq:smooth-target}, we restrict \(W\) to \(\Omega\). Notice that at this stage we do not assume that \(W\) is convex, because convexity will only be imposed when we apply the a priori estimate. Let \(\Phi:\mathbb R^d\to\mathbb R\) be a Brenier potential whose gradient transports \(\mu\) to \(\nu\). We write
\[
T:=\nabla\Phi.
\]
Since \(W\) is continuous on the compact set \(K\), the target density and
its reciprocal are bounded on \(\Omega\), whereas smoothness of \(V\) gives the
same local bounds for the source density and its reciprocal on every Euclidean ball. It follows that, when \(d\ge2\), the hypotheses of \cite[Theorem 1.1]{CorderoFigalli2019} hold with source domain \(X=\R^d\) and target domain \(Y=\Omega\). We also analyse the case \(d=1\) in the proof.

\begin{proposition}[Global source regularity]\label{prop:regularity}
Under the smooth assumptions above, the map
\[
T:\R^d\longrightarrow\Omega
\]
is a \(C^\infty\) diffeomorphism. In particular,
\[
D^2\Phi(x)\succ0
\qquad(x\in\R^d),
\]
and the change-of-variables equation holds at every point.
\end{proposition}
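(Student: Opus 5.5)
For \(d\ge2\) the plan is to apply \cite[Theorem 1.1]{CorderoFigalli2019} directly and then bootstrap to \(C^\infty\). For \(d=1\) we argue by hand through the monotone rearrangement.

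In the case \(d\ge2\), the paragraph preceding the statement already checks the hypotheses of Cordero-Erausquin--Figalli. These are: source domain \(X=\R^d\); target domain \(Y=\Omega\), bounded and convex; and densities that are locally bounded away from \(0\) and \(\infty\) on \(X\) and globally on \(Y\). Their theorem then gives that \(\Phi\) is strictly convex and of class \(C^{1,\alpha}_{\mathrm{loc}}(\R^d)\), and that \(T=\nabla\Phi\) is a homeomorphism of \(\R^d\) onto \(\Omega\), so in particular \(\nabla\Phi(\R^d)=\Omega\). The crucial point is that the whole open set \(\Omega\) is attained, and not merely a set of full measure. Given this, \(\Phi\) is an Alexandrov (indeed Brenier) solution of
\[
\det D^2\Phi(x)=\frac{Z_\nu}{Z_\mu}\exp\bigl(W(\nabla\Phi(x))-V(x)\bigr)
\]
on \(\R^d\). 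The right-hand side is locally bounded above and below by positive constants, and it is locally \(C^\alpha\) because \(W\in C^\infty(\mathcal U)\), \(V\in C^\infty\), and \(\nabla\Phi\in C^{\alpha}_{\mathrm{loc}}\). Caffarelli's interior \(C^{2,\alpha}\) theory for strictly convex solutions with Hölder right-hand side then gives \(\Phi\in C^{2,\alpha}_{\mathrm{loc}}\) with \(D^2\Phi\succ0\), since the determinant is positive and the Hessian is positive semidefinite. The equation is now uniformly elliptic on compact sets and its right-hand side is \(C^{k,\alpha}\) whenever \(\Phi\in C^{k+1,\alpha}\). Schauder theory therefore bootstraps this to \(\Phi\in C^\infty(\R^d)\).

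Since \(D^2\Phi\succ0\) everywhere, \(T\) is a local \(C^\infty\) diffeomorphism by the inverse function theorem. It is injective because \(\Phi\) is strictly convex, and surjective onto \(\Omega\) by the theorem cited above. Hence \(T:\R^d\to\Omega\) is a \(C^\infty\) diffeomorphism, and the change of variables holds pointwise, which is exactly the displayed Monge--Amp\`ere equation.

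For \(d=1\) we set \(\Omega=(a,b)\) and take \(T=F_\nu^{-1}\circ F_\mu\). Here \(F_\mu\) is a \(C^\infty\) diffeomorphism \(\R\to(0,1)\) with derivative \(Z_\mu^{-1}e^{-V}>0\). Likewise \(F_\nu\) restricted to \((a,b)\) is a \(C^\infty\) diffeomorphism onto \((0,1)\), because its density \(Z_\nu^{-1}e^{-W}\) is smooth and positive on a neighbourhood of \([a,b]\). The composition is therefore a \(C^\infty\) diffeomorphism with \(T'>0\). By uniqueness of the monotone map it coincides with \(\Phi'\), and differentiating \(F_\nu(T)=F_\mu\) gives the pointwise equation.

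The main obstacle is the passage from \(C^{1,\alpha}\) to \(C^{2,\alpha}\) together with the surjectivity \(T(\R^d)=\Omega\). Both rely on the global strict convexity and surjectivity supplied by \cite{CorderoFigalli2019}, which is available here because the source domain is all of \(\R^d\) and the target is convex. I would cite that theorem for these two properties rather than reprove them.
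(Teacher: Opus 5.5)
Your proposal is correct and follows the paper's proof: the monotone rearrangement plus the inverse function theorem for \(d=1\), and Cordero-Erausquin--Figalli's Theorem~1.1(i) for the homeomorphism \(T:\R^d\to\Omega\) when \(d\ge2\). The only difference is that you unpack the higher-regularity step into Caffarelli's interior \(C^{2,\alpha}\) estimates followed by a Schauder bootstrap. The paper instead cites Theorem~1.1(ii) of the same reference at every order, and that result packages essentially this argument.
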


\begin{proof}
We will first assume that \(d=1\). Notice that the map is the monotone rearrangement
\(T=F_\nu^{-1}\circ F_\mu\). Also, the source density is smooth and strictly positive on \(\R\), while the target density is smooth and strictly positive on \(\Omega\). By the inverse-function theorem, we obtain a smooth increasing diffeomorphism from \(\R\) onto \(\Omega\).

We now assume that \(d\ge2\), and write
\[
F(x)=Z_\mu^{-1}e^{-V(x)},
\qquad
G(y)=Z_\nu^{-1}e^{-W(y)}.
\]
For every \(R>0\), we have \(F,1/F\in L^\infty(B_R)\), and, since \(\Omega\) is bounded and \(W\) is continuous on \(K\), we also have \(G,1/G\in L^\infty(\Omega)\). Observe that the target domain is convex and the source domain is the whole space \(\R^d\). Therefore, \cite[Theorem 1.1(i)]{CorderoFigalli2019} implies that \(T:\R^d\to\Omega\) is a homeomorphism. Both \(F\) and \(G\) are \(C^\infty\) on their open supports. We apply the higher-regularity conclusion \cite[Theorem 1.1(ii)]{CorderoFigalli2019} at every finite order, and it follows that \(T\) is a \(C^\infty\) diffeomorphism.
\end{proof}

\begin{remark}[Smooth framework scope]\label{rem:smooth-framework-scope}
We use the boundary regularity of \(W\) to apply the global source diffeomorphism theorem and to justify the classical differentiation of the Monge--Amp\`ere equation. Both approximation schemes below give target potentials that are smooth on neighbourhoods of the closed approximating supports. In particular, every application of \Cref{prop:regularity} satisfies its hypotheses.
\end{remark}

We keep the notation \(T=\nabla\Phi\), and set
\[
A:=D^2\Phi,
\qquad
c_{\mathrm{MA}}:=\log Z_\nu-\log Z_\mu.
\]
In this case, the Monge--Amp\`ere equation takes the form
\begin{equation}\label{eq:MA-general}
\log\det A(x)
=
c_{\mathrm{MA}}-V(x)+W(T(x)).
\end{equation}

For the global maximum-principle and resolvent arguments, we first impose the a priori assumption
\begin{equation}\label{eq:apriori-Hessian-bound}
\sup_{x\in\R^d}\|D^2\Phi(x)\|_{\op}<\infty.
\end{equation}
For log-concave targets, we remove it by regularisation and stability in \Cref{sec:regularisation,sec:approximation}. In the semi-log-concave argument,
the qualitative starting estimate of \Cref{sec:semiconvex-starting} gives this bound.

\subsection{Coercive normalisation}

Observe that translation of the target leaves the Hessian and all widths the same.

\begin{lemma}[Coercive normalisation]\label{lem:coercive}
Under the smooth assumptions, fix any \(y_0\in\Omega\). The replacements
\[
\begin{aligned}
\Omega&\ \text{by}\ \Omega-y_0,
&\qquad
K&\ \text{by}\ K-y_0,\\
W(y)&\ \text{by}\ W(y+y_0),
&\qquad
\Phi(x)&\ \text{by}\ \Phi(x)-\langle y_0,x\rangle,
\end{aligned}
\]
do not change \(D^2\Phi\) or any width \(w_K(e)\). By choosing \(y_0\in\Omega\), we assume that \(0\in\Omega\) and that \(\Phi\) is coercive. After adding a constant, we also arrange that
\begin{equation}\label{eq:Phi-nonnegative}
\Phi\ge0\quad\text{on }\R^d.
\end{equation}
\end{lemma}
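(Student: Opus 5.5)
The plan is to check directly that the replacements preserve the structure of the smooth framework, and then to obtain coercivity from the fact that \(0\) lies in the interior of the range of the new gradient map.

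\textbf{Invariance.} Set \(\tilde\Phi(x):=\Phi(x)-\langle y_0,x\rangle\). Then \(\nabla\tilde\Phi=T-y_0\) and \(D^2\tilde\Phi=D^2\Phi\), since the subtracted term is affine. By \Cref{prop:regularity}, \(T\) is a diffeomorphism onto \(\Omega\), so \(\nabla\tilde\Phi\) is a diffeomorphism onto \(\Omega-y_0\). It pushes \(\mu\) forward to the translate \(\tilde\nu\) of \(\nu\) by \(-y_0\), whose density is \(Z_\nu^{-1}e^{-W(y+y_0)}\one_{\Omega-y_0}(y)\). The gradient of a convex function pushing \(\mu\) to \(\tilde\nu\) is the Brenier map, so \(\tilde\Phi\) is a Brenier potential for the translated problem. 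Since \(\tilde\Phi\) is smooth, the range condition for \(\partial\tilde\Phi\) is simply \(\nabla\tilde\Phi(\R^d)\subset K-y_0\). Widths are translation invariant:
\[
\sup_{y\in K-y_0}\langle y,e\rangle=\sup_{y\in K}\langle y,e\rangle-\langle y_0,e\rangle,
\]
and likewise for the infimum, so the shift cancels in \(w_{K-y_0}(e)=w_K(e)\). The translated potential \(y\mapsto W(y+y_0)\) is smooth on the neighbourhood \(\mathcal U-y_0\) of \(K-y_0\), and \(Z_\nu\) is unchanged, so all the smooth assumptions persist. The Monge--Amp\`ere equation \eqref{eq:MA-general} also holds for the new data with the same constant \(c_{\mathrm{MA}}\).

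\textbf{Coercivity.} After the replacement we have \(0\in\Omega-y_0=\nabla\tilde\Phi(\R^d)\), which is open. Choose \(r>0\) with \(B_r\subset\nabla\tilde\Phi(\R^d)\). For each \(p\in B_r\) pick \(x_p\) with \(\nabla\tilde\Phi(x_p)=p\). Convexity then gives
\[
\tilde\Phi(z)\ge\tilde\Phi(x_p)+\langle p,z-x_p\rangle .
\]
We need uniformity in \(p\), and this is the only mildly delicate point. Apply the inequality only for \(p\) on the sphere \(|p|=r/2\), a compact set whose closure lies in the open range. Its preimage under the homeomorphism \(\nabla\tilde\Phi\) is compact, so \(|x_p|\) and \(\tilde\Phi(x_p)\) are bounded there by continuity. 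Taking \(p=\frac r2\,z/|z|\) gives
\[
\tilde\Phi(z)\ge\tfrac r2|z|-C
\]
for a constant \(C\), so \(\tilde\Phi\) is coercive.

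\textbf{Normalisation.} A coercive continuous function attains its minimum on \(\R^d\). Subtracting this minimum yields \eqref{eq:Phi-nonnegative}, and adding a constant changes neither \(D^2\Phi\) nor the transport. The normalisation \(\Phi(0)=0\) is abandoned here, but this is harmless because only \(D^2\Phi\) and the widths enter the estimates.
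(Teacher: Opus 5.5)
Your proof is correct and follows essentially the same route as the paper, which bounds \(\Phi^*\) on a closed ball inside \(\Omega\) and applies Fenchel duality with \(y=rx/|x|\). Your supporting-hyperplane inequality at the preimage \(x_p\) is exactly that Fenchel bound, since \(\tilde\Phi(x_p)-\langle p,x_p\rangle=-\tilde\Phi^*(p)\); you merely obtain boundedness from compactness of the preimage of the sphere rather than from continuity of \(\Phi^*\), and your explicit check of the invariances, which the paper treats as evident, is a harmless addition.
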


\begin{proof}
It remains to prove coercivity. To this end, let us choose \(r>0\) such that \(\overline{B_r(0)}\subset\Omega\). Since \(T=\nabla\Phi\) is a diffeomorphism from \(\R^d\) onto \(\Omega\), the Legendre transform \(\Phi^*\) is finite and smooth on \(\Omega\). We obtain
\[
M_r:=\sup_{|y|\le r}\Phi^*(y)<\infty.
\]
For \(x\ne0\), Fenchel duality gives
\[
\Phi(x)
=\sup_{y\in\Omega}\{\langle x,y\rangle-\Phi^*(y)\}
\ge r|x|-M_r,
\]
because \(y=rx/|x|\) is admissible. It follows that \(\Phi\) is coercive and attains a finite minimum. Adding a constant then gives \eqref{eq:Phi-nonnegative}.
\end{proof}

\subsection{The drifted linearised Monge--Amp\`ere inequality}\label{subsec:linearised}

We fix \(e\in\Sph^{d-1}\), and set
\begin{equation}\label{eq:u-def}
u:=\Phi_{ee}=e^{\mathsf T}Ae>0.
\end{equation}
We define
\begin{equation}\label{eq:b-def}
\mathfrak b(x):=\nabla W(T(x))
\end{equation}
and, for \(f\in C^2(\mathbb R^d)\), introduce the drifted linearised Monge--Amp\`ere operator
\begin{equation}\label{eq:L-def}
\cL f
:=\tr(A^{-1}D^2f)-\langle\mathfrak b,\nabla f\rangle.
\end{equation}
If \(R\in\mathbb R^{d\times d}\) is symmetric positive definite, let
\begin{equation}\label{eq:H-def}
\cH(R):=\tr R-d-\log\det R
\end{equation}
be the log-determinant gap. We also write
\begin{equation}\label{eq:g-def}
g(q):=q-1-\log q,
\qquad q>0.
\end{equation}
Following Bregman \cite{Bregman1967}, let \(F:U\to\R\) be strictly convex
and differentiable, where \(U\) is an open convex subset of a Euclidean space. For \(y,z\in U\), we define the Bregman divergence by
\begin{equation}\label{eq:Bregman-def}
D_F(y,z)
:=F(y)-F(z)-\langle\nabla F(z),y-z\rangle.
\end{equation}
With respect to the Hilbert--Schmidt inner product on the space of symmetric
matrices, we have
\[
\cH(R)=D_{-\log\det}(R,\Id).
\]

The following differentiated Monge--Amp\`ere estimate is standard in the arguments based on maximum principle \cite{Caffarelli2000,Kolesnikov2010}. For more details, see also the calculation in \cite[Section 2]{Valdimarsson2007}.

\begin{lemma}[Directional logarithmic inequality]\label{lem:log-u}
Under the smooth assumptions,
\begin{equation}\label{eq:log-u-full}
\cL\log u
\ge
-\frac{V_{ee}}u
+\frac{D^2W(T)[Ae,Ae]}u.
\end{equation}
In particular, if \(W\) is convex and \(V_{ee}\le\Lambda\), then
\begin{equation}\label{eq:log-u-simple}
\cL\log u\ge-\frac\Lambda u.
\end{equation}
\end{lemma}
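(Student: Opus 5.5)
Differentiating the Monge--Amp\`ere equation \eqref{eq:MA-general} twice in the direction \(e\), and evaluating along \(T\), will give the inequality \eqref{eq:log-u-full}. Since \(T\) is a \(C^\infty\) diffeomorphism by \Cref{prop:regularity}, \(\Phi\) is smooth and \(A\succ0\) everywhere, so every calculation below is classical.

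\emph{First derivative.} Differentiating once in \(e\) gives
\[
\tr(A^{-1}A_e)=-V_e+\langle\nabla W(T),Ae\rangle .
\]
Here \(A_e=D^2\Phi_e\) and \(\partial_eT=Ae\). Since \(Ae=\nabla\Phi_e\), the last term equals \(\langle\mathfrak b,\nabla\Phi_e\rangle\).

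\emph{Second derivative.} Differentiating again in \(e\) and using \(\partial_e(A^{-1})=-A^{-1}A_eA^{-1}\) yields
\[
\tr(A^{-1}A_{ee})-\tr(A^{-1}A_eA^{-1}A_e)
=-V_{ee}+D^2W(T)[Ae,Ae]+\langle\nabla W(T),A_e e\rangle .
\]
Note that \(A_{ee}=D^2u\) and \(A_e e=\nabla\Phi_{ee}=\nabla u\). Rearranging therefore gives
\[
\cL u=\tr(A^{-1}A_eA^{-1}A_e)-V_{ee}+D^2W(T)[Ae,Ae].
\]

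\emph{Passing to \(\log u\).} We have
\[
\cL\log u=\frac{\cL u}{u}-\frac{\langle A^{-1}\nabla u,\nabla u\rangle}{u^2}.
\]
The claim \eqref{eq:log-u-full} is thus reduced to the Cauchy--Schwarz-type inequality
\[
u\,\tr(A^{-1}A_eA^{-1}A_e)\ge\langle A^{-1}\nabla u,\nabla u\rangle,
\]
where \(\nabla u=A_e e\) and \(u=\langle Ae,e\rangle\). This is the one step needing care.

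To prove it, set \(M:=A^{-1/2}A_eA^{-1/2}\), which is symmetric, and \(\xi:=A^{1/2}e\). Then
\[
\langle A^{-1}A_ee,A_ee\rangle=|M\xi|^2\le\|M\|_{\mathrm{HS}}^2|\xi|^2=\tr(M^2)\,u .
\]
Moreover \(\tr(M^2)=\tr(A^{-1}A_eA^{-1}A_e)\), which gives the inequality. The same symmetrisation shows \(\tr(A^{-1}A_eA^{-1}A_e)\ge0\), so no sign issue arises.

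Dividing by \(u>0\) then gives \eqref{eq:log-u-full}. For the particular case \eqref{eq:log-u-simple}: if \(W\) is convex on \(\mathcal U\supset K\ni T(x)\), then \(D^2W(T)[Ae,Ae]\ge0\). Combined with \(V_{ee}\le\Lambda\), this yields \(\cL\log u\ge-\Lambda/u\).

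I expect no real obstacle beyond the matrix Cauchy--Schwarz step and the bookkeeping of the drift term.
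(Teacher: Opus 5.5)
Your proposal is correct and matches the paper's proof step for step. It differentiates the Monge--Amp\`ere equation twice, uses \(A_{ee}=D^2u\) and \(A_ee=\nabla u\) to obtain the formula for \(\cL u\), and applies the bound \(|M\xi|^2\le\tr(M^2)|\xi|^2\) with \(M=A^{-1/2}A_eA^{-1/2}\) and \(\xi=A^{1/2}e\) before the chain rule for \(\log u\). One minor point: for the simplified inequality you need convexity of \(W\) only on \(\Omega\), where \(T\) takes its values, not on all of \(\mathcal U\).
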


\begin{proof}
We now differentiate \eqref{eq:MA-general} in the direction \(e\). With
\(A_e:=\partial_eA\), we obtain
\begin{equation}\label{eq:first-diff-MA-general}
\tr(A^{-1}A_e)
=-V_e+\langle \mathfrak b,Ae\rangle.
\end{equation}
We differentiate once again, write \(A_{ee}:=\partial_eA_e=\partial_e^2A\), and obtain
\begin{align}\label{eq:second-diff-MA-general}
&\tr(A^{-1}A_{ee})
-\tr(A^{-1}A_eA^{-1}A_e)\notag\\
&\qquad=
-V_{ee}
+D^2W(T)[Ae,Ae]
+\langle \mathfrak b,A_e e\rangle.
\end{align}
Note that the full symmetry of \(D^3\Phi\) implies that
\begin{equation}\label{eq:symmetry-identities}
A_{ee}=D^2u,
\qquad
A_e e=\nabla u.
\end{equation}
We obtain
\begin{equation}\label{eq:L-u-general}
\cL u
=\tr(A^{-1}A_eA^{-1}A_e)
-V_{ee}+D^2W(T)[Ae,Ae].
\end{equation}

Let us now set
\[
M:=A^{-1/2}A_eA^{-1/2},
\qquad
v:=A^{1/2}e.
\]
The matrix \(M\) is symmetric, \(|v|^2=u\), and
\[
A^{-1/2}\nabla u
=A^{-1/2}A_e e=Mv.
\]
We infer that
\begin{equation}\label{eq:carre-u-general}
\langle A^{-1}\nabla u,\nabla u\rangle
=|Mv|^2
\le \tr(M^2)|v|^2
=u\,\tr(A^{-1}A_eA^{-1}A_e).
\end{equation}
We now apply the chain rule to \eqref{eq:L-def} and obtain
\begin{align*}
\cL\log u
&=\frac{\cL u}{u}
-\frac{\langle A^{-1}\nabla u,\nabla u\rangle}{u^2}\\
&\ge-\frac{V_{ee}}u
+\frac{D^2W(T)[Ae,Ae]}u.
\end{align*}
This proves \eqref{eq:log-u-full}, and, if \(W\) is convex and \(V_{ee}\le\Lambda\), we obtain \eqref{eq:log-u-simple}.
\end{proof}

\begin{remark}\label{rem:target-curvature-positive}
The target Hessian term in \eqref{eq:log-u-full} has the favourable sign, and we can handle the remaining target increment in the translated corrector by convexity. In other words, we apply the nonnegativity of the Bregman divergence.
\end{remark}

\section{Estimates}\label{sec:apriori}

Under the smooth hypotheses, let us now assume that \(W\) is convex on \(K\). We first prove the quantitative estimate under the preliminary bound \eqref{eq:apriori-Hessian-bound}. We begin with an upper bound for the penalty under the drifted operator.

\begin{lemma}[Penalty bound]\label{lem:penalty-bound}
Let us assume that \(0\in K\). It follows that
\begin{equation}\label{eq:L-Phi-bound}
\cL\Phi
\le d+W(0)-\inf_KW.
\end{equation}
\end{lemma}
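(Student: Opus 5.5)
We compute \(\cL\Phi\) directly. Since \(D^2\Phi=A\), the second-order part is \(\tr(A^{-1}A)=d\). The drift part is \(-\langle\mathfrak b,\nabla\Phi\rangle=-\langle\nabla W(T),T\rangle\). Hence
\[
\cL\Phi=d-\langle\nabla W(T(x)),T(x)\rangle .
\]
It therefore suffices to show that, for every \(y\in\Omega\),
\[
-\langle\nabla W(y),y\rangle\le W(0)-\inf_K W .
\]
By \Cref{prop:regularity}, \(T(x)\in\Omega\) for every \(x\), so this inequality applied at \(y=T(x)\) gives \eqref{eq:L-Phi-bound} pointwise.

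For the scalar inequality we use convexity of \(W\) on \(K\), which is assumed throughout this section. \(W\) is smooth on the neighbourhood \(\mathcal U\) of \(K\). Because \(K\) is convex and both \(0\) and \(y\) lie in \(K\), the segment \([0,y]\) is contained in \(K\). The supporting-hyperplane inequality at \(y\), evaluated at the point \(0\), reads
\[
W(0)\ge W(y)+\langle\nabla W(y),0-y\rangle .
\]
Rearranging gives \(-\langle\nabla W(y),y\rangle\le W(0)-W(y)\). Since \(y\in K\), we have \(W(y)\ge\inf_K W\), and the claimed bound follows. The quantity \(W(0)-\inf_KW\) is finite because \(W\) is continuous on the compact set \(K\).

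The only point needing care is the validity of the gradient inequality when \(0\in\partial\Omega\), since the hypothesis is \(0\in K\) rather than \(0\in\Omega\). Convexity of \(W\) on \(K\) together with \(C^1\) smoothness on \(\mathcal U\supset K\) gives the first-order convexity inequality between any two points of \(K\). One way to see this is to restrict to the segment and use that \(t\mapsto W(ty)\) is convex and \(C^1\) on \([0,1]\). So the boundary case causes no difficulty, and I do not expect any step to be a genuine obstacle: the lemma is a one-line computation plus the Bregman nonnegativity \(D_W(0,y)\ge0\). This is the same mechanism flagged in \Cref{rem:target-curvature-positive}.
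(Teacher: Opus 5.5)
Your proposal is correct and follows the paper's own proof: you compute $\cL\Phi=d-\langle\nabla W(T),T\rangle$, apply the first-order convexity inequality between $T(x)$ and $0$, and then use $W(T)\ge\inf_K W$. The only differences are minor. You justify finiteness of $\inf_K W$ by continuity of $W$ on the compact set $K$, whereas the paper uses an affine minorant. You also explicitly treat the boundary case $0\in\partial\Omega$, which the paper leaves implicit.
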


\begin{proof}
Given that \(D^2\Phi=A\) and \(\nabla\Phi=T\), we have
\[
\cL\Phi=d-\langle\nabla W(T),T\rangle.
\]
By convexity of \(W\),
\[
W(0)\ge W(T)+\langle\nabla W(T),-T\rangle.
\]
We deduce that
\[
\langle\nabla W(T),T\rangle\ge W(T)-W(0).
\]
This proves the estimate. Notice that \(\inf_KW\) is finite, because a convex function which is finite at one interior point has a global affine lower support on the bounded set \(K\).
\end{proof}

\begin{remark}[Directional source curvature]\label{rem:directional-curvature}
In the a priori argument, one could replace the global constant \(\Lambda\) in a fixed direction \(e\) by any positive number \(\Lambda_e\) such that \(V_{ee}\le\Lambda_e\) on \(\R^d\). A global upper Hessian bound is still useful for the basic Caffarelli
estimate in the regularisation step. On the other hand, the final directional estimate keeps the sharper value \(\Lambda_e\).
\end{remark}

\subsection{Translations and a Schur complement}\label{subsec:directional-translation}

In the following construct, we present a refinement in several dimensions. As regards the methods, we primarily use the block Hessian, Caffarelli's maximum-principle method, and the finite-difference arguments of Valdimarsson and Kolesnikov
\cite{Caffarelli2000,Valdimarsson2007,Kolesnikov2010}. Valdimarsson's finite-difference functional depends jointly on the source point and the direction \cite[Section 2]{Valdimarsson2007} (compare the method in \cite{Kolesnikov2010}). More precisely, the new step is to use the full Hessian in the variables \((x,s)\), whose Schur complement couples the Bregman divergence for the negative log determinant directly with the active directional curvature.

Let us fix \(e\in\Sph^{d-1}\), and, for \(s\in\mathbb R\), define the translated Bregman corrector by
\begin{equation}\label{eq:directional-translated-corrector}
\mathcal F_s(x)
:=\Phi(x+se)-\Phi(x)-s\Phi_e(x),
\end{equation}
and set
\[
C_s(x):=D^2\Phi(x+se),
\qquad
R_s(x):=A(x)^{-1/2}C_s(x)A(x)^{-1/2}.
\]
We omit the \(x\)-argument of \(C_s(x)\) and \(R_s(x)\) whenever this does
not create ambiguity. We keep the notation
\[
u=e^{\mathsf T}Ae,
\qquad
g(q)=q-1-\log q.
\]

\begin{lemma}[Directional corrector with a translation]
\label{lem:directional-translated-corrector}
Assume the smooth hypotheses, the convexity of \(W\), and the a priori bound
\eqref{eq:apriori-Hessian-bound}. If \(V_{ee}\le\Lambda\), then
\begin{align}
0&\le \mathcal F_s\le w_K(e)|s|,
\label{eq:directional-translated-budget}\\
\cL\mathcal F_s
&\ge \cH(R_s)-\frac{\Lambda s^2}{2}.
\label{eq:directional-translated-L}
\end{align}
Moreover,
\begin{equation}\label{eq:directional-translated-derivatives}
\partial_s\mathcal F_s
=\Phi_e(x+se)-\Phi_e(x),
\qquad
\partial_{ss}\mathcal F_s=e^{\mathsf T}C_se,
\qquad
\nabla_x\partial_s\mathcal F_s=(C_s-A)e.
\end{equation}
\end{lemma}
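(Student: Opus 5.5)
The plan is to prove the three parts in the order: derivative identities, budget bounds, then the drifted inequality, which is the only substantive step. Everything is smooth by \Cref{prop:regularity}, so we may differentiate classically.

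\emph{Derivative identities.} Differentiating \eqref{eq:directional-translated-corrector} in \(s\) gives
\[
\partial_s\mathcal F_s=\Phi_e(x+se)-\Phi_e(x),
\qquad
\partial_{ss}\mathcal F_s=\Phi_{ee}(x+se)=e^{\mathsf T}C_se.
\]
Differentiating the first identity in \(x\) gives
\[
\nabla_x\partial_s\mathcal F_s=D^2\Phi(x+se)e-D^2\Phi(x)e=(C_s-A)e.
\]

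\emph{Budget bounds \eqref{eq:directional-translated-budget}.} The lower bound \(\mathcal F_s\ge0\) is convexity of \(\Phi\). For the upper bound, write
\[
\mathcal F_s=\int_0^s\bigl(\Phi_e(x+te)-\Phi_e(x)\bigr)\dd t.
\]
Since \(\nabla\Phi(\R^d)\subset\Omega\subset K\), both \(\Phi_e(x+te)\) and \(\Phi_e(x)\) lie in \(\{\langle y,e\rangle:y\in K\}\). This interval has length \(w_K(e)\), so the integrand is bounded by \(w_K(e)\) and \(\mathcal F_s\le w_K(e)|s|\).

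\emph{The drifted inequality \eqref{eq:directional-translated-L}.} Compute the \(x\)-derivatives:
\[
\nabla\mathcal F_s=T(x+se)-T(x)-sAe,
\qquad
D^2\mathcal F_s=C_s-A-sA_e.
\]
Therefore
\[
\tr(A^{-1}D^2\mathcal F_s)=\tr R_s-d-s\tr(A^{-1}A_e).
\]
Substitute \eqref{eq:first-diff-MA-general} for \(\tr(A^{-1}A_e)\); this yields
\[
\cL\mathcal F_s=\tr R_s-d+sV_e(x)-s\langle\mathfrak b,Ae\rangle-\langle\mathfrak b,T(x+se)-T(x)-sAe\rangle.
\]
The terms \(\pm s\langle\mathfrak b,Ae\rangle\) cancel, leaving
\[
\cL\mathcal F_s=\tr R_s-d+sV_e(x)-\langle\nabla W(T(x)),T(x+se)-T(x)\rangle.
\]

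Next, write \(\tr R_s-d=\cH(R_s)+\log\det R_s\). Use \eqref{eq:MA-general} at \(x\) and at \(x+se\):
\[
\log\det R_s=\log\det C_s-\log\det A=-V(x+se)+V(x)+W(T(x+se))-W(T(x)).
\]
Collecting terms,
\[
\cL\mathcal F_s=\cH(R_s)-\bigl[V(x+se)-V(x)-sV_e(x)\bigr]+\bigl[W(T(x+se))-W(T(x))-\langle\nabla W(T(x)),T(x+se)-T(x)\rangle\bigr].
\]
The last bracket is the Bregman divergence \(D_W(T(x+se),T(x))\). It is nonnegative because \(W\) is convex on \(K\) and both points lie in \(\Omega\) (\Cref{rem:target-curvature-positive}). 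The middle bracket is at most \(\Lambda s^2/2\) by Taylor's theorem along the segment, using \(V_{ee}\le\Lambda\). This gives \eqref{eq:directional-translated-L}.

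The main obstacle is purely bookkeeping: the first-derivative Monge--Amp\`ere identity must cancel the \(sA_e\) term against the drift exactly, so that only the two Bregman increments survive. The a priori bound \eqref{eq:apriori-Hessian-bound} is not needed for these pointwise identities.
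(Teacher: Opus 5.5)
Your proposal is correct and follows essentially the same route as the paper: convexity and the range constraint give the budget, the once-differentiated Monge--Amp\`ere equation gives the cancellation, and $\tr R_s-d=\cH(R_s)+\log\det R_s$ together with the Monge--Amp\`ere equation at $x$ and at $x+se$ reduces everything to the nonnegative Bregman increment of $W$ and the second-order Taylor remainder of $V$. The only cosmetic difference is that you compute $\cL\mathcal F_s$ in one pass and record the exact identity $\cL\mathcal F_s=\cH(R_s)-\bigl[V(x+se)-V(x)-sV_e(x)\bigr]+D_W\bigl(T(x+se),T(x)\bigr)$, whereas the paper treats $\Phi(x+se)-\Phi(x)$ and $s\Phi_e$ separately and uses $\cL(s\Phi_e)=-sV_e$.
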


\begin{proof}
Note that the first inequality in \eqref{eq:directional-translated-budget} follows from the supporting-hyperplane inequality for \(\Phi\). Since \(\Phi_e=\langle T,e\rangle\) has oscillation at most \(w_K(e)\), we have
\[
\mathcal F_s(x)
=\int_0^s\bigl[\Phi_e(x+te)-\Phi_e(x)\bigr]\dd t
\le w_K(e)|s|.
\]
We put \(\mathfrak b:=\nabla W(T(x))\), and obtain
\[
\cL\bigl(\Phi(x+se)-\Phi(x)\bigr)
=
\tr R_s-d-\langle \mathfrak b,T(x+se)-T(x)\rangle
\]
and
\[
\log\det R_s
=
-\bigl(V(x+se)-V(x)\bigr)
+W(T(x+se))-W(T(x)).
\]
We infer that
\begin{align*}
\cL\bigl(\Phi(x+se)-\Phi(x)\bigr)
&=
\cH(R_s)
+\log\det R_s
-\langle \mathfrak b,T(x+se)-T(x)\rangle\\
&\ge
\cH(R_s)
-\bigl(V(x+se)-V(x)\bigr),
\end{align*}
where we used the convexity of \(W\) in the last inequality. From the (once-differentiated) Monge--Amp\`ere equation, we also obtain \(\cL(s\Phi_e)=-sV_e\), and hence
\[
\cL\mathcal F_s
\ge
\cH(R_s)-\bigl(V(x+se)-V(x)-sV_e(x)\bigr).
\]
It follows from source semiconcavity in the direction \(e\) that
\[
V(x+se)-V(x)-sV_e(x)\le\frac{\Lambda s^2}{2}.
\]
This proves \eqref{eq:directional-translated-L}. Finally, we differentiate
directly and obtain the identities in \eqref{eq:directional-translated-derivatives}.
\end{proof}

Let \(c>0\), let \(A,C\in\R^{d\times d}\) be symmetric positive-definite matrices, and let \(e\in\R^d\) satisfy \(e^{\mathsf T}Ce\le c\). We define
\begin{equation}\label{eq:directional-Schur-def}
\mathscr S_e(A,C;c)
:=
\begin{cases}
\displaystyle
\frac{\langle A^{-1}(C-A)e,(C-A)e\rangle}
{c-e^{\mathsf T}Ce},
&e^{\mathsf T}Ce<c,\\[3mm]
0,
&e^{\mathsf T}Ce=c\ \text{and }(C-A)e=0,\\
+\infty,
&e^{\mathsf T}Ce=c\ \text{and }(C-A)e\ne0.
\end{cases}
\end{equation}

\begin{lemma}[Scalar minimisation for the log determinant and a Schur complement]
\label{lem:scalar-logdet-Schur}
For every $y>0$,
\[
\inf_{0<x<1}
\left[
g(x/y)+\frac{(x-y)^2}{y(1-x)}
\right]
=
\begin{cases}
0,&0<y\le1,\\[1mm]
y+2\log y-y^{-1},&y>1.
\end{cases}
\]
\end{lemma}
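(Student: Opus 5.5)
The plan is to compute the infimum directly by one-variable calculus. Write
\[
h_y(x):=g(x/y)+\frac{(x-y)^2}{y(1-x)},\qquad 0<x<1 .
\]
Both summands are nonnegative, since \(g\ge0\) and \(1-x>0\), so \(h_y\ge0\) for every \(y>0\). The case \(0<y\le1\) then reduces to exhibiting points where \(h_y\) is \(0\) or tends to \(0\).

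\emph{Case \(y<1\).} Take \(x=y\); this gives \(h_y(y)=g(1)+0=0\).

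\emph{Case \(y=1\).} Here \(h_1(x)=g(x)+(1-x)\), which tends to \(g(1)+0=0\) as \(x\uparrow1\). Hence the infimum is \(0\) (not attained) for all \(0<y\le1\).

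\emph{Case \(y>1\).} I will differentiate and use the factorisation
\[
(x-y)(2-x-y)=(1-y)^2-(1-x)^2 .
\]
A direct computation then gives
\[
h_y'(x)=\frac1y-\frac1x+\frac{(x-y)(2-x-y)}{y(1-x)^2}
=-\frac1x+\frac{(y-1)^2}{y(1-x)^2}.
\]
Critical points therefore solve \(y(1-x)^2=x(y-1)^2\). With \(t:=1-x\in(0,1)\), this becomes
\[
y t^2+(y-1)^2t-(y-1)^2=0 .
\]
This quadratic has negative constant term and positive leading coefficient, so it has exactly one positive root. One checks that \(x_*=1/y\in(0,1)\) satisfies \(y(1-x_*)^2=(y-1)^2/y=x_*(y-1)^2\), so it is that root.

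Since \(h_y'(x)\to-\infty\) as \(x\downarrow0\) and \(h_y'(x)\to+\infty\) as \(x\uparrow1\), with a single sign change, \(x_*\) is the global minimiser on \((0,1)\).

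It remains to evaluate \(h_y(1/y)\). The first term is
\[
g(1/y^2)=y^{-2}-1+2\log y .
\]
The second term is
\[
\frac{(1/y-y)^2}{y-1}=\frac{(y-1)(y+1)^2}{y^2}.
\]
Expanding gives \(1+(y-1)(y+1)^2=y^3+y^2-y\), and therefore
\[
h_y(1/y)=y+1-y^{-1}-1+2\log y=y+2\log y-y^{-1}.
\]

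The main step I expect to require care is recognising the factorisation that collapses \(h_y'\) to the two-term form above, together with the uniqueness of the critical point. Once \(x_*=1/y\) is guessed, the rest is routine algebra.
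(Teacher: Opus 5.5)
Your proof is correct and follows essentially the same route as the paper, namely direct differentiation. Your form $h_y'(x)=-\frac1x+\frac{(y-1)^2}{y(1-x)^2}$ equals the paper's $\frac{(x-y)(1-xy)}{xy(1-x)^2}$; that factorisation displays the critical points $x=y$ and $x=1/y$ and the sign of $h_y'$ at once, so the root count for the quadratic is not needed. One small wording slip: for $y<1$ the infimum is attained at $x=y$, and it fails to be attained only when $y=1$.
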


\begin{proof}
We fix \(y>0\), differentiate directly, and obtain
\[
\frac{\partial}{\partial x}
\left[
g(x/y)+\frac{(x-y)^2}{y(1-x)}
\right]
=
-\frac{(x-y)(xy-1)}{xy(1-x)^2}.
\]
If \(0<y<1\), the unique critical point in \((0,1)\) is \(x=y\), and the expression vanishes there. When \(y=1\), the expression is \(-\log x\) and its infimum is zero as \(x\uparrow1\). If \(y>1\), the unique critical point is \(x=y^{-1}\), where the value equals
\[
y+2\log y-y^{-1}.
\]
The behaviour at the endpoints completes the proof.
\end{proof}

For \(y>0\), we set
\[
\Xi(y):=y+2\log y-y^{-1}.
\]
If \(a,b>0\), let \(y_{a,b}>1\) be the unique solution of
\begin{equation}\label{eq:yab-def}
\Xi(y_{a,b})
=
\frac{1}{ab\,y_{a,b}}+\frac{1}{2a^2},
\end{equation}
and define
\begin{equation}\label{eq:Ctr-def}
C_{\mathrm{tr}}
:=
\inf_{a,b>0}
a\,y_{a,b}
\exp\!\left(\frac{b}{2a}\right).
\end{equation}
Notice that the left-hand side of \eqref{eq:yab-def} is strictly increasing, whereas the right-hand side is strictly decreasing, which means that \(y_{a,b}\) is well defined. A
numerical minimisation of \eqref{eq:Ctr-def} gives
\[
C_{\mathrm{tr}}\approx1.8270604323.
\]
In \Cref{app:quadratic-certificate}, we provide an exact rational certificate to prove the inequality \(C_{\mathrm{tr}}<1.828\), and we use this quadratic constant below simply as the initial value for the nonquadratic bootstrap.

\begin{lemma}[Directional coercivity]
\label{lem:directional-logdet-Schur}
Let \(c>0\), let \(A,C\in\R^{d\times d}\) be symmetric positive-definite matrices, and let \(e\in\Sph^{d-1}\). Assume that
\[
e^{\mathsf T}Ce\le c.
\]
Assume also that, whenever \(e^{\mathsf T}Ce=c\), we have \((C-A)e=0\). We then obtain
\begin{equation}\label{eq:directional-logdet-Schur}
\cH(A^{-1/2}CA^{-1/2})
+\mathscr S_e(A,C;c)
\ge
\Xi\!\left(\frac{e^{\mathsf T}Ae}{c}\right)
\end{equation}
whenever \(e^{\mathsf T}Ae>c\). If \(e^{\mathsf T}Ae\le c\), the left-hand
side is nonnegative.
\end{lemma}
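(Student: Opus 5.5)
The plan is to reduce \eqref{eq:directional-logdet-Schur} to the scalar minimisation in \Cref{lem:scalar-logdet-Schur}. Both terms on the left-hand side will be bounded below by functions of two scalars only: the normalised active curvature \(y:=e^{\mathsf T}Ae/c\) and the normalised translated curvature \(x:=e^{\mathsf T}Ce/c\in(0,1]\).

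\emph{Notation.} Set \(R:=A^{-1/2}CA^{-1/2}\), \(u:=e^{\mathsf T}Ae>0\), \(v:=A^{1/2}e\) and \(\hat v:=v/|v|\), so that \(|v|^2=u\). Define the Rayleigh quotient \(q:=\langle R\hat v,\hat v\rangle=e^{\mathsf T}Ce/u\); then \(q=x/y\).

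\emph{Log-determinant term.} Write \(\cH(R)=\sum_i g(\lambda_i)\), where \(\lambda_i>0\) are the eigenvalues of \(R\) and \(\hat v=\sum_i w_i\phi_i\) in an orthonormal eigenbasis. The weights \(w_i^2\) sum to one. Since \(g\ge0\), since \(w_i^2\le1\), and by Jensen's inequality for the convex function \(g\),
\[
\cH(R)\ge\sum_i w_i^2g(\lambda_i)\ge g\Bigl(\sum_i w_i^2\lambda_i\Bigr)=g(q)=g(x/y).
\]

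\emph{Schur term, strict case \(x<1\).} Use \((C-A)e=A^{1/2}(R-\Id)v\) to get
\[
\langle A^{-1}(C-A)e,(C-A)e\rangle=|(R-\Id)v|^2=u\,|(R-\Id)\hat v|^2 .
\]
By Cauchy--Schwarz, \(|(R-\Id)\hat v|^2\ge\langle(R-\Id)\hat v,\hat v\rangle^2=(q-1)^2\). Hence
\[
\mathscr S_e(A,C;c)\ge\frac{u(q-1)^2}{c-uq}=\frac{y\,(x/y-1)^2}{1-x}=\frac{(x-y)^2}{y(1-x)}.
\]
Adding the two bounds, the left-hand side of \eqref{eq:directional-logdet-Schur} is at least \(g(x/y)+(x-y)^2/(y(1-x))\) for some \(x\in(0,1)\). \Cref{lem:scalar-logdet-Schur} then gives the lower bound \(\Xi(y)\) when \(y>1\), which is \eqref{eq:directional-logdet-Schur}, and the lower bound \(0\) when \(y\le1\).

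\emph{Equality case \(x=1\).} By hypothesis \((C-A)e=0\), so \(\mathscr S_e=0\) by definition. Moreover \(Ce=Ae\) forces \(u=e^{\mathsf T}Ce=c\), that is \(y=1\). So we are never in the regime \(e^{\mathsf T}Ae>c\), and the left-hand side equals \(\cH(R)\ge0\), as claimed.

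The only non-mechanical step is the bound \(\cH(R)\ge g(\langle R\hat v,\hat v\rangle)\), which needs the sub-probability weighting argument above. The rest is the algebraic identification \(q=x/y\), which makes the denominator \(c-e^{\mathsf T}Ce\) become \(c(1-x)\) and produces exactly the expression minimised in \Cref{lem:scalar-logdet-Schur}.
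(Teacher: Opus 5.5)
Your proof is correct and follows the paper's argument essentially step by step. Spectral Jensen, with the weights of \(A^{1/2}e\) in an eigenbasis of \(R\), gives \(\cH(R)\ge g(x/y)\). A Cauchy--Schwarz inequality gives \(\mathscr S_e\ge(x-y)^2/(y(1-x))\); you phrase it through \((C-A)e=A^{1/2}(R-\Id)A^{1/2}e\), which is equivalent to the paper's \(A^{-1}\)-weighted version. \Cref{lem:scalar-logdet-Schur} then closes the case \(x<1\). Your treatment of \(x=1\), where \(Ce=Ae\) forces \(y=1\), is exactly the compatibility argument that the paper leaves implicit.
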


\begin{proof}
We put
\[
x:=\frac{e^{\mathsf T}Ce}{c}\in(0,1],
\qquad
y:=\frac{e^{\mathsf T}Ae}{c}>0,
\]
and set
\[
R:=A^{-1/2}CA^{-1/2}.
\]
We choose an orthonormal eigenbasis \((z_i)_{i=1}^d\) such that
\(Rz_i=\lambda_i z_i\), \(1\le i\le d\), and define
\[
\theta_i
:=
\frac{\langle A^{1/2}e,z_i\rangle^2}
{e^{\mathsf T}Ae}.
\]
We have \(\theta_i\ge0\), \(\sum_i\theta_i=1\), and
\[
\frac{x}{y}
=
\frac{e^{\mathsf T}Ce}{e^{\mathsf T}Ae}
=
\sum_i\theta_i\lambda_i.
\]
Observe that the function \(g\) is nonnegative and convex. Therefore,
\[
\cH(R)
=
\sum_i g(\lambda_i)
\ge
\sum_i\theta_i g(\lambda_i)
\ge
g\!\left(\frac{x}{y}\right).
\]
Let us now assume that \(x<1\). It follows that
\[
c^2(x-y)^2
=
\langle(C-A)e,e\rangle^2
\le
\langle A^{-1}(C-A)e,(C-A)e\rangle\,
e^{\mathsf T}Ae.
\]
Since \(e^{\mathsf T}Ae=cy\) and \(c-e^{\mathsf T}Ce=c(1-x)\), we obtain
\[
\mathscr S_e(A,C;c)
\ge
\frac{(x-y)^2}{y(1-x)}.
\]
The conclusion now follows from \Cref{lem:scalar-logdet-Schur}, and the case \(x=1\) follows directly from the definition of \(\mathscr S_e\) and the compatibility assumption.
\end{proof}

\begin{proposition}[Quadratic estimate]
\label{prop:smooth-estimate-translated}
As before, we assume the smooth hypotheses, the convexity of \(W\), and the a priori bound \eqref{eq:apriori-Hessian-bound}. For every \(e\in\Sph^{d-1}\), it follows that
\begin{equation}\label{eq:translated-directional}
\sup_{x\in\mathbb R^d}\Phi_{ee}(x)
\le C_{\mathrm{tr}}\sqrt\Lambda\,w_K(e).
\end{equation}
\end{proposition}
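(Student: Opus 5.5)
The plan is a maximum-principle argument for a penalised functional of the pair \((x,s)\in\R^d\times\R\), built from the translated corrector of \Cref{lem:directional-translated-corrector}. By scaling we may take \(\Lambda=1\) and \(w_K(e)=1\); the parameters \(a,b>0\) of \eqref{eq:yab-def} will appear as the curvature scale of the translation penalty and the weight of a logarithmic term. By \Cref{lem:coercive} we may assume \(0\in\Omega\), \(\Phi\ge0\) and \(\Phi\) coercive. By \eqref{eq:apriori-Hessian-bound}, \(M:=\sup\Phi_{ee}<\infty\). We argue by contradiction, assuming \(M>a\,y_{a,b}e^{b/(2a)}\).

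The test function I would use is
\[
\Theta(x,s):=\mathcal F_s(x)-\frac{c}{2}s^2+\beta\log u(x)-\varepsilon\Phi(x),
\]
with \(c\) comparable to \(a\), \(\beta\) tied to \(b\), and \(\varepsilon\downarrow0\).

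Existence of a maximiser comes from three facts. First, \(0\le\mathcal F_s\le|s|\) forces \(|s|\le 2/c\) on the region where \(\Theta\) is large. Second, \(\log u\le\log M\). Third, \(-\varepsilon\Phi\) is coercive.

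At a maximiser \((x_*,s_*)\) we read off the following.
\begin{itemize}
\item Stationarity in \(s\) gives \(\Phi_e(x_*+s_*e)-\Phi_e(x_*)=cs_*\). This is the active directional constraint.
\item The \((s,s)\) entry gives \(e^{\mathsf T}Ce\le c\), and equality forces \((C-A)e=0\), as required in \Cref{lem:directional-logdet-Schur}.
\item The full \((d+1)\times(d+1)\) Hessian is negative semidefinite. Taking its Schur complement with respect to the \(s\)-block gives
\[
D_x^2\bigl(\mathcal F_s+\beta\log u-\varepsilon\Phi\bigr)\preceq-\frac{(C-A)e\otimes(C-A)e}{c-e^{\mathsf T}Ce}.
\]
Tracing against \(A^{-1}\) yields \(-\mathscr S_e(A,C;c)\) on the right.
\item Stationarity in \(x\) kills the drift part of \(\cL\).
\end{itemize}

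Combining these gives, at \((x_*,s_*)\),
\[
\cL\mathcal F_s+\beta\,\cL\log u-\varepsilon\,\cL\Phi\le-\mathscr S_e(A,C;c).
\]
Into this I insert three estimates:
\begin{itemize}
\item \(\cL\mathcal F_s\ge\cH(R_s)-s^2/2\) from \eqref{eq:directional-translated-L};
\item \(\cL\log u\ge-1/u\) from \eqref{eq:log-u-simple};
\item \(\cL\Phi\le d+W(0)-\inf_KW\) from \Cref{lem:penalty-bound}, whose contribution vanishes as \(\varepsilon\to0\).
\end{itemize}
Then \Cref{lem:directional-logdet-Schur} gives
\[
\Xi\!\left(\frac{u(x_*)}{c}\right)\le\frac{s_*^2}{2}+\frac{\beta}{u(x_*)}+o(1).
\]
Using \(s_*^2\lesssim1/c^2\), this is exactly the shape of \eqref{eq:yab-def}, with \(1/(2a^2)\) and \(1/(ab\,y)\) on the right. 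Hence \(u(x_*)\le c\,y_{a,b}\).

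The step I expect to be the main obstacle is transferring this to \(M=\sup u\). The plan is to compare \(\Theta(x_*,s_*)\) with \(\Theta\) at a point \(x_0\) where \(u(x_0)\approx M\), taking \(s\) small there. For small \(s\) we have \(\mathcal F_s\approx u\,s^2/2\), so
\[
\Theta(x_0,s)\approx\frac{(M-c)s^2}{2}+\beta\log M.
\]
On the other hand, at the maximiser the budget gives
\[
\Theta(x_*,s_*)\le|s_*|-\frac{c}{2}s_*^2+\beta\log u(x_*)\le\frac{1}{2c}+\beta\log\bigl(c\,y_{a,b}\bigr).
\]
Comparing the two values, the gap \(\tfrac{1}{2c}\) is paid for by the logarithmic term, which produces the factor \(\exp\bigl(1/(2c\beta)\bigr)=e^{b/(2a)}\) with \(c\beta\) proportional to \(a/b\). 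This is where the constants must be balanced carefully, possibly by optimising \(s\) at \(x_0\) rather than sending it to zero.

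The result is \(M\le a\,y_{a,b}e^{b/(2a)}\), which contradicts the assumption. Letting \(\varepsilon\to0\) and taking the infimum over \(a,b\) gives \eqref{eq:translated-directional}.
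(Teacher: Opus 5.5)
Your proposal is correct and is essentially the paper's argument: after normalising \(\Lambda=w_K(e)=1\), your \(\Theta\) is the paper's \(\mathcal J_\varepsilon\) divided by its weight, with your \(c=a\) and your logarithmic weight \(\beta=1/b\). The \(s\)-stationarity, the Schur complement of the full \((x,s)\)-Hessian, and the three operator estimates are then used exactly as in the paper. The step you flag closes without any optimisation in \(s\). Compare with \((x_0,0)\), where \(\mathcal F_0=0\), and use \(\Phi\ge0\): this gives \(\beta\log u(x_0)-\varepsilon\Phi(x_0)\le \frac{1}{2c}+\beta\log u(x_*)\), hence the factor \(e^{1/(2c\beta)}=e^{b/(2a)}\). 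Separately, stationarity together with the width bound gives exactly \(|s_*|\le 1/c\), not merely \(s_*^2\lesssim 1/c^2\), so \(s_*^2/2\le 1/(2a^2)\) as required.
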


\begin{proof}
Let \(w=w_K(e)\), and assume that \(w>0\). We translate and normalise the target as in \Cref{lem:coercive}, fix \(a,b>0\), and set
\[
c:=a\sqrt\Lambda\,w,
\qquad
\beta:=b\frac{\sqrt\Lambda}{w}.
\]
We also define
\[
C_W:=d+W(0)-\inf_KW<\infty.
\]
For \(\varepsilon>0\), we maximise jointly over \((x,s)\in\mathbb R^d\times\mathbb R\) the function
\begin{equation}\label{eq:directional-joint-H}
\mathcal J_\varepsilon(x,s)
:=\log u(x)+\beta\mathcal F_s(x)
-\frac{\beta c}{2}s^2-\varepsilon\Phi(x).
\end{equation}
The a priori Hessian bound, the coercivity of \(\Phi\), and \eqref{eq:directional-translated-budget} show that the maximum is attained.

Let \((x_\varepsilon,s_\varepsilon)\) be a maximising pair. Stationarity with respect to \(s\) gives
\begin{equation}\label{eq:directional-s-stationarity}
\Phi_e(x_\varepsilon+s_\varepsilon e)
-\Phi_e(x_\varepsilon)=cs_\varepsilon.
\end{equation}
For the maximum-point calculation, set
\[
A_\varepsilon:=A(x_\varepsilon),
\qquad
C_\varepsilon:=
D^2\Phi(x_\varepsilon+s_\varepsilon e),
\]
\[
R_\varepsilon:=
A_\varepsilon^{-1/2}
C_\varepsilon
A_\varepsilon^{-1/2},
\qquad
u_\varepsilon:=
e^{\mathsf T}A_\varepsilon e,
\]
and
\[
d_\varepsilon:=
c-e^{\mathsf T}C_\varepsilon e.
\]
This implies that \(|s_\varepsilon|\le w/c\). The \((s,s)\)-block of the Hessian gives \(d_\varepsilon\ge0\). We further set
\[
\delta_\varepsilon:=\frac{\varepsilon C_W}{\beta},
\qquad
y_\varepsilon:=\frac{u_\varepsilon}{c}.
\]
To write the full \((x,s)\)-Hessian inequality at \((x_\varepsilon,s_\varepsilon)\), we define
\[
H_\varepsilon
:=
D^2_{xx}\mathcal J_\varepsilon
(x_\varepsilon,s_\varepsilon).
\]
Here, \(D^2_{xx}\) is the Hessian in the \(x\)-variables with \(s\) fixed, and we use the same convention for \(\nabla_x\mathcal J_\varepsilon\). We then have
\[
\begin{pmatrix}
H_\varepsilon
& \beta(C_\varepsilon-A_\varepsilon)e\\
\beta e^{\mathsf T}(C_\varepsilon-A_\varepsilon)
& -\beta d_\varepsilon
\end{pmatrix}
\preceq0.
\]
If \(d_\varepsilon>0\), the Schur complement gives
\[
H_\varepsilon
+
\beta
\frac{(C_\varepsilon-A_\varepsilon)e
\otimes(C_\varepsilon-A_\varepsilon)e}
{d_\varepsilon}
\preceq0.
\]
Assume now that \(d_\varepsilon=0\). The negative semidefiniteness of the
full block matrix necessarily implies
\[
(C_\varepsilon-A_\varepsilon)e=0.
\]
Indeed, otherwise we could choose \(h\in\mathbb R^d\) such that \(\langle h,(C_\varepsilon-A_\varepsilon)e\rangle\ne0\) and test the quadratic form on \((h,t)\). Letting \(t\to+\infty\) or \(t\to-\infty\) with the appropriate sign would contradict nonpositivity.

In both cases, we obtain
\[
\operatorname{tr}\!\left(
A_\varepsilon^{-1}H_\varepsilon
\right)
+
\beta\,\mathscr S_e(A_\varepsilon,C_\varepsilon;c)
\le0.
\]
At the maximising point, we have
\(\nabla_x\mathcal J_\varepsilon=0\), hence
\[
\operatorname{tr}\!\left(
A_\varepsilon^{-1}H_\varepsilon
\right)
=
\Bigl(
\cL\log u
+\beta\cL\mathcal F_{s_\varepsilon}
-\varepsilon\cL\Phi
\Bigr)(x_\varepsilon).
\]
We now apply
\Cref{lem:log-u,lem:directional-translated-corrector,lem:penalty-bound}, and
obtain
\begin{equation}\label{eq:directional-entropy-upper}
\cH(R_\varepsilon)+\mathscr S_e(A_\varepsilon,C_\varepsilon;c)
\le
\frac{\Lambda}{\beta u_\varepsilon}
+\frac{\Lambda s_\varepsilon^2}{2}
+\delta_\varepsilon.
\end{equation}
Since \(|s_\varepsilon|\le w/c\), we obtain
\[
\cH(R_\varepsilon)+\mathscr S_e(A_\varepsilon,C_\varepsilon;c)
\le
\frac{\Lambda}{\beta u_\varepsilon}
+\frac{\Lambda w^2}{2c^2}
+\delta_\varepsilon.
\]
If \(y_\varepsilon>1\), then \Cref{lem:directional-logdet-Schur} gives
\[
\Xi(y_\varepsilon)
\le
\frac1{ab\,y_\varepsilon}+\frac1{2a^2}+\delta_\varepsilon.
\]
Given that \(\delta_\varepsilon\to0\), the monotonicity in the defining equation for \(y_{a,b}\) implies that
\[
\limsup_{\varepsilon\downarrow0} y_\varepsilon\le y_{a,b}.
\]
When \(y_\varepsilon\le1\), the same conclusion holds immediately. We conclude that
\begin{equation}\label{eq:directional-joint-u-bound}
\limsup_{\varepsilon\downarrow0}u_\varepsilon
\le a\,y_{a,b}\sqrt\Lambda\,w.
\end{equation}

Let
\[
M:=\sup_{\mathbb R^d}u,
\]
and fix \(0<\zeta<M\). We choose \(x_\zeta\in\mathbb R^d\) such that
\[
u(x_\zeta)\ge M-\zeta.
\]
Since \(\mathcal F_0=0\), the maximality of \((x_\varepsilon,s_\varepsilon)\) gives
\[
\log(M-\zeta)-\varepsilon\Phi(x_\zeta)
\le
\mathcal J_\varepsilon(x_\varepsilon,s_\varepsilon).
\]
On the other hand,
\[
\beta\mathcal F_s-\frac{\beta c}{2}s^2
\le\beta\left(w|s|-\frac c2s^2\right)
\le\frac{\beta w^2}{2c}=\frac{b}{2a}.
\]
Together with \(\Phi\ge0\), this implies that
\[
\mathcal J_\varepsilon(x_\varepsilon,s_\varepsilon)
\le
\log u(x_\varepsilon)+\frac{b}{2a}.
\]
We deduce that
\[
M-\zeta
\le
u(x_\varepsilon)
\exp\!\left(
\frac{b}{2a}
+\varepsilon\Phi(x_\zeta)
\right).
\]
We now take \(\limsup_{\varepsilon\downarrow0}\), use \eqref{eq:directional-joint-u-bound}, and then let \(\zeta\downarrow0\). This gives
\[
\sup_{\mathbb R^d}u
\le
a\,y_{a,b}
\exp\!\left(\frac{b}{2a}\right)
\sqrt\Lambda\,w.
\]
Finally, we take the infimum over \(a,b>0\) and obtain \eqref{eq:translated-directional}.
\end{proof}

\subsection{A nonquadratic bootstrap with translations}
\label{subsec:nonquadratic-bootstrap}

The quadratic penalty in \eqref{eq:directional-joint-H} treats the active translation independently of the comparison loss. In order to couple these two quantities, we now use the directional estimate that we already have from the nonquadratic bootstrap.

\begin{proposition}[Nonquadratic bootstrap with translations]
\label{prop:nonquadratic-bootstrap}
We assume the smooth hypotheses, the convexity of \(W\), and the a priori bound
\eqref{eq:apriori-Hessian-bound}. Let us fix \(e\in\Sph^{d-1}\), and put
\[
w:=w_K(e),
\qquad
L:=\sqrt\Lambda\,w,
\qquad
u:=\Phi_{ee},
\]
and suppose that, for some \(D>0\),
\begin{equation}\label{eq:bootstrap-input-bound}
\sup_{\mathbb R^d}u\le DL.
\end{equation}
Let \(\vartheta\in C^2(\mathbb R)\) be even and strictly convex, with
\[
\vartheta(0)=\vartheta'(0)=0,
\qquad
\vartheta''>0,
\qquad
\lim_{r\to\infty}\vartheta'(r)>1.
\]
For \(r\ge0\), set
\[
p(r):=\vartheta'(r)
\]
and define
\begin{equation}\label{eq:bootstrap-G-def}
G_{D,\vartheta}(r)
:=rp(r)-\vartheta(r)-\frac{p(r)^2}{2D}.
\end{equation}
For \(b>0\) and \(r\ge0\) with \(p(r)\le1\), let \(Y_{b,\vartheta}(r)>1\) be the unique solution of
\begin{equation}\label{eq:bootstrap-Y-def}
\Xi\bigl(Y_{b,\vartheta}(r)\bigr)
=
\frac{1}{b\vartheta''(r)Y_{b,\vartheta}(r)}
+\frac{r^2}{2}.
\end{equation}
Let
\begin{equation}\label{eq:bootstrap-map-def}
\mathfrak T_{\vartheta,b}(D)
:=
\sup_{\substack{r\ge0:\,p(r)\le1\\p(r)\le Dr}}
\vartheta''(r)Y_{b,\vartheta}(r)
\exp\bigl(bG_{D,\vartheta}(r)\bigr).
\end{equation}
It follows that
\begin{equation}\label{eq:bootstrap-output-bound}
\sup_{\mathbb R^d}u
\le
\mathfrak T_{\vartheta,b}(D)L.
\end{equation}
\end{proposition}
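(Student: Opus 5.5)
The plan is to rerun the proof of \Cref{prop:smooth-estimate-translated}, replacing the quadratic translation penalty by a rescaled copy of \(\vartheta\). We work in the natural variable \(r=\sqrt\Lambda\,s\). After the normalisation of \Cref{lem:coercive} (and assuming \(w>0\)), fix \(b>0\) and set
\[
\beta:=b\frac{\sqrt\Lambda}{w},
\qquad
\Psi(s):=\frac{w}{\sqrt\Lambda}\,\vartheta(\sqrt\Lambda\,s).
\]
Then maximise jointly
\[
\mathcal J_\varepsilon(x,s):=\log u(x)+\beta\mathcal F_s(x)-\beta\Psi(s)-\varepsilon\Phi(x).
\]
Since \(\lim_{r\to\infty}\vartheta'(r)>1\), the function \(\Psi\) grows faster than \(w|s|\). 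By \eqref{eq:directional-translated-budget}, the coercivity of \(\Phi\) and the a priori bound \eqref{eq:apriori-Hessian-bound}, a maximiser \((x_\varepsilon,s_\varepsilon)\) exists. Because \(\vartheta\) is even, I treat \(s_\varepsilon\ge0\); the case \(s_\varepsilon<0\) is symmetric. Write \(r_\varepsilon:=\sqrt\Lambda\,s_\varepsilon\).

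\textbf{Step 1: stationarity in \(s\) and the constraints on \(r\).} Stationarity gives
\[
\Phi_e(x_\varepsilon+s_\varepsilon e)-\Phi_e(x_\varepsilon)=\Psi'(s_\varepsilon)=w\,p(r_\varepsilon).
\]
Since \(\Phi_e\) has oscillation at most \(w\), this forces \(p(r_\varepsilon)\le1\). Since \(\Phi_{ee}\le DL\) by \eqref{eq:bootstrap-input-bound}, the left-hand side is at most \(DLs_\varepsilon=Dw\,r_\varepsilon\), which forces \(p(r_\varepsilon)\le Dr_\varepsilon\). These are exactly the constraints in \eqref{eq:bootstrap-map-def}.

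\textbf{Step 2: the block-Hessian estimate.} The \((s,s)\)-entry of the Hessian of \(\mathcal J_\varepsilon\) is \(\beta(e^{\mathsf T}C_\varepsilon e-\Psi''(s_\varepsilon))\). Hence the scale \(c\) of the quadratic proof is replaced by
\[
c_\varepsilon:=\Psi''(s_\varepsilon)=L\,\vartheta''(r_\varepsilon).
\]
The Schur complement argument, including the degenerate case \(d_\varepsilon=0\), goes through verbatim with \(c=c_\varepsilon\). Applying \Cref{lem:log-u}, \Cref{lem:directional-translated-corrector} and \Cref{lem:penalty-bound} gives
\[
\cH(R_\varepsilon)+\mathscr S_e(A_\varepsilon,C_\varepsilon;c_\varepsilon)\le\frac{\Lambda}{\beta u_\varepsilon}+\frac{r_\varepsilon^2}{2}+\delta_\varepsilon .
\]
Set \(y_\varepsilon:=u_\varepsilon/c_\varepsilon\). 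Since \(\beta L=b\Lambda\), the first term on the right equals \(1/(b\vartheta''(r_\varepsilon)y_\varepsilon)\). If \(y_\varepsilon>1\), \Cref{lem:directional-logdet-Schur} and monotonicity as in \eqref{eq:yab-def} give \(y_\varepsilon\le Y_{b,\vartheta}(r_\varepsilon)+o(1)\). Therefore
\[
u(x_\varepsilon)\le L\,\vartheta''(r_\varepsilon)\,Y_{b,\vartheta}(r_\varepsilon)+o(1).
\]

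\textbf{Step 3: bounding the value of \(\mathcal J_\varepsilon\) at the maximiser.} This is the new ingredient. The increment \(t\mapsto\Phi_e(x+te)-\Phi_e(x)\) is nondecreasing on \([0,s_\varepsilon]\), equals \(w\,p(r_\varepsilon)\) at \(t=s_\varepsilon\), and is at most \(DLt\). Integrating \(\min(w\,p(r_\varepsilon),DLt)\) in \(\mathcal F_s=\int_0^s\) gives, in \(r\)-units,
\[
\mathcal F_{s_\varepsilon}(x_\varepsilon)\le\frac{w}{\sqrt\Lambda}\Bigl(r_\varepsilon p-\frac{p^2}{2D}\Bigr),\qquad p=p(r_\varepsilon).
\]
This integral formula uses \(p\le Dr\). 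Consequently \(\beta(\mathcal F_{s_\varepsilon}-\Psi(s_\varepsilon))\le b\,G_{D,\vartheta}(r_\varepsilon)\). Comparing with a near-maximiser \(x_\zeta\) of \(u\) at \(s=0\), as in the quadratic proof, gives
\[
M-\zeta\le u(x_\varepsilon)\exp\bigl(bG_{D,\vartheta}(r_\varepsilon)+\varepsilon\Phi(x_\zeta)\bigr).
\]

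\textbf{Step 4: combining and passing to the limit.} Combining Step 3 with the bound on \(u(x_\varepsilon)\) from Step 2 gives
\[
M-\zeta\le L\,\vartheta''(r_\varepsilon)\,Y_{b,\vartheta}(r_\varepsilon)\exp\bigl(bG_{D,\vartheta}(r_\varepsilon)\bigr)+o(1).
\]
By Step 1, \(r_\varepsilon\) satisfies both constraints, so the right-hand side is bounded by \(\mathfrak T_{\vartheta,b}(D)L+o(1)\), with no need to extract a convergent subsequence. Letting \(\varepsilon\downarrow0\) and then \(\zeta\downarrow0\) proves \eqref{eq:bootstrap-output-bound}.

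The main obstacle is Step 3. The sharper budget \(G\) is only available at the maximising \(s_\varepsilon\), where the stationarity identity pins the endpoint increment, and it must be combined consistently with the Step 2 estimate at the same \(r_\varepsilon\). One must also handle the edge cases where \(y_\varepsilon\le1\) and where \(d_\varepsilon=0\); in both, the bound on \(u(x_\varepsilon)\) still holds because \(Y_{b,\vartheta}>1\).
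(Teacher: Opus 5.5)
Your proposal is correct and follows the paper's proof step for step: the rescaled penalty $\tfrac{w}{\sqrt\Lambda}\vartheta(\sqrt\Lambda s)$, stationarity giving $p(r_\varepsilon)\le1$ and $p(r_\varepsilon)\le Dr_\varepsilon$, the Schur complement with $c$ replaced by $L\vartheta''(r_\varepsilon)$, and the comparison budget $bG_{D,\vartheta}(r_\varepsilon)$ obtained by integrating $\min\{wp,DLt\}$. The one deviation is harmless: the paper extracts a subsequence $r_{\varepsilon_j}\to r$, whereas you bound directly at $r_\varepsilon$. Your $o(1)$ terms are uniform because $\Xi'(y)=(1+y^{-1})^2\ge1$ gives $y_\varepsilon\le Y_{b,\vartheta}(r_\varepsilon)+\delta_\varepsilon$, and because $\vartheta''$ and $G_{D,\vartheta}$ are bounded on the compact set $\{r\ge0:\,p(r)\le1\}$; it is worth stating this explicitly.
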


\begin{proof}
If \(w=0\), the statement follows immediately, so we assume from now on that
\(w>0\). We translate and normalise the target as in \Cref{lem:coercive}, and set
\[
C_W:=d+W(0)-\inf_KW.
\]
We define the even penalty by
\[
\Theta(s)
:=
\frac{w}{\sqrt\Lambda}\,
\vartheta(\sqrt\Lambda s),
\qquad
\beta:=b\frac{\sqrt\Lambda}{w}.
\]
As \(\vartheta'(r)\) is eventually strictly greater than one, \eqref{eq:directional-translated-budget} shows that
\[
\mathcal J_\varepsilon(x,s)
:=
\log u(x)+\beta\mathcal F_s(x)-\beta\Theta(s)
-\varepsilon\Phi(x)
\]
is coercive in \(s\), and, as in \Cref{prop:smooth-estimate-translated}, it is also coercive in \(x\). This implies that \(\mathcal J_\varepsilon\) attains a maximum at some
\((x_\varepsilon,s_\varepsilon)\).

Let us put
\[
r_\varepsilon:=\sqrt\Lambda\,|s_\varepsilon|.
\]
At the maximising pair, set
\[
A_\varepsilon^{\circ}:=A(x_\varepsilon),
\qquad
C_\varepsilon^{\circ}:=
D^2\Phi(x_\varepsilon+s_\varepsilon e),
\]
\[
R_\varepsilon^{\circ}:=
(A_\varepsilon^{\circ})^{-1/2}
C_\varepsilon^{\circ}
(A_\varepsilon^{\circ})^{-1/2},
\qquad
u_\varepsilon:=u(x_\varepsilon).
\]
Stationarity with respect to \(s\), the monotonicity of \(t\mapsto\Phi_e(x_\varepsilon+te)\), and the width bound imply that
\begin{equation}\label{eq:bootstrap-stationarity}
\left|
\Phi_e(x_\varepsilon+s_\varepsilon e)-\Phi_e(x_\varepsilon)
\right|
=w\,p(r_\varepsilon),
\qquad
p(r_\varepsilon)\le1.
\end{equation}
From the input bound \eqref{eq:bootstrap-input-bound} and stationarity, we
also obtain the active constraint
\begin{equation}\label{eq:bootstrap-active-domain}
p(r_\varepsilon)
=\frac1w\left|
\int_0^{s_\varepsilon}
\Phi_{ee}(x_\varepsilon+te)\,\dd t
\right|
\le D r_\varepsilon.
\end{equation}
From the \((s,s)\)-Hessian inequality, we now get
\begin{equation}\label{eq:bootstrap-ss-bound}
e^{\mathsf T}C_\varepsilon^\circ e
\le
\Theta''(s_\varepsilon)
=L\vartheta''(r_\varepsilon).
\end{equation}
We simply repeat the argument using the Schur complement in the variables
\((x,s)\) from \Cref{prop:smooth-estimate-translated}, with the scalar \(c\) replaced by
\(L\vartheta''(r_\varepsilon)\). We obtain
\begin{equation}\label{eq:bootstrap-entropy-upper}
\cH(R_\varepsilon^{\circ})
+\mathscr S_e\!
\left(A_\varepsilon^{\circ},C_\varepsilon^{\circ};
L\vartheta''(r_\varepsilon)\right)
\le
\frac{\Lambda}{\beta u_\varepsilon}
+\frac{r_\varepsilon^2}{2}
+\frac{\varepsilon C_W}{\beta}.
\end{equation}
Let us also set
\[
y_\varepsilon
:=
\frac{u_\varepsilon}{L\vartheta''(r_\varepsilon)}.
\]
If \(y_\varepsilon>1\), then \Cref{lem:directional-logdet-Schur} and
\eqref{eq:bootstrap-entropy-upper} imply that
\[
\Xi(y_\varepsilon)
\le
\frac{1}{b\vartheta''(r_\varepsilon)y_\varepsilon}
+\frac{r_\varepsilon^2}{2}
+\frac{\varepsilon C_W}{\beta}.
\]
If \(y_\varepsilon\le1\), then \(y_\varepsilon\) is already less than the root \(Y_{b,\vartheta}(r_\varepsilon)>1\), so the monotonicity conclusion used below is automatic in this case. It remains to compare the joint maximum with \(s=0\). We first assume that \(s_\varepsilon>0\), and define, for \(0\le q\le r_\varepsilon\),
\[
h(q)
:=
\frac{
\Phi_e(x_\varepsilon+q e/\sqrt\Lambda)
-\Phi_e(x_\varepsilon)
}{w}.
\]
By \eqref{eq:bootstrap-input-bound},
\[
h(0)=0,
\qquad
h(r_\varepsilon)=p(r_\varepsilon),
\qquad
0\le h'(q)\le D.
\]
We infer that
\[
h(q)\le\min\{Dq,p(r_\varepsilon)\}.
\]
We obtain
\begin{equation}\label{eq:bootstrap-active-budget}
\frac{\sqrt\Lambda}{w}\,
\mathcal F_{s_\varepsilon}(x_\varepsilon)
=
\int_0^{r_\varepsilon}h(q)\,\dd q
\le
r_\varepsilon p(r_\varepsilon)
-\frac{p(r_\varepsilon)^2}{2D}.
\end{equation}
If \(s_\varepsilon<0\), we have the same estimate by applying the previous argument to
\[
h(q)
:=
\frac{
\Phi_e(x_\varepsilon)
-\Phi_e(x_\varepsilon-q e/\sqrt\Lambda)
}{w}.
\]
In both cases,
\begin{equation}\label{eq:bootstrap-comparison-loss}
\beta\left(
\mathcal F_{s_\varepsilon}(x_\varepsilon)
-\Theta(s_\varepsilon)
\right)
\le
bG_{D,\vartheta}(r_\varepsilon).
\end{equation}

Let \(M:=\sup_{\mathbb R^d}u\). For \(\zeta>0\), we choose \(x_\zeta\) such that
\[
u(x_\zeta)\ge M-\zeta.
\]
Since \(\mathcal F_0=\Theta(0)=0\), the maximality property, the bound
\(\Phi\ge0\), and \eqref{eq:bootstrap-comparison-loss} give
\[
M-\zeta
\le
u_\varepsilon
\exp\left(
bG_{D,\vartheta}(r_\varepsilon)
+\varepsilon\Phi(x_\zeta)
\right).
\]
We divide by \(L\), and define
\[
a_\varepsilon
:=
\frac{u_\varepsilon}{L}
\exp\bigl(bG_{D,\vartheta}(r_\varepsilon)\bigr),
\qquad
a^*:=\limsup_{\varepsilon\downarrow0}a_\varepsilon.
\]
We choose \(\varepsilon_j\downarrow0\) such that
\[
a_{\varepsilon_j}\longrightarrow a^*.
\]
Given that \(p(r_{\varepsilon_j})\le1\) and \(p(r)>1\) for all sufficiently large \(r\), the sequence \(r_{\varepsilon_j}\) stays in a fixed compact interval. After passing to a further subsequence, we may assume
\[
r_{\varepsilon_j}\longrightarrow r.
\]
Notice that along this further subsequence, we still have \(a_{\varepsilon_j}\to a^*\). Let us apply the monotonicity in \eqref{eq:bootstrap-Y-def} to the above scalar inequality and then pass to the limit. This gives
\[
\limsup_{j\to\infty}
\frac{u_{\varepsilon_j}}{L}
\le
\vartheta''(r)Y_{b,\vartheta}(r).
\]
By the continuity of \(G_{D,\vartheta}\) and \(\vartheta''\), we obtain
\[
a^*
\le
\vartheta''(r)Y_{b,\vartheta}(r)
\exp\bigl(bG_{D,\vartheta}(r)\bigr)
\le
\mathfrak T_{\vartheta,b}(D).
\]
We now take \(\limsup_{\varepsilon\downarrow0}\) in the comparison inequality at the fixed point \(x_\zeta\), which gives
\[
\frac{M-\zeta}{L}\le a^*.
\]
Finally, we let \(\zeta\downarrow0\) and obtain \eqref{eq:bootstrap-output-bound}--\eqref{eq:bootstrap-map-def}.
\end{proof}

For \(z\in\mathbb R\), define
\[
\operatorname{erf}(z)
:=
\frac{2}{\sqrt\pi}
\int_0^z e^{-t^2}\,\dd t.
\]
If \(a,k>0\), let \(\vartheta_{a,k}\in C^2(\mathbb R)\) be the even function determined by
\[
\vartheta_{a,k}(0)=\vartheta_{a,k}'(0)=0,
\qquad
\vartheta_{a,k}''(r)=ae^{-kr^2}
\quad(r\in\mathbb R).
\]
For \(r\ge0\),
\begin{align}
p_{a,k}(r)
&:=\vartheta_{a,k}'(r)
=\frac{a\sqrt\pi}{2\sqrt k}
\operatorname{erf}(\sqrt k\,r),
\label{eq:Gaussian-penalty-p}\\
r p_{a,k}(r)-\vartheta_{a,k}(r)
&=\frac{a}{2k}\left(1-e^{-kr^2}\right).
\label{eq:Gaussian-penalty-Legendre}
\end{align}

\subsection{Parametrisation by the inverse of the penalty derivative}
\label{subsec:inverse-penalty}

We may parametrise the preceding bootstrap by the inverse of the active
derivative. Recall the notation \(D,\vartheta,p\), and \(G_{D,\vartheta}\) from \Cref{prop:nonquadratic-bootstrap}. We have \(p(0)=0\), the function \(p\) is strictly increasing on \([0,\infty)\), and
\[
\lim_{r\to\infty}p(r)>1.
\]
Therefore, the restriction \(p|_{[0,\infty)}\) has a well-defined inverse on \([0,1]\), and we define
\[
\mathcal R:[0,1]\longrightarrow[0,\infty),
\qquad
\mathcal R(\tau)
:=
\bigl(p|_{[0,\infty)}\bigr)^{-1}(\tau).
\]
For \(0<\tau<1\), we have
\[
\vartheta''(\mathcal R(\tau))
=\frac{1}{\mathcal R'(\tau)}.
\]
After the change of variables \(r=\mathcal R(\tau)\), the active constraint \(p(r)\le Dr\) becomes
\[
\mathcal R(\tau)\ge\frac{\tau}{D}.
\]
Moreover, integration by parts yields, for every \(\tau\in[0,1]\),
\[
G_{D,\vartheta}(\mathcal R(\tau))
=
\int_0^\tau
\left(\mathcal R(q)-\frac{q}{D}\right)\dd q.
\]
These identities suggest the following converse criterion. Here, \(\mathcal R\) is imposed independently, and in the proof, we construct the penalty \(\vartheta\) from \(\mathcal R\).

\begin{proposition}[Certificate criterion with the inverse]
\label{prop:inverse-penalty-bootstrap}
Assume the previous hypotheses, fix \(e\in\Sph^{d-1}\), and set
\[
w:=w_K(e),
\qquad
L:=\sqrt{\Lambda}\,w,
\qquad
u:=\Phi_{ee}.
\]
Suppose that, for some \(D>0\),
\[
\sup_{\mathbb R^d}u\le DL.
\]
Let \(C,b>0\), and let \(\mathcal R\in C^1([0,1])\) satisfy
\[
\mathcal R(0)=0,
\qquad
\mathcal R'(\tau)>0,
\qquad
\mathcal R(\tau)\ge\frac{\tau}{D}
\qquad (0\le\tau\le1).
\]
We now define
\begin{equation}\label{eq:inverse-G-def}
G_{D,\mathcal R}(\tau)
:=
\int_0^\tau
\left(\mathcal R(q)-\frac qD\right)\dd q,
\end{equation}
and put
\begin{equation}\label{eq:inverse-Ybar-def}
\overline Y(\tau)
:=C\mathcal R'(\tau)e^{-bG_{D,\mathcal R}(\tau)}.
\end{equation}
Suppose that, for every \(\tau\in[0,1]\),
\begin{equation}\label{eq:inverse-certificate-ineqs}
\overline Y(\tau)>1,
\qquad
\Xi(\overline Y(\tau))
\ge
\frac{e^{bG_{D,\mathcal R}(\tau)}}{bC}
+\frac{\mathcal R(\tau)^2}{2}.
\end{equation}
It follows that
\[
\sup_{\mathbb R^d}u\le CL.
\]
\end{proposition}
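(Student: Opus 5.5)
The plan is to reduce the statement to \Cref{prop:nonquadratic-bootstrap}. I will build an admissible penalty \(\vartheta\) whose derivative \(p=\vartheta'\) inverts \(\mathcal R\) on the active range, and then check that the certificate inequalities \eqref{eq:inverse-certificate-ineqs} force \(\mathfrak T_{\vartheta,b}(D)\le C\). The conclusion \(\sup u\le CL\) then follows from \eqref{eq:bootstrap-output-bound}.

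\emph{Step 1: construction of \(\vartheta\).} Put \(r_1:=\mathcal R(1)>0\). On \([0,r_1]\), define \(p:=\mathcal R^{-1}\). Since \(\mathcal R\in C^1([0,1])\) and \(\mathcal R'>0\), the function \(p\) is \(C^1\) with \(p'(r)=1/\mathcal R'(p(r))>0\), \(p(0)=0\) and \(p(r_1)=1\).

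For \(r>r_1\), extend \(p\) affinely with slope \(p'(r_1)\). This keeps \(p\in C^1([0,\infty))\), makes \(p'>0\), and gives \(p(r)\to\infty\), so \(\lim_{r\to\infty}p(r)>1\). Extend \(p\) oddly to \(\mathbb R\). Because \(p'(0)=1/\mathcal R'(0)\) is finite and positive, the odd extension is still \(C^1\). Finally set \(\vartheta(r):=\int_0^{|r|}p\). Then \(\vartheta\) is even and \(C^2\), with \(\vartheta(0)=\vartheta'(0)=0\) and \(\vartheta''=p'>0\), so all hypotheses of \Cref{prop:nonquadratic-bootstrap} hold.

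\emph{Step 2: reparametrisation of the supremum in \eqref{eq:bootstrap-map-def}.} The condition \(p(r)\le1\) with \(r\ge0\) means exactly \(r=\mathcal R(\tau)\) for some \(\tau\in[0,1]\). For such \(r\), the constraint \(p(r)\le Dr\) reads \(\mathcal R(\tau)\ge\tau/D\), which holds for every \(\tau\) by hypothesis. Hence the supremum runs over all \(\tau\in[0,1]\).

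On this range we have \(\vartheta''(\mathcal R(\tau))=1/\mathcal R'(\tau)\). Integrating by parts, \(rp(r)-\vartheta(r)=\int_0^\tau\mathcal R(q)\,\dd q\) when \(r=\mathcal R(\tau)\). Therefore \(G_{D,\vartheta}(\mathcal R(\tau))=G_{D,\mathcal R}(\tau)\), and
\[
\mathfrak T_{\vartheta,b}(D)=\sup_{0\le\tau\le1}\frac{Y_{b,\vartheta}(\mathcal R(\tau))}{\mathcal R'(\tau)}\,e^{bG_{D,\mathcal R}(\tau)}.
\]
So it suffices to prove \(Y_{b,\vartheta}(\mathcal R(\tau))\le\overline Y(\tau)\) for every \(\tau\in[0,1]\).

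\emph{Step 3: comparison of roots.} By \eqref{eq:bootstrap-Y-def}, \(Y:=Y_{b,\vartheta}(\mathcal R(\tau))\) is the unique root on \((1,\infty)\) of \(\Xi(Y)=\mathcal R'(\tau)/(bY)+\mathcal R(\tau)^2/2\). On \((1,\infty)\) the left side is strictly increasing and the right side strictly decreasing. Evaluating the right side at \(\overline Y(\tau)\) gives
\[
\frac{\mathcal R'(\tau)}{b\overline Y(\tau)}=\frac{e^{bG_{D,\mathcal R}(\tau)}}{bC}.
\]
Thus the second inequality in \eqref{eq:inverse-certificate-ineqs} says that at the point \(\overline Y(\tau)>1\) the left side already dominates the right side. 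By the opposite monotonicities, this forces \(Y\le\overline Y(\tau)\). Hence \(\mathfrak T_{\vartheta,b}(D)\le C\), and \Cref{prop:nonquadratic-bootstrap} yields \(\sup u\le CL\).

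The step I expect to need the most care is Step 1. The penalty must be genuinely \(C^2\), in particular at \(r=0\) (after the even extension) and at the gluing point \(r_1\). It must also satisfy \(\lim p>1\), so that the proposition's coercivity in \(s\) applies. If the affine gluing raised any doubt, I would instead extend \(p\) by any smooth increasing \(C^1\) continuation with positive derivative. The extension is irrelevant to \(\mathfrak T_{\vartheta,b}(D)\), which only sees the region \(p\le1\).
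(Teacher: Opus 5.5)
Your proposal is correct and follows the paper's proof essentially step for step. You build the same penalty: you invert \(\mathcal R\), extend affinely with slope \(p'(\mathcal R(1))\) beyond \(\mathcal R(1)\), and set \(\vartheta(t)=\int_0^{|t|}p\). You then use the same identities \(\vartheta''(\mathcal R(\tau))=1/\mathcal R'(\tau)\) and \(G_{D,\vartheta}(\mathcal R(\tau))=G_{D,\mathcal R}(\tau)\), and the same monotone root comparison \(Y_{b,\vartheta}(\mathcal R(\tau))\le\overline Y(\tau)\), before invoking \Cref{prop:nonquadratic-bootstrap}.
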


\begin{proof}
We set \(R:=\mathcal R(1)\), and let \(p_0:[0,R]\to[0,1]\) be the inverse of \(\mathcal R\). Since \(\mathcal R\in C^1([0,1])\) and \(\mathcal R'>0\), we have \(p_0\in C^1([0,R])\) and \(p_0'>0\). We extend \(p_0\) to a strictly increasing \(C^1\) function on
\([0,\infty)\), for example, by setting
\[
p_0(t):=1+p_0'(R)(t-R)
\qquad(t\ge R),
\]
and define
\[
\vartheta(t):=\int_0^{|t|}p_0(q)\,\dd q.
\]
Given that \(p_0(0)=0\), the function \(\vartheta\) is even and belongs to \(C^2(\mathbb R)\). Moreover, \(\vartheta(0)=\vartheta'(0)=0\), \(\vartheta''>0\), and \(\vartheta'(t)>1\) for every \(t>R\). Set
\[
p:=\vartheta'.
\]
Observe that on the active range \(0\le\tau\le1\), the inverse of \(\vartheta'\) is
exactly \(\mathcal R\). In other terms,
\[
(\vartheta')^{-1}(\tau)=\mathcal R(\tau)
\qquad(0\le\tau\le1).
\]
We conclude that, for \(0\le\tau\le1\),
\[
\vartheta''(\mathcal R(\tau))=\frac1{\mathcal R'(\tau)},
\qquad
G_{D,\vartheta}(\mathcal R(\tau))=G_{D,\mathcal R}(\tau).
\]
Let
\[
Y(\tau):=Y_{b,\vartheta}(\mathcal R(\tau))>1.
\]
This means precisely
\[
\Xi(Y(\tau))
=\frac{\mathcal R'(\tau)}{b\,Y(\tau)}
+\frac{\mathcal R(\tau)^2}{2}.
\]
By \eqref{eq:inverse-Ybar-def},
\[
\frac{\mathcal R'(\tau)}{b\overline Y(\tau)}
=\frac{e^{bG_{D,\mathcal R}(\tau)}}{bC}.
\]
The function \(y\mapsto\Xi(y)-\mathcal R'(\tau)/(b\,y)\) is strictly increasing on \([1,\infty)\). It follows from \eqref{eq:inverse-certificate-ineqs} that \(Y(\tau)\le\overline Y(\tau)\). We obtain
\[
\vartheta''(\mathcal R(\tau))Y(\tau)e^{bG_{D,\mathcal R}(\tau)}
=\frac{Y(\tau)}{\mathcal R'(\tau)}e^{bG_{D,\mathcal R}(\tau)}
\le C.
\]
We conclude the proof by applying \Cref{prop:nonquadratic-bootstrap}.
\end{proof}

\begin{lemma}[Smoothing a finite certificate]
\label{lem:piecewise-inverse-smoothing}
We fix \(D,C,b>0\) and an integer \(N\ge1\). Let
\[
0=q_0<q_1<\cdots<q_N=1,
\]
and let \(\mathcal R_0:[0,1]\to[0,\infty)\) be continuous and, for each \(j\in\{1,\ldots,N\}\), affine on \([q_{j-1},q_j]\) with slope \(s_j\). Assume that
\[
\mathcal R_0(0)=0,
\qquad
\frac1D<s_1\le s_2\le\cdots\le s_N.
\]
Let us define
\[
G_0(\tau):=\int_0^\tau\left(\mathcal R_0(q)-\frac qD\right)\dd q.
\]
For each \(j\in\{1,\ldots,N\}\) and \(\tau\in[q_{j-1},q_j]\), define
\[
Y_j(\tau):=Cs_j e^{-bG_0(\tau)},
\qquad
H_j(\tau):=\Xi(Y_j(\tau))
-\frac{e^{bG_0(\tau)}}{bC}-\frac{\mathcal R_0(\tau)^2}{2}.
\]
Assume that
\[
Y_j(\tau)>1,
\qquad
H_j(\tau)>0
\qquad
(\tau\in[q_{j-1},q_j],\ 1\le j\le N).
\]
In this case, there exists \(\mathcal R\in C^\infty([0,1])\) that satisfies
\[
\mathcal R(0)=0,
\qquad
\mathcal R'(\tau)>\frac1D,
\]
and, with
\[
G_{D,\mathcal R}(\tau)
:=\int_0^\tau\left(\mathcal R(q)-\frac qD\right)\dd q,
\qquad
\overline Y(\tau)
:=C\mathcal R'(\tau)e^{-bG_{D,\mathcal R}(\tau)},
\]
one has
\[
\overline Y(\tau)>1,
\qquad
\Xi(\overline Y(\tau))
>\frac{e^{bG_{D,\mathcal R}(\tau)}}{bC}
+\frac{\mathcal R(\tau)^2}{2}
\qquad(0\le \tau\le1).
\]
In particular, \(\mathcal R(\tau)\ge \tau/D\), and we apply \Cref{prop:inverse-penalty-bootstrap} with the same constants \(D,C,b\).
\end{lemma}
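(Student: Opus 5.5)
Since the strict inequalities \(Y_j>1\) and \(H_j>0\) are assumed on the compact intervals \([q_{j-1},q_j]\), there is a uniform margin \(\eta>0\) with \(Y_j\ge1+\eta\) and \(H_j\ge\eta\) on each piece. I will smooth the piecewise-constant derivative \(\mathcal R_0'\) so that the smoothed derivative stays close, in a one-sided sense, to the slope of the active piece. Everything else in the criterion depends continuously on \(\mathcal R\) in the uniform norm.

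\textbf{Construction.} Extend the step function \(\sigma:=\mathcal R_0'\), equal to \(s_j\) on \((q_{j-1},q_j)\), by \(s_1\) on \((-\infty,0)\) and by \(s_N\) on \((1,\infty)\). Mollify it with a one-sided kernel supported in \([0,\delta]\), that is, put \(\sigma_\delta(\tau):=\int\sigma(\tau-t)\kappa_\delta(t)\,\dd t\) with \(\kappa_\delta\ge0\) smooth, supported in \([0,\delta]\), and of unit mass. Then set
\[
\mathcal R(\tau):=\int_0^\tau\sigma_\delta(q)\,\dd q .
\]
Since \(\sigma\) is nondecreasing, \(\sigma_\delta\) is smooth, and \(\sigma(\tau-\delta)\le\sigma_\delta(\tau)\le\sigma(\tau)\), we have \(s_1\le\sigma_\delta\le s_N\). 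Hence \(\mathcal R\in C^\infty\), \(\mathcal R(0)=0\), and \(\mathcal R'\ge s_1>1/D\). Moreover \(|\mathcal R-\mathcal R_0|\le(s_N-s_1)\,N\delta\), because \(\sigma_\delta\) and \(\sigma\) differ only on the union of the \(N\) windows \([q_{j-1},q_{j-1}+\delta)\). Consequently \(G_{D,\mathcal R}\to G_0\) and \(\mathcal R\to\mathcal R_0\) uniformly on \([0,1]\) as \(\delta\to0\).

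\textbf{Pointwise comparison.} Fix \(\tau\in[q_{j-1},q_j]\). The key observation is that \(\sigma_\delta(\tau)\) is a convex combination of the values \(\sigma(t)\) with \(t\in[\tau-\delta,\tau]\). Once \(\delta<\min_j(q_j-q_{j-1})\), these values lie in \(\{s_{j-1},s_j\}\) (only \(s_j\) when \(j=1\)). Thus \(\mathcal R'(\tau)=\lambda s_{j-1}+(1-\lambda)s_j\) with \(\lambda\in[0,1]\), and \(\lambda>0\) only if \(\tau<q_{j-1}+\delta\).

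In that boundary layer, \(\tau\) is within \(\delta\) of \(q_{j-1}\), which belongs to both pieces \(j-1\) and \(j\). By uniform continuity of \(G_0\) and \(\mathcal R_0\), the quantities
\[
\Xi\bigl(Cs\,e^{-bG_0(\tau)}\bigr)-\frac{e^{bG_0(\tau)}}{bC}-\frac{\mathcal R_0(\tau)^2}{2}
\qquad\text{and}\qquad
Cs\,e^{-bG_0(\tau)}
\]
are at least \(\eta/2\) and \(1+\eta/2\) respectively, for both \(s=s_{j-1}\) and \(s=s_j\), once \(\delta\) is small.

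Now \(s\mapsto\Xi(Cs\,e^{-bG_0(\tau)})\) is increasing, so any \(s\) between \(s_{j-1}\) and \(s_j\) satisfies both bounds as well; this monotonicity of \(\Xi\) is the essential point. Away from the layers \(\mathcal R'(\tau)=s_j\) exactly, and the bounds hold by assumption.

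\textbf{Conclusion.} Replace \(G_0,\mathcal R_0\) by \(G_{D,\mathcal R},\mathcal R\). Since \(\Xi\) is uniformly continuous on compact subsets of \((0,\infty)\) and \(\mathcal R'\) stays in \([s_1,s_N]\), the error is \(o(1)\) uniformly in \(\tau\). For \(\delta\) small this keeps \(\overline Y>1\) and the strict inequality \(\Xi(\overline Y)>e^{bG_{D,\mathcal R}}/(bC)+\mathcal R^2/2\). Finally, \(\mathcal R(\tau)\ge\tau/D\) follows from \(\mathcal R'>1/D\) and \(\mathcal R(0)=0\). This verifies the hypotheses of \Cref{prop:inverse-penalty-bootstrap}, so it applies with the same \(D,C,b\).

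The main obstacle is controlling the layers near the breakpoints. The plan handles it with the one-sided kernel, which keeps \(\mathcal R'\) between neighbouring slopes, together with the monotonicity of \(\Xi\) in \(s\) and the assumption that both adjacent pieces are valid at the shared breakpoint.
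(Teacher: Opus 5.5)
Your proof is correct. It differs from the paper's in where the transition layers sit.

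\textbf{The paper's route.} The paper replaces each jump by an explicit cutoff transition on a left neighbourhood $(q_j-\delta_j,q_j)$, which lies inside the lower-slope cell. So the smoothed slope is always at least the slope $s_j$ of the cell containing $\tau$. Since $\Xi$ is increasing, both certificate quantities can then only improve. Only the margin of the current cell is ever used, and this is exactly where the ordering $s_1\le\cdots\le s_N$ enters.

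\textbf{Your route.} Your kernel is supported in $[0,\delta]$, so the slope lags and the layers fall to the right of the knots. There $\mathcal R'(\tau)\in[s_{j-1},s_j]$, which is the unfavourable direction for cell $j$. You compensate by using that the closed cells share their endpoints. The fixed-slope quantity with $s=s_{j-1}$ has margin $\eta$ at $q_{j-1}$. By continuity of $G_0$ and $\mathcal R_0$ it keeps margin $\eta/2$ on $[q_{j-1},q_{j-1}+\delta]$. Monotonicity of $\Xi$ in $s$ then covers every convex combination of $s_{j-1}$ and $s_j$.

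\textbf{Comparison.} Your route costs one extra uniform-continuity step. In exchange, the layer argument never uses the ordering of the slopes; it only needs every $s_j>1/D$ and $\min_j s_j\le\mathcal R'\le\max_j s_j$. It also replaces the choice of disjoint transition intervals by a single global mollification. If you took the kernel supported in $[-\delta,0]$ instead, the slope would lead rather than lag, and you would essentially recover the paper's argument.
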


\begin{proof}
If \(N=1\), the function \(\mathcal R_0\) is already smooth and we take \(\mathcal R=\mathcal R_0\), so let us assume that \(N\ge2\). We also choose a nondecreasing function \(\chi\in C^\infty(\mathbb R)\) such that \(\chi=0\) on \(({-}\infty,0]\) and \(\chi=1\) on \([1,\infty)\). Suppose we select numbers \(\delta_j>0\), \(1\le j<N\), sufficiently small that the left neighbourhoods
\[
I_j:=(q_j-\delta_j,q_j)
\]
are pairwise disjoint, and set
\[
\boldsymbol\delta:=(\delta_1,\ldots,\delta_{N-1}),
\qquad
\|\boldsymbol\delta\|_\infty
:=\max_{1\le j<N}\delta_j.
\]
On \(I_j\), replace the jump from \(s_j\) to \(s_{j+1}\) by
\[
s_j+(s_{j+1}-s_j)
\chi\!\left(\frac{\tau-q_j+\delta_j}{\delta_j}\right)
\qquad(\tau\in I_j),
\]
and keep the derivative equal to the original cell slope elsewhere. We denote the new smooth positive function by \(s_{\boldsymbol\delta}\), and set
\[
\mathcal R_{\boldsymbol\delta}(\tau)
:=
\int_0^\tau s_{\boldsymbol\delta}(q)\,\dd q,
\qquad
G_{\boldsymbol\delta}(\tau)
:=
\int_0^\tau
\left(\mathcal R_{\boldsymbol\delta}(q)-\frac qD\right)\dd q.
\]
We have \(s_{\boldsymbol\delta}>1/D\), so \(\mathcal R_{\boldsymbol\delta}(\tau)\ge \tau/D\). Moreover,
\[
\|\mathcal R_{\boldsymbol\delta}-\mathcal R_0\|_{L^\infty([0,1])}
+\|G_{\boldsymbol\delta}-G_0\|_{L^\infty([0,1])}
\longrightarrow0
\]
as \(\|\boldsymbol\delta\|_\infty\to0\).

Outside the transition intervals, \(s_{\boldsymbol\delta}\) agrees with the
relevant original slope, and on \(I_j\), we have \(s_{\boldsymbol\delta}\ge s_j\). We fix \(\xi,\gamma\in\mathbb R\), and consider the following functions of
\(s>0\)
\[
s\longmapsto Cs e^{-b\gamma},
\qquad
s\longmapsto
\Xi(Cs e^{-b\gamma})
-\frac{e^{b\gamma}}{bC}
-\frac{\xi^2}{2}
\]
are strictly increasing on the region \(Cs e^{-b\gamma}>1\), because
\[
\Xi'(y)=(1+y^{-1})^2>0
\qquad (y>1).
\]
Therefore, if the values of the inverse curve and its integral are kept fixed, an increase in the cell slope cannot decrease either certificate quantity. Since both quantities have positive uniform margins on the finitely many compact cells, the uniform convergence above and continuity imply that, for all sufficiently small \(\delta_j\), the function \(\mathcal R_{\boldsymbol\delta}\) satisfies both strict certificate inequalities on all of \([0,1]\). We conclude by taking \(\mathcal R=\mathcal R_{\boldsymbol\delta}\).
\end{proof}

\begin{proposition}[Optimised estimate]
\label{prop:smooth-estimate-optimised}
With the previous hypotheses, for every \(e\in\Sph^{d-1}\),
\begin{equation}\label{eq:optimised-directional}
\sup_{x\in\mathbb R^d}\Phi_{ee}(x)
\le
C_{\mathrm{nq}}\sqrt\Lambda\,w_K(e),
\qquad
C_{\mathrm{nq}}=0.587.
\end{equation}
\end{proposition}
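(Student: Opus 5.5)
The plan is to iterate the bootstrap of \Cref{prop:inverse-penalty-bootstrap}, starting from the quadratic seed of \Cref{prop:smooth-estimate-translated}. Fix \(e\in\Sph^{d-1}\) and set \(L:=\sqrt\Lambda\,w_K(e)\). If \(w_K(e)=0\), then \(\Phi_e\) is constant, so \(\Phi_{ee}\equiv0\) and there is nothing to prove; otherwise we may assume \(L>0\). By \Cref{prop:smooth-estimate-translated} and the rational certificate for \(C_{\mathrm{tr}}<1.828\), the input hypothesis of \Cref{prop:inverse-penalty-bootstrap} holds with \(D_0:=1.828\). The key structural point is that the hypotheses of \Cref{prop:inverse-penalty-bootstrap} are only the smooth hypotheses, the convexity of \(W\), the a priori bound \eqref{eq:apriori-Hessian-bound}, and an input bound \(\sup u\le DL\). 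Its output \(\sup u\le CL\) can therefore be fed back as a new input, and this gives a finite chain \(D_0>D_1>\cdots>D_m=0.587\).

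At each step \(k\), we select parameters \(b_k>0\), a partition \(0=q_0<\cdots<q_{N_k}=1\), and nondecreasing slopes \(s_1\le\cdots\le s_{N_k}\) with \(s_1>1/D_k\). These define a piecewise affine \(\mathcal R_0\) as in \Cref{lem:piecewise-inverse-smoothing}, with target constant \(C=D_{k+1}\). On each cell, \(G_0\) is an explicit piecewise quadratic polynomial, \(Y_j(\tau)=Cs_je^{-bG_0(\tau)}\), and \(H_j\) is an explicit elementary function. We verify \(Y_j>1\) and \(H_j>0\) on every compact cell by interval arithmetic: we subdivide each cell, enclose \(G_0\), \(\mathcal R_0\), and the exponentials, and use the monotonicity of \(\Xi\) on \((1,\infty)\), where \(\Xi'(y)=(1+y^{-1})^2\), to bound \(\Xi(Y_j)\) from below by its value at a lower enclosure of \(Y_j\). \Cref{lem:piecewise-inverse-smoothing} then supplies a smooth \(\mathcal R\) satisfying the hypotheses of \Cref{prop:inverse-penalty-bootstrap}, and that proposition gives \(\sup u\le D_{k+1}L\). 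This is exactly the finite certificate recorded in \Cref{app:certificate}, and I would cite it at this point.

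The main obstacle is producing certificates that actually contract \(D\) all the way to \(0.587\), since the gain per step shrinks as \(D\) decreases. To find them, I would first solve the continuous optimisation numerically: treat \(\mathcal R\) as unknown and make the second inequality in \eqref{eq:inverse-certificate-ineqs} nearly tight, which is an ODE for \(\mathcal R\) in \(\tau\). I would then discretise with a margin and pick \(b\) by a one-dimensional search. The sequence \(D_k\) stabilises near a fixed point of the bootstrap map, and \(0.587\) is chosen just above that numerical fixed point so that the strict inequalities retain certifiable margins. Finally, since \(e\) was arbitrary and \(\Phi_{ee}\) is continuous, the bound holds pointwise for every \(x\), which gives \eqref{eq:optimised-directional}.
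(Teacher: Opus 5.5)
Your proposal is correct and follows the paper's proof. You seed with \(D=1.828\) from \Cref{prop:smooth-estimate-translated} and \Cref{lem:Ctr-certificate}, then compose finitely many certified bootstrap implications \(D_{\mathrm{in}}\mapsto D_{\mathrm{out}}\) down to \(0.587\), delegating the numerics to the certificate of \Cref{app:certificate}. One correction: that certificate is not piecewise-affine at every stage. The first step \(1.828\mapsto0.6595\) uses the Gaussian-second-derivative penalties \(\vartheta_{a,k}\) directly through \Cref{prop:nonquadratic-bootstrap}, the next two steps use rational inverse curves fed straight into \Cref{prop:inverse-penalty-bootstrap}, and \Cref{lem:piecewise-inverse-smoothing} is needed only for the final step \(0.605\mapsto0.587\), so when you cite the certificate you should invoke all three results.
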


For the proof, we fix \(e\in\Sph^{d-1}\) and set
\[
u:=\Phi_{ee},
\qquad
L:=\sqrt{\Lambda}\,w_K(e).
\]
If \(D_{\mathrm{in}},D_{\mathrm{out}}>0\) are numerical constants, the notation
\[
D_{\mathrm{in}}\mapsto D_{\mathrm{out}}
\]
stands for the implication
\[
\sup_{\R^d}u\le D_{\mathrm{in}}L
\quad\Longrightarrow\quad
\sup_{\R^d}u\le D_{\mathrm{out}}L.
\]

\begin{proof}
By \Cref{prop:smooth-estimate-translated,lem:Ctr-certificate}, we already know that the bound \eqref{eq:bootstrap-input-bound} first holds with \(D=1.828\). We then apply
the part of the finite certificate based on penalties whose second derivatives
are Gaussian functions through \Cref{prop:nonquadratic-bootstrap} and obtain
\(D=0.6595\). Let us now apply the parts based on rational and piecewise-affine inverse curves through \Cref{prop:inverse-penalty-bootstrap} and \Cref{lem:piecewise-inverse-smoothing}. They give the remaining implications in \Cref{app:certificate}, which yields \(D=0.587\) and proves \eqref{eq:optimised-directional}.
\end{proof}

We have thus established the smooth log-concave estimate. In the next three
sections, we present the closing argument. More precisely, we first remove the Hessian bound, then approximate compact log-concave targets. Finally, we pass to nonsmooth sources and lower-dimensional targets before analysing the affine statement.

\section{Removal of the Hessian bound}\label{sec:regularisation}

We will now remove \eqref{eq:apriori-Hessian-bound} for a smooth log-concave
target. To this end, the main a priori estimate we need is the generalised Caffarelli contraction theorem in a version that allows for extended-valued convex target potentials with compact support. We also use the standard weak stability of optimal transport plans in the compactness step. For completeness, we give the detailed scaling reduction for the contraction estimate.

\begin{lemma}[A preliminary Caffarelli bound]\label{lem:Caffarelli-preliminary}
Let \(\Lambda,\delta>0\), and suppose that $V\in C^2(\R^d)$ satisfies
$D^2V\preceq\Lambda\Id$. Define
\[
Z_\mu:=\int_{\R^d}e^{-V(x)}\,\dd x,
\]
assume that \(Z_\mu\in(0,\infty)\), and set
\[
\dd\mu=Z_\mu^{-1}e^{-V}\,\dd x\in\mathcal P_2(\R^d).
\]
Let $K$ be a compact convex set. We assume that $U:\R^d\to\R\cup\{+\infty\}$ is proper and lower semicontinuous, that its domain is contained in $K$, and that
\begin{equation}\label{eq:strong-convex-U}
U(y)-\frac\delta2|y|^2
\quad\text{is convex}.
\end{equation}
Let us also define
\[
Z_\nu:=\int_{\R^d}e^{-U(y)}\,\dd y.
\]
If \(Z_\nu\in(0,\infty)\) and
\[
\dd\nu=Z_\nu^{-1}e^{-U(y)}\,\dd y,
\]
then the Brenier map $T$ from $\mu$ to $\nu$ satisfies
\begin{equation}\label{eq:preliminary-Caffarelli}
\Lip(T)\le\sqrt{\frac\Lambda\delta}.
\end{equation}
\end{lemma}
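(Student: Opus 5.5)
The plan is to reduce to the normalised form of Caffarelli's contraction theorem by a scaling argument, and then to invoke the version of that theorem which allows extended-valued, strongly convex target potentials. Put
\[
\tilde V(x):=V(x/\sqrt\Lambda),
\qquad
\tilde U(y):=U(y/\sqrt\delta).
\]
Then \(D^2\tilde V\preceq\Id\) and \(\tilde U-\frac12|y|^2\) is convex. The corresponding measures are \(\tilde\mu=(\sqrt\Lambda\,\cdot)_\#\mu\) and \(\tilde\nu=(\sqrt\delta\,\cdot)_\#\nu\), where \(\tilde\nu\) is supported in the compact convex set \(\sqrt\delta K\). The Brenier map transforms as
\[
T(x)=\delta^{-1/2}\,\tilde T(\sqrt\Lambda\,x),
\]
because compositions of gradients of convex functions with positive dilations remain gradients of convex functions, and Brenier maps are unique. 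Therefore
\[
\Lip(T)=\sqrt{\Lambda/\delta}\,\Lip(\tilde T),
\]
and it suffices to prove \(\Lip(\tilde T)\le1\).

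For the normalised problem, the source satisfies \(D^2\tilde V\preceq\Id\) and the target satisfies \(D^2\tilde U\succeq\Id\) in the extended-valued sense. This is exactly the hypothesis of the general Caffarelli theorem with \(\Lambda=\kappa=1\), as in \cite[Theorem 3.2]{ColomboFigalliJhaveri2017} or \cite[Theorem 1]{FathiGozlanProdhomme2020}. The Fathi--Gozlan--Prodhomme formulation is proved by a dual or entropic argument that only needs \(\nu\) to be \(1\)-log-concave as a measure, which is the point of using it. Our \(\tilde\nu\) has density \(e^{-\tilde U}\) with \(\tilde U-\frac12|y|^2\) convex, proper and lower semicontinuous, so it is \(1\)-log-concave even though its support is compact and its potential is \(+\infty\) off the support. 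That theorem gives \(\Lip(\tilde T)\le 1\) for the everywhere-defined continuous representative, and the scaling relation then yields \eqref{eq:preliminary-Caffarelli}.

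The main obstacle is the extended-valued target potential. If the cited theorem is only stated for finite smooth potentials, I would approximate instead. Replace \(U\) by its Moreau--Yosida envelope \(U_n\) of parameter \(1/n\), plus a correction restoring \(\delta\)-convexity. Concretely, apply the envelope to the convex part \(U-\frac\delta2|y|^2\) and add \(\frac\delta2|y|^2\) back, so each \(U_n\) is finite, \(\delta\)-convex and \(C^{1,1}\); convolving with a mollifier then makes it smooth. The resulting measures \(\nu_n\) converge weakly to \(\nu\) with uniformly bounded second moments, since the \(\delta\)-convexity gives uniform Gaussian tails. The smooth Caffarelli theorem gives \(\Lip(T_n)\le\sqrt{\Lambda/\delta}\) uniformly in \(n\). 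Stability of optimal plans together with Arzel\`a--Ascoli, after normalising the potentials at \(0\), lets the equi-Lipschitz maps \(T_n\) converge locally uniformly to a Lipschitz map supported on the graph of the limit plan. That limit must be the Brenier map \(T\), so \(T\) inherits the bound.
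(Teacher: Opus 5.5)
Your proposal is correct and follows the paper's proof: the same rescalings \(x\mapsto\sqrt\Lambda\,x\) and \(y\mapsto\sqrt\delta\,y\), the identification \(T(x)=\delta^{-1/2}\widetilde T(\sqrt\Lambda\,x)\) by Brenier uniqueness, and an appeal to \cite[Theorem 1]{FathiGozlanProdhomme2020} for the normalised problem. Your approximation fallback is not needed, because that theorem already covers extended-valued convex target potentials, which is exactly how the paper uses it.
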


\begin{proof}
We introduce $S_\Lambda(x)=\sqrt\Lambda x$ and $S_\delta(y)=\sqrt\delta y$, and set
\[
\widetilde\mu:=(S_\Lambda)_\#\mu,
\qquad
\widetilde\nu:=(S_\delta)_\#\nu.
\]
We define
\[
P_\mu(x):=
\frac{|x|^2}{2}
-V\!\left(\frac{x}{\sqrt\Lambda}\right).
\]
With respect to the standard Gaussian \(\gamma_d\), let \(C_\mu>0\) be the
normalising constant for which
\[
\frac{\dd\widetilde\mu}{\dd\gamma_d}(x)
=
C_\mu\exp(P_\mu(x)).
\]
Note that the exponent \(P_\mu\) is convex, because
\[
D^2P_\mu(x)
=\Id-\frac1\Lambda
D^2V\!\left(\frac{x}{\sqrt\Lambda}\right)
\succeq0.
\]
We then define
\[
P_\nu(y):=
U\!\left(\frac{y}{\sqrt\delta}\right)-\frac{|y|^2}{2}.
\]
Let \(C_\nu>0\) be the corresponding normalising constant, so that
\[
\frac{\dd\widetilde\nu}{\dd\gamma_d}(y)
=
C_\nu\exp(-P_\nu(y)).
\]
By \eqref{eq:strong-convex-U}, the function \(P_\nu\) is extended-valued
convex. The generalised Caffarelli theorem \cite[Theorem 1]{FathiGozlanProdhomme2020} yields a $1$-Lipschitz Brenier map $\widetilde T$ from $\widetilde\mu$ to $\widetilde\nu$.

Let $T=\nabla\Phi$ be the Brenier map from $\mu$ to $\nu$. The map
\[
\widetilde T(x)
=\sqrt\delta\,T\!\left(\frac{x}{\sqrt\Lambda}\right)
\]
is the gradient of the convex function
\[
\widetilde\Phi(x)
:=\sqrt{\Lambda\delta}\,
\Phi\!\left(\frac{x}{\sqrt\Lambda}\right),
\]
and it pushes $\widetilde\mu$ to $\widetilde\nu$. By uniqueness, it is the Brenier map, and its $1$-Lipschitz property is precisely \eqref{eq:preliminary-Caffarelli}.
\end{proof}

For convex functions, one may encode global upper Hessian bounds by second
differences, which is the standard semiconcavity characterisation \cite{CannarsaSinestrari2004}. More precisely, let \(\Psi:\mathbb R^d\to\mathbb R\) be finite and convex, and let \(L_0\ge0\). The distributional bound
\[
D^2\Psi\preceq L_0\Id
\]
holds if and only if
\[
\Psi(x+h)+\Psi(x-h)-2\Psi(x)\le L_0|h|^2
\]
for all \(x,h\in\R^d\). In the class of convex \(C^{1,1}\) functions, this is also the almost-everywhere Alexandrov Hessian inequality.

We first identify the compact-range normalisation needed in all the limiting
arguments. The conjugate truncation below is the minimal Brenier extension studied by
Benamou and Duval \cite[Section 3, Proposition 3.1]{BenamouDuval2019} in the planar context of bounded open source and target domains. The same conjugate argument gives the dimension-independent statement under the hypotheses below \cite[Proof of Theorem 1.1, Step 1]{CorderoFigalli2019}.

\begin{lemma}[Compact-range representative]
\label{lem:compact-range-representative}
We assume that \(\mu\in\mathcal P_2(\R^d)\) has a density which is strictly positive Lebesgue-almost everywhere. Let \(\nu\in\mathcal P_2(\R^d)\) satisfy \(\nu(C)=1\) for a compact convex set \(C\), and let \(\Psi\) be a finite Brenier potential from \(\mu\) to \(\nu\). It follows that the function
\[
\Psi_C(x)
:=
(\Psi^*+\iota_C)^*(x)
=
\sup_{y\in C}
\{\langle x,y\rangle-\Psi^*(y)\}
\]
coincides with \(\Psi\) on \(\R^d\). Moreover,
\[
\partial\Psi_C(\R^d)\subset C.
\]
In particular, after fixing an additive normalisation, one may choose every such Brenier potential so that its global subgradient range is contained in \(C\).
\end{lemma}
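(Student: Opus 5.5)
We prove the two assertions in order: first \(\Psi_C=\Psi\) on \(\R^d\), then \(\partial\Psi_C(\R^d)\subset C\). The second is immediate once \(\Psi_C\) is finite. Indeed, \(\Psi_C\) is the support-type function of the compact set \(C\) shifted by the lower-semicontinuous proper function \(\Psi^*\), so the supremum defining it is attained. Hence \(\Psi_C\) is finite, convex and Lipschitz with constant \(\sup_{y\in C}|y|\). Moreover, by the general formula for conjugates, \(\partial\Psi_C(x)=\operatorname*{argmax}_{y}\{\langle x,y\rangle-(\Psi^*+\iota_C)(y)\}\) up to closed convex hulls. More directly, \(\partial\Psi_C(x)=\partial(\Psi^*+\iota_C)^*(x)\) consists of the maximisers of \(\langle x,\cdot\rangle-(\overline{\Psi^*+\iota_C})\). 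This uses \((\Psi^*+\iota_C)^{**}=\operatorname{lsc}\operatorname{conv}(\Psi^*+\iota_C)\), whose domain lies in the closed convex set \(C\). So every subgradient lies in \(C\).

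The main step is the identity \(\Psi_C=\Psi\). By construction \(\Psi_C\le\Psi^{**}=\Psi\) everywhere, since the supremum runs over fewer \(y\). For the reverse inequality, we use that \(\nabla\Psi_\#\mu=\nu\) with \(\nu(C)=1\). Thus \(\nabla\Psi(x)\in C\) for \(\mu\)-a.e. \(x\) at which \(\Psi\) is differentiable. Since \(\mu\) has an a.e. positive density, this holds on a set \(E\) of full Lebesgue measure. For \(x\in E\), Fenchel's equality gives \(\Psi(x)=\langle x,\nabla\Psi(x)\rangle-\Psi^*(\nabla\Psi(x))\) with \(\nabla\Psi(x)\in C\). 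Hence \(\Psi(x)\le\Psi_C(x)\), and so \(\Psi=\Psi_C\) on \(E\). Both functions are finite and convex on \(\R^d\), hence continuous, and \(E\) is dense. Therefore they agree everywhere.

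We expect the only delicate point to be the finiteness of \(\Psi_C\), which requires \(\Psi^*\not\equiv+\infty\) on \(C\). This is automatic from the previous step: any \(\nabla\Psi(x)\) with \(x\in E\) lies in \(C\cap\operatorname{dom}\Psi^*\), and \(E\neq\emptyset\). The lower bound \(\Psi^*>-\infty\) holds because \(\Psi\) is finite. Combining the two steps gives \(\partial\Psi(\R^d)=\partial\Psi_C(\R^d)\subset C\).

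For the final clause, adding a constant to \(\Psi\) does not change its subdifferential. Any additive normalisation such as \(\Phi(0)=0\) therefore preserves the range property.
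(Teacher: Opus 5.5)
Your proof is correct and follows the paper's argument. You use $\Psi_C\le\Psi^{**}=\Psi$, then Fenchel equality at the differentiability points where $\nabla\Psi(x)\in C$, which form a set of full Lebesgue measure since the density is positive a.e., and then continuity of finite convex functions to get equality everywhere. The only difference is the range inclusion: the paper just notes that $\Psi_C$ is a supremum of affine functions with slopes in $C$, whereas you use the argmax characterisation of $\partial(\Psi^*+\iota_C)^*$. Your hedge ``up to closed convex hulls'' is unnecessary there, because $\Psi^*+\iota_C$ is already proper, convex and lower semicontinuous.
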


\begin{proof}
We first note that the identity \((\nabla\Psi)_\#\mu=\nu\) implies that \(\nabla\Psi(x)\in C\) at \(\mu\)-almost every differentiability point \(x\). At each of these points, Fenchel equality gives
\[
\Psi_C(x)
\ge
\langle x,\nabla\Psi(x)\rangle-\Psi^*(\nabla\Psi(x))
=
\Psi(x).
\]
On the other hand, \(\Psi_C\le\Psi^{**}=\Psi\). Since the density of \(\mu\) is strictly positive almost everywhere, the two functions are equal Lebesgue-almost everywhere. They are both finite and continuous, and hence agree everywhere. Finally, \(\Psi_C\) is a supremum of affine functions whose slopes belong to \(C\), so we conclude that
\(\partial\Psi_C(\R^d)\subset C\).
\end{proof}

The next part involves the standard weak stability of optimal transport plans (see, for instance, \cite[Theorem 5.20]{Villani2009}). We give the argument at the level of potentials, because we shall need both the global compact-range normalisation and the stability of distributional Hessian bounds.

\begin{lemma}[Stability of compact-range Brenier potentials]
\label{lem:stability-potentials}
Assume that $\mu\in\mathcal P_2(\R^d)$ has a density which is strictly
positive Lebesgue-almost everywhere. Let $\nu_n,\nu\in\mathcal P_2(\R^d)$ satisfy $\nu_n\rightharpoonup\nu$, and suppose that all their supports lie in a fixed compact convex set $C$. For each $n$, let $\Phi_n$ be a Brenier potential from $\mu$ to $\nu_n$, normalised by $\Phi_n(0)=0$, and choose its convex representative so that
\[
\partial\Phi_n(\R^d)\subset C.
\]
Let \(\Phi\) be the Brenier potential from \(\mu\) to \(\nu\), normalised by \(\Phi(0)=0\), and choose its convex representative in such a way that
\[
\partial\Phi(\R^d)\subset C.
\]
It follows that
\begin{equation}\label{eq:potential-local-convergence}
\Phi_n\longrightarrow\Phi
\quad\text{locally uniformly on }\R^d.
\end{equation}

We now fix a unit vector \(e\). Suppose that there exist a sequence \((L_n)_{n\ge1}\subset\mathbb R\) and a number \(L\in\mathbb R\) such
that, for every \(n\),
\begin{equation}\label{eq:distribution-bound-n}
\partial_{ee}\Phi_n\le L_n
\quad\text{in distributions},
\end{equation}
and
\[
\limsup_{n\to\infty}L_n\le L,
\]
then
\begin{equation}\label{eq:distribution-bound-limit}
\partial_{ee}\Phi\le L
\quad\text{in distributions}.
\end{equation}
\end{lemma}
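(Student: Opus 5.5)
The argument is a compactness argument at the level of potentials, driven by the uniform range constraint, followed by identification of the limit through the weak stability of optimal plans. The distributional bound then passes to the limit by testing against nonnegative test functions.

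\emph{Step 1: compactness.} Since \(\partial\Phi_n(\R^d)\subset C\) and \(C\subset B_R(0)\) for some \(R\), every \(\Phi_n\) is \(R\)-Lipschitz on \(\R^d\). Together with \(\Phi_n(0)=0\), this gives \(|\Phi_n(x)|\le R|x|\). By Arzel\`a--Ascoli, every subsequence has a further subsequence converging locally uniformly to some finite convex function \(\Psi\). The limit satisfies \(\Psi(0)=0\) and is \(R\)-Lipschitz. Moreover \(\partial\Psi(\R^d)\subset C\): any \(p\in\partial\Psi(x)\) is a limit of elements of \(\partial\Phi_n(x_n)\) with \(x_n\to x\), by the standard closedness of subdifferentials under locally uniform convergence of convex functions, and \(C\) is closed.

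\emph{Step 2: identification of the limit.} It suffices to show \(\Psi=\Phi\); uniqueness of the limit then upgrades subsequential convergence to \eqref{eq:potential-local-convergence}. Locally uniform convergence of convex functions gives \(\nabla\Phi_n\to\nabla\Psi\) at every differentiability point of \(\Psi\), hence Lebesgue-a.e., hence \(\mu\)-a.e. All gradients lie in \(C\), so dominated convergence gives
\[
(\nabla\Phi_n)_\#\mu\rightharpoonup(\nabla\Psi)_\#\mu,
\]
and therefore \((\nabla\Psi)_\#\mu=\nu\). A gradient of a convex function pushing \(\mu\) to \(\nu\) is the Brenier map by McCann's uniqueness \cite{McCann1995}. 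Thus \(\nabla\Psi=\nabla\Phi\) \(\mu\)-a.e., hence Lebesgue-a.e. because the density of \(\mu\) is positive a.e. Two locally Lipschitz functions with a.e.\ equal gradients differ by a constant, so \(\Psi-\Phi\) is constant, and the normalisation at \(0\) gives \(\Psi=\Phi\). Alternatively one could invoke \Cref{lem:compact-range-representative} for the range-normalised representative, but the range normalisation is not actually needed beyond the Lipschitz bound.

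\emph{Step 3: passage of the Hessian bound.} Fix \(\varphi\in C_c^\infty(\R^d)\) with \(\varphi\ge0\). The hypothesis \eqref{eq:distribution-bound-n} reads
\[
\int\Phi_n\,\partial_{ee}\varphi\,\dd x\le L_n\int\varphi\,\dd x.
\]
By local uniform convergence on \(\supp\varphi\), the left side converges to \(\int\Phi\,\partial_{ee}\varphi\,\dd x\). Taking \(\limsup\) on the right gives at most \(L\int\varphi\,\dd x\), since \(\int\varphi\ge0\). This holds even if \(L_n\to-\infty\) along a subsequence, because the left side is bounded. This proves \eqref{eq:distribution-bound-limit}.

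The only delicate point is Step 2, specifically the uniqueness of the Brenier potential up to constants. It requires \(\mu\) to be absolutely continuous with a.e.\ positive density, which is what lets a.e.\ gradient equality on \(\supp\mu\) propagate to all of \(\R^d\). This is exactly the hypothesis of the lemma.
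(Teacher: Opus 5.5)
Your proposal is correct and follows essentially the same route as the paper. Both arguments use Lipschitz compactness from the range constraint. Both identify every subsequential limit through a.e.\ convergence of gradients, the pushforward identity $(\nabla\Psi)_\#\mu=\nu$, Brenier--McCann uniqueness and normalisation at the origin, and then pass the directional bound by testing against nonnegative test functions.

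The differences are only cosmetic. The paper works on the full-measure set where $\overline\Phi$ and all $\Phi_n$ are simultaneously differentiable. Your side claim $\partial\Psi(\R^d)\subset C$ in Step~1 is never used, and the justification you give for it is the graphical-convergence direction rather than closedness; a support-function bound would prove it directly.
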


\begin{proof}
Note that the normalised representatives from the statement are
available by \Cref{lem:compact-range-representative}. We set $R:=\sup_{y\in C}|y|$. Since every subgradient of $\Phi_n$ belongs to $C$, the function $\Phi_n$ is $R$-Lipschitz. Along with $\Phi_n(0)=0$, this gives local uniform boundedness and equicontinuity. Every subsequence admits a further subsequence which converges locally uniformly to a convex function $\overline\Phi$.

Let $E$ denote the set of points at which $\overline\Phi$ and all the functions $\Phi_n$ are differentiable. Since convex functions are differentiable Lebesgue-almost everywhere, the set $E$ has full Lebesgue measure. For every $x\in E$, the local uniform convergence of convex functions gives
\begin{equation}\label{eq:gradient-convergence-convex}
\nabla\Phi_n(x)\longrightarrow\nabla\overline\Phi(x).
\end{equation}
Indeed, every cluster point $p$ of the bounded sequence $\nabla\Phi_n(x)$ satisfies the limiting subgradient inequalities
\[
\overline\Phi(z)\ge \overline\Phi(x)+\langle p,z-x\rangle
\qquad(z\in\R^d).
\]
It thus belongs to $\partial\overline\Phi(x)$, which is a singleton. This gives \eqref{eq:gradient-convergence-convex} $\mu$-almost everywhere. Since the gradients are uniformly bounded by $R$, for every bounded continuous $f$, we have
\begin{align*}
\int_{\R^d}f(\nabla\overline\Phi)\,\dd\mu
&=\lim_{n\to\infty}\int_{\R^d}f(\nabla\Phi_n)\,\dd\mu\\
&=\lim_{n\to\infty}\int_{\R^d}f\,\dd\nu_n
=\int_{\R^d}f\,\dd\nu.
\end{align*}
We conclude that $\nabla\overline\Phi$ pushes $\mu$ to $\nu$, and Brenier
uniqueness implies that $\nabla\overline\Phi=\nabla\Phi$ $\mu$-almost everywhere. The density of $\mu$ is strictly positive almost everywhere, so the gradients agree Lebesgue-almost everywhere. Since both functions are finite and convex on $\R^d$, they differ by a constant on \(\R^d\). Their normalisation at $0$ shows that this constant is zero. Moreover, every subsequence has the same limit. This proves \eqref{eq:potential-local-convergence}.

For a nonnegative test function $\varphi\in C_c^\infty(\R^d)$,
\eqref{eq:distribution-bound-n} means
\[
\int_{\R^d}\Phi_n\,\partial_{ee}\varphi\,\dd x
\le L_n\int_{\R^d}\varphi\,\dd x.
\]
The local uniform convergence and the bound on $L_n$ allow us to pass to the
limit. We obtain \eqref{eq:distribution-bound-limit}. Similarly, suppose that one has the uniform matrix estimate
\[
D^2\Phi_n\preceq L_n\Id
\qquad\text{in distributions},
\]
or, in other terms,
\[
\Phi_n(x+h)+\Phi_n(x-h)-2\Phi_n(x)
\le L_n|h|^2
\qquad(x,h\in\R^d).
\]
It follows from the local uniform convergence that the same inequality holds
for \(\Phi\), with
\[
\limsup_{n\to\infty}L_n
\]
in place of \(L_n\). For convex functions, this is the distributional matrix bound. Once \(C^{1,1}\) regularity is known, it agrees with the almost-everywhere Alexandrov Hessian bound.
\end{proof}

\begin{proposition}[Smooth log-concave targets with compact support]
\label{prop:smooth-target-final}
We assume the smooth source hypotheses of \Cref{sec:prelim}. Let \(\Omega\subset\R^d\) be bounded, open, and convex, set \(K:=\overline\Omega\), and let \(\mathcal U\supset K\) be open. Suppose that \(W\in C^\infty(\mathcal U)\) and that \(W|_\Omega\) is convex, and define
\[
Z_\nu:=\int_\Omega e^{-W(y)}\,\dd y.
\]
Further assume that \(Z_\nu\in(0,\infty)\), and let \(\nu\) be the probability measure defined by
\[
\dd\nu(y)
:=
Z_\nu^{-1}e^{-W(y)}\one_\Omega(y)\,\dd y.
\]
Let \(\Phi\) be the normalised compact-range Brenier potential which satisfies
\[
(\nabla\Phi)_\#\mu=\nu.
\]
For every $e\in\Sph^{d-1}$,
\begin{equation}\label{eq:smooth-target-distribution}
\partial_{ee}\Phi
\le C_{\mathrm{nq}}\sqrt\Lambda\,w_K(e)
\end{equation}
in distributions.
\end{proposition}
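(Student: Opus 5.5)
We need to remove the a priori Hessian bound \eqref{eq:apriori-Hessian-bound}. The plan is to regularise the target by adding a small strongly convex term, so that \Cref{lem:Caffarelli-preliminary} supplies a global Hessian bound. We then apply \Cref{prop:smooth-estimate-optimised} to the regularised problem and pass to the limit with \Cref{lem:stability-potentials}.

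For \(\delta>0\), set
\[
W_\delta(y):=W(y)+\frac\delta2|y|^2,
\qquad
\dd\nu_\delta:=Z_{\nu_\delta}^{-1}e^{-W_\delta}\one_\Omega\,\dd y .
\]
The function \(W_\delta\) is smooth on \(\mathcal U\), and it is convex on \(\Omega\) because \(W\) is. The support of \(\nu_\delta\) is still \(K\), so every width \(w_K(e)\) is unchanged. Moreover, \(Z_{\nu_\delta}\in(0,\infty)\) since \(K\) is bounded.

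To apply \Cref{lem:Caffarelli-preliminary}, take \(U_\delta:=\operatorname{lsc}(W_\delta+\iota_\Omega)\). Up to boundary values, which do not affect the measure, this equals \(\overline W+\frac\delta2|\cdot|^2\). It is proper, lower semicontinuous, has domain contained in \(K\), and \(U_\delta-\frac\delta2|\cdot|^2=\overline W\) is convex. The lemma then gives
\[
\Lip(\nabla\Phi_\delta)\le\sqrt{\Lambda/\delta},
\]
where \(\Phi_\delta\) is the Brenier potential from \(\mu\) to \(\nu_\delta\).

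By \Cref{prop:regularity}, \(\Phi_\delta\) is smooth, so this Lipschitz bound gives \(\sup_x\|D^2\Phi_\delta(x)\|_{\op}\le\sqrt{\Lambda/\delta}<\infty\). That is exactly the a priori bound \eqref{eq:apriori-Hessian-bound}. All the smooth hypotheses of \Cref{sec:apriori} are in force, with \(W_\delta\) convex, so \Cref{prop:smooth-estimate-optimised} yields
\[
\sup_x\partial_{ee}\Phi_\delta\le C_{\mathrm{nq}}\sqrt\Lambda\,w_K(e).
\]
This bound is independent of \(\delta\).

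For the limit, we pass from the smooth representative to the normalised compact-range representative. Both gradients equal \(\nabla\Phi_\delta\) and both satisfy \(\Phi(0)=0\). \Cref{lem:compact-range-representative} shows that the normalised compact-range representative coincides with the smooth potential, because its range \(\nabla\Phi_\delta(\R^d)=\Omega\subset K\). Hence the distributional bound \(\partial_{ee}\Phi_\delta\le C_{\mathrm{nq}}\sqrt\Lambda\,w_K(e)\) holds for that representative. As \(\delta\downarrow0\), the densities converge uniformly on \(K\), so \(\nu_\delta\rightharpoonup\nu\), and all supports lie in \(C:=K\). The source \(\mu\) has an everywhere positive density and finite second moment. \Cref{lem:stability-potentials}, applied with the constant sequence \(L_n=C_{\mathrm{nq}}\sqrt\Lambda\,w_K(e)\), then gives \eqref{eq:smooth-target-distribution}.

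The main obstacle is checking that \Cref{lem:Caffarelli-preliminary} applies in this setting, where the target potential is extended-valued and is convex only on \(\Omega\) rather than on all of \(\R^d\). The key point is that the lower-semicontinuous envelope of a function convex on an open convex set, extended by \(+\infty\), is proper and convex, and that \(W\) being finite on \(K\) makes \(Z_{\nu_\delta}\) finite.

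A secondary point is the identification of the smooth Brenier potential with the compact-range representative. This is needed so that the \(\delta\)-uniform bound holds in distributions for the potentials to which \Cref{lem:stability-potentials} applies, and it follows by uniqueness of gradients up to additive constants together with the normalisation \(\Phi(0)=0\).
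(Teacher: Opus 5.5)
Your proposal is correct and follows essentially the same route as the paper. You regularise the target by adding \(\frac\delta2|y|^2\), obtain the a priori Hessian bound from \Cref{lem:Caffarelli-preliminary}, apply \Cref{prop:smooth-estimate-optimised} uniformly in \(\delta\), and pass to the limit via \Cref{lem:compact-range-representative} and \Cref{lem:stability-potentials}. The one step you leave implicit is that the Lipschitz Brenier map from \Cref{lem:Caffarelli-preliminary} coincides everywhere with the smooth map of \Cref{prop:regularity}; this holds because the two agree \(\mu\)-almost everywhere by Brenier uniqueness, both are continuous, and the source density is positive everywhere.
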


\begin{proof}
We first observe thata a translation of the target does not affect the statement, so we may assume that $0\in\Omega$. For $\delta>0$, let us define
\begin{equation}\label{eq:target-delta}
\dd\nu_\delta(y)
:=Z_\delta^{-1}
\exp\left(-W(y)-\frac\delta2|y|^2\right)
\one_\Omega(y)\,\dd y.
\end{equation}
Let \(\overline W\) be the proper lower-semicontinuous convex extension of
\(W|_\Omega\) from \eqref{eq:Wbar-def}. The extended potential
\[
U_\delta(y)
:=\overline W(y)+\frac\delta2|y|^2
\]
is proper, lower semicontinuous, and $\delta$-strongly convex. Since $\partial\Omega$ is Lebesgue-null, changing its values there does not change $\nu_\delta$. The preliminary bound \Cref{lem:Caffarelli-preliminary} gives, for the Brenier map
$T_\delta=\nabla\Phi_\delta$ from $\mu$ to $\nu_\delta$,
\begin{equation}\label{eq:delta-prebound}
\Lip(T_\delta)\le\sqrt{\Lambda/\delta}.
\end{equation}
Notice that the potential $W+\delta|\cdot|^2/2$ is smooth on a neighbourhood
of $K$, is convex on \(\Omega\), and is continuous on \(K=\overline\Omega\), so it is convex on \(K\). It follows from \Cref{prop:regularity} that the potential $\Phi_\delta$ is smooth and $D^2\Phi_\delta$ is positive definite. By Brenier uniqueness, the Lipschitz
transport given by \Cref{lem:Caffarelli-preliminary}, which is the gradient
of a convex function, agrees with the smooth Brenier map $\nabla\Phi_\delta$
$\mu$-almost everywhere. Both maps are continuous, and the source density is positive on all of $\R^d$, so they agree everywhere. This means that the finite bound
\eqref{eq:delta-prebound} provides the a priori hypothesis \eqref{eq:apriori-Hessian-bound}. We now apply \Cref{prop:smooth-estimate-optimised} and obtain, with a constant independent of $\delta$,
\begin{equation}\label{eq:delta-newbound}
\partial_{ee}\Phi_\delta
\le C_{\mathrm{nq}}\sqrt\Lambda\,w_K(e).
\end{equation}

Since $K$ is bounded, $\nu_\delta\to\nu$ in total variation, hence
weakly, as $\delta\downarrow0$. Let us use \Cref{lem:compact-range-representative} to choose each \(\Phi_\delta\) as the normalised \(K\)-range representative. This choice leaves the transport map and \eqref{eq:delta-newbound} the same, and gives
\(\partial\Phi_\delta(\R^d)\subset K\). We then apply \Cref{lem:stability-potentials} and pass to the limit in \eqref{eq:delta-newbound}, which proves \eqref{eq:smooth-target-distribution}.
\end{proof}

\section{Compact log-concave targets}
\label{sec:approximation}

We now pass from the smooth target potentials satisfying the
hypotheses on the density at the boundary in \Cref{prop:regularity} to arbitrary
full-dimensional log-concave measures with compact support. To achieve this, we approximate from the interior, so the density does not degenerate at the boundary of any approximating support. We apply the local convex mollification, which is classical (see, for example, \cite{GreeneWu1979,Azagra2013}). We present the full construction because the approximating supports must stay inside \(K\) and retain all the directional
width bounds.

\begin{lemma}[Inner approximation of a compact log-concave target]
\label{lem:inner-logconcave-approximation}
Let $\nu$ be a full-dimensional compactly supported log-concave probability measure. We set $K=\supp\nu$ and $\Omega=\operatorname{int}K$, and write
\[
W:\Omega\to\R
\]
for its finite convex potential. Define
\[
Z_\nu:=\int_\Omega e^{-W(y)}\,\dd y\in(0,\infty),
\]
so that
\[
\dd\nu(y)=Z_\nu^{-1}e^{-W(y)}\one_\Omega(y)\,\dd y.
\]
This implies that there exist bounded open convex sets $\Omega_n\Subset\Omega$, with $K_n:=\overline{\Omega_n}$, convex functions $W_n$ which are smooth on a neighbourhood of $K_n$, and probability measures
\[
\dd\nu_n(y)=Z_n^{-1}e^{-W_n(y)}\one_{\Omega_n}(y)\,\dd y
\]
such that
\begin{enumerate}[label=(\roman*)]
\item $\nu_n\to\nu$ in total variation.
\item $K_n\subset K$ and $\Omega_n$ exhausts $\Omega$.
\item for every $e\in\Sph^{d-1}$,
\begin{equation}\label{eq:inner-width-control}
w_{K_n}(e)\le w_K(e),
\qquad
\diam(K_n)\le\diam(K).
\end{equation}
\end{enumerate}
\end{lemma}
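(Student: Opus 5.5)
The plan is to shrink the support toward an interior point and mollify the potential there. After a translation, we may assume \(0\in\Omega\). For \(\lambda_n\uparrow1\) we set \(\Omega_n:=\lambda_n\Omega\). Since \(\Omega\) is convex and contains \(0\), this gives \(\overline{\Omega_n}=\lambda_nK\subset\Omega\), so \(\Omega_n\Subset\Omega\). Moreover \(\Omega_n\) increases to \(\Omega\): every \(y\in\Omega\) satisfies \(y/\lambda\in\Omega\) for \(\lambda\) close to \(1\). The width and diameter bounds are immediate, because \(w_{K_n}(e)=\lambda_nw_K(e)\le w_K(e)\) and \(\diam(K_n)=\lambda_n\diam(K)\). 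This settles (ii) and (iii).

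For the potential, \(\overline{\Omega_n}\) is a compact subset of \(\Omega\), so we can choose \(\varepsilon_n>0\) with \(\overline{\Omega_n}+\overline{B_{2\varepsilon_n}}\subset\Omega\). We then define \(W_n:=W*\eta_{\varepsilon_n}\) on the open set \(\mathcal U_n:=\Omega_n+B_{\varepsilon_n}\), which is a neighbourhood of \(K_n\). This function is smooth, and it is convex on the convex set \(\mathcal U_n\), since it is an average of translates of the convex function \(W\), each defined on \(\mathcal U_n\). No global convex extension is needed, because the smooth framework only requires \(W_n\in C^\infty(\mathcal U)\) for a neighbourhood \(\mathcal U\) of \(K_n\), with \(W_n\) convex on \(\Omega_n\). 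Finally, \(Z_n\in(0,\infty)\) because \(W_n\) is continuous on the compact set \(K_n\).

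The remaining point is (i), convergence in total variation. Write \(f:=e^{-W}\one_\Omega\) and \(f_n:=e^{-W_n}\one_{\Omega_n}\). It suffices to show \(f_n\to f\) in \(L^1\), since then \(Z_n\to Z_\nu\) and the normalised densities converge in \(L^1\). We also impose \(\varepsilon_n\downarrow0\). A convex function is continuous on \(\Omega\), so \(W_n\to W\) locally uniformly on \(\Omega\). Combined with the exhaustion \(\one_{\Omega_n}\to\one_\Omega\) pointwise, this gives \(f_n\to f\) pointwise on \(\Omega\); off \(\overline\Omega\) both vanish, and \(\partial\Omega\) is null.

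The step I expect to need the most care is the domination needed for dominated convergence, since \(W\) may blow down or up near \(\partial\Omega\). For a lower bound, \(W\) has an affine minorant \(\ell\) on \(\Omega\). Then \(W_n\ge\ell*\eta_{\varepsilon_n}=\ell\), because convolution with a radial mollifier preserves affine functions. Hence \(e^{-W_n}\le e^{-\ell}\), which is bounded on the bounded set \(K\), so \(|f_n|\le\sup_Ke^{-\ell}\,\one_K\in L^1\). Dominated convergence then gives \(f_n\to f\) in \(L^1\), and total-variation convergence follows.
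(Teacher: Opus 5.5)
Your proposal is correct and follows the paper's construction: shrink \(\Omega\) homothetically toward an interior point, then mollify \(W\) locally at a scale below \(\operatorname{dist}(K_n,\partial\Omega)\). The only difference is in (i). The paper chooses \(\varepsilon_n\) so that \(\sup_{\Omega_n}|W_n-W|\le 1/n\) and bounds \(\|f_n-f\|_{L^1}\) directly by \((e^{1/n}-1)\int_{\Omega_n}f+\int_{\Omega\setminus\Omega_n}f\). Your dominated-convergence argument, using the affine minorant of \(W\), works for any \(\varepsilon_n\downarrow0\) and is equally valid.
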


\begin{proof}
We first fix $y_0\in\Omega$, choose an increasing sequence $\alpha_n\uparrow1$ such that $0<\alpha_n<1$, and set
\[
\Omega_n:=y_0+\alpha_n(\Omega-y_0),
\qquad
K_n:=\overline{\Omega_n}=y_0+\alpha_n(K-y_0).
\]
Notice that the set $\Omega_n$ is bounded, open, and convex, and $K_n\Subset\Omega$. Moreover, these sets are increasing and exhaust $\Omega$. Indeed, for $y\in\Omega$, the openness of $\Omega$ allows us to choose $s>1$ such that $y_0+s(y-y_0)\in\Omega$. It follows that $y\in\Omega_n$ whenever $\alpha_n>s^{-1}$. By homothety, we have
\[
w_{K_n}(e)=\alpha_n w_K(e),
\qquad
\diam(K_n)=\alpha_n\diam(K),
\]
which proves \eqref{eq:inner-width-control}.

We now set $d_n:=\operatorname{dist}(K_n,\partial\Omega)>0$. The function $W$
is continuous on $\Omega$. For every $n$, we may choose $0<\varepsilon_n<d_n/2$ sufficiently small that the local convolution
\[
W_n(y):=
\int_{\supp\eta_{\varepsilon_n}}
W(y-z)\eta_{\varepsilon_n}(z)\,\dd z
\]
is well defined on a neighbourhood of $K_n$ and satisfies
\begin{equation}\label{eq:Wn-inner-uniform}
\sup_{y\in\Omega_n}|W_n(y)-W(y)|\le\frac1n.
\end{equation}
Wherever it is defined, the function $W_n$ is smooth and convex.

Let us put
\[
f=e^{-W}\one_\Omega,
\qquad
f_n=e^{-W_n}\one_{\Omega_n}.
\]
It follows from \eqref{eq:Wn-inner-uniform} that
\[
e^{-1/n}f\le f_n\le e^{1/n}f
\qquad\text{on }\Omega_n.
\]
We obtain
\[
\|f_n-f\|_{L^1(\R^d)}
\le (e^{1/n}-1)\int_{\Omega_n}f\,\dd y
+\int_{\Omega\setminus\Omega_n}f\,\dd y.
\]
The first term tends to zero, while the second tends to zero by monotone convergence since $\Omega_n$ exhausts $\Omega$. Therefore, $f_n\to f$ in $L^1$, and the convergence of the normalising constants gives $\nu_n\to\nu$ in total variation.
\end{proof}

\begin{lemma}[Distributional Hessian bounds and $C^{1,1}$ regularity]
\label{lem:distribution-C11}
Let $\Psi:\R^d\to\R$ be convex and locally integrable. Suppose that, for
some \(L\ge0\),
\begin{equation}\label{eq:distribution-Hessian-L}
D^2\Psi\preceq L\Id
\end{equation}
in the sense of matrix-valued distributions. This implies $\Psi\in C^{1,1}(\R^d)$, and its gradient has an everywhere-defined representative that satisfies
\begin{equation}\label{eq:gradient-Lip-L}
|\nabla\Psi(x)-\nabla\Psi(y)|\le L|x-y|
\qquad(x,y\in\R^d).
\end{equation}
\end{lemma}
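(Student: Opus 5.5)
The plan is to mollify, obtain the second-difference form of the bound for the smooth approximants, pass it to \(\Psi\), and then use the standard one-dimensional argument for convex functions to get Lipschitz gradients.

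\emph{Mollification.} For \(\varepsilon>0\), set \(\Psi_\varepsilon:=\Psi*\eta_\varepsilon\). This function is smooth and convex, and it converges to \(\Psi\) locally uniformly, since a finite convex function is continuous. Convolving the distributional inequality \(L\Id-D^2\Psi\succeq0\) with the nonnegative kernel \(\eta_\varepsilon\) gives, for every \(v\),
\[
\langle D^2\Psi_\varepsilon v,v\rangle=\bigl(\partial_{vv}\Psi\bigr)*\eta_\varepsilon\le L|v|^2
\]
pointwise, because a nonnegative distribution convolved with a nonnegative test function is a nonnegative function. Hence \(0\preceq D^2\Psi_\varepsilon\preceq L\Id\) everywhere.

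\emph{Passing to \(\Psi\).} For smooth \(\Psi_\varepsilon\), Taylor's formula yields
\[
\Psi_\varepsilon(x+h)+\Psi_\varepsilon(x-h)-2\Psi_\varepsilon(x)\le L|h|^2 .
\]
Letting \(\varepsilon\to0\) gives the same second-difference inequality for \(\Psi\). This is the semiconcavity characterisation recalled before \Cref{lem:compact-range-representative}.

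\emph{Lipschitz gradient.} The main step is to show that a convex function which is \(L\)-semiconcave is differentiable everywhere with \(L\)-Lipschitz gradient. I would do this directly on the approximants. Each \(\Psi_\varepsilon\) has \(0\preceq D^2\Psi_\varepsilon\preceq L\Id\), so \(\|D^2\Psi_\varepsilon\|_{\op}\le L\), and therefore
\[
|\nabla\Psi_\varepsilon(x)-\nabla\Psi_\varepsilon(y)|\le L|x-y| .
\]
Differentiability of \(\Psi\) at every point comes from squeezing between an affine minorant and a quadratic majorant. For any \(p\in\partial\Psi(x)\), the second-difference bound with \(\Psi(x-h)\ge\Psi(x)-\langle p,h\rangle\) gives
\[
\Psi(x+h)\le\Psi(x)+\langle p,h\rangle+L|h|^2 .
\]
Combined with \(\Psi(x+h)\ge\Psi(x)+\langle p,h\rangle\), this shows \(\Psi\) is differentiable at \(x\) with \(\nabla\Psi(x)=p\), so \(\partial\Psi(x)\) is a singleton. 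Pointwise convergence \(\nabla\Psi_\varepsilon(x)\to\nabla\Psi(x)\) then follows from the cluster-point argument of \Cref{lem:stability-potentials}. The \(\nabla\Psi_\varepsilon\) are locally bounded, being \(L\)-Lipschitz with bounded values at a point (bounded because \(\Psi_\varepsilon\) is locally uniformly bounded and convex). Passing to the limit in the Lipschitz inequality gives \eqref{eq:gradient-Lip-L} everywhere.

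The only step needing care is the positivity of the mollified distribution, which is immediate. So no genuine obstacle is expected.
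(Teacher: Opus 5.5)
Your proof is correct. Its first half coincides with the paper's: mollification, then nonnegativity of \((L|v|^2-\partial_{vv}\Psi)*\eta_\varepsilon\), hence \(0\preceq D^2\Psi_\varepsilon\preceq L\Id\) and \(L\)-Lipschitz \(\nabla\Psi_\varepsilon\).

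The routes differ at the key step of proving that \(\Psi\) is differentiable and identifying its gradient.

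The paper works only with the gradients of the approximants. On a ball it bounds \(\nabla\Psi_\varepsilon\) uniformly and extracts, by Arzel\`a--Ascoli, a uniformly convergent subsequence with \(L\)-Lipschitz limit \(G\). It then passes to the limit in \(\Psi_\varepsilon(y)-\Psi_\varepsilon(x)=\int_0^1\langle\nabla\Psi_\varepsilon(x+t(y-x)),y-x\rangle\,\dd t\). From this it reads off \(|\Psi(x+h)-\Psi(x)-\langle G(x),h\rangle|\le\frac L2|h|^2\), so \(\nabla\Psi=G\), and uniqueness of the limit upgrades subsequential convergence to full locally uniform convergence.

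You instead transfer the bound to \(\Psi\) itself in second-difference form. You then get differentiability at every point from convexity alone, by sandwiching \(\Psi(x+h)\) between the subgradient minorant \(\Psi(x)+\langle p,h\rangle\) and the majorant \(\Psi(x)+\langle p,h\rangle+L|h|^2\). The approximants are needed only to transfer the Lipschitz estimate, via pointwise convergence of gradients from the cluster-point argument of \Cref{lem:stability-potentials}. Your boundedness remark (convexity plus local uniform boundedness of \(\Psi_\varepsilon\)) correctly supplies the hypothesis that argument needs.

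What your version buys:
\begin{itemize}
\item It avoids the compactness step and the integral identity.
\item It makes explicit that ``convex plus semiconcave implies differentiable'' is a purely pointwise fact, in the same second-difference language the paper uses in its stability lemmas.
\end{itemize}
What it costs is minor. The Taylor constant is \(L\) rather than \(L/2\), which is irrelevant here. Local uniform convergence of \(\nabla\Psi_\varepsilon\) is not obtained directly, but it follows a posteriori from equi-Lipschitz continuity together with pointwise convergence.
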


\begin{proof}
Let $\eta_\varepsilon$ be a standard compactly supported smooth mollifier,
and set $\Psi_\varepsilon=\Psi*\eta_\varepsilon$. Since $\Psi$ is convex, it is locally Lipschitz. It follows that the convolution is well defined on every compact set when $\varepsilon$ is sufficiently small. Recall that convolution preserves distributional matrix inequalities, so
\[
0\preceq D^2\Psi_\varepsilon\preceq L\Id.
\]
We conclude that $\nabla\Psi_\varepsilon$ is $L$-Lipschitz.

In this step, we fix a ball $B_R$. Convexity and the local uniform convergence
$\Psi_\varepsilon\to\Psi$ show that $\nabla\Psi_\varepsilon$ is uniformly bounded on $B_R$, provided that the convolution is done in a slightly larger ball. The family $\{\nabla\Psi_\varepsilon\}$ is equibounded and equi-Lipschitz on $B_R$, so by Arzel\`a--Ascoli, every sequence $\varepsilon_j\downarrow0$ has a subsequence for which
\[
\nabla\Psi_{\varepsilon_j}\longrightarrow G
\quad\text{uniformly on }B_R
\]
for an $L$-Lipschitz vector field $G$. Let $x,y\in B_R$, and assume that the segment $[x,y]$ lies in the slightly larger ball. We then have
\[
\Psi_{\varepsilon_j}(y)-\Psi_{\varepsilon_j}(x)
=\int_0^1
\left\langle
\nabla\Psi_{\varepsilon_j}(x+t(y-x)),y-x
\right\rangle\dd t.
\]
On passing to the limit, we obtain
\[
\Psi(y)-\Psi(x)
=\int_0^1\langle G(x+t(y-x)),y-x\rangle\dd t.
\]
In this identity, let $y=x+h$ and use the $L$-Lipschitz continuity of $G$. We obtain
\[
|\Psi(x+h)-\Psi(x)-\langle G(x),h\rangle|
\le \frac L2|h|^2
\]
whenever the segment remains in the ball. This implies that $\Psi$ is differentiable on $B_R$ and $\nabla\Psi=G$, and, since the gradient of a differentiable function is unique, every convergent subsequence has the same limit. We then have $\nabla\Psi_\varepsilon\to\nabla\Psi$ locally uniformly. Passing to the limit
in
\[
|\nabla\Psi_\varepsilon(x)-\nabla\Psi_\varepsilon(y)|
\le L|x-y|
\]
proves \eqref{eq:gradient-Lip-L}. Since $R$ was arbitrary, the conclusion is global.
\end{proof}

\begin{corollary}[Closure of directional Hessian bounds]
\label{cor:directional-closure}
Let \(\Psi:\R^d\to\R\) be finite and convex. Assume that, for some \(L\ge0\) and every \(e\) in a dense subset of \(\Sph^{d-1}\),
\[
\partial_{ee}\Psi\le L
\]
in distributions. It holds that
\[
0\preceq D^2\Psi\preceq L\Id
\]
in the sense of matrix-valued distributions. Moreover, \(\Psi\in C^{1,1}(\R^d)\) and
\[
\Lip(\nabla\Psi)\le L.
\]
\end{corollary}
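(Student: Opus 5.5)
The plan is to turn the directional bounds on a dense set of directions into the full matrix bound \(0\preceq D^2\Psi\preceq L\Id\), and then quote \Cref{lem:distribution-C11}.

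The lower bound comes for free. Since \(\Psi\) is finite and convex, for every nonnegative test function \(\varphi\) and every \(v\in\R^d\) we have
\[
\int\Psi\,\partial_{vv}\varphi\,\dd x\ge0.
\]
To see this, write \(\partial_{vv}\varphi\) as a limit of second difference quotients of \(\varphi\). Move these onto \(\Psi\) by a change of variables; the resulting second differences of \(\Psi\) are nonnegative by convexity.

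For the upper bound, consider the functional
\[
q_\varphi(v):=\int\Psi\,\partial_{vv}\varphi\,\dd x=\Bigl\langle\Bigl(\int\Psi\,D^2\varphi\,\dd x\Bigr)v,v\Bigr\rangle .
\]
This is a quadratic form in \(v\), so it is continuous in \(v\). The hypothesis says
\[
q_\varphi(e)\le L\int\varphi\,\dd x
\]
for every \(e\) in a dense subset of \(\Sph^{d-1}\). By continuity the same inequality holds for every \(e\in\Sph^{d-1}\), and by homogeneity \(q_\varphi(v)\le L|v|^2\int\varphi\) for every \(v\in\R^d\). Therefore
\[
\int\Psi\,D^2\varphi\,\dd x\preceq L\Bigl(\int\varphi\,\dd x\Bigr)\Id
\]
for every nonnegative test function \(\varphi\). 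This is exactly the distributional matrix inequality \(D^2\Psi\preceq L\Id\).

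Since \(\Psi\) is convex it is locally Lipschitz, hence locally integrable. We can therefore apply \Cref{lem:distribution-C11}, which gives \(\Psi\in C^{1,1}(\R^d)\) together with an everywhere-defined gradient representative satisfying \(\Lip(\nabla\Psi)\le L\).

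No step is a genuine obstacle. The only point needing care is the density argument, and it works because the distributional pairing \(v\mapsto q_\varphi(v)\) is a finite quadratic form for each fixed test function.
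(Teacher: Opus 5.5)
Your proposal is correct and follows the paper's proof. Continuity of \(e\mapsto\int\Psi\,\partial_{ee}\varphi\,\dd x\) extends the bound from the dense set to the whole sphere, convexity supplies the lower bound, and \Cref{lem:distribution-C11} gives the \(C^{1,1}\) conclusion. Your second-difference justification of \(\int\Psi\,\partial_{vv}\varphi\,\dd x\ge0\) and your remark that this pairing is a quadratic form in \(v\) only spell out details that the paper leaves implicit.
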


\begin{proof}
Let us fix a nonnegative \(\varphi\in C_c^\infty(\R^d)\). Notice that the map
\[
e\longmapsto
\langle\partial_{ee}\Psi,\varphi\rangle
=
\int_{\R^d}\Psi\,\partial_{ee}\varphi\,\dd x
\]
is continuous on \(\Sph^{d-1}\). This implies that the upper bound extends from the dense subset to every unit vector. By convexity, \(D^2\Psi\succeq0\). We obtain \(D^2\Psi\preceq L\Id\) as a matrix-valued distribution. The conclusion follows from
\Cref{lem:distribution-C11}.
\end{proof}

\begin{proposition}[Full-dimensional target and a smooth source]
\label{prop:full-dimensional-smooth}
Let \(V\in C^\infty(\R^d)\) satisfy \(D^2V\preceq\Lambda\Id\) for some \(\Lambda>0\), and define
\[
Z_\mu:=\int_{\R^d}e^{-V(x)}\,\dd x,
\]
assume \(Z_\mu\in(0,\infty)\), and let
\[
\dd\mu=Z_\mu^{-1}e^{-V}\,\dd x\in\mathcal P_2(\R^d).
\]
Let \(\nu\) be a full-dimensional compactly supported log-concave probability measure with support \(K\). Let \(\Phi\) be the normalised compact-range Brenier potential which satisfies
\[
(\nabla\Phi)_\#\mu=\nu.
\]
It follows that
\[
\partial_{ee}\Phi
\le
C_{\mathrm{nq}}\sqrt\Lambda\,w_K(e)
\qquad(e\in\Sph^{d-1})
\]
in distributions, and
\[
\Lip(\nabla\Phi)
\le
C_{\mathrm{nq}}\sqrt\Lambda\diam(K).
\]
\end{proposition}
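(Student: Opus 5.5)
The plan is to approximate $\nu$ by the smooth inner targets of \Cref{lem:inner-logconcave-approximation}, apply \Cref{prop:smooth-target-final} to each approximant, and pass to the limit with \Cref{lem:stability-potentials}. The Lipschitz bound then follows from \Cref{cor:directional-closure}.

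Concretely, let $\nu_n$, $\Omega_n$, $K_n$, $W_n$ be as in \Cref{lem:inner-logconcave-approximation}. Each $\nu_n$ satisfies the hypotheses of \Cref{prop:smooth-target-final}. Indeed, $\Omega_n$ is bounded, open and convex, and $W_n$ is smooth on a neighbourhood $\mathcal U_n\supset K_n$. Moreover, $W_n|_{\Omega_n}$ is convex, being a local mollification of the convex function $W$ on a convex set where it is defined. Finally, $Z_n\in(0,\infty)$ because $e^{-W_n}$ is continuous and positive on the compact set $K_n$.

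By \Cref{lem:compact-range-representative}, I would take $\Phi_n$ to be the normalised $K_n$-range Brenier potential from $\mu$ to $\nu_n$. \Cref{prop:smooth-target-final} then gives
\[
\partial_{ee}\Phi_n\le C_{\mathrm{nq}}\sqrt\Lambda\,w_{K_n}(e)\le C_{\mathrm{nq}}\sqrt\Lambda\,w_K(e)
\]
in distributions, where the second inequality is \eqref{eq:inner-width-control}. Since $K_n\subset K$, we have $\partial\Phi_n(\R^d)\subset K_n\subset K$, so every $\Phi_n$ is also a normalised $K$-range representative.

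The remaining hypotheses of \Cref{lem:stability-potentials} hold with $C=K$. The source $\mu$ has a strictly positive density, and $\nu_n\to\nu$ in total variation, hence weakly. All supports lie in $K$, and all the measures lie in $\mathcal P_2$ because they are compactly supported. Applying \Cref{lem:stability-potentials} with $L_n=L=C_{\mathrm{nq}}\sqrt\Lambda\,w_K(e)$ yields the directional distributional bound for the normalised $K$-range potential $\Phi$ of $\nu$.

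For the Lipschitz statement, note that $w_K(e)\le\diam(K)$ for every unit vector $e$. Hence $\partial_{ee}\Phi\le C_{\mathrm{nq}}\sqrt\Lambda\diam(K)$ for all $e\in\Sph^{d-1}$. Applying \Cref{cor:directional-closure} with $L=C_{\mathrm{nq}}\sqrt\Lambda\diam(K)$ gives $\Phi\in C^{1,1}(\R^d)$, an everywhere-defined representative of $\nabla\Phi$, and the stated Lipschitz bound.

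I expect no step to be a serious obstacle, since everything is assembled from results already proved. The point needing most care is bookkeeping of representatives. The range condition must be taken with respect to the fixed set $K$ in \Cref{lem:stability-potentials}, which is consistent because $K_n\subset K$, and the normalisation $\Phi_n(0)=0$ must be kept. If a translation was used in \Cref{prop:smooth-target-final} to ensure $0\in\Omega_n$, I would note that the distributional Hessian bound is invariant under subtracting linear functions. One also checks that mollification preserves convexity on the relevant domain, which holds since $\varepsilon_n<d_n/2$ keeps the convolution inside $\Omega$.
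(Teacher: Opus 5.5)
Your proposal is correct and follows the paper's proof essentially step by step. You approximate via \Cref{lem:inner-logconcave-approximation}, apply \Cref{prop:smooth-target-final} together with the width monotonicity \eqref{eq:inner-width-control}, pass to the limit with \Cref{lem:stability-potentials} using representatives whose subgradient range lies in $K$ (by \Cref{lem:compact-range-representative} your $K_n$-range representatives are exactly the paper's normalised $K$-range ones), and conclude the Lipschitz bound from \Cref{cor:directional-closure}.
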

\begin{proof}
Let $\nu_n$ be the measures given by \Cref{lem:inner-logconcave-approximation}, and let $\Phi_n$ be the Brenier potential from $\mu$ to $\nu_n$, normalised by $\Phi_n(0)=0$. The function $W_n$ is smooth on a neighbourhood of $K_n$ and convex on $\Omega_n$. It follows from \Cref{prop:smooth-target-final} that
\begin{equation}\label{eq:inner-Hessian-bound}
\partial_{ee}\Phi_n
\le C_{\mathrm{nq}}\sqrt\Lambda\,w_{K_n}(e)
\le C_{\mathrm{nq}}\sqrt\Lambda\,w_K(e)
\end{equation}
in distributions.

Note that all the supports $K_n$ lie in the fixed compact convex set $K$, and $\nu_n\to\nu$ weakly. By \Cref{lem:compact-range-representative}, we choose the normalised \(K\)-range representative of every \(\Phi_n\). This does not change either the transport map or \eqref{eq:inner-Hessian-bound}, and it gives \(\partial\Phi_n(\R^d)\subset K\). By \Cref{lem:stability-potentials}, we conclude that $\Phi_n\to\Phi$ locally
uniformly. We pass to the limit in \eqref{eq:inner-Hessian-bound} and obtain
\[
\partial_{ee}\Phi
\le C_{\mathrm{nq}}\sqrt\Lambda\,w_K(e)
\]
for every fixed $e$. This proves the directional estimate.

Finally, since \(w_K(e)\le\diam(K)\), we apply \Cref{cor:directional-closure} with
\[
L:=C_{\mathrm{nq}}\sqrt\Lambda\,\diam(K).
\]
The operator estimate follows.
\end{proof}

\section{Affine covariance and nonsmooth sources}
\label{sec:affine-nonsmooth-source}

The pointwise calculations above hold for smooth source potentials. We now remove this restriction and establish the affine version of the theorem.

\subsection{Reverse covariance}

In the estimate below, we apply the well-known matrix Cram\'er--Rao inequality that relates covariance and Fisher information \cite{Rao1945,Darmois1945,Cramer1946}. The scalar version from efficient estimation and Fisher information is given in
\cite{AitkenSilverstone1942} (\textit{cf}. the related argument of Fr\'echet in \cite{Frechet1943}). We combine this inequality with an upper bound for the Fisher information, which is similar to the application of covariance inequalities in optimal transport \cite{ChewiPooladian2023}.

\begin{lemma}[Reverse covariance under semiconcavity of the potential]
\label{lem:reverse-covariance-upper-Hessian}
Let \(M\succ0\), and suppose that \(\pi\) is a probability measure on \(\R^d\). Let \(U:\R^d\to\R\) be locally Lipschitz, define
\[
Z_\pi:=\int_{\R^d}e^{-U(x)}\,\dd x,
\]
assume that \(Z_\pi\in(0,\infty)\), and suppose that \(\pi\) has positive Lebesgue density
\[
\dd\pi(x)=Z_\pi^{-1}e^{-U(x)}\,\dd x,
\]
and assume that \(D^2U\preceq M\) in the sense of distributions. If \(\pi\) has finite
second moment, then
\begin{equation}\label{eq:reverse-covariance-upper-Hessian}
\Cov(\pi)\succeq M^{-1}.
\end{equation}
\end{lemma}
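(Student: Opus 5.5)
The plan is to prove \eqref{eq:reverse-covariance-upper-Hessian} by combining the matrix Cram\'er--Rao inequality with an upper bound on the Fisher information matrix. Write \(\rho:=Z_\pi^{-1}e^{-U}\) and \(I(\pi):=\int\nabla U\otimes\nabla U\,\dd\pi\). The Cram\'er--Rao inequality, applied to the identity statistic, gives \(\Cov(\pi)\succeq I(\pi)^{-1}\) whenever \(I(\pi)\) is finite and positive definite. It then suffices to show \(I(\pi)\preceq M\), since matrix inversion is Loewner-antitone on positive-definite matrices and so \(I(\pi)^{-1}\succeq M^{-1}\).

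\textbf{Step 1: the Fisher information bound.} Formally, integration by parts gives
\[
\int\nabla U\otimes\nabla U\,\dd\pi=\int D^2U\,\dd\pi\preceq M,
\]
because \(\nabla\rho=-\rho\nabla U\). Rigorously, I would test against \(\chi_R\rho\) with a cutoff \(\chi_R(x)=\chi(x/R)\). For a fixed vector \(\xi\), integrating by parts in the distributional sense yields
\[
\int\chi_R\langle\nabla U,\xi\rangle^2\,\dd\pi
=\bigl\langle\partial_{\xi\xi}U,\chi_R\rho\bigr\rangle
+\int\partial_\xi\chi_R\,\langle\nabla U,\xi\rangle\,\dd\pi.
\]
The distributional pairing is legitimate because \(\chi_R\rho\) is a nonnegative Lipschitz function with compact support; it can be mollified to reach \(C_c^\infty\) test functions. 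The hypothesis \(D^2U\preceq M\) then bounds the first term by \(\langle M\xi,\xi\rangle\int\chi_R\,\dd\pi\). The second term is handled by Cauchy--Schwarz: it is at most \(\|\nabla\chi_R\|_\infty(\int_{\supp\chi_R}\langle\nabla U,\xi\rangle^2\dd\pi)^{1/2}\). Choosing \(\chi\) with \(|\nabla\chi|\le C\chi^{1/2}\), this term is absorbed into the left-hand side up to an \(O(1/R)\) error. Letting \(R\to\infty\) gives \(\langle I(\pi)\xi,\xi\rangle\le\langle M\xi,\xi\rangle\). In particular \(I(\pi)\) is finite and \(I(\pi)\preceq M\).

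\textbf{Step 2: Cram\'er--Rao without assuming invertibility of \(I(\pi)\).} The key identity is \(\int(x-m_\pi)\otimes\nabla U\,\dd\pi=\Id\), which is again an integration by parts, \(\int x_i\partial_jU\,\rho=\int\partial_j x_i\,\rho\). I would justify it with the same cutoffs, using finite second moment and Step 1's \(\nabla U\in L^2(\pi)\) to kill the boundary terms. Then for any \(\xi,\zeta\in\R^d\), Cauchy--Schwarz gives
\[
\langle\xi,\zeta\rangle^2\le\langle\Cov(\pi)\xi,\xi\rangle\,\langle I(\pi)\zeta,\zeta\rangle\le\langle\Cov(\pi)\xi,\xi\rangle\,\langle M\zeta,\zeta\rangle.
\]
Taking \(\zeta=M^{-1}\xi\) yields \(\langle\Cov(\pi)\xi,\xi\rangle\ge\langle M^{-1}\xi,\xi\rangle\), which is \eqref{eq:reverse-covariance-upper-Hessian}. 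This route avoids inverting \(I(\pi)\) altogether.

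\textbf{Main obstacle.} The hard part is the rigorous integration by parts in Step 1. \(U\) is only locally Lipschitz with a distributional upper Hessian bound, so \(\partial_{\xi\xi}U\) is a distribution bounded above, that is, \(\langle M\xi,\xi\rangle\) minus a nonnegative measure. I would first try to convolve \(U\) with \(\eta_\varepsilon\): this preserves \(D^2U_\varepsilon\preceq M\) and converges locally uniformly with \(\nabla U_\varepsilon\to\nabla U\) almost everywhere and locally boundedly. I would then perform the cutoff computation with smooth \(U_\varepsilon\) while keeping the weight \(\rho\), and pass to the limit \(\varepsilon\to0\) by dominated convergence on the compact support of \(\chi_R\). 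The absorption trick with \(\chi_R^{1/2}\) controls the tail without any a priori integrability of \(\nabla U\).
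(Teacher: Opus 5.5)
Your proposal is correct and is essentially the paper's argument. It has the same three steps: the cutoff integration by parts with absorption giving $\mathbb E[\nabla U(Y)\otimes\nabla U(Y)]\preceq M$, the identity $\mathbb E[(Y-\mathbb EY)\otimes\nabla U(Y)]=\Id$ from the same cutoffs, and Cauchy--Schwarz at $\zeta=M^{-1}\xi$. The paper cuts off with $\chi_R^2$, which is your $|\nabla\chi|\le C\chi^{1/2}$ device. You should write the Cauchy--Schwarz in the weighted form $(\int_{\{\chi_R>0\}}|\partial_\xi\chi_R|^2\chi_R^{-1}\,\dd\pi)^{1/2}(\int\chi_R U_\xi^2\,\dd\pi)^{1/2}$, since the unweighted integral over $\supp\chi_R$ you wrote first is not what gets absorbed.

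The only real difference is the nonsmooth case. The paper first treats $U\in C^{1,1}_{\mathrm{loc}}$ and then approximates by $U_n=\tfrac12\langle Mx,x\rangle-P_n$, with $P_n$ the Moreau envelopes of the convex function $\tfrac12\langle Mx,x\rangle-U$, and passes to the limit in total variation and in the covariance. You instead pair $\partial_{\xi\xi}U$ directly with the nonnegative Lipschitz test function $\chi_R\rho$, which is legitimate after mollifying the test function because $U_\xi\in L^\infty_{\mathrm{loc}}$. This is equally valid and spares the approximation of the measure.
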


\begin{proof}
We first assume that $U\in C^{1,1}_{\mathrm{loc}}(\mathbb R^d)$. Let $Y\sim\pi$, and set
\[
S:=\nabla U(Y).
\]
We fix $\xi\in\mathbb R^d$. For \(R\ge1\), choose \(\chi_R\in C_c^\infty(\mathbb R^d)\) such that
\[
0\le\chi_R\le1,
\qquad
\chi_R(x)\uparrow1\quad\text{as }R\to\infty,
\qquad
|\nabla\chi_R|\le\frac2R.
\]
Integration by parts with compact support gives
\[
\int \chi_R^2 U_\xi^2\,\dd\pi
=
\int \partial_\xi(\chi_R^2U_\xi)\,\dd\pi
=
\int \chi_R^2U_{\xi\xi}\,\dd\pi
+
2\int\chi_R(\partial_\xi\chi_R)U_\xi\,\dd\pi.
\]
Let
\[
A_R
:=
\left(
\int\chi_R^2U_\xi^2\,\dd\pi
\right)^{1/2}.
\]
It follows from $D^2U\preceq M$ that
\[
A_R^2
\le
\langle M\xi,\xi\rangle
+
2A_R
\left(\int|\partial_\xi\chi_R|^2\,\dd\pi\right)^{1/2}.
\]
We have
\[
\int|\partial_\xi\chi_R|^2\,\dd\pi\longrightarrow0,
\]
and infer that
\[
\limsup_{R\to\infty}A_R^2
\le
\langle M\xi,\xi\rangle.
\]
We obtain
\[
\mathbb E\langle S,\xi\rangle^2
\le
\langle M\xi,\xi\rangle,
\]
and conclude
\[
\mathbb E[SS^{\mathsf T}]\preceq M.
\]

We now know that \(S\in L^2(\pi)\). Let \((e_i)_{i=1}^d\) be the standard basis, and write
\[
U_j:=\partial_jU,
\qquad
y_i(y):=\langle y,e_i\rangle,
\qquad
Y_i:=\langle Y,e_i\rangle,
\]
for \(1\le i,j\le d\). Let us denote the Kronecker delta by \(\delta_{ij}\). We use the same cutoffs and obtain, for every \(j\in\{1,\ldots,d\}\),
\[
\int\chi_R U_j\,\dd\pi
=
\int\partial_j\chi_R\,\dd\pi
\longrightarrow0,
\]
which gives
\[
\mathbb ES=0.
\]
Analogously, for every \(i,j\in\{1,\ldots,d\}\),
\[
\int
\chi_R(y_i-\mathbb EY_i)U_j\,\dd\pi
=
\delta_{ij}\int\chi_R\,\dd\pi
+
\int
(y_i-\mathbb EY_i)\partial_j\chi_R\,\dd\pi.
\]
The last term tends to zero by Cauchy--Schwarz, because \(Y\in L^2(\pi)\)
and \(\|\nabla\chi_R\|_\infty\to0\). We obtain
\[
\mathbb E[(Y-\mathbb EY)S^{\mathsf T}]
=
\Id.
\]
Let $u,v\in\mathbb R^d$. The Cauchy--Schwarz inequality gives
\[
\langle u,v\rangle^2
=
\left(
\mathbb E[
\langle u,Y-\mathbb EY\rangle
\langle v,S\rangle
]
\right)^2
\le
\langle\Cov(\pi)u,u\rangle
\langle\mathbb E[SS^{\mathsf T}]v,v\rangle.
\]
Since $\mathbb E[SS^{\mathsf T}]\preceq M$, we may take the supremum over all $v$ such that \(\langle Mv,v\rangle\le1\), which gives
\[
\langle M^{-1}u,u\rangle\le\langle\Cov(\pi)u,u\rangle.
\]

Let us now consider the distributional case. Write
\[
P(x):=\frac12\langle Mx,x\rangle-U(x).
\]
By the hypothesis, \(P\) is finite and convex. We choose \(\lambda_n\downarrow0\), and let \(P_n\) be the corresponding Moreau envelopes \cite{Moreau1965},
\[
P_n(x)
:=
\inf_{z\in\mathbb R^d}
\left\{
P(z)+\frac{|x-z|^2}{2\lambda_n}
\right\}.
\]
Observe that \(P_n\) is convex and belongs to \(C^{1,1}\). In particular,
\[
P_n(x)\uparrow P(x)
\qquad
(x\in\mathbb R^d).
\]
We define
\[
U_n(x)
:=
\frac12\langle Mx,x\rangle-P_n(x),
\qquad x\in\mathbb R^d,
\]
which allows us to conclude
\[
U_n(x)\downarrow U(x)
\qquad
\text{for every }x\in\mathbb R^d,
\]
and that
\[
D^2U_n\preceq M
\]
almost everywhere. Since \(U_n\ge U\), we set
\[
Z_n:=\int_{\R^d}e^{-U_n(x)}\,\dd x,
\qquad
\dd\pi_n(x):=Z_n^{-1}e^{-U_n(x)}\,\dd x.
\]
We have \(0<Z_n\le Z_\pi\). By dominated convergence, \(Z_n\to Z_\pi\) and \(\pi_n\to\pi\) in total variation. The covariance matrices converge as well, again by dominated convergence, because \(|x|^2e^{-U(x)}\) is integrable. Finally, we apply the \(C^{1,1}\) case to \(\pi_n\) and pass to the limit. This proves \eqref{eq:reverse-covariance-upper-Hessian}.
\end{proof}

The fact that Gaussian smoothing makes the logarithm of the density semiconvex
and yields bounds on its Hessian is standard in the literature on heat
semigroups (for example, see \cite{EldanLee2018} and \cite[Section 2]{BrigatiPedrotti2025}). Recent propagation results for generalised heat flows can also be found in \cite{ChaintronConfortiEichinger2025}. In the next lemma, we consider the anisotropic matrix estimate.

\begin{lemma}[Gaussian smoothing preserves source semiconcavity]
\label{lem:Gaussian-source-smoothing}
Let \(Q\succ0\), and suppose that \(V:\R^d\to\R\) is finite. Define
\[
Z_\mu:=\int_{\R^d}e^{-V(x)}\,\dd x,
\]
assume that \(Z_\mu\in(0,\infty)\), and set
\[
\dd\mu(x)=Z_\mu^{-1}e^{-V(x)}\,\dd x\in\mathcal P_2(\R^d),
\]
and assume that
\[
x\longmapsto\frac12\langle Qx,x\rangle-V(x)
\]
is convex. For \(t>0\), set
\[
\mu_t:=\mu*\gamma_{d,t}.
\]
We also write \(\dd\mu_t=e^{-V_t}\,\dd x\). It follows that \(V_t\in C^\infty(\mathbb R^d)\), and
\begin{equation}\label{eq:Gaussian-source-Hessian}
D^2V_t
\preceq Q(\Id+tQ)^{-1}
\preceq Q.
\end{equation}
Moreover, $\mu_t\to\mu$ in $W_2$ as $t\downarrow0$.
\end{lemma}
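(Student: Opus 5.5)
The plan is to write the convolved density explicitly and to read its Hessian off as a conditional covariance. Then \Cref{lem:reverse-covariance-upper-Hessian} bounds that covariance from below.

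\emph{Representation of the density.} With \(\phi_t(z)=(2\pi t)^{-d/2}e^{-|z|^2/(2t)}\), we have
\[
e^{-V_t(x)}=Z_\mu^{-1}\int_{\R^d}e^{-V(y)}\phi_t(x-y)\,\dd y .
\]
Since \(\frac12\langle Qy,y\rangle-V(y)\) is convex, it has an affine minorant. Hence \(e^{-V(y)}\le Ce^{\frac12\langle Qy,y\rangle-\langle a,y\rangle}\), but this alone does not guarantee integrability against the Gaussian. Instead I use \(Z_\mu<\infty\) and \(\mu\in\mathcal P_2\): for every fixed \(x\), the integrand \(e^{-V(y)}\phi_t(x-y)\) is bounded by \(e^{-V(y)}\) times a Gaussian with arbitrary polynomial moments. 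So the integral is finite and positive, and differentiation under the integral sign is justified because every derivative of \(\phi_t(x-y)\) in \(x\) is a polynomial in \((x-y)/t\) times \(\phi_t\), and this is locally uniformly bounded in \(x\). This gives \(V_t\in C^\infty\).

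\emph{Hessian as a conditional covariance.} Differentiating twice gives the classical identity
\[
D^2V_t(x)=\frac{\Id}{t}-\frac{1}{t^2}\Cov(\pi_x),
\qquad
\dd\pi_x(y)\propto e^{-V(y)-\frac{|x-y|^2}{2t}}\,\dd y .
\]
This is the posterior of \(Y\sim\mu\) given \(Y+\sqrt t G=x\), and it has finite second moments by the previous bound. Its potential \(U_x(y)=V(y)+|x-y|^2/(2t)\) is locally Lipschitz, because \(V\) is a quadratic minus a convex function. It also satisfies \(D^2U_x\preceq Q+t^{-1}\Id\) distributionally. \Cref{lem:reverse-covariance-upper-Hessian} then yields \(\Cov(\pi_x)\succeq (Q+t^{-1}\Id)^{-1}=t(\Id+tQ)^{-1}\). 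Substituting,
\[
D^2V_t\preceq \frac1t\bigl(\Id-(\Id+tQ)^{-1}\bigr)=Q(\Id+tQ)^{-1}.
\]
Here all matrices are functions of \(Q\) and commute. The last inequality \(Q(\Id+tQ)^{-1}\preceq Q\) is immediate in the eigenbasis of \(Q\), since \(\lambda/(1+t\lambda)\le\lambda\). The main point needing care is checking that the hypotheses of \Cref{lem:reverse-covariance-upper-Hessian} hold for \(\pi_x\): a positive density, a finite second moment, and the distributional upper Hessian bound. All three follow from the structure above.

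\emph{Convergence.} For \(W_2\)-convergence, I use the coupling \((Y,Y+\sqrt tG)\) with \(G\sim\gamma_d\) independent of \(Y\). It gives \(W_2(\mu_t,\mu)^2\le t\,\mathbb E|G|^2=td\to0\).
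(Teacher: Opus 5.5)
Your proposal is correct and follows the paper's proof essentially step for step: the posterior $\pi_x$, the Tweedie-type identity $D^2V_t=t^{-1}\Id-t^{-2}\Cov(\pi_x)$, \Cref{lem:reverse-covariance-upper-Hessian} applied with $M=Q+t^{-1}\Id$, and the coupling $(Y,Y+\sqrt t\,G)$ for the $W_2$ convergence. There is one harmless slip: an affine minorant of the convex function $\frac12\langle Qy,y\rangle-V(y)$ gives a \emph{lower} bound $e^{-V(y)}\ge c\,e^{-\frac12\langle Qy,y\rangle+\langle a,y\rangle}$, not the upper bound you wrote, but nothing depends on it, since you discard that remark and rely on $Z_\mu<\infty$ instead.
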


\begin{proof}
Let $X=Y+\sqrt tG$, where $Y\sim\mu$ and \(G\sim\gamma_d\) are independent.
Let $p_t$ be the density of $X$. Since \(p_t(x)>0\) for every \(x\in\mathbb R^d\), we define the posterior probability measure
\[
\dd\pi_{t,x}(y)
:=
\frac{
Z_\mu^{-1}(2\pi t)^{-d/2}
\exp\!\left(-V(y)-\frac{|x-y|^2}{2t}\right)
}{p_t(x)}\,\dd y.
\]
Note that this measure has finite second moment. We set
\[
m_t(x):=m_{\pi_{t,x}},
\qquad
\Sigma_t(x):=\Cov(\pi_{t,x}).
\]
The Gaussian score identity below is usually called Tweedie's formula (see
\cite{Robbins1956,Efron2011} and Miyasawa's direct derivation in the normal
means model \cite{Miyasawa1961}.) After the standard rescaling to unit Gaussian
noise, the trace of the identity that relates the Jacobian to the conditional covariance recovers the Hatsell--Nolte divergence identity \cite{HatsellNolte1971} (the modern
full matrix version appears in \cite{DytsoPoorShamai2023}). We differentiate under the integral that defines \(p_t\) and obtain
\[
\nabla\log p_t(x)
=
-\frac1t\bigl(x-m_t(x)\bigr),
\qquad
Dm_t(x)=\frac1t\Sigma_t(x).
\]
Since \(p_t=e^{-V_t}\), we have \(V_t=-\log p_t\). This leads to
\begin{equation}\label{eq:Gaussian-score-Hessian-formula}
D^2V_t(x)
=\frac1t\Id-\frac1{t^2}\Sigma_t(x).
\end{equation}
The potential of the posterior law \(\pi_{t,x}\) is
\[
U_{t,x}(y)=V(y)+\frac{|x-y|^2}{2t}+\text{constant}.
\]
Notice that the function \(\frac12\langle Qy,y\rangle-V(y)\) is finite and convex. This implies that \(U_{t,x}\) is finite and locally Lipschitz. The posterior law has finite second moment and
\[
D^2U_{t,x}\preceq Q+t^{-1}\Id
\]
in distributions. We apply \Cref{lem:reverse-covariance-upper-Hessian} and obtain
\[
\Sigma_t(x)
\succeq
(Q+t^{-1}\Id)^{-1}
=t(\Id+tQ)^{-1}.
\]
We now directly substitute this estimate into \eqref{eq:Gaussian-score-Hessian-formula}, which proves \eqref{eq:Gaussian-source-Hessian}. The convergence in $W_2$ follows from the coupling $X=Y+\sqrt tG$.
\end{proof}

\subsection{Stability under source variation}

The compactness for transport plans is, again, the standard stability theorem (compare \cite[Theorem 5.20]{Villani2009}). We consider here the version with varying sources at the level of potentials, as we will then invoke it in the global compact-range normalisation proof below.

\begin{lemma}[Stability with varying sources]
\label{lem:varying-source-stability}
Let $\mu_n,\mu\in\mathcal P_2(\mathbb R^d)$ be absolutely continuous, assume that the density of $\mu$ is strictly positive Lebesgue-almost everywhere and that $\mu_n\to\mu$ in $W_2$. Let $\nu\in\mathcal P_2(\mathbb R^d)$ be supported in a compact convex set $C$, and let $\Phi_n$ be normalised compact-range Brenier potentials from $\mu_n$ to $\nu$ such that
\[
\partial\Phi_n(\mathbb R^d)\subset C,
\qquad
\Phi_n(0)=0.
\]
One may then normalise the Brenier potential $\Phi$ from $\mu$ to $\nu$ by
$\Phi(0)=0$ in such a way that
\begin{equation}\label{eq:varying-source-local-uniform}
\Phi_n\longrightarrow\Phi
\quad\text{locally uniformly}.
\end{equation}
We also fix \(v\in\mathbb R^d\) and \(L\in\mathbb R\). Suppose that, for some sequence \((L_n)\subset\mathbb R\),
\[
\partial_{vv}\Phi_n\le L_n
\qquad\text{in distributions},
\]
and
\[
\limsup_{n\to\infty}L_n\le L.
\]
It follows that
\[
\partial_{vv}\Phi\le L
\qquad\text{in distributions}.
\]
In particular, for \(L_*\in\mathbb R\), if
\[
D^2\Phi_n\preceq L_*\Id
\qquad\text{in distributions for every }n,
\]
then
\[
D^2\Phi\preceq L_*\Id
\qquad\text{in distributions}.
\]
\end{lemma}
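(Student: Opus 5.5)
We follow the proof of \Cref{lem:stability-potentials}, with the extra work of passing the varying sources through the push-forward identity. The compact-range normalisation makes every \(\Phi_n\) \(R\)-Lipschitz with \(R:=\sup_{y\in C}|y|\) and \(\Phi_n(0)=0\). By Arzel\`a--Ascoli, every subsequence therefore has a further subsequence converging locally uniformly to a finite convex function \(\overline\Phi\). This limit is also \(R\)-Lipschitz, satisfies \(\overline\Phi(0)=0\), and its subgradients lie in \(C\), since limiting subgradient inequalities preserve membership in the closed convex set \(C\).

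\emph{Identifying the limit.} We show that \(\nabla\overline\Phi\) pushes \(\mu\) to \(\nu\). Let \(E\) be the full-measure set where \(\overline\Phi\) and all \(\Phi_n\) are differentiable. As in \Cref{lem:stability-potentials}, \(\nabla\Phi_n\to\nabla\overline\Phi\) pointwise on \(E\), hence locally in measure. Fix a bounded Lipschitz test function \(f\). We split
\[
\int f(\nabla\Phi_n)\,\dd\mu_n-\int f(\nabla\overline\Phi)\,\dd\mu
=\int f(\nabla\Phi_n)\,\dd(\mu_n-\mu)+\int\bigl(f(\nabla\Phi_n)-f(\nabla\overline\Phi)\bigr)\dd\mu.
\]
The second term tends to zero by dominated convergence. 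The first term is the obstacle, because the integrand \(f\circ\nabla\Phi_n\) is not continuous uniformly in \(n\).

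To handle it, we use weak convergence of plans rather than densities. Consider the optimal plans \(\pi_n:=(\Id,\nabla\Phi_n)_\#\mu_n\). Their marginals are tight (\(\mu_n\to\mu\) in \(W_2\), and the second marginal is fixed), so along a subsequence \(\pi_n\rightharpoonup\pi\), a coupling of \(\mu\) and \(\nu\). Each \(\pi_n\) is concentrated on the closed graph of \(\partial\Phi_n\), i.e.\ on \(\{(x,y):\Phi_n(x)+\Phi_n^*(y)=\langle x,y\rangle\}\). Because \(\Phi_n\to\overline\Phi\) locally uniformly with uniformly bounded slopes in \(C\), the conjugates restricted to \(C\) converge uniformly on \(C\). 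Passing to the limit in the inequality \(\Phi_n(x)+\Phi_n^*(y)-\langle x,y\rangle\le0\), tested by nonnegative continuous compactly supported functions on \(\R^d\times C\), shows that \(\pi\) is concentrated on the graph of \(\partial\overline\Phi\). Since \(\mu\) is absolutely continuous, \(\pi=(\Id,\nabla\overline\Phi)_\#\mu\), and hence \((\nabla\overline\Phi)_\#\mu=\nu\).

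By Brenier uniqueness and almost-everywhere positivity of the density of \(\mu\), the gradients \(\nabla\overline\Phi\) agree Lebesgue-almost everywhere with those of any Brenier potential, so \(\overline\Phi\) is independent of the subsequence once normalised at \(0\). Defining \(\Phi:=\overline\Phi\) gives \eqref{eq:varying-source-local-uniform} for the full sequence.

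\emph{Passing the Hessian bounds.} For a nonnegative \(\varphi\in C_c^\infty\), the inequality \(\int\Phi_n\,\partial_{vv}\varphi\le L_n\int\varphi\) passes to the limit by local uniform convergence, giving \(\partial_{vv}\Phi\le L\). For the matrix statement, we either apply this for every \(v\) with \(L=L_*|v|^2\), or pass the second-difference inequality \(\Phi_n(x+h)+\Phi_n(x-h)-2\Phi_n(x)\le L_*|h|^2\) to the limit pointwise.

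The main obstacle is the support argument in the identification step: justifying concentration of the limiting plan on \(\partial\overline\Phi\), in particular the uniform convergence of \(\Phi_n^*\) on \(C\) and the lower semicontinuity needed for the limit.
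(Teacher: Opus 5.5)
\textbf{The overall plan matches the paper, but the key step is justified by a false claim.} Your steps are: equi-Lipschitz compactness of the $\Phi_n$; weak subsequential limits $\pi$ of the plans $\pi_n=(\Id,\nabla\Phi_n)_\#\mu_n$; concentration of $\pi$ on $\operatorname{graph}\partial\overline\Phi$; Brenier--McCann uniqueness with a.e.\ positivity of the density to make $\overline\Phi$ independent of the subsequence; and passage of the distributional bounds by local uniform convergence. This is the paper's structure. Your shortcut is also fine: a coupling concentrated on $\operatorname{graph}\partial\overline\Phi$ with absolutely continuous first marginal equals $(\Id,\nabla\overline\Phi)_\#\mu$, which avoids the paper's detour through cyclical monotonicity. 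The gap is the concentration step itself, which is the only nontrivial step. You justify it by claiming that $\Phi_n^*$ converges uniformly on $C$, and this is false in general because $\Phi_n^*$ need not even be finite on $C$. For example, take $\mu_n=\gamma_d$, $\nu$ uniform on the unit ball and $C$ the closed ball of radius $2$. The Brenier potential is admissible, since its gradient takes values in the open unit ball. But it is $1$-Lipschitz, so $\Phi_n^*\equiv+\infty$ on $\{1<|y|\le2\}$. Even on the closure of its domain, $\Phi_n^*$ is only lower semicontinuous, and these domains vary with $n$. Consequently $c_n(x,y):=\Phi_n(x)+\Phi_n^*(y)-\langle x,y\rangle$ is not a locally uniformly convergent family of continuous functions. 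Testing it against $C_c$ functions and passing to the limit is therefore unjustified.

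\textbf{Two ways to close the gap.}

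\emph{The paper's route avoids conjugates.} Since $\operatorname{graph}\partial\Phi_n$ is closed and $\pi_n$ is concentrated on it, $\supp\pi_n\subset\operatorname{graph}\partial\Phi_n$. Fix $(x,y)\in\supp\pi$. The Portmanteau theorem gives $\pi_n(B_{1/k}((x,y)))>0$ for all large $n$. A diagonal choice then yields $(x_k,y_k)\in\supp\pi_{n_k}$ with $(x_k,y_k)\to(x,y)$ and $y_k\in\partial\Phi_{n_k}(x_k)$. The inequality $\Phi_{n_k}(z)\ge\Phi_{n_k}(x_k)+\langle y_k,z-x_k\rangle$ passes to the limit by local uniform convergence.

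\emph{Your conjugate formulation also works if you use only a liminf inequality.} For fixed $z$ one has $\Phi_n^*(y_n)\ge\langle z,y_n\rangle-\Phi_n(z)$, so $\liminf_n\Phi_n^*(y_n)\ge\overline\Phi^*(y)$ whenever $y_n\to y$. Let $\bar c$ be the analogue of $c_n$ for $\overline\Phi$. The nonnegative Borel functions $c_n$ then satisfy $\bar c(z)\le\liminf_n c_n(z_n)$ along convergent sequences. A Skorokhod representation of $\pi_n\rightharpoonup\pi$ together with Fatou's lemma gives $\int\bar c\,\dd\pi\le\liminf_n\int c_n\,\dd\pi_n=0$. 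Hence $\pi$ is concentrated on $\{\bar c=0\}=\operatorname{graph}\partial\overline\Phi$.
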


\begin{proof}
Observe that all subgradients lie in $C$, so the potentials are equi-Lipschitz. In particular, each subsequence admits a further subsequence converging locally uniformly to a convex function $\overline\Phi$. Let
\[
\pi_n=(\Id,\nabla\Phi_n)_\#\mu_n.
\]
The second marginal is fixed and compactly supported. The first marginals converge in $W_2$. It follows that $(\pi_n)$ is tight. Every weak limit $\pi$ has marginals $\mu,\nu$. We claim that
\[
\supp\pi
\subset
\{(x,y):y\in\partial\overline\Phi(x)\}.
\]
Indeed, the graph of \(\partial\Phi_n\) is closed and \(\pi_n\) is concentrated on this graph. Therefore,
\[
\supp\pi_n
\subset
\operatorname{graph}(\partial\Phi_n).
\]
We fix \((x,y)\in\supp\pi\). For every \(k\ge1\), the open ball
\[
B_{1/k}^{\R^{2d}}\bigl((x,y)\bigr)
\]
has positive \(\pi\)-mass. By the Portmanteau theorem, its \(\pi_n\)-mass is positive for all sufficiently large \(n\). A diagonal argument then gives \(n_k\uparrow\infty\) and
\[
(x_k,y_k)\in\supp\pi_{n_k},
\qquad
(x_k,y_k)\longrightarrow(x,y).
\]
From the inclusion \(y_k\in\partial\Phi_{n_k}(x_k)\), we have, for every
\(z\in\mathbb R^d\),
\[
\Phi_{n_k}(z)
\ge
\Phi_{n_k}(x_k)+\langle y_k,z-x_k\rangle.
\]
The local uniform convergence allows us to pass to the limit and obtain
\[
\overline\Phi(z)
\ge
\overline\Phi(x)+\langle y,z-x\rangle,
\]
so \(y\in\partial\overline\Phi(x)\). We conclude that \(\pi\) is cyclically monotone and optimal (see \cite[Chapter 5]{Villani2009}). Since \(\mu\) is absolutely continuous, Brenier--McCann uniqueness \cite{Brenier1991,McCann1995} shows that the optimal coupling is unique and equals
\[
(\Id,\nabla\Phi)_\#\mu.
\]
Every weak subsequential limit of \((\pi_n)\) is \((\Id,\nabla\Phi)_\#\mu\). It follows that the whole sequence of transport plans converges weakly, that is,
\[
(\Id,\nabla\Phi_n)_\#\mu_n
\rightharpoonup
(\Id,\nabla\Phi)_\#\mu.
\]
Passing to the second marginals, we obtain
\[
(\nabla\Phi)_\#\mu=\nu.
\]
We deduce that
\[
\nabla\Phi(x)\in\partial\overline\Phi(x)
\qquad\text{for $\mu$-almost every }x.
\]
Both $\Phi$ and $\overline\Phi$ are differentiable Lebesgue-almost everywhere. The density of $\mu$ is strictly positive Lebesgue-almost everywhere, so
\[
\nabla\overline\Phi=\nabla\Phi
\qquad\text{Lebesgue-almost everywhere}.
\]
Two finite convex functions on $\mathbb R^d$ whose gradients agree Lebesgue-almost everywhere differ by a constant. Their normalisation at the origin fixes this constant, so $\overline\Phi=\Phi$. It follows that every subsequential limit equals $\Phi$. This proves
\eqref{eq:varying-source-local-uniform}. Let \(\varphi\in C_c^\infty(\mathbb R^d)\) be nonnegative. If \(\partial_{vv}\Phi_n\le L_n\) in distributions, then
\[
\int_{\mathbb R^d}\Phi_n\,\partial_{vv}\varphi\,\dd x
\le L_n\int_{\mathbb R^d}\varphi\,\dd x.
\]
The local uniform convergence gives convergence of the left-hand side, and taking the limsup on the right-hand side gives
\[
\int_{\mathbb R^d}\Phi\,\partial_{vv}\varphi\,\dd x
\le L\int_{\mathbb R^d}\varphi\,\dd x.
\]
This is precisely \(\partial_{vv}\Phi\le L\) in distributions. The final matrix statement follows by applying the directional statement to every fixed \(v\), with \(L=L_*|v|^2\).
\end{proof}

\subsection{Lower-dimensional targets}

\begin{proposition}[Smooth sources and targets of arbitrary affine dimension]
\label{prop:affine-hull-smooth}
Let \(V\in C^\infty(\R^d)\) satisfy \(D^2V\preceq\Lambda\Id\) for some
\(\Lambda>0\). Define
\[
Z_\mu:=\int_{\R^d}e^{-V(x)}\,\dd x,
\]
assume \(Z_\mu\in(0,\infty)\), and let
\[
\dd\mu=Z_\mu^{-1}e^{-V}\,\dd x\in\mathcal P_2(\R^d).
\]
Let \(\nu\) be any compactly supported log-concave probability measure on \(\R^d\), where we do not make an assumption on full dimensionality, and set \(K:=\supp\nu\). Let \(\Phi\) be the normalised \(K\)-range Brenier potential transporting \(\mu\) to \(\nu\), that is,
\[
\Phi(0)=0,
\qquad
\partial\Phi(\mathbb R^d)\subset K.
\]
It follows that for every \(e\in\Sph^{d-1}\),
\[
\partial_{ee}\Phi
\le C_{\mathrm{nq}}\sqrt\Lambda\,w_K(e)
\]
in the sense of distributions. Moreover, \(T=\nabla\Phi\) has a globally Lipschitz representative which satisfies
\[
\Lip(T)\le C_{\mathrm{nq}}\sqrt\Lambda\,\diam(K).
\]
\end{proposition}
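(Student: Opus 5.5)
The plan is to reduce to the full-dimensional case in \Cref{prop:full-dimensional-smooth} by thickening the target within its affine hull, and then to pass to the limit with \Cref{lem:stability-potentials}.

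Let \(E\) be the affine hull of \(K\), of dimension \(k<d\); the case \(k=d\) is exactly \Cref{prop:full-dimensional-smooth}. By Borell's characterisation, \(\nu\) has a log-concave density \(e^{-W}\) with respect to \(k\)-dimensional Hausdorff measure on the relative interior of \(K\). If \(k=0\), then \(\nu=\delta_{y_0}\), so \(\Phi(x)=\langle y_0,x\rangle\) and all conclusions are trivial. For \(k\ge1\), write \(E^\perp\) for the orthogonal complement of the direction space of \(E\), let \(B^\perp_\delta\) be the closed \(\delta\)-ball in \(E^\perp\), and set
\[
\nu_\delta:=\nu*\bigl(\text{normalised Lebesgue measure on }B^\perp_\delta\bigr).
\]
This is a product of log-concave measures in orthogonal coordinates, hence log-concave. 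It is full-dimensional, with support \(K_\delta:=K+B^\perp_\delta\), and it converges weakly to \(\nu\) as \(\delta\downarrow0\).

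The widths are controlled directly. Since \(w_{A+B}=w_A+w_B\) for compact convex \(A,B\), we get
\[
w_{K_\delta}(e)=w_K(e)+2\delta|P_{E^\perp}e|\le w_K(e)+2\delta .
\]
For the same reason, \(\diam(K_\delta)\le\diam(K)+2\delta\).

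Applying \Cref{prop:full-dimensional-smooth} to \(\nu_\delta\) gives, for the normalised \(K_\delta\)-range potentials \(\Phi_\delta\),
\[
\partial_{ee}\Phi_\delta\le C_{\mathrm{nq}}\sqrt\Lambda\,\bigl(w_K(e)+2\delta\bigr).
\]
All of the \(K_\delta\) with \(\delta\le1\) lie in the fixed compact convex set \(C:=K_1\). By \Cref{lem:compact-range-representative}, the \(K_\delta\)-range representative is also a \(C\)-range representative. The Brenier potential for \(\nu\) can likewise be taken to have range in \(K\subset C\). We then invoke \Cref{lem:stability-potentials} with \(L_\delta=C_{\mathrm{nq}}\sqrt\Lambda(w_K(e)+2\delta)\to C_{\mathrm{nq}}\sqrt\Lambda\,w_K(e)\). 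This yields local uniform convergence \(\Phi_\delta\to\Phi\) and the directional distributional bound for \(\Phi\).

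One point needs care: the lemma delivers the bound for the \(C\)-range normalised potential, while the statement concerns the \(K\)-range one. By \Cref{lem:compact-range-representative}, both coincide with the unique finite Brenier potential normalised at \(0\), so this causes no difficulty.

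The Lipschitz statement then follows from \(w_K(e)\le\diam(K)\) and \Cref{cor:directional-closure}. Its proof only uses convexity and finiteness of \(\Phi\), so full-dimensionality plays no role there.

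The main obstacle I expect is verifying that the thickened measure genuinely satisfies the hypotheses of \Cref{prop:full-dimensional-smooth}, namely that it is full-dimensional, compactly supported and log-concave. Log-concavity of a product measure across orthogonal factors is standard via Prékopa–Leindler, or directly from the product form of the density, \(e^{-W(y')}\one_{B^\perp_\delta}(y'')\), which is log-concave on \(\R^d\). I would check that density form explicitly rather than appeal to a general convolution result. The identification of the limit through the uniqueness argument in \Cref{lem:stability-potentials} is already available.
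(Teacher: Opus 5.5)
Your proposal is correct and follows essentially the same route as the paper: thicken the target by a small ball in \(E^\perp\), apply \Cref{prop:full-dimensional-smooth} using additivity of widths under Minkowski sums, pass to the limit with \Cref{lem:stability-potentials} inside a fixed compact convex set, and conclude with \Cref{cor:directional-closure}. The differences are cosmetic. You treat \(k=0\) separately, whereas the paper's thickening already covers it since then \(E^\perp=\R^d\). You also identify the \(C\)-range limit with the \(K\)-range potential via \Cref{lem:compact-range-representative}, whereas the paper re-derives \(\nabla\Phi(\R^d)\subset K\) from continuity of the \(C^{1,1}\) gradient and positivity of the source density.
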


\begin{proof}
The translation of the target changes the Brenier potential only by an affine function, so we may assume that the affine hull of \(K\) is a linear subspace \(E\subset\R^d\). Let
\[
\overline B_{E^\perp}:=\{z\in E^\perp:|z|\le1\}
\]
be the closed Euclidean unit ball in \(E^\perp\). For \(0<\varepsilon\le1\), let \(\sigma_\varepsilon\) be normalised Lebesgue measure on \(\varepsilon\overline B_{E^\perp}\), where we take the Lebesgue measure in the subspace \(E^\perp\). We define
\[
\nu_\varepsilon
:=
\bigl[(y,z)\mapsto y+z\bigr]_\#
(\nu\otimes\sigma_\varepsilon).
\]
Borell's \cite{Borell1975} characterisation and the basic stability of log-concavity
under products and affine isomorphisms show that \(\nu_\varepsilon\) is a full-dimensional compactly supported log-concave probability measure. Its support is
\[
K_\varepsilon=K+\varepsilon\overline B_{E^\perp}.
\]
We also have \(\nu_\varepsilon\rightharpoonup\nu\), and all the supports
\(K_\varepsilon\), \(0<\varepsilon\le1\), lie in a fixed compact convex set.

Let \(\Phi_\varepsilon\) be the normalised Brenier potential transporting
\(\mu\) to \(\nu_\varepsilon\). By \Cref{prop:full-dimensional-smooth}, we have
\[
\partial_{ee}\Phi_\varepsilon
\le
C_{\mathrm{nq}}\sqrt\Lambda\,w_{K_\varepsilon}(e)
\]
in distributions. Let us also denote by \(P_{E^\perp}\) the orthogonal projection onto
\(E^\perp\). By the additivity of support functions under Minkowski addition,
we obtain
\[
w_{K_\varepsilon}(e)
=
w_K(e)+2\varepsilon|P_{E^\perp}e|.
\]
It follows that \(w_{K_\varepsilon}(e)\to w_K(e)\).

Set
\[
C:=K+\overline B_{E^\perp}.
\]
By \Cref{lem:compact-range-representative}, we choose the normalised
\(C\)-range representative of each \(\Phi_\varepsilon\). This choice leaves the transport map and the distributional Hessian estimate the same, and it gives \(\partial\Phi_\varepsilon(\R^d)\subset C\). By \Cref{lem:stability-potentials}, we have
\(\Phi_\varepsilon\to\Phi\) locally uniformly. We pass to the limit in the second-difference inequality and obtain
\[
\partial_{ee}\Phi
\le C_{\mathrm{nq}}\sqrt\Lambda\,w_K(e).
\]
Since \(w_K(e)\le\diam(K)\), we apply \Cref{cor:directional-closure} with
\[
L:=C_{\mathrm{nq}}\sqrt\Lambda\,\diam(K)
\]
and obtain the required Lipschitz estimate.

It now remains to identify the range of the limiting representative. To this end, notice that $\Phi\in C^{1,1}(\R^d)$. It follows that its gradient is continuous. We also have
$(\nabla\Phi)_\#\mu=\nu$ and $\nu(K)=1$. Suppose that $\nabla\Phi(x_0)\notin K$ for some $x_0\in\R^d$. The closedness of $K$ and the continuity of $\nabla\Phi$ then give a nonempty open neighbourhood $U$ of $x_0$ such that
\[
\nabla\Phi(U)\subset\R^d\setminus K.
\]
The strict positivity of the source density gives $\mu(U)>0$, but this contradicts
\[
(\nabla\Phi)_\#\mu(\R^d\setminus K)=0.
\]
We conclude that $\nabla\Phi(\R^d)\subset K$. Since $\Phi$ is differentiable everywhere, it follows that $\partial\Phi(\R^d)\subset K$. The normalisation at the origin then identifies $\Phi$ with the normalised compact-range representative from the statement.
\end{proof}

\subsection{Proof of the main theorem}

\begin{proof}[Proof of \Cref{thm:main}]
The statement holds immediately when \(v=0\), so let us assume that \(v\ne0\), and suppose first that \(V\) is smooth. Since \(D^2V\preceq Q\preceq\|Q\|_{\op}\Id\),
\Cref{prop:affine-hull-smooth} gives \(\Phi\in C^{1,1}(\mathbb R^d)\). The Hessian calculations below are valid almost everywhere. It follows that the new inequalities hold after distributional testing. We put \(L_Q:=Q^{1/2}\), and set
\[
x'=L_Qx,
\qquad
y'=L_Q^{-1}y,
\qquad
\widetilde\Phi(x'):=\Phi(L_Q^{-1}x').
\]
This implies
\[
D^2\widetilde\Phi(x')
=
L_Q^{-1}D^2\Phi(L_Q^{-1}x')L_Q^{-1}
\]
almost everywhere. The Hessian of the transformed source potential is bounded above by \(\Id\), and the transformed target support is \(L_Q^{-1}K\). If we now apply \Cref{prop:affine-hull-smooth} in the direction \(L_Qv/|L_Qv|\), we obtain
\[
\frac{v^{\mathsf T}D^2\Phi v}{|L_Qv|^2}
\le
C_{\mathrm{nq}}
w_{L_Q^{-1}K}\!\left(\frac{L_Qv}{|L_Qv|}\right)
=
C_{\mathrm{nq}}\frac{w_K(v)}{|L_Qv|}.
\]
This proves \eqref{eq:main-directional} when the source potential is smooth.

Let us now consider a general finite potential \(V\) that satisfies \eqref{eq:distributional-source-Q}, and set \(\mu_t=\mu*\gamma_{d,t}\). By \Cref{lem:Gaussian-source-smoothing}. The smooth potentials \(V_t\) satisfy
\[
D^2V_t
\preceq Q(\Id+tQ)^{-1}
\preceq Q,
\]
and \(\mu_t\to\mu\) in \(W_2\), and the smooth estimate is uniform in \(t\). For every \(t>0\), we let \(\Phi_t\) be the normalised \(K\)-range representative given by
\Cref{lem:compact-range-representative} for the Brenier transport from
\(\mu_t\) to \(\nu\). It follows that \(\partial\Phi_t(\R^d)\subset K\), and the assumptions of \Cref{lem:varying-source-stability} hold. We apply the lemma and obtain local uniform convergence, with the same normalisation, to the compact-range representative \(\Phi\) transporting \(\mu\) to \(\nu\). In particular,
\[
(\Id,\nabla\Phi_t)_\#\mu_t
\rightharpoonup
(\Id,\nabla\Phi)_\#\mu,
\]
so \((\nabla\Phi)_\#\mu=\nu\). Let us fix \(v\in\R^d\). The uniform directional estimate passes to the locally uniform limit in one-dimensional second-difference form along \(v\). In other terms, it yields the distributional bound for \(\partial_{vv}\Phi\). This proves
\eqref{eq:main-directional}.

Since
\[
\sqrt{\langle Qv,v\rangle}\,w_K(v)
\le
\sqrt{\|Q\|_{\op}}\diam(K)|v|^2,
\]
we apply \Cref{cor:directional-closure} with
\[
L:=C_{\mathrm{nq}}
\sqrt{\|Q\|_{\op}}\diam(K)
\]
and obtain \eqref{eq:main-operator}.
\end{proof}

\begin{proof}[Proof of \Cref{cor:gaussian}]
The conclusion follows directly by applying \Cref{thm:main} with \(Q=\Id\).
\end{proof}

When the target potential is also \(\kappa\)-strongly convex, we combine \Cref{thm:main} with Caffarelli's contraction theorem \cite{Caffarelli2000,Caffarelli2002,ColomboFigalliJhaveri2017} and obtain the minimum
of the support bound and the curvature bound.

\section{Semi-log-concave targets}\label{sec:semilogconcave}

We now prove the global semi-log-concave estimate for arbitrary values of
the scale-invariant quantity \(\rho\diam(K)^2\). To this end, we add a translation variable to the maximum-principle functional. Through a Schur complement, its Hessian provides the matrix coercivity needed to close the estimate without any restriction on \(\rho\diam(K)^2\).

\subsection{The smooth a priori estimate}

We fix \(\rho\ge0\), and begin in the normalised smooth setting
\begin{equation}\label{eq:unrestricted-normalisation}
D^2V\preceq\Id,
\qquad
\diam(K)\le1,
\qquad
D^2W\succeq-\rho\Id,
\end{equation}
and assume temporarily that
\[
M_\Phi:=\sup_{\R^d}\|D^2\Phi\|_{\op}<\infty.
\]
We use the notation
\[
\begin{aligned}
A&:=D^2\Phi,
&\qquad
T&:=\nabla\Phi,
&\qquad
\mathfrak b&:=\nabla W(T),\\
\cL f&:=\tr(A^{-1}D^2f)
-\langle\mathfrak b,\nabla f\rangle.
\end{aligned}
\]

\subsubsection{A coercivity lemma}

If \(P\) is positive semidefinite, we denote its Moore--Penrose pseudoinverse by \(P^\dagger\), and set
\[
P^{\dagger/2}:=(P^\dagger)^{1/2}.
\]

Our next statement is the negative-semidefinite version of the standard
Schur complement criterion with a pseudoinverse for block matrices (see Albert \cite{Albert1969}). In this short proof, we fix the convention for kernels and ranges.

\begin{lemma}[Generalised Schur complement]
\label{lem:generalised-Schur}
Let \(n,m\ge1\) be integers, and let
\[
H\in\mathbb R^{n\times n},
\qquad
P\in\mathbb R^{m\times m}
\]
be symmetric. Let \(B\in\mathbb R^{n\times m}\), and assume that \(P\succeq0\). If
\[
\begin{pmatrix}
H&B\\ B^{\mathsf T}&-P
\end{pmatrix}
\preceq0,
\]
then $Bz=0$ for every $z\in\ker P$. Moreover,
\begin{equation}\label{eq:generalised-Schur-conclusion}
H+BP^\dagger B^{\mathsf T}\preceq0.
\end{equation}
\end{lemma}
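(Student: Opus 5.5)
The argument tests the block quadratic form on well-chosen vectors and then reduces the Schur complement to the invertible part of \(P\). Write
\[
\mathcal M:=\begin{pmatrix}H&B\\ B^{\mathsf T}&-P\end{pmatrix}\preceq0,
\]
and first record that \(H\preceq0\) by testing on vectors \((h,0)\).

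\emph{Kernel compatibility.} Fix \(z\in\ker P\) and \(h\in\mathbb R^n\). For \(t\in\mathbb R\), the quadratic form of \(\mathcal M\) on \((h,tz)\) equals
\[
\langle Hh,h\rangle+2t\langle h,Bz\rangle-t^2\langle Pz,z\rangle=\langle Hh,h\rangle+2t\langle h,Bz\rangle,
\]
and this must be \(\le0\) for every \(t\). That forces \(\langle h,Bz\rangle=0\). Since \(h\) is arbitrary, \(Bz=0\). This is the same argument already used for \(d_\varepsilon=0\) in \Cref{prop:smooth-estimate-translated}.

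\emph{Schur complement.} Fix \(h\in\mathbb R^n\) and set \(z:=-P^\dagger B^{\mathsf T}h\). Testing \(\mathcal M\) on \((h,z)\) gives
\[
\langle Hh,h\rangle-2\langle B^{\mathsf T}h,P^\dagger B^{\mathsf T}h\rangle+\langle PP^\dagger B^{\mathsf T}h,P^\dagger B^{\mathsf T}h\rangle\le0.
\]
By the Moore--Penrose identity \(P^\dagger PP^\dagger=P^\dagger\), the last term equals \(\langle B^{\mathsf T}h,P^\dagger B^{\mathsf T}h\rangle\). The inequality therefore collapses to
\[
\langle (H+BP^\dagger B^{\mathsf T})h,h\rangle\le0,
\]
which is \eqref{eq:generalised-Schur-conclusion}.

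The kernel statement is not needed for this algebra, but it guarantees that \(BP^\dagger B^{\mathsf T}\) captures the whole coupling. Indeed, \(B^{\mathsf T}h\in(\ker P)^\perp=\operatorname{ran}P\), so \(PP^\dagger B^{\mathsf T}h=B^{\mathsf T}h\).

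An alternative route is to diagonalise \(P\) orthogonally, split \(\mathbb R^m=\operatorname{ran}P\oplus\ker P\), and use the first step to discard the kernel block. The classical Schur complement with the invertible restriction of \(P\) then applies. There is no real obstacle here; the only care needed is the pseudoinverse bookkeeping \(P^\dagger PP^\dagger=P^\dagger\), which the first route handles directly.
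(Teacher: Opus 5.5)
Your kernel-compatibility step is correct and is exactly the paper's argument. The Schur-complement step, however, fails as written because of a sign error. The block quadratic form is
\[
\langle Hh,h\rangle+2\langle B^{\mathsf T}h,z\rangle-\langle Pz,z\rangle ,
\]
so the \(P\)-term enters with a minus sign, not the plus sign in your display.

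Write \(q:=\langle B^{\mathsf T}h,P^\dagger B^{\mathsf T}h\rangle\), and note \(q\ge0\) since \(P^\dagger\succeq0\). Using \(P^\dagger PP^\dagger=P^\dagger\), your test vector \(z=-P^\dagger B^{\mathsf T}h\) actually gives \(\langle Hh,h\rangle-3q\le0\). Even your own display gives \(\langle Hh,h\rangle-2q+q=\langle Hh,h\rangle-q\le0\), i.e.\ \(H-BP^\dagger B^{\mathsf T}\preceq0\), not \(H+BP^\dagger B^{\mathsf T}\preceq0\).

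Since \(q\ge0\), either inequality already follows from \(H\preceq0\) and carries no Schur-complement information. The vector \(-P^\dagger B^{\mathsf T}h\) makes the cross term negative. The right test vector is instead the maximiser \(P^\dagger B^{\mathsf T}h\) of \(z\mapsto2\langle B^{\mathsf T}h,z\rangle-\langle Pz,z\rangle\).

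The repair is one sign: take \(z:=P^\dagger B^{\mathsf T}h\), which is what the paper does. Then the cross term is \(+2q\), the \(P\)-term is \(-\langle PP^\dagger B^{\mathsf T}h,P^\dagger B^{\mathsf T}h\rangle=-q\), and the form equals \(\langle Hh,h\rangle+q\le0\), which is the claim. With this change your proof coincides with the paper's.

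Your side remark is correct: this algebra needs only \(P^\dagger PP^\dagger=P^\dagger\), not the kernel condition. Your alternative route via the orthogonal splitting \(\operatorname{ran}P\oplus\ker P\) is also valid.
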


\begin{proof}
Let $z\in\ker P$, $x\in\mathbb R^n$, and $t\in\mathbb R$. We evaluate the quadratic form at $(x,tz)$ and let $t\to\pm\infty$. Its coefficient of $t$ must vanish for every $x$, which forces $Bz=0$. We then have $\operatorname{ran}(B^{\mathsf T})\subset\operatorname{ran}P$. Let us now fix $x$, insert $y=P^\dagger B^{\mathsf T}x$ into the block quadratic form, which gives
\[
\langle Hx,x\rangle
+\langle P^\dagger B^{\mathsf T}x,B^{\mathsf T}x\rangle\le0,
\]
and this proves \eqref{eq:generalised-Schur-conclusion}.
\end{proof}

For \(s>0\), let us define
\begin{equation}\label{eq:Xi-plus-def}
\Xi_+(s):=
\begin{cases}
0,&0<s\le1,\\[1mm]
\Xi(s),&s>1.
\end{cases}
\end{equation}

\begin{lemma}[Coercivity from the log determinant and a Schur complement]
\label{lem:logdet-Schur}
Let \(A,C\in\mathbb R^{d\times d}\) be symmetric positive-definite matrices such that \(C\preceq\Id\), assume that $(C-A)z=0$ for every $z\in\ker(\Id-C)$, and set
\begin{align}
R&:=A^{-1/2}CA^{-1/2},\label{eq:R-Schur-def}\\
\mathscr S(A,C)
&:=\tr\!\left(
A^{-1}(C-A)(\Id-C)^\dagger(C-A)
\right).\label{eq:S-Schur-def}
\end{align}
It follows that for every nonzero $v\in\R^d$, we have
\begin{equation}\label{eq:logdet-Schur-coercivity}
\cH(R)+\mathscr S(A,C)
\ge
\Xi_+\!\left(\frac{\langle Av,v\rangle}{|v|^2}\right).
\end{equation}
\end{lemma}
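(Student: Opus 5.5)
The plan is to reduce the matrix statement to the scalar one already proved in \Cref{lem:directional-logdet-Schur}, applied with \(c=1\) and the unit direction \(e:=v/|v|\).

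\textbf{Trivial case.} If \(\langle Ae,e\rangle\le1\), the right-hand side vanishes. The left-hand side is nonnegative, because \(\cH(R)\ge0\) and \(\mathscr S(A,C)\) is the trace of \(A^{-1/2}(C-A)(\Id-C)^\dagger(C-A)A^{-1/2}\succeq0\). So assume from now on that \(y:=\langle Ae,e\rangle>1\).

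\textbf{Step 1: a scalar lower bound for \(\mathscr S\).} The goal is
\[
\mathscr S(A,C)\ge\mathscr S_e(A,C;1).
\]
Write \(P:=\Id-C\succeq0\) and \(B:=(C-A)\). By the same block-matrix principle used in \Cref{lem:generalised-Schur}, the pseudoinverse quadratic form dominates every single test vector. For \(\xi\in\operatorname{ran}P\) and any \(z\) with \(\langle Pz,z\rangle>0\),
\[
\langle P^\dagger\xi,\xi\rangle\ge\frac{\langle\xi,z\rangle^2}{\langle Pz,z\rangle}.
\]
This follows from Cauchy--Schwarz in the \(P\)-inner product, using \(\xi=PP^\dagger\xi\). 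By hypothesis \(B\) vanishes on \(\ker P\), so every column of \(B\) lies in \(\operatorname{ran}P\).

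Now apply the inequality with \(z=e\). When \(\langle Pe,e\rangle=1-e^{\mathsf T}Ce>0\), this gives, in the Loewner order,
\[
B P^\dagger B\succeq\frac{Be\otimes Be}{1-e^{\mathsf T}Ce}.
\]
Taking the trace against \(A^{-1}\) then yields
\[
\mathscr S(A,C)\ge\frac{\langle A^{-1}(C-A)e,(C-A)e\rangle}{1-e^{\mathsf T}Ce}=\mathscr S_e(A,C;1).
\]
In the degenerate case \(e^{\mathsf T}Ce=1\), the bound \(C\preceq\Id\) forces \(Pe=0\), i.e. \(e\in\ker(\Id-C)\). The compatibility hypothesis then gives \((C-A)e=0\), so \(\mathscr S_e=0\) and the inequality is trivial. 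The condition \(e^{\mathsf T}Ce\le1\) also holds automatically, since \(C\preceq\Id\).

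\textbf{Step 2: apply the directional lemma.} Both hypotheses of \Cref{lem:directional-logdet-Schur} with \(c=1\) are now verified, namely \(e^{\mathsf T}Ce\le1\) and compatibility at equality. It gives
\[
\cH(R)+\mathscr S_e(A,C;1)\ge\Xi(y)=\Xi_+(y).
\]
Combining this with Step 1 gives \eqref{eq:logdet-Schur-coercivity}.

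\textbf{Main obstacle.} The delicate point is the rank-one lower bound for the pseudoinverse quadratic form when \(\Id-C\) is singular. It relies on the range inclusion, which is exactly where the kernel hypothesis enters. The rest is a direct reuse of the earlier scalar argument.
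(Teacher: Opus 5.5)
Your proof is correct and uses the same ingredients as the paper's: the kernel hypothesis giving $\operatorname{ran}(C-A)\subset\operatorname{ran}(\Id-C)$, a Cauchy--Schwarz bound for the pseudoinverse form, spectral Jensen for $\cH(R)$, and \Cref{lem:scalar-logdet-Schur}. The only difference is bookkeeping: you route through the rank-one bound $\mathscr S(A,C)\ge\mathscr S_e(A,C;1)$ and then reuse \Cref{lem:directional-logdet-Schur}, whereas the paper obtains $\mathscr S(A,C)\ge(x-y)^2/(y(1-x))$, with $x=\langle Ce,e\rangle$, in a single step from the factorisation $\mathscr S(A,C)=\|(\Id-C)^{\dagger/2}(C-A)A^{-1/2}\|_{\mathrm{HS}}^2$.
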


\begin{proof}
We normalise $|v|=1$, and put
\[
\begin{aligned}
P&:=\Id-C\succeq0,
&\qquad
E&:=C-A,\\
x&:=\langle Cv,v\rangle\in(0,1],
&\qquad
y&:=\langle Av,v\rangle>0.
\end{aligned}
\]
Note that the kernel assumption gives $E(\ker P)=\{0\}$. Since $E$ is symmetric, we
obtain
\[
\operatorname{ran}E\subset(\ker P)^\perp=\operatorname{ran}P.
\]
We also have
\begin{equation}\label{eq:S-HS-factorisation}
\mathscr S(A,C)
=\left\|P^{\dagger/2}EA^{-1/2}\right\|_{\mathrm{HS}}^2.
\end{equation}

Observe that the Rayleigh quotient of $R$ at the nonzero vector $A^{1/2}v$
is $x/y$. We now apply spectral Jensen to the function \(g\) defined in
\eqref{eq:g-def}, and obtain
\begin{equation}\label{eq:H-rayleigh-xy}
\cH(R)\ge g(x/y).
\end{equation}
Suppose first that $x=1$. This implies $\langle Pv,v\rangle=0$, so $v\in\ker P$. We then arrive at $Ev=0$ and $Av=Cv=v$, hence $y=1$. The conclusion follows immediately in this case.

We now assume that $x<1$. Since $Ev\in\operatorname{ran}P$,
\[
\langle Ev,v\rangle
=\left\langle P^{\dagger/2}Ev,P^{1/2}v\right\rangle.
\]
We apply \eqref{eq:S-HS-factorisation} and then bound the operator norm by the Hilbert--Schmidt norm, which gives
\begin{align*}
|\langle Ev,v\rangle|
&\le |P^{1/2}v|
\left|P^{\dagger/2}EA^{-1/2}A^{1/2}v\right|\\
&\le \sqrt{1-x}\,\sqrt{\mathscr S(A,C)}\,\sqrt y.
\end{align*}
Since $\langle Ev,v\rangle=x-y$, we have proved
\begin{equation}\label{eq:S-rayleigh-xy}
\mathscr S(A,C)
\ge\frac{(x-y)^2}{y(1-x)}.
\end{equation}
Finally, we combine \eqref{eq:H-rayleigh-xy} with \eqref{eq:S-rayleigh-xy} and apply \Cref{lem:scalar-logdet-Schur}, which proves the claim.
\end{proof}

\subsubsection{Refinement with translation}
\label{subsec:pure-translation-refinement}

For the semi-log-concave estimate, a corrector with a single translation is the most efficient choice, which avoids the auxiliary one-dimensional noise and gives the sharper exponential rate we use below.

Under the normalised assumptions \eqref{eq:unrestricted-normalisation}, let us set, for $m\in\mathbb R^d$,
\begin{equation}\label{eq:pure-translation-corrector}
\mathcal G_m(x)
:=\Phi(x+m)-\Phi(x)-\langle m,T(x)\rangle,
\end{equation}
\[
C_m(x):=A(x+m),
\qquad
R_m(x):=A(x)^{-1/2}C_m(x)A(x)^{-1/2}.
\]
We omit the \(x\)-argument whenever this does not create ambiguity.

\begin{lemma}[Corrector with translation alone]
\label{lem:pure-translation-corrector}
Under \eqref{eq:unrestricted-normalisation},
\begin{align}
0&\le\mathcal G_m\le |m|,
\label{eq:pure-translation-budget}\\
\cL\mathcal G_m
&\ge
\cH(R_m)-\frac{|m|^2}{2}
-\frac\rho2|T(x+m)-T(x)|^2.
\label{eq:pure-translation-L}
\end{align}
Moreover,
\begin{equation}\label{eq:pure-translation-derivatives}
\nabla_m\mathcal G_m=T(x+m)-T(x),
\qquad
D^2_{mm}\mathcal G_m=C_m,
\qquad
D^2_{xm}\mathcal G_m=C_m-A.
\end{equation}
\end{lemma}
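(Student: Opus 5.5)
The plan is to follow the proof of \Cref{lem:directional-translated-corrector}, with the scalar translation \(se\) replaced by a vector \(m\in\R^d\), and to handle the nonconvex target potential by a semiconvexity correction rather than by the Bregman nonnegativity used there.

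\textbf{Budget.} The lower bound in \eqref{eq:pure-translation-budget} is the supporting-hyperplane inequality for the convex function \(\Phi\) at \(x\). For the upper bound we write
\[
\mathcal G_m(x)=\int_0^1\langle T(x+tm)-T(x),m\rangle\,\dd t .
\]
Both \(T(x+tm)\) and \(T(x)\) lie in \(K\), and \(\diam(K)\le1\), so the integrand is at most \(|m|\). This gives \(\mathcal G_m\le|m|\).

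\textbf{Derivatives.} The identities \eqref{eq:pure-translation-derivatives} follow by direct differentiation in \(m\):
\[
\nabla_m\mathcal G_m=T(x+m)-T(x),\qquad D^2_{mm}\mathcal G_m=A(x+m)=C_m .
\]
Differentiating \(\nabla_m\mathcal G_m\) in \(x\) gives \(C_m-A\), which is symmetric, so the order of the mixed derivatives does not matter.

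\textbf{Operator inequality.} We compute \(\cL\) of each piece at fixed \(m\).
\begin{itemize}
\item \(\cL\bigl(\Phi(\cdot+m)-\Phi\bigr)=\tr R_m-d-\langle\mathfrak b,T(x+m)-T(x)\rangle\), because \(\tr(A^{-1}C_m)=\tr R_m\).
\item The once-differentiated Monge--Amp\`ere equation \eqref{eq:first-diff-MA-general}, applied in direction \(m\), gives \(\cL\langle m,T\rangle=\tr(A^{-1}\partial_mA)-\langle\mathfrak b,Am\rangle=-V_m\).
\item Subtracting \eqref{eq:MA-general} at \(x\) from \eqref{eq:MA-general} at \(x+m\) gives
\[
\log\det R_m=-\bigl(V(x+m)-V(x)\bigr)+W(T(x+m))-W(T(x)).
\]
\end{itemize}
We then write \(\tr R_m-d=\cH(R_m)+\log\det R_m\) and combine the three items:
\[
\cL\mathcal G_m=\cH(R_m)-\bigl[V(x+m)-V(x)-\langle\nabla V(x),m\rangle\bigr]+\bigl[W(T(x+m))-W(T(x))-\langle\nabla W(T(x)),T(x+m)-T(x)\rangle\bigr].
\]

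\textbf{Estimating the two brackets.}
\begin{itemize}
\item Since \(D^2V\preceq\Id\), Taylor's formula bounds the \(V\)-bracket above by \(|m|^2/2\).
\item The \(W\)-bracket is a Bregman divergence of \(W\) at two points of \(K\). Because \(W+\frac\rho2|\cdot|^2\) is convex on \(K\) and the segment between the points stays in \(K\), the divergence is at least \(-\frac\rho2|T(x+m)-T(x)|^2\).
\end{itemize}
Together these give \eqref{eq:pure-translation-L}.

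\textbf{Main obstacle.} The delicate point is applying the semiconvexity of \(W\) up to the boundary. The points \(T(x)\) and \(T(x+m)\) lie in \(\Omega\) by \Cref{prop:regularity}, and \(W\) is smooth on a neighbourhood of \(K\), so the convexity of \(W+\frac\rho2|\cdot|^2\) holds along the segment between them by convexity of \(\Omega\). The remaining steps are routine calculation.
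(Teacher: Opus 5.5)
Your proposal is correct and follows the paper's proof essentially step for step: the budget from convexity and $\diam(K)\le1$, the identity for $\log\det R_m$ from the Monge--Amp\`ere equation at $x$ and $x+m$, the $\rho$-semiconvex Bregman bound for $W$, the identity $\cL\langle m,T\rangle=-\langle m,\nabla V\rangle$, and the Taylor bound $V(x+m)-V(x)-\langle m,\nabla V(x)\rangle\le|m|^2/2$. Your remark that $T(x),T(x+m)\in\Omega$, so the segment between them stays in the convex set $\Omega$ where $D^2W\succeq-\rho\Id$, is what justifies the semiconvex Bregman inequality that the paper invokes without comment.
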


\begin{proof}
We obtain the value estimate from convexity and the normalised target
diameter. Let us set \(\mathfrak b:=\nabla W(T(x))\). The Monge--Amp\`ere equation gives the identity
\[
\log\det R_m
=
-\bigl(V(x+m)-V(x)\bigr)+W(T(x+m))-W(T(x)).
\]
From the $\rho$-semiconvex Bregman inequality, we have
\[
W(T(x+m))-W(T(x))
\ge
\langle \mathfrak b,T(x+m)-T(x)\rangle
-\frac\rho2|T(x+m)-T(x)|^2.
\]
We also have
\[
\cL(\Phi(x+m)-\Phi(x))
=
\tr R_m-d-\langle \mathfrak b,T(x+m)-T(x)\rangle,
\]
which gives
\[
\cL(\Phi(x+m)-\Phi(x))
\ge
\cH(R_m)-\bigl(V(x+m)-V(x)\bigr)
-\frac\rho2|T(x+m)-T(x)|^2.
\]
For the affine target function $y\mapsto\langle m,y\rangle$, the once-differentiated Monge--Amp\`ere equation yields
\[
\cL\langle m,T\rangle=-\langle m,\nabla V\rangle.
\]
It follows from the normalised source semiconcavity bound that
\[
V(x+m)-V(x)-\langle m,\nabla V(x)\rangle\le\frac{|m|^2}{2},
\]
and the affine compensation cancels the linear source drift. This proves \eqref{eq:pure-translation-L}. Finally, we differentiate directly and obtain the derivative identities.
\end{proof}

\begin{proposition}[Improved semi-log-concave bootstrap]
\label{prop:unrestricted-bootstrap-half}
Let us assume the smooth hypotheses of \Cref{sec:prelim}, and suppose that, for some \(\rho\ge0\),
\[
D^2W\succeq-\rho\Id
\qquad\text{on }\Omega.
\]
We also temporarily assume the bound
\[
M_\Phi:=\sup_{\mathbb R^d}\|D^2\Phi\|_{\op}<\infty.
\]
Set
\[
D_K:=\diam(K),
\qquad
r:=\rho D_K^2.
\]
It follows that
\begin{equation}\label{eq:unrestricted-bootstrap-half}
M_\Phi
\le
\frac{(1+r)(5+r)}2
\exp\!\left(\frac{1+r}{2}\right)
\sqrt\Lambda\,D_K.
\end{equation}
\end{proposition}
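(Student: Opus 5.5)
Scaling first: replacing $x$ by $\sqrt\Lambda x$ and $y$ by $y/D_K$ reduces to the normalisation \eqref{eq:unrestricted-normalisation} with $\Lambda=1$, $D_K=1$, and the curvature parameter becomes $\rho D_K^2=r$. So it suffices to prove $M_\Phi\le \frac{(1+r)(5+r)}2 e^{(1+r)/2}$ in this setting, where $M_\Phi=\sup_x\sup_{|v|=1}\langle A(x)v,v\rangle$ is finite by assumption. The plan is a joint maximum principle in $(x,m,v)$, or more simply in $(x,m)$ for a fixed unit $v$ followed by a supremum over $v$, for the functional
\[
\mathcal J_\varepsilon(x,m):=\log\langle A(x)v,v\rangle+\beta\,\mathcal G_m(x)-\frac{\beta c}{2}|m|^2-\varepsilon\Phi(x),
\]
with $\beta,c>0$ to be chosen in terms of $r$. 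By \eqref{eq:pure-translation-budget}, the bound $\beta(\mathcal G_m-\frac c2|m|^2)\le\beta/(2c)$ holds, so the comparison with $m=0$ costs the factor $\exp(\beta/(2c))$, exactly as in \Cref{prop:smooth-estimate-translated}. Coercivity in $x$ after \Cref{lem:coercive}, together with coercivity in $m$, ensures a maximiser $(x_\varepsilon,m_\varepsilon)$ exists.

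At the maximiser, stationarity in $m$ gives $T(x+m)-T(x)=cm$, so $|m|\le 1/c$ and $|T(x+m)-T(x)|\le1$. The $(m,m)$-block gives $C_m\preceq c\Id$. The full block Hessian
$\begin{pmatrix} H & \beta(C_m-A)\\ \beta(C_m-A) & -\beta(c\Id-C_m)\end{pmatrix}\preceq0$
combined with \Cref{lem:generalised-Schur} yields $\tr(A^{-1}H)+\beta\,\mathscr S(A/c,C_m/c)\cdot(\text{scaling})\le0$. After rescaling $A,C$ by $c$, this matches \Cref{lem:logdet-Schur} with $C\preceq\Id$. Inserting \Cref{lem:log-u} (where now $D^2W\succeq-\rho\Id$ contributes $-\rho\,|Av|^2/\langle Av,v\rangle\ge-\rho M_\Phi$), \Cref{lem:pure-translation-corrector}, and the analogue of \Cref{lem:penalty-bound}, with its $\rho$-correction absorbed into $\varepsilon$-terms, gives
\[
\Xi_+\!\Big(\tfrac{u}{c}\Big)\le \frac{1+\rho M_\Phi}{\beta u}+\frac{1}{2c^2}+\frac{\rho}{2}+o(1),\qquad u=\langle A(x_\varepsilon)v,v\rangle.
\]
Here $\frac{\rho}{2}|T(x+m)-T(x)|^2\le\rho/2$ is used.

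The main obstacle is the bad term $\rho M_\Phi/(\beta u)$ from the log-$u$ inequality, since it involves $M_\Phi$ itself. To handle it, I would choose $v$ (jointly maximising over $v\in\Sph^{d-1}$) so that $u$ near the maximum is comparable to $M_\Phi$. Then $|Av|^2\le M_\Phi u$, and the bad term is at most $\rho/\beta$, a constant. With $\beta\asymp 1+r$ and $c\asymp 1$, the right side is $O(1+r)$. Since $\Xi(y)\ge y-1/y$, this gives $u/c\lesssim (5+r)/2$ type bounds, and the exponential factor becomes $e^{\beta/(2c)}=e^{(1+r)/2}$. Thus I would take $c=1/(1+r)$-normalised choices such as $\beta=1$, $c=1/(1+r)$, and tune the constants so that the product $c\cdot y\cdot e^{\beta/(2c)}$ equals $\frac{(1+r)(5+r)}2e^{(1+r)/2}$. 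Finally, the limit $\varepsilon\downarrow0$ and the comparison argument with a near-maximising $x_\zeta$ proceed verbatim as in \Cref{prop:smooth-estimate-translated}. The self-referential use of the finite $M_\Phi$, which is justified by the temporary assumption, is the step I would check most carefully.
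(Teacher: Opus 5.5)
\textbf{There is a genuine gap.} Your architecture matches the paper's: rescale to $\Lambda=1$, $\diam K=1$ and curvature $r$, maximise $\log\Phi_{vv}+\beta\mathcal G_m-\frac{\beta c}{2}|m|^2-\varepsilon\Phi$, use stationarity in $m$, the generalised Schur complement, \Cref{lem:logdet-Schur}, and compare with $m=0$. The paper takes $c=1$, $\beta=1+r$ and maximises jointly over $(x,e,m)$. But the step you flag as the main obstacle is handled incorrectly.

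In \Cref{lem:log-u} the target term is
\[
\frac{D^2W(T)[Av,Av]}{u}\ge-r\,\frac{|Av|^2}{u}=-r\,\ell_v,
\qquad \ell_v:=\frac{|Av|^2}{u}\ge u .
\]
After dividing by $\beta$ this contributes $r\ell_v/\beta\le rM_\Phi/\beta$ on the right. It is not $rM_\Phi/(\beta u)$ as in your display. So the bad term is never a constant. It is linear in the very quantity you are bounding: at a joint maximum in $v$, $v$ is a top eigenvector, so $\ell_v=u$. Choosing $v$ with $u\approx M_\Phi$ makes it $\approx rM_\Phi/\beta$, not $r/\beta$. Your conclusion that the right side is $O(1+r)$ is therefore unjustified, and the self-referential use of $M_\Phi$ does not close. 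Finiteness of $M_\Phi$ is only needed to attain the maximum.

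\textbf{The missing idea is absorption}, and this is what fixes $\beta$ in the paper. Apply \Cref{lem:logdet-Schur} to $A/c$ and $C_m/c$. No extra scaling factor appears, since
\[
\mathscr S(A/c,C_m/c)=\tr\bigl(A^{-1}(C_m-A)(c\Id-C_m)^\dagger(C_m-A)\bigr).
\]
Evaluate it at the vector $A^{1/2}v$, so the left side becomes $\Xi_+(\ell_v/c)$. Suppose $\ell_v/c>1$ and $u>1/\beta$ (otherwise $u\le\max\{c,1/\beta\}$). Then $\Xi(y)>y-1$, $1/(\beta u)<1$ and $|m|\le 1/c$ give
\[
\Bigl(\frac1c-\frac{r}{\beta}\Bigr)\ell_v<2+\frac{1}{2c^2}+\frac{r}{2}+o(1).
\]
This closes only if $\beta>rc$.
\begin{itemize}
\item With the paper's choice $c=1$, $\beta=1+r$, it yields $\ell_v\le(1+r)(5+r)/2$. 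The comparison factor is $e^{\beta/(2c)}=e^{(1+r)/2}$.
\item Your choice $\beta=1$, $c=1/(1+r)$ also satisfies $\beta>rc$. Carried out correctly it gives $\ell_v\le(r^2+3r+5)/2$ with the same exponential factor, which implies the stated bound.
\end{itemize}
In both cases conclude with $u\le\ell_v$ by Cauchy--Schwarz.

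In your fixed-$v$ variant, note that $\Xi_+(u/c)$ on the left cannot absorb $r\ell_v/\beta$, because $\ell_v$ may be much larger than $u$. You need either the test vector $A^{1/2}v$ or the joint maximisation over $v$.
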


\begin{proof}
Let us set
\[
\widetilde x:=\sqrt\Lambda\,x,
\qquad
\widetilde y:=D_K^{-1}y,
\qquad
\widetilde\Omega:=D_K^{-1}\Omega,
\qquad
\widetilde K:=D_K^{-1}K,
\]
and define
\[
\widetilde V(\widetilde x)
:=
V\!\left(\frac{\widetilde x}{\sqrt\Lambda}\right),
\qquad
\widetilde W(\widetilde y)
:=
W(D_K\widetilde y),
\]
and
\[
\widetilde\Phi(\widetilde x)
:=
\frac{\sqrt\Lambda}{D_K}
\Phi\!\left(\frac{\widetilde x}{\sqrt\Lambda}\right)
\]
Notice that the map \(\nabla\widetilde\Phi\) transports the transformed source to the
transformed target. Set
\[
\widetilde\rho:=\rho D_K^2=r.
\]
Moreover,
\[
D^2\widetilde V\preceq\Id,
\qquad
\diam(\widetilde K)=1,
\qquad
D^2\widetilde W
\succeq
-\widetilde\rho\,\Id,
\]
It suffices to prove the normalised estimate. From now on, we omit the tildes and write simply
\[
\widehat\rho:=\widetilde\rho=r.
\]
We translate and normalise $\Phi$ as in \Cref{lem:coercive}, and set
\[
\beta:=1+\widehat\rho
\]
and
\[
C_{\mathrm{pen}}
:=d+W(0)-\inf_KW+\frac{\widehat\rho}{2}<\infty.
\]
The function \(W+\widehat\rho|\cdot|^2/2\) is convex and \(0,T(x)\in K\). Therefore,
\[
W(0)
\ge
W(T)+\langle\nabla W(T),-T\rangle
-\frac{\widehat\rho}{2}|T|^2.
\]
In other terms,
\[
\langle\nabla W(T),T\rangle
\ge
W(T)-W(0)-\frac{\widehat\rho}{2}|T|^2.
\]
Under the normalisation $\diam(K)\le1$, we have $|T|\le1$. This gives
\[
\cL\Phi
=
d-\langle\nabla W(T),T\rangle
\le
d+W(0)-\inf_KW+\frac{\widehat\rho}{2}
=
C_{\mathrm{pen}}.
\]
For $\varepsilon>0$, we maximise
\begin{equation}\label{eq:pure-joint-H}
\mathcal J_\varepsilon(x,e,m)
:=\log\Phi_{ee}(x)
+\beta\mathcal G_m(x)
-\frac\beta2|m|^2
-\varepsilon\Phi(x)
\end{equation}
over $(x,e,m)\in\mathbb R^d\times\Sph^{d-1}\times\mathbb R^d$. The bound \eqref{eq:pure-translation-budget} and coercivity show that the maximum is attained. Let
\[
(x_\varepsilon,e_\varepsilon,m_\varepsilon)
\]
be a maximising triple. In the calculation with the maximum point below, we write
\[
x=x_\varepsilon,
\qquad
e=e_\varepsilon,
\qquad
m=m_\varepsilon.
\]
We now set
\[
u=\Phi_{ee},
\qquad C:=C_m(x),
\qquad R:=R_m(x),
\qquad
\ell_e=\frac{|Ae|^2}{u}
\]
and
\[
u_\varepsilon
:=
\Phi_{e_\varepsilon e_\varepsilon}(x_\varepsilon).
\]
Stationarity with respect to $m$ gives
\begin{equation}\label{eq:pure-m-stationarity}
T(x+m)-T(x)=m.
\end{equation}
This implies that $|m|\le1$. The $(m,m)$ and $(x,m)$ blocks are
\[
\beta(C-\Id),
\qquad
\beta(C-A).
\]
Let \(\cL_x\) denote the operator \(\cL\) that act only in the \(x\)-variable, with \(e\) and \(m\) fixed. It follows that \(C\preceq\Id\), and the generalised Schur complement gives
\begin{equation}\label{eq:pure-Schur-at-max}
\cL_x
\mathcal J_\varepsilon
(x_\varepsilon,e_\varepsilon,m_\varepsilon)
+\beta\mathscr S(A,C)
\le0.
\end{equation}
We now use \Cref{lem:log-u,lem:pure-translation-corrector}, \eqref{eq:pure-m-stationarity}, and the previous penalty estimate. We obtain
\begin{equation}\label{eq:pure-HS-upper}
\cH(R)+\mathscr S(A,C)
\le
\frac{\widehat\rho}{\beta}\ell_e
+\frac1{\beta u}
+\frac\beta2
+\frac{\varepsilon C_{\mathrm{pen}}}{\beta}.
\end{equation}
Indeed, the source and target increment costs add up to \((1+\widehat\rho)|m|^2/2=\beta|m|^2/2\le\beta/2\).

Let us now apply \Cref{lem:logdet-Schur} with $v=A^{1/2}e$. If $u\le1/\beta$ or $\ell_e\le1$, then $u\le1$. Otherwise, $u>1/\beta$ and $\ell_e>1$, and \eqref{eq:pure-HS-upper} gives
\[
\ell_e+2\log\ell_e-\ell_e^{-1}
\le
\frac{\widehat\rho}{\beta}\ell_e+1+\frac\beta2
+\frac{\varepsilon C_{\mathrm{pen}}}{\beta}.
\]
Since \(\beta-\widehat\rho=1\) and \(\ell_e^{-1}<1\), we obtain
\[
\ell_e
\le
\beta\left(2+\frac\beta2\right)
+\varepsilon C_{\mathrm{pen}}
=
\frac{(1+\widehat\rho)(5+\widehat\rho)}2
+\varepsilon C_{\mathrm{pen}}.
\]
As the matrix $A$ is symmetric positive definite and $|e|=1$, the Cauchy--Schwarz inequality gives
\[
u^2
=
\langle Ae,e\rangle^2
\le
|Ae|^2
=
u\ell_e.
\]
Since $u>0$, we have $u\le\ell_e$, and the same bound holds for $u$ at the
joint maximum. Moreover,
\[
\beta\mathcal G_m-\frac\beta2|m|^2
\le\beta\left(|m|-\frac{|m|^2}{2}\right)
\le\frac\beta2,
\]
where the last inequality follows from $|m|\le1$. Let
\[
M:=M_\Phi=\sup_{\R^d}\|D^2\Phi\|_{\op}.
\]
We now fix \(0<\zeta<M\), and choose \(x_\zeta\in\R^d\) and \(e_\zeta\in\Sph^{d-1}\) so that
\[
\Phi_{e_\zeta e_\zeta}(x_\zeta)\ge M-\zeta.
\]
Since \(\mathcal G_0=0\), the maximality of the triple \((x_\varepsilon,e_\varepsilon,m_\varepsilon)\) gives
\[
\log(M-\zeta)-\varepsilon\Phi(x_\zeta)
\le
\mathcal J_\varepsilon(x_\varepsilon,e_\varepsilon,m_\varepsilon).
\]
The above bound for the translation penalty, along with \(\Phi\ge0\), also gives
\[
\mathcal J_\varepsilon(x_\varepsilon,e_\varepsilon,m_\varepsilon)
\le
\log u_\varepsilon+\frac\beta2.
\]
We deduce that
\[
M-\zeta
\le
u_\varepsilon
\exp\!\left(\frac\beta2+\varepsilon\Phi(x_\zeta)\right).
\]
We now use
\[
u_\varepsilon
\le
\frac{(1+\widehat\rho)(5+\widehat\rho)}2+\varepsilon C_{\mathrm{pen}},
\]
and then let \(\varepsilon\downarrow0\) and \(\zeta\downarrow0\). We obtain
\[
M_\Phi
\le
\frac{(1+\widehat\rho)(5+\widehat\rho)}2
\exp\!\left(\frac{1+\widehat\rho}{2}\right).
\]
Since \(\widehat\rho=r\), undoing the normalisation proves \eqref{eq:unrestricted-bootstrap-half}.
\end{proof}

\subsection{A global estimate}
\label{sec:semiconvex-starting}

The bootstrap in \Cref{prop:unrestricted-bootstrap-half} is based on the finiteness of \(M_\Phi\), so the preliminary bound may depend on the dimension and on all the details of the target. We now establish such a bound for a class of smooth approximants. In the proof, we combine a multiplier in the target variables of Pogorelov type with a
centred incremental quotient at infinity. The multiplier comes from Pogorelov's interior Hessian estimates for the Monge--Amp\`ere equation \cite{Pogorelov1971}. In optimal transport, related multipliers in the target variables appear in \cite[Section 4, the part before Theorem 4.2]{Kolesnikov2011} and \cite[Proposition 4.3 and the proof of
Theorem 1.1]{ColomboFigalliJhaveri2017}. The latter work also combines a
weighted multiplier that involves second derivatives with centred incremental quotients. For ball targets, Caffarelli's asymptotic at a support point \cite{Caffarelli2000} and
the vanishing of centred second differences at infinity are recalled in \cite[Lemma 3.1]{ColomboFigalliJhaveri2017} (see also \cite[Lemma 2.2]{Bidoia2026}). In our approach, we use the so-called \textit{compensated collar} that adapts these tools to arbitrary strictly convex compact supports and to target potentials which need not have a global positive lower Hessian bound.

\subsubsection{Vanishing of second differences at infinity}

\begin{lemma}[Support point]
\label{lem:strict-support-asymptotics}
Let $\Omega\subset\R^d$ be a bounded open convex set, and assume that its closure $K:=\overline\Omega$ is strictly convex. Let $\Phi\in C^1(\R^d)$ be convex, and suppose that $T=\nabla\Phi:\R^d\to\Omega$ is a homeomorphism. It follows that for every $t>0$,
\begin{equation}\label{eq:second-difference-vanishes}
\sup_{e\in\Sph^{d-1}}
\bigl[
\Phi(x+te)+\Phi(x-te)-2\Phi(x)
\bigr]
\longrightarrow0
\qquad\text{as }|x|\to\infty.
\end{equation}
\end{lemma}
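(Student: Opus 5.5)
The plan is to reduce the statement to a vanishing-oscillation property of \(T\) on balls of fixed radius escaping to infinity, and then to prove that property by a compactness argument based on monotonicity and strict convexity of \(K\). Fix \(t>0\). Since \(\Phi\in C^1\), for every \(x\in\R^d\) and \(e\in\Sph^{d-1}\) we can write
\[
\Phi(x+te)+\Phi(x-te)-2\Phi(x)
=\int_0^t\bigl\langle T(x+\sigma e)-T(x-\sigma e),e\bigr\rangle\dd\sigma .
\]
Hence
\[
0\le\Phi(x+te)+\Phi(x-te)-2\Phi(x)\le t\,\operatorname{osc}(x),
\qquad
\operatorname{osc}(x):=\sup_{z,z'\in\overline{B_t(x)}}|T(z)-T(z')|.
\]
The lower bound \(0\) comes from convexity. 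The right-hand side no longer depends on \(e\), so the uniformity in \(e\) in \eqref{eq:second-difference-vanishes} is automatic. It therefore suffices to show that \(\operatorname{osc}(x)\to0\) as \(|x|\to\infty\).

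I will argue by contradiction. Suppose there are \(\delta>0\), points \(x_n\) with \(|x_n|\to\infty\), and \(z_n,z_n'\in\overline{B_t(x_n)}\) such that \(|T(z_n)-T(z_n')|\ge\delta\). After passing to a subsequence, I will assume that \(x_n/|x_n|\to\xi\in\Sph^{d-1}\), that \(T(z_n)\to y^*\in K\), and that \(T(z_n')\to y^{**}\in K\); this is possible because \(K\) is compact. Since \(|z_n-x_n|\le t\), we also have \(z_n/|x_n|\to\xi\) and \(z_n'/|x_n|\to\xi\).

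The key step is to identify every such limit point. Take any \(y'\in\Omega\). Because \(T:\R^d\to\Omega\) is onto (this is where the homeomorphism hypothesis enters), there is a point \(x'\) with \(T(x')=y'\). Monotonicity of the gradient of the convex function \(\Phi\) gives
\[
\langle T(z_n)-y',\,z_n-x'\rangle\ge0 .
\]
Dividing by \(|x_n|\) and letting \(n\to\infty\), with \(x'\) fixed and \(T(z_n)\) bounded, yields \(\langle y^*-y',\xi\rangle\ge0\) for every \(y'\in\Omega\). By density this extends to every \(y'\in K=\overline\Omega\). Thus \(y^*\) maximises the linear functional \(\langle\cdot,\xi\rangle\) over \(K\). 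Strict convexity of \(K\) means that this maximiser is a unique point \(y_\xi\), since any face of \(K\) is a single point. The same reasoning applied to \(z_n'\) gives \(y^{**}=y_\xi\). This contradicts \(|y^*-y^{**}|\ge\delta\), and so \(\operatorname{osc}(x)\to0\) as \(|x|\to\infty\).

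The step I expect to require the most care is this identification of limit points, in particular checking that the monotonicity inequality is applied with a fixed preimage \(x'\), so that the normalisation by \(|x_n|\) kills the \(x'\) term. The remaining steps, namely the integral formula for the second difference and the subsequence extraction, are routine.
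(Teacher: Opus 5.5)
Your proof is correct, and its core mechanism matches the paper's. You test the monotonicity of $T$ against a preimage of an interior point of $K$, divide by $|x|$, and let strict convexity force the limit to be the unique maximiser of $\langle\cdot,\xi\rangle$ on $K$. Your closing bound by $t$ times an oscillation of $T$ is a slightly coarser form of the paper's one-dimensional estimate $f(t)+f(-t)-2f(0)\le t[f'(t)-f'(-t)]$.

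The difference lies in how uniformity is obtained. The paper proves the explicit uniform asymptotic $T(R\vartheta)\to s(\vartheta)$ by testing against the whole compact family $y_\alpha(\vartheta)=(1-\alpha)s(\vartheta)+\alpha y_0$. This requires continuity of the exposed-point map $s$, uniform shrinking of the caps, and boundedness of $T^{-1}$ on compact subsets of $\Omega$, which is where continuity of $T^{-1}$ is used. The paper then needs a uniform-continuity argument for $s$ to compare $T(x+te)$ with $T(x-te)$.

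Your contradiction argument gets all the uniformity (in $e$, in the direction of $x$, and over $\overline{B_t(x)}$) from sequential compactness. It invokes uniqueness of the maximiser only at the single limit direction $\xi$. It also needs only surjectivity of $T$ onto $\Omega$, since each preimage $x'$ is fixed before the limit is taken. It is therefore shorter and works under weaker hypotheses.

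What the paper's route buys is the uniform statement $T(R\vartheta)\to s(\vartheta)$ itself, which is reused in the proof of the global Hessian bound to place $T(\widehat x_m)$ in the outer collar. Your compactness argument yields the needed consequence, $\operatorname{dist}(T(x),\partial K)\to0$ as $|x|\to\infty$, just as easily, but you would have to record it separately.
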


\begin{proof}
For $\vartheta\in\Sph^{d-1}$, let $s(\vartheta)$ be the unique point of $K$ at which the support function $h_K(\vartheta)=\sup_{y\in K}\langle y,\vartheta\rangle$ is attained. Strict convexity and compactness imply that $s$ is continuous. For
\(\vartheta\in\Sph^{d-1}\) and \(\delta\ge0\), we define the cap
\[
\mathcal C(\vartheta,\delta)
:=
\left\{
y\in K:
h_K(\vartheta)-\langle y,\vartheta\rangle\le\delta
\right\}.
\]
The diameters of these caps tend to zero as $\delta\downarrow0$, uniformly in $\vartheta$. Otherwise, compactness would give a direction $\vartheta$ whose exposed face contains two distinct points.

We fix $y_0\in\Omega$ and for $\alpha\in(0,1)$, set
\[
y_\alpha(\vartheta)
:=(1-\alpha)s(\vartheta)+\alpha y_0.
\]
Since the image of \(y_\alpha\) is a compact subset of \(\Omega\), it follows that \(z_\alpha(\vartheta):=T^{-1}(y_\alpha(\vartheta))\) is uniformly bounded
in \(\vartheta\). Let
\[
R_K:=\sup_{y\in K}|y|,
\qquad
M_\alpha
:=
\sup_{\vartheta\in\Sph^{d-1}}
|z_\alpha(\vartheta)|.
\]
For \(R>0\), set \(x=R\vartheta\). The monotonicity of the gradient gives
\[
0\le
\langle T(x)-y_\alpha(\vartheta),
x-z_\alpha(\vartheta)\rangle.
\]
This implies
\begin{align*}
h_K(\vartheta)-\langle T(R\vartheta),\vartheta\rangle
&\le
h_K(\vartheta)-\langle y_\alpha(\vartheta),\vartheta\rangle
+\frac{2R_KM_\alpha}{R}\\
&\le\alpha\diam(K)+\frac{2R_KM_\alpha}{R}.
\end{align*}
We first choose $\alpha$ small and then \(R\) large. The uniform cap property gives
\begin{equation}\label{eq:T-support-uniform}
T(R\vartheta)-s(\vartheta)\longrightarrow0
\quad\text{uniformly in }\vartheta
\quad\text{as }R\to\infty.
\end{equation}

As $|x|\to\infty$, the directions of $x+te$ and $x-te$ both approach $x/|x|$, uniformly in $e$. The uniform continuity of $s$, with \eqref{eq:T-support-uniform}, then implies
\[
\sup_{e\in\Sph^{d-1}}
|T(x+te)-T(x-te)|
\longrightarrow0.
\]
We now apply the one-dimensional convexity argument to \(f(\tau)=\Phi(x+\tau e)\). We have
\[
f(t)+f(-t)-2f(0)
=\int_0^t[f'(\tau)-f'(-\tau)]\,\dd \tau
\le t[f'(t)-f'(-t)].
\]
This proves \eqref{eq:second-difference-vanishes}.
\end{proof}

\subsubsection{A compensated target collar}

Recall that \(C^\infty\) functions on closed sets are understood according to the convention above. Let \(\mathcal U\subset\mathbb R^d\) be open and convex, let
\(\mathfrak r\in C^\infty(\mathcal U)\) be convex, and set
\[
\Omega:=\{y\in\mathcal U:\mathfrak r(y)<0\},
\qquad
K:=\overline\Omega.
\]
Let us assume that \(K\) is compact and contained in \(\mathcal U\). We say that
\(W\in C^\infty(K)\) has a \textit{compensated collar relative to \(\mathfrak r\)} if there exist
\[
\delta,c,\kappa>0,
\qquad
\psi\in C^\infty(K),
\]
and a smooth function \(W_{\mathrm{col}}\) on a neighbourhood of \(K\) with the following properties.
\begin{enumerate}[label=(\roman*)]
\item \(\psi\) is constant on \(K\cap\{\mathfrak r\ge-\delta/2\}\), and \(W=W_{\mathrm{col}}\) on \(K\cap\{\mathfrak r\ge-\delta\}\).
\item \(D^2W_{\mathrm{col}}\succeq\kappa\Id\) on \(\Omega\).
\item for every $y\in\Omega$, every positive-definite matrix $A$, and every unit eigenvector $e$ of $A$ that corresponds to $u=\lambda_{\max}(A)$, one has
\begin{equation}\label{eq:compensated-collar-coercivity}
W_{ee}(y)u^2
+u\tr(D^2\psi(y)A)
\ge cu^2.
\end{equation}
\end{enumerate}

\begin{proposition}[Global Hessian bound]
\label{prop:qualitative-starting-bound}
Assume that \(\Omega\) is bounded, smooth, and uniformly convex, and that \(W\in C^\infty(K)\) has a compensated collar relative to \(\mathfrak r\). Let $\Phi$ be the smooth Brenier potential transporting the source \eqref{eq:source-main} to
$Z_\nu^{-1}e^{-W}\one_\Omega\,\dd y$. It then holds
\begin{equation}\label{eq:qualitative-U-finite}
\sup_{x\in\R^d}\|D^2\Phi(x)\|_{\op}<\infty.
\end{equation}
The bound in \eqref{eq:qualitative-U-finite} is qualitative and may depend on $d$, $K$, $W$, plus the compensator.
\end{proposition}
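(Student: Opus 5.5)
The plan is a Pogorelov-type maximum principle on the largest Hessian eigenvalue, corrected by the target multiplier \(\psi(T)\) from the compensated collar. The a priori assumption \eqref{eq:apriori-Hessian-bound} cannot be used, so the maximum has to be located by other means. For this we use \Cref{lem:strict-support-asymptotics}: \(K\) is uniformly convex, hence strictly convex, and \(T\) is a diffeomorphism onto \(\Omega\) by \Cref{prop:regularity}.

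We replace the pointwise eigenvalue, which may be unbounded, by centred second differences at a fixed scale \(t>0\):
\[
\delta_t(x,e):=\frac{\Phi(x+te)+\Phi(x-te)-2\Phi(x)}{t^2}.
\]
This quantity is finite and continuous on \(\R^d\times\Sph^{d-1}\). By \Cref{lem:strict-support-asymptotics} it tends to \(0\) as \(|x|\to\infty\), uniformly in \(e\). Hence the functional
\[
\mathcal P_t(x,e):=\log\delta_t(x,e)+\psi(T(x))
\]
attains its maximum on \(\R^d\times\Sph^{d-1}\), provided \(\delta_t\) is positive somewhere. Here \(\psi\) is bounded on \(K\), so no coercive \(\varepsilon\Phi\) term is needed.

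At a maximum point we compute \(\cL\) of each term. For \(\cL\delta_t\), the three-point analogue of \Cref{lem:directional-translated-corrector} applies: the Monge--Amp\`ere equation at \(x\pm te\) and concavity of \(\log\det\) give \(\cL\delta_t\ge -C(\Lambda)\) plus target terms. The target terms are \(W(T(x\pm te))-W(T(x))\) minus their linearisations. Where \(T(x)\) lies in the collar \(\{\mathfrak r\ge-\delta\}\), these are controlled from below by \(\kappa|T(x\pm te)-T(x)|^2/(2t^2)\) via \(D^2W_{\mathrm{col}}\succeq\kappa\Id\). In the interior region, where \(\psi\) may vary and \(W\) may be nonconvex, we instead use \(\cL(\psi\circ T)=\tr(D^2\psi(T)A)+\ldots\), and condition \eqref{eq:compensated-collar-coercivity} is designed to absorb the negative curvature \(W_{ee}u^2\). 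Since \(\delta_t\) is only a finite-difference surrogate for \(u\), I would carry this out through the limit \(t\downarrow0\). One option is to show that the maximisers stay in a compact set uniformly in small \(t\), pass to a limit point, and apply the differential inequality for \(u=\lambda_{\max}(A)\) there (using the standard viscosity trick of freezing the eigenvector \(e\)). The alternative is a one-shot argument with \(t\) fixed and the a priori bound deduced from \(\Lip(T)\lesssim\sup\delta_t\). I would try the first option: fix a small \(t\) for localisation only, and then argue with \(\mathcal P(x,e)=\log\Phi_{ee}+\psi(T)\) restricted to the region where \(\Phi_{ee}\) exceeds the far-field values.

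At the maximum, the first-order conditions cancel the gradient cross terms, just as in \Cref{lem:log-u}. Combining them with \eqref{eq:compensated-collar-coercivity} gives \(0\ge cu-C_1-C_2\) with constants depending on \(\Lambda\), \(\|\nabla W\|_{L^\infty(K)}\), \(\|\psi\|_{C^2}\) and \(d\). The drift term \(\langle\mathfrak b,\nabla\psi(T)\rangle\) involves \(A\nabla\psi\), which is bounded by \(u|\nabla\psi|\); this is acceptable because the coercive term is quadratic in \(u\) before dividing by \(u\). This yields a bound on \(u\) at the maximum, hence globally, since \(\psi\) is bounded.

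The main obstacle is the localisation. Knowing \(\sup_e\delta_t\to0\) at infinity for each fixed \(t\) does not directly prevent \(\Phi_{ee}\) itself from blowing up along a sequence \(|x_k|\to\infty\). I would handle this by running the maximum principle on the fixed-\(t\) quantity \(\mathcal P_t\) itself, paying error terms of order \(t\). The bound \(\sup\delta_t\le C\) with \(C\) independent of \(t\) then gives \(D^2\Phi\preceq C\Id\) by the second-difference characterisation of semiconcavity, as \(\delta_t\le C\) at every scale forces this bound. Verifying the three-point inequality carefully, with the collar supplying the needed convexity at the points \(x\pm te\), is the delicate step.
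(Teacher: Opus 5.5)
There is a genuine gap, and it sits exactly where you locate the ``main obstacle'' and the ``delicate step''. Your final plan is to run the maximum principle on \(\mathcal P_t\) everywhere and pay error terms of order \(t\). That is not justified, for two reasons. First, if the maximisers of \(\mathcal P_t\) escape to infinity as \(t\downarrow0\), nothing controls \(D^3\Phi\) or \(D^4\Phi\) there, so no Taylor error is uniformly \(O(t)\). Second, where \(\psi\) varies, the three-point computation produces \(|T(x\pm te)-T(x)|^2/t^2\) and a gradient term in \(T_++T_--2T_0\). Neither is a function of \(\delta_t\), and \eqref{eq:compensated-collar-coercivity}, a pointwise condition at an exact top eigenvector, does not apply to them.

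You also do not need the maximisers to stay in a compact set, which is what your first option tries to prove. The missing idea is a dichotomy whose collar branch is \emph{exact}. Let \((\hat x,\hat e)\) maximise \(h_t=\delta_t\,e^{\psi(T)}\), which exists by \Cref{lem:strict-support-asymptotics}.
\begin{itemize}
\item If \(\mathfrak r(T(\hat x))>-\delta/2\), then by (i) the weight is locally constant near \(\hat x\). So \(x\mapsto\delta_t(x,\hat e)\) itself has a local maximum at \(\hat x\), which gives \(T_++T_-=2T_0\) and \(A_++A_-\preceq 2A_0\) with no error.
\item The midpoint identity and convexity of \(\mathfrak r\) then give \(\mathfrak r(T_\pm)>-\delta\), so \(W=W_{\mathrm{col}}\) at all three points. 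This is what resolves your delicate step.
\item Concavity and monotonicity of \(\log\det\), together with the Monge--Amp\`ere equation, give \(W_{\mathrm{col}}(T_+)+W_{\mathrm{col}}(T_-)-2W_{\mathrm{col}}(T_0)\le\Lambda t^2\). Hence \(|T_+-T_-|\le 2\sqrt{\Lambda/\kappa}\,t\) and \(\delta_t(\hat x,\hat e)\le 2\sqrt{\Lambda/\kappa}\).
\item Otherwise \(\hat x\) lies in the compact set \(T^{-1}(K\cap\{\mathfrak r\le-\delta/2\})\).
\end{itemize}
The paper phrases this differently. If \(\sup h=\infty\), the maximisers of \(h_{t_m}\) must leave every compact set, so they fall in the collar branch and are bounded, a contradiction. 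If \(h\) attains its maximum, the continuous Pogorelov computation applies. For this qualitative statement the compact branch is even bounded directly, since \(\delta_t(x,e)\le\sup_{|s|\le t}\Phi_{ee}(x+se)\) and \(\Phi\) is smooth.

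Your interior bookkeeping is also off. You bound the drift \(\langle\nabla W(T),A\nabla\psi(T)\rangle\) by \(\|\nabla W\|_\infty\,u\,|\nabla\psi|\). After multiplying by \(u\), this is of order \(u^2\), competes with \(cu^2\), and cannot be absorbed, contrary to what you claim. The correct observation is that the \(\nabla W\) terms cancel exactly: by the once-differentiated Monge--Amp\`ere equation,
\[
\cL(\psi\circ T)=\tr(D^2\psi(T)A)-\langle\nabla\psi(T),\nabla V\rangle.
\]
The surviving \(\nabla V\) term is missing from your list of constants, and it is not controlled by \(\Lambda\). It is bounded because \(\nabla\psi\) vanishes on \(\Omega\cap\{\mathfrak r>-\delta/2\}\) and \(T\) is a homeomorphism onto \(\Omega\), so \(\nabla\psi\circ T\) has compact support. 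Combining this with \Cref{lem:log-u} at a top eigenvector gives \(0\ge cu^2-C_1u-\Lambda\), as in the paper.
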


\begin{proof}
Let us fix
\[
\delta,c,\kappa>0,
\qquad
\psi\in C^\infty(K),
\]
and a smooth function \(W_{\mathrm{col}}\) which witness the compensated-collar property of \(W\) relative to \(\mathfrak r\). Set
\[
A:=D^2\Phi,
\qquad
T:=\nabla\Phi.
\]
We consider
\begin{equation}\label{eq:Pogorelov-h}
h(x,e)
:=
\Phi_{ee}(x)\exp\!\bigl(\psi(T(x))\bigr),
\qquad (x,e)\in\R^d\times\Sph^{d-1}.
\end{equation}
Suppose first that $h$ attains its global maximum at $(x_0,e_0)$. Since the weight is independent of $e$, the vector $e_0$ is a top eigenvector of $A(x_0)$. We now rotate the coordinates so that $e_0=e_1$ and $A(x_0)$ is diagonal, with eigenvalues \(\lambda_1,\ldots,\lambda_d\) and \(u:=\lambda_1\ge\lambda_i\) for \(1\le i\le d\).

In this computation, we evaluate all quantities at \(x_0\), and the target derivatives at \(T(x_0)\). Numeric subscripts denote coordinate derivatives in the rotated coordinates. Every index \(i,j,k,\ell,a,b\) ranges over \(\{1,\ldots,d\}\), and repeated indices are summed. We write
\[
A^{ij}:=(A^{-1})_{ij},
\]
\begin{equation}\label{eq:Pogorelov-first}
0=(\log h)_i=\frac{\Phi_{11i}}u+\psi_k(T)\Phi_{ki},
\end{equation}
and
\begin{align}
0
&\ge
uA^{ij}(\log h)_{ij}\notag\\
&=A^{ij}\Phi_{11ij}
-\frac{A^{ij}\Phi_{11i}\Phi_{11j}}u
+u\psi_k(T)A^{ij}\Phi_{kij}
+u\tr(D^2\psi(T)A).
\label{eq:Pogorelov-second}
\end{align}
We differentiate the Monge--Amp\`ere equation once and then twice, and obtain
\begin{align}
A^{ij}\Phi_{ijk}
&=-V_k+W_\ell(T)\Phi_{\ell k},
\label{eq:Pogorelov-MA-one}\\
A^{ij}\Phi_{ij11}
-A^{ia}A^{bj}\Phi_{ij1}\Phi_{ab1}
&=-V_{11}+W_{k\ell}(T)\Phi_{k1}\Phi_{\ell1}
+W_k(T)\Phi_{11k}.
\label{eq:Pogorelov-MA-two}
\end{align}
The terms with $\nabla W$ cancel by
\eqref{eq:Pogorelov-first} and \eqref{eq:Pogorelov-MA-one}. Moreover,
\begin{align*}
&A^{ia}A^{bj}\Phi_{ij1}\Phi_{ab1}
-\frac{A^{ij}\Phi_{11i}\Phi_{11j}}u\\
&\qquad=
\sum_{i,j=1}^d\frac{\Phi_{ij1}^2}{\lambda_i\lambda_j}
-\sum_{i=1}^d\frac{\Phi_{11i}^2}{\lambda_i u}
\ge0,
\end{align*}
because the second sum is precisely the part of the first sum that corresponds to $j=1$. Since $Ae_1=ue_1$, we arrive at
\begin{equation}\label{eq:Pogorelov-core-inequality}
0
\ge
W_{11}(T)u^2
+u\tr(D^2\psi(T)A)
-u\langle\nabla\psi(T),\nabla V(x_0)\rangle
-V_{11}(x_0).
\end{equation}
Observe that the support of $\nabla\psi$ is a compact subset of $\Omega$. Since $T:\R^d\to\Omega$ is a diffeomorphism, the inverse image of this support is compact. It follows that there exists a finite constant $C_1$ such that
\[
|\langle\nabla\psi(T(x)),\nabla V(x)\rangle|\le C_1
\qquad(x\in\R^d).
\]
We apply \eqref{eq:compensated-collar-coercivity} and $V_{11}\le\Lambda$ in \eqref{eq:Pogorelov-core-inequality}, and obtain
\[
0\ge cu^2-C_1u-\Lambda.
\]
It follows that the global maximum of $h$ is finite.

It remains to exclude $\sup h=+\infty$ when the maximum of $h$ is not attained. For $t>0$, we define the normalised incremental quotient
\begin{equation}\label{eq:Pogorelov-ht}
h_t(x,e)
:=\frac{\Phi(x+te)+\Phi(x-te)-2\Phi(x)}{t^2}
\exp\!\bigl(\psi(T(x))\bigr).
\end{equation}
By \Cref{lem:strict-support-asymptotics}, $h_t(x,e)\to0$ as $|x|\to\infty$, uniformly in $e$, and the function $h_t$ attains a global maximum. Since $\Phi$ is smooth, the centred second-difference quotients converge to $\Phi_{ee}$ locally uniformly on
$(x,e)\in\R^d\times\Sph^{d-1}$. We obtain $h_t\to h$ locally uniformly as $t\downarrow0$. Suppose that $\sup h=\infty$, and choose $(x_m,e_m)$ such that
$h(x_m,e_m)\to\infty$, and then choose $t_m\downarrow0$ so that $h_{t_m}(x_m,e_m)\ge h(x_m,e_m)/2$. Let us denote by $(\widehat x_m,\widehat e_m)$ a global maximum of $h_{t_m}$. Its value tends to infinity. Moreover, $|\widehat x_m|\to\infty$, because the quotients converge uniformly to $h$ on every compact set.

Let us now write
\[
\begin{aligned}
T_0&:=T(\widehat x_m),
&\qquad
T_\pm&:=T(\widehat x_m\pm t_m\widehat e_m),\\
A_\pm&:=A(\widehat x_m\pm t_m\widehat e_m),
&\qquad
A_0&:=A(\widehat x_m).
\end{aligned}
\]
For large $m$, the support-point asymptotics give \(\mathfrak r(T_0)>-\delta/2\). Since the weight in \eqref{eq:Pogorelov-ht} is locally constant, the unweighted second difference has a local maximum at $\widehat x_m$, so
\begin{equation}\label{eq:midpoint-target-relation}
T_++T_-=2T_0,
\qquad
A_++A_-\preceq2A_0.
\end{equation}
The convexity of \(\mathfrak r\), with \(\mathfrak r(T_\pm)<0\) and \(\mathfrak r(T_0)>-\delta/2\), implies that \(\mathfrak r(T_\pm)>-\delta\). Therefore, \(W(T_0)=W_{\mathrm{col}}(T_0)\) and \(W(T_\pm)=W_{\mathrm{col}}(T_\pm)\).

By \eqref{eq:midpoint-target-relation}, the monotonicity and concavity of $\log\det$ on the positive-definite cone give
\[
\log\det A_++\log\det A_--2\log\det A_0\le0.
\]
From the Monge--Amp\`ere equation, we then obtain
\begin{align*}
&W_{\mathrm{col}}(T_+)+W_{\mathrm{col}}(T_-)-2W_{\mathrm{col}}(T_0)\\
&\qquad\le
V(\widehat x_m+t_m\widehat e_m)
+V(\widehat x_m-t_m\widehat e_m)-2V(\widehat x_m)
\le\Lambda t_m^2.
\end{align*}
Since $T_0=(T_++T_-)/2$ and \(D^2W_{\mathrm{col}}\succeq\kappa\Id\), we obtain
\[
|T_+-T_-|\le2\sqrt{\Lambda/\kappa}\,t_m.
\]
One-dimensional convexity along the line $\widehat x_m+\R\widehat e_m$ now gives
\begin{align*}
&\Phi(\widehat x_m+t_m\widehat e_m)
+\Phi(\widehat x_m-t_m\widehat e_m)-2\Phi(\widehat x_m)\\
&\qquad\le
t_m\langle T_+-T_-,\widehat e_m\rangle
\le2\sqrt{\Lambda/\kappa}\,t_m^2.
\end{align*}
As the weight has a fixed constant value in the outer collar, the maxima of $h_{t_m}$ are uniformly bounded, which is a contradiction, so we conclude that $\sup h<\infty$. Since $\psi$ is bounded on $K$, this proves \eqref{eq:qualitative-U-finite}.
\end{proof}

\subsection{Approximation}
\label{sec:semiconvex-approximation}

We now construct smooth targets to which both \Cref{prop:qualitative-starting-bound} and the a priori estimate \Cref{prop:unrestricted-bootstrap-half} apply.

\begin{lemma}[Smooth approximation with a compensated collar]
\label{lem:compensated-approximation}
Let \(K\subset\R^d\) be compact and convex, set \(\Omega:=\operatorname{int}K\), and suppose that \(\nu\) is a full-dimensional compactly supported probability measure. Let
\(W:\Omega\to\mathbb R\) be finite, define
\[
Z_\nu:=\int_\Omega e^{-W(y)}\,\dd y,
\]
assume that \(Z_\nu\in(0,\infty)\), and suppose that \(\nu\) has the form
\begin{equation}\label{eq:semiconvex-general-density}
\dd\nu(y)
=
Z_\nu^{-1}e^{-W(y)}\one_\Omega(y)\,\dd y.
\end{equation}
Further assume that, for some \(\rho\ge0\),
\begin{equation}\label{eq:G-convex-general}
G(y):=W(y)+\frac\rho2|y|^2
\end{equation}
is convex on $\Omega$. It follows that there exist smooth uniformly convex open convex sets \(\Omega_n\Subset\Omega\), with \(K_n:=\overline{\Omega_n}\), open neighbourhoods \(\mathcal U_n\supset K_n\), and functions \(W_n\in C^\infty(\mathcal U_n)\), with probability measures
\[
\dd\nu_n=Z_n^{-1}e^{-W_n}\one_{\Omega_n}\,\dd y
\]
with the following properties.
\begin{enumerate}[label=(\roman*)]
\item $D^2W_n\succeq-\rho\Id$ on $K_n$.
\item $\diam(K_n)\le\diam(K)$.
\item $\nu_n\to\nu$ in total variation.
\item each \(W_n\) has a compensated collar relative to a smooth convex defining function.
\end{enumerate}
\end{lemma}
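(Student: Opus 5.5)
The approach is to reuse the inner homothetic approximation of \Cref{lem:inner-logconcave-approximation}, with three changes: smooth and uniformly convexify the domains, mollify the convex part \(G\) instead of \(W\), and glue in a strongly convex collar near the boundary with a compensating weight \(\psi\).

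\textbf{Domains.} Fix \(y_0\in\Omega\) and shrink, \(\Omega'_n:=y_0+\alpha_n(\Omega-y_0)\) with \(\alpha_n\uparrow1\). Replace \(\Omega'_n\) by a smooth uniformly convex open set \(\Omega_n\) with \(\Omega'_{n-1}\subset\Omega_n\subset\Omega'_n\). Such a set exists by mollifying the gauge of \(\Omega'_n\) centred at \(y_0\) and adding a small multiple of \(|y-y_0|^2\). Its defining function is \(\mathfrak r_n:=\) (smoothed gauge \(+\,\epsilon|y-y_0|^2\)) \(-1\), which is smooth and convex on a convex neighbourhood \(\mathcal U_n\Subset\Omega\) of \(K_n\). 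Then \(\diam(K_n)\le\alpha_n\diam(K)\le\diam(K)\), and the sets \(\Omega_n\) exhaust \(\Omega\).

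\textbf{Potentials.} Mollify \(G\) locally, \(G_n:=G*\eta_{\varepsilon_n}\) on \(\mathcal U_n\), chosen so that \(|G_n-G|\le1/n\) there. Set \(W^0_n:=G_n-\frac\rho2|y|^2\), which satisfies \(D^2W^0_n\succeq-\rho\Id\). Fix a thin collar \(\{-2\delta_n\le\mathfrak r_n\le0\}\) and let \(\chi\) be a cutoff that equals \(1\) on \(\{\mathfrak r_n\ge-\delta_n\}\) and vanishes on \(\{\mathfrak r_n\le-2\delta_n\}\). Define
\[
W_n:=W^0_n+\chi(\mathfrak r_n)\,\frac{\rho+1}{2}|y-y_0|^2 .
\]
On \(\{\mathfrak r_n\ge-\delta_n\}\) this equals \(W_{\mathrm{col}}:=G_n+\frac12|y|^2+\text{(affine and constant terms)}\), and \(D^2W_{\mathrm{col}}\succeq\Id\) on the whole of \(\Omega_n\). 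So (ii) of the collar definition holds with \(\kappa=1\).

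Property (i) needs \(D^2W_n\succeq-\rho\Id\) in the transition region, where the cutoff's derivatives appear. Rather than fight those error terms, I would make the added term convex by construction. Replace \(\chi(\mathfrak r_n)\frac{\rho+1}{2}|y-y_0|^2\) by \(\lambda\,\phi(\mathfrak r_n)\), where \(\phi\) is convex, nondecreasing, vanishes on \((-\infty,-2\delta_n]\), and has \(\phi''\) large on \([-\delta_n,0]\). A convex nondecreasing function of a convex function is convex, so (i) holds automatically. Uniform convexity of \(\mathfrak r_n\) gives \(D^2\phi(\mathfrak r_n)\succeq\phi'(\mathfrak r_n)D^2\mathfrak r_n\succeq\phi'(-\delta_n)c_n\Id\) on \(\{\mathfrak r_n\ge-\delta_n\}\). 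Taking \(W_{\mathrm{col}}:=W^0_n+\lambda\phi(\mathfrak r_n)+\frac{\rho+1}{2}|y|^2-\frac{\rho+1}{2}|y|^2\cdot0\) would need \(D^2W_{\mathrm{col}}\succeq\kappa\Id\) on all of \(\Omega_n\). So instead define \(W_{\mathrm{col}}\) as \(W^0_n+\lambda\tilde\phi(\mathfrak r_n)\), with \(\tilde\phi\) a convex extension that is strictly increasing everywhere, for example \(\tilde\phi(s)=\phi(s)+\mu e^{s}\). Then \(\lambda\tilde\phi(\mathfrak r_n)\) contributes \(\succeq\lambda\mu e^{-\max}c_n\Id\), and choosing \(\lambda\) large beats \(-\rho\). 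Set \(W_n:=W^0_n+\lambda\tilde\phi(\mathfrak r_n)\) globally, which is simpler: then \(W_n=W_{\mathrm{col}}\) everywhere and (i) is clear.

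This modifies the density everywhere, so to keep total variation convergence take \(\lambda\mu\to0\) while the collar keeps \(\kappa_n>0\). Only positivity of \(\kappa_n\) is needed, not uniformity in \(n\), since \Cref{prop:qualitative-starting-bound} is qualitative. Then \(W_n\to W\) locally uniformly on \(\Omega\), and the \(L^1\) convergence argument of \Cref{lem:inner-logconcave-approximation} gives (iii).

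\textbf{Compensated coercivity.} With \(W_n\) strongly convex everywhere on \(\Omega_n\), condition \eqref{eq:compensated-collar-coercivity} holds with \(\psi\equiv0\) and \(c=\kappa_n\). Indeed, \(W_{ee}u^2\ge\kappa_nu^2\), and the requirement that \(\psi\) be constant near the boundary is trivially satisfied.

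\textbf{Main obstacle.} The delicate point is reconciling (i) with (iii), namely adding curvature without destroying convergence. The plan above resolves it by letting the added strongly convex term \(\lambda\mu e^{\mathfrak r_n}\) tend to zero in sup norm while staying positive for each \(n\). I would double-check that strict positivity of \(\kappa_n\), with no uniformity in \(n\), is all that the later application of \Cref{prop:qualitative-starting-bound} and \Cref{prop:unrestricted-bootstrap-half} requires. If some uniformity were needed, I would fall back on a genuine \(\psi\) compensator that is supported in the interior.
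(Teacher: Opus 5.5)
There is a genuine gap, and it lies in your final choice \(\psi\equiv0\) with \(W_n\) strongly convex on all of \(\Omega_n\).

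With \(\psi\equiv0\), condition \eqref{eq:compensated-collar-coercivity} reduces to \((W_n)_{ee}\ge c\) for every unit \(e\). This is because every unit vector is a top eigenvector of some positive-definite \(A\) (e.g.\ \(A=\Id+2e\otimes e\)). So each \(\nu_n\) would be log-concave. Total-variation limits of log-concave measures are log-concave: pass to the limit in \(\nu_n((1-t)E+tF)\ge\nu_n(E)^{1-t}\nu_n(F)^t\). Property (iii) would then force \(\nu\) itself to be log-concave. That fails for \(\rho>0\), e.g.\ for \(W(y)=-\frac\rho2|y|^2\) on a ball, where \(G\equiv0\).

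In your explicit formula the failure looks like this. When \(G\) is affine, \(D^2G_n=0\), and \(D^2W_n\succeq\kappa_n\Id\) forces \(D^2(\lambda\mu e^{\mathfrak r_n})\succeq\rho\Id\) on a fixed ball \(B_r(y_0)\subset\{\mathfrak r_n\le-2\delta_n\}\), where \(\phi(\mathfrak r_n)=0\). Hence \(\lambda\mu e^{\mathfrak r_n}\) has oscillation at least \(\rho r^2/2\) on that ball. This is incompatible both with \(\lambda\mu\to0\) (since \(0<\lambda\mu e^{\mathfrak r_n}\le\lambda\mu\) on \(\Omega_n\)) and with \(W_n\to W\) modulo constants. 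So the problem is not uniformity in \(n\); per-\(n\) qualitative constants are indeed enough for \Cref{prop:qualitative-starting-bound}. Your construction only works when \(\rho=0\).

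The missing idea is that the interior negative curvature must be paid for by the compensator \(\psi\); that is exactly what \(\psi\) is for in the definition. Strong convexity is demanded only of a separate function \(W_{\mathrm{col}}\), which must agree with \(W_n\) only in a thin collar. The paper proceeds as follows.
\begin{itemize}
\item It takes \(\psi_n=\frac{\rho+1}{2}\zeta_n(r_n)|y|^2\), with \(\zeta_n=1\) on \(\{r_n\le-2\delta_n\}\) and \(\zeta_n=0\) on \(\{r_n\ge-\delta_n\}\).
\item On the inner region, \(\tr A\ge u\) gives \((W_n)_{ee}u^2+u\tr(D^2\psi_nA)\ge-\rho u^2+(\rho+1)u^2=u^2\), even though \(W_n\) is only \(\rho\)-semiconvex there.
\item On the collar, \(W_n=W_{\mathrm{col},n}=M_nr_n+\tau_n-\frac\rho2|y|^2\). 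Since \(D^2r_n\succeq2\varepsilon_n\Id\), choosing \(2\varepsilon_nM_n-\rho\ge dM_{\psi,n}+1\), where \(D^2\psi_n\succeq-M_{\psi,n}\Id\), lets \(\tr A\le du\) absorb the negative part of \(D^2\psi_n\) in the transition zone.
\item The gluing is \(G_n=\operatorname{smax}(G_n^0,M_nr_n+\tau_n)\) and \(W_n=G_n-\frac\rho2|y|^2\). Here \(\tau_n\) is chosen so that the smooth maximum equals \(G_n^0\) on a core \(C_n\) with \(\nu(C_n)\to1\), and equals the outer function on the collar.
\item This keeps \(G_n\) convex, so (i) holds with no cutoff error terms.
\item It also gives \(G_n\ge G_n^0\), so \(e^{-W_n}\le e^{-W_n^0}\) with equality on \(C_n\), which yields (iii).
\end{itemize}
Your domain construction (smoothed gauge plus a small quadratic) is essentially the paper's and is fine.
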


\begin{proof}
Here, we combine classical smoothing for convex functions with maximum gluing, as in \cite{GreeneWu1979,Azagra2013}, and construct the smooth inner approximation
of the convex body. After a translation, we may assume that \(0\in\Omega\). Let us define the Minkowski functional of \(K\) by
\[
m(y):=\inf\{\lambda>0:y\in\lambda K\},
\qquad y\in\R^d.
\]
Since \(0\in\operatorname{int}K\), the function \(m\) is finite, convex, and globally Lipschitz. Moreover,
\[
\Omega=\{m<1\}.
\]
We choose a standard radial mollifier $\eta_\varepsilon$, and set
\[
q_\varepsilon(y)
:=
(m*\eta_\varepsilon)(y)+\varepsilon|y|^2.
\]
The mollifier $\eta_\varepsilon$ has barycentre zero and $m$ is convex. Jensen's inequality gives
\[
m*\eta_\varepsilon\ge m.
\]
It follows that, for every $c<1$,
\[
\{q_\varepsilon<c\}\subset\{m<1\}=\Omega.
\]
Since $0\in\Omega$ and $K$ is bounded, there exists a constant \(a_0>0\) such that \(m(y)\ge a_0|y|\). All the sublevels $\{q_\varepsilon<c\}$ with $c<1$ lie in the fixed bounded set $\{m<1\}$.

Moreover, since $q_\varepsilon(0)\to m(0)=0$, we may discard finitely many indices
and reindex so as to choose $\varepsilon_n\downarrow0$ with
\[
q_{\varepsilon_n}(0)<1-\frac1n.
\]
We choose a regular value
\[
c_n\in
\left(
\max\left\{1-\frac1n,q_{\varepsilon_n}(0)\right\},
1
\right),
\]
and define
\begin{equation}\label{eq:Kn-def}
\Omega_n:=\{q_{\varepsilon_n}<c_n\},
\qquad
K_n:=\overline{\Omega_n}.
\end{equation}
We have $\Omega_n\Subset\Omega$, and $\partial\Omega_n$ is smooth and uniformly convex, because
\[
D^2q_{\varepsilon_n}\succeq2\varepsilon_n\Id.
\]
We claim that the sets $\Omega_n$ exhaust $\Omega$. Let $C\Subset\Omega$. For some $\delta>0$, one has $m\le1-2\delta$ on $C$. Since $q_{\varepsilon_n}\to m$ uniformly on $C$ and $c_n\to1$, for all sufficiently large $n$, we have
\[
q_{\varepsilon_n}\le1-\delta<c_n
\qquad\text{on }C,
\]
so $C\subset\Omega_n$.

For every $\theta\in(0,1)$, we have $\theta K\Subset\Omega$. It follows that
$\theta K\subset\Omega_n$ for all sufficiently large $n$. Let \(d_{\mathrm H}\) denote the Hausdorff distance between nonempty compact subsets of \(\mathbb R^d\). Since \(K_n\subset K\), we conclude that
\[
d_{\mathrm H}(K_n,K)\longrightarrow0,
\]
as \(n\to\infty\). The defining functions
\begin{equation}\label{eq:rn-def}
r_n:=q_{\varepsilon_n}-c_n
\end{equation}
are smooth and satisfy $D^2r_n\succeq2\varepsilon_n\Id$. Since $K_n\subset K$, its diameter is at most $\diam(K)$.

The function $G$ is finite and convex on $\Omega$, and is continuous there. By mollifying at a scale smaller than $\operatorname{dist}(K_n,\partial\Omega)$, we choose a smooth convex function $G_n^0$ on a neighbourhood of $K_n$ such that
\begin{equation}\label{eq:G0-uniform-approx}
\|G_n^0-G\|_{L^\infty(K_n)}\le\frac1n.
\end{equation}
Since $\Omega_n$ exhausts $\Omega$, we have $\nu(\Omega_n)\to1$. For each
$n$, the sets $\{r_n\le-3\delta\}$ increase to $\Omega_n$ as $\delta\downarrow0$. We may choose $\delta_n>0$ sufficiently small that the compact core
\begin{equation}\label{eq:core-Cn}
C_n:=\{r_n\le-3\delta_n\}
\end{equation}
satisfies
\[
\nu(C_n)\ge\nu(\Omega_n)-\frac1n.
\]
In particular, $\nu(C_n)\to1$. We also set
\[
O_n:=K_n\cap\{r_n\ge-2\delta_n\}.
\]

We use the standard smooth maximum, see, for instance, \cite[Lemma 2.1]{Azagra2013}. Let us fix a smooth convex even function $\chi:\R\to\R$ such that $\chi(s)=|s|$ for $|s|\ge2$ and $|\chi'|\le1$. For \(a,b\in\R\), define
\begin{equation}\label{eq:smooth-max}
\operatorname{smax}(a,b)
:=\frac{a+b+\chi(a-b)}2.
\end{equation}
Convexity and the matching condition give $\chi(s)\ge|s|$. Notice that the smooth maximum is convex and nondecreasing in both variables, dominates $\max\{a,b\}$, and agrees with $\max\{a,b\}$ whenever $|a-b|\ge2$.

We set \(\alpha:=\rho+1\), and choose a smooth cutoff $\zeta_n:\R\to[0,1]$ such that
\[
\zeta_n=1\quad\text{on }(-\infty,-2\delta_n],
\qquad
\zeta_n=0\quad\text{on }[-\delta_n,\infty),
\]
and let
\begin{equation}\label{eq:psi-n-def}
\psi_n(y):=\frac \alpha2\zeta_n(r_n(y))|y|^2.
\end{equation}
We choose \(M_{\psi,n}\ge0\) so that
\begin{equation}\label{eq:psi-negative-Hessian}
D^2\psi_n\succeq-M_{\psi,n}\Id
\qquad\text{on }K_n.
\end{equation}

We choose $M_n>0$ sufficiently large that
\begin{equation}\label{eq:Mn-conditions}
M_n\delta_n>
\operatorname{osc}_{K_n}G_n^0+4,
\qquad
2\varepsilon_nM_n-\rho
\ge dM_{\psi,n}+1.
\end{equation}
There exists a constant \(\tau_n\in\R\) for which the strongly convex function
\begin{equation}\label{eq:outer-Gbar}
\overline G_n:=M_nr_n+\tau_n
\end{equation}
satisfies
\begin{align}
\overline G_n&\le G_n^0-2
&&\text{on }C_n,
\label{eq:Gbar-below}\\
\overline G_n&\ge G_n^0+2
&&\text{on }O_n.
\label{eq:Gbar-above}
\end{align}
More explicitly, it suffices to choose \(\tau_n\) in the interval
\[
\left[
\sup_{O_n}
(G_n^0+2-M_nr_n),
\inf_{C_n}
(G_n^0-2-M_nr_n)
\right].
\]
The left endpoint is at most $\sup_{K_n}G_n^0+2+2M_n\delta_n$, while the right endpoint is at least $\inf_{K_n}G_n^0-2+3M_n\delta_n$. The first inequality in
\eqref{eq:Mn-conditions} shows that this interval is nonempty. We define
\begin{equation}\label{eq:Gn-Wn-def}
G_n:=\operatorname{smax}(G_n^0,\overline G_n),
\qquad
W_n:=G_n-\frac\rho2|y|^2.
\end{equation}
All the functions in this construction are defined on a neighbourhood of $K_n$. In particular, $W_n$ extends smoothly to such a neighbourhood, while the convexity of $G_n$ gives $D^2W_n\succeq-\rho\Id$. Moreover,
\begin{equation}\label{eq:Gn-exact-regions}
G_n=G_n^0\quad\text{on }C_n,
\qquad
G_n=\overline G_n\quad\text{on }O_n.
\end{equation}
On the outer region, we set
\[
W_{\mathrm{col},n}:=\overline G_n-\frac\rho2|y|^2.
\]
From the second condition in \eqref{eq:Mn-conditions}, we obtain
\begin{equation}\label{eq:Wbar-strong}
D^2W_{\mathrm{col},n}
\succeq(dM_{\psi,n}+1)\Id
\qquad\text{on }K_n.
\end{equation}
Observe that $\psi_n$ vanishes on $\{r_n\ge-\delta_n\}$ and is equal to \(\alpha|y|^2/2\) on $\{r_n\le-2\delta_n\}$. Let $A$ be positive definite, let $e\in\Sph^{d-1}$ be a top eigenvector, and set $u=\lambda_{\max}(A)$. On
\(K_n\cap\{r_n\le-2\delta_n\}\),
\begin{align*}
(W_n)_{ee}u^2+u\tr(D^2\psi_nA)
&\ge-\rho u^2+\alpha u\tr A
\ge u^2.
\end{align*}
On the other hand, on \(O_n\),
\begin{align*}
(W_n)_{ee}u^2+u\tr(D^2\psi_nA)
&\ge(dM_{\psi,n}+1)u^2
-M_{\psi,n}u\tr A
\ge u^2.
\end{align*}
Therefore, \(W_n\) has a compensated collar relative to the defining function \(r_n\), with collar parameter \(2\delta_n\), compensator \(\psi_n\), and
\[
c=1,
\qquad
\kappa=dM_{\psi,n}+1.
\]

It remains to prove the convergence of the measures. To this end, let us write
\[
f:=e^{-W}\one_\Omega,
\qquad
f_n^0:=e^{-G_n^0+\rho|y|^2/2}\one_{\Omega_n},
\qquad
f_n:=e^{-W_n}\one_{\Omega_n}.
\]
By \eqref{eq:G0-uniform-approx}, $f_n^0/f\to1$ uniformly on $\Omega_n$. Moreover, $G_n\ge G_n^0$ on \(K_n\) and $G_n=G_n^0$ on $C_n$. We then have
\[
0\le f_n\le f_n^0,
\qquad
f_n=f_n^0\quad\text{on }C_n.
\]
The complement of $C_n$ has vanishing $f$-mass. On $\Omega_n$, the uniform
estimate \eqref{eq:G0-uniform-approx} makes $f_n^0$ and $f$ uniformly comparable. We split the $L^1$ norm over $C_n$, $\Omega_n\setminus C_n$, and $\Omega\setminus\Omega_n$. It follows that $\|f_n-f\|_{L^1(\R^d)}\to0$. The normalising constants also converge. We conclude that $\nu_n\to\nu$ in total variation.
\end{proof}

\begin{proposition}[Semi-log-concave targets with a smooth source]
\label{prop:semilogconcave-smooth}
Assume the smooth source hypotheses of \Cref{sec:prelim}. Let \(\nu\) be a full-dimensional compactly supported probability measure with compact convex support \(K\). Set
\[
\Omega:=\operatorname{int}K.
\]
Let \(W:\Omega\to\mathbb R\) be finite, and define
\[
Z_\nu:=\int_\Omega e^{-W(y)}\,\dd y.
\]
Assume that \(Z_\nu\in(0,\infty)\), and suppose that
\[
\dd\nu(y)
=
Z_\nu^{-1}e^{-W(y)}\one_\Omega(y)\,\dd y.
\]
Suppose that, for some \(\rho\ge0\),
\[
y\longmapsto W(y)+\frac\rho2|y|^2
\]
is convex on \(\Omega\). Let \(\Phi\) be the normalised compact-range Brenier potential transporting \(\mu\) to \(\nu\), and put
\[
r:=\rho\diam(K)^2.
\]
It follows that
\[
0\preceq D^2\Phi
\preceq
\frac{(1+r)(5+r)}2
\exp\!\left(\frac{1+r}{2}\right)
\sqrt\Lambda\,\diam(K)\Id
\]
in the sense of matrix-valued distributions. Moreover, \(\nabla\Phi\) has an everywhere-defined globally Lipschitz representative with the same constant.
\end{proposition}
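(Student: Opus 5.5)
The plan is to follow the same template as \Cref{prop:full-dimensional-smooth}, replacing the log-concave inner approximation by the compensated-collar approximation of \Cref{lem:compensated-approximation}.

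\textbf{Approximating targets.} Apply \Cref{lem:compensated-approximation} to obtain smooth, uniformly convex domains \(\Omega_n\Subset\Omega\) and potentials \(W_n\) with the following properties:
\begin{enumerate}[label=(\roman*)]
\item \(D^2W_n\succeq-\rho\Id\) on \(K_n\);
\item \(\diam(K_n)\le\diam(K)\);
\item \(\nu_n\to\nu\) in total variation;
\item \(W_n\) has a compensated collar.
\end{enumerate}
Let \(\Phi_n\) be the Brenier potential from \(\mu\) to \(\nu_n\).

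\textbf{Smooth estimate for each \(n\).} Each \(\nu_n\) satisfies the smooth assumptions of \Cref{sec:prelim}, since \(W_n\) is smooth on a neighbourhood of \(K_n\). Hence \Cref{prop:regularity} makes \(\Phi_n\) smooth with \(D^2\Phi_n\succ0\). \Cref{prop:qualitative-starting-bound} applies because \(\Omega_n\) is smooth and uniformly convex and \(W_n\) has a compensated collar. It gives the qualitative bound \(M_{\Phi_n}<\infty\), which is exactly the temporary hypothesis of \Cref{prop:unrestricted-bootstrap-half}. That proposition then yields
\[
\|D^2\Phi_n\|_{\op}\le C_{\mathrm{sl}}(r_n)\sqrt\Lambda\,\diam(K_n),
\qquad
r_n:=\rho\diam(K_n)^2.
\]
Since \(C_{\mathrm{sl}}\) is increasing in \(r\) and \(\diam(K_n)\le\diam(K)\), we have \(r_n\le r\). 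The right-hand side is therefore at most \(L:=C_{\mathrm{sl}}(r)\sqrt\Lambda\,\diam(K)\), uniformly in \(n\). Together with convexity this gives \(0\preceq D^2\Phi_n\preceq L\Id\) pointwise, hence also in the sense of distributions.

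\textbf{Passage to the limit.} All \(K_n\) lie in the compact convex set \(K\), and \(\nu_n\rightharpoonup\nu\). By \Cref{lem:compact-range-representative}, we replace each \(\Phi_n\) by its normalised \(K\)-range representative; this changes neither the map nor the Hessian bound. \Cref{lem:stability-potentials}, in its matrix second-difference form, then gives \(\Phi_n\to\Phi\) locally uniformly and \(D^2\Phi\preceq L\Id\) in distributions. Convexity of \(\Phi\) gives \(D^2\Phi\succeq0\). Finally, \Cref{lem:distribution-C11} supplies the everywhere-defined \(L\)-Lipschitz representative of \(\nabla\Phi\).

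\textbf{Main obstacle.} The only delicate point is checking that the hypotheses of the smooth results hold exactly. \Cref{prop:regularity} and \Cref{prop:qualitative-starting-bound} need \(W_n\) smooth up to the boundary, and the collar construction provides this. \Cref{prop:unrestricted-bootstrap-half} needs the lower bound \(D^2W_n\succeq-\rho\Id\) on \(\Omega_n\), which is property (i). The monotonicity in \(r\) is immediate, since \(C_{\mathrm{sl}}\) is a product of increasing positive factors.
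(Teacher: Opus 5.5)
Your proposal is correct and follows the paper's proof essentially step by step: the compensated-collar approximation of \Cref{lem:compensated-approximation}, then the qualitative bound of \Cref{prop:qualitative-starting-bound} feeding \Cref{prop:unrestricted-bootstrap-half}, then the \(K\)-range normalisation, then \Cref{lem:stability-potentials} in second-difference form, and finally \Cref{lem:distribution-C11}. Your explicit remark that \(C_{\mathrm{sl}}\) is increasing in \(r\), so that \(r_n\le r\) yields an \(n\)-independent bound, makes precise a step the paper leaves implicit.
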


\begin{proof}
Let \(\nu_n\) be from \Cref{lem:compensated-approximation}, and let \(\Psi_n\) be the smooth Brenier potential from \(\mu\) to \(\nu_n\). From the source regularity, we know that \(\nabla\Psi_n:\R^d\to\Omega_n\) is a smooth diffeomorphism. The estimate \Cref{prop:qualitative-starting-bound} gives
\[
U_n:=\sup_{\R^d}\|D^2\Psi_n\|_{\op}<\infty.
\]
Since \(\diam(K_n)\le\diam(K)\), we apply \Cref{prop:unrestricted-bootstrap-half} and obtain the uniform bound
\begin{equation}\label{eq:uniform-semiconvex-approx-bound}
\begin{aligned}
0\preceq D^2\Psi_n
&\preceq
\frac{(1+r)(5+r)}2\exp\!\left(\frac{1+r}{2}\right)
\sqrt\Lambda\,\diam(K)\Id.
\end{aligned}
\end{equation}

The supports \(K_n\) lie in the fixed compact convex set \(K\). By \Cref{lem:compact-range-representative}, we choose the normalised \(K\)-range representative \(\Phi_n\) of each \(\Psi_n\). This preserves \eqref{eq:uniform-semiconvex-approx-bound} and gives \(\partial\Phi_n(\R^d)\subset K\). It follows from \Cref{lem:stability-potentials} that
\(\Phi_n\to\Phi\) locally uniformly, where \(\Phi\) is the normalised compact-range Brenier potential transporting \(\mu\) to \(\nu\).

We pass to the limit in \eqref{eq:uniform-semiconvex-approx-bound} via the second-difference statement and obtain the same matrix inequality for \(D^2\Phi\) in distributions. The Lipschitz conclusion follows from \Cref{lem:distribution-C11}.
\end{proof}

\begin{proof}[Proof of \Cref{thm:semilogconcave}]
Suppose first that \(V\) is smooth, then \Cref{prop:semilogconcave-smooth} gives the required distributional matrix bound and the Lipschitz representative as well. Let us now consider a general finite potential \(V\). We repeat the argument based on Gaussian smoothing of the source and stability under variation of the source from the proof of \Cref{thm:main}, with \(Q=\Lambda\Id\) and the uniform matrix bound from \Cref{prop:semilogconcave-smooth}. For \(\mu_t=\mu*\gamma_{d,t}\), \Cref{lem:Gaussian-source-smoothing} gives
\[
D^2V_t
\preceq
\frac{\Lambda}{1+t\Lambda}\Id
\preceq
\Lambda\Id,
\qquad
\mu_t\longrightarrow\mu
\quad\text{in }W_2.
\]
The estimate for a smooth source is uniform in \(t\), so \Cref{lem:varying-source-stability} passes it to the locally uniform limit in second-difference form. Finally, we apply \Cref{lem:distribution-C11} and obtain the theorem.
\end{proof}

\section{Sharpness in one dimension}
\label{sec:sharp-one-dimensional}

Let \(C_*\) be the infimum of all \(C\ge0\) with the following property. For every integer \(d\ge1\), every datum in dimension \(d\) that satisfies the hypotheses of \Cref{thm:main}, and every \(v\in\mathbb R^d\), one has
\[
\partial_{vv}\Phi
\le
C\sqrt{\langle Qv,v\rangle}\,w_K(v)
\]
in the sense of distributions. For semi-log-concave targets, write
\[
D:=\diam(K),
\qquad
r:=\rho D^2.
\]
Let \(c_{\mathrm{sl}}^*\) be the infimum of all \(c\ge0\) for which there exist constants \(A>0\) and \(N\ge0\) such that, for every integer \(d\ge1\) and every pair of source and target measures in dimension \(d\) covered by \Cref{thm:semilogconcave}, one has
\[
\Lip(T)
\le
A(1+r)^N e^{cr}\sqrt\Lambda\,D.
\]

\subsection{The sharp log-concave constant}

We now determine the exact one-dimensional log-concave constant. Let
\[
\varphi(x):=(2\pi)^{-1/2}e^{-x^2/2},
\qquad
\Gamma(x):=\int_{-\infty}^x\varphi(t)\,\dd t,
\qquad
\overline\Gamma(x):=1-\Gamma(x).
\]
Recall that the standard Gaussian measure \(\gamma_1=\gamma_{1,1}\) has density \(\varphi\). Let \(\sigma\) be an absolutely continuous probability measure on \(\R\), and suppose that its density has a positive continuous representative \(f_\sigma\) on the interior of its interval support. We define the profile of the density at left quantiles by
\begin{equation}\label{eq:profile-def}
I_\sigma(p)
:=f_\sigma(F_\sigma^{-1}(p)),
\qquad 0<p<1.
\end{equation}
In other terms, \(I_\sigma(p)\) is the boundary density of the left half-line with \(\sigma\)-mass \(p\). If \(\sigma\) is log-concave, Bobkov's one-dimensional theorem for half-lines \cite[Proposition 2.1]{Bobkov1996} gives the full isoperimetric profile as
\[
p\longmapsto
\min\{I_\sigma(p),I_\sigma(1-p)\}.
\]
See also \cite[Section 1.1]{Milman2015}. In particular, \(I_{\gamma_1}(p)=\varphi(\Gamma^{-1}(p))\).

The next inequality is the one-dimensional version, which we analyse in terms of the
quantile profile, of the general Hessian form of Caffarelli's contraction theorem \cite[Theorem 11]{Caffarelli2000} \cite{Caffarelli2002} \cite[Theorem 3.2]{ColomboFigalliJhaveri2017}. Indeed, for a source \(\mu\), as in the lemma below and
\[
S:=F_{\gamma_1}^{-1}\circ F_\mu,
\]
we have
\[
S'\!\left(F_\mu^{-1}(p)\right)
=\frac{I_\mu(p)}{I_{\gamma_1}(p)}.
\]
We give a direct proof, which also yields the sharp equality statement.

\begin{lemma}[One-dimensional Caffarelli comparison]
\label{lem:source-profile}
Let \(V\in C^2(\R)\), define
\[
Z_\mu:=\int_\R e^{-V(x)}\,\dd x,
\]
assume that \(Z_\mu\in(0,\infty)\), and let
\[
\dd\mu(x)=Z_\mu^{-1}e^{-V(x)}\,\dd x
\]
be a probability measure on \(\R\). Assume that, for some \(\Lambda>0\),
\[
V''(x)\le\Lambda
\qquad(x\in\R).
\]
It follows that
\begin{equation}\label{eq:source-profile}
I_\mu(p)\le\sqrt\Lambda\,I_{\gamma_1}(p),
\qquad 0<p<1.
\end{equation}
This constant is sharp, and equality holds by a Gaussian source with variance \(\Lambda^{-1}\).
\end{lemma}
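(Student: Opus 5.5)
We argue with the quantile profile directly, by a comparison argument for an ODE in the quantile variable. Set \(J(p):=I_\mu(p)\) and \(J_0(p):=\sqrt\Lambda\,I_{\gamma_1}(p)\), and note that \(J_0\) is exactly the profile of the Gaussian source with variance \(\Lambda^{-1}\). That observation already gives the equality case and sharpness, so only the inequality needs proof.

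\emph{Step 1: the profile ODE.} Write \(f=Z_\mu^{-1}e^{-V}\) and \(x=F_\mu^{-1}(p)\), so that \(\dd x/\dd p=1/f(x)\). Then \(J'(p)=f'(x)/f(x)=-V'(x)\). Differentiating once more gives
\[
J''(p)=-\frac{V''(x)}{J(p)}\ge-\frac{\Lambda}{J(p)},
\]
so \(JJ''\ge-\Lambda\) on \((0,1)\). For the Gaussian profile the same computation gives \(J_0J_0''=-\Lambda\) with equality. Moreover \(J_0\) is positive, concave and symmetric, with \(J_0(p)\sim\sqrt\Lambda\,p\sqrt{2\log(1/p)}\) as \(p\downarrow0\).

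\emph{Step 2: comparison through the ratio.} I would consider the ratio \(\rho:=J/J_0\), or equivalently \(\log J-\log J_0\). Suppose \(\rho>1\) somewhere. If \(\sup\rho\) is attained at an interior point \(p_0\), then \(J(p_0)>J_0(p_0)\), \(J'(p_0)/J(p_0)=J_0'(p_0)/J_0(p_0)\), and \((\log\rho)''(p_0)\le0\). Now compute \((\log J)''=J''/J-(J'/J)^2\ge-\Lambda/J^2-(J'/J)^2\). At \(p_0\) the gradient terms agree, while \(-\Lambda/J^2>-\Lambda/J_0^2\). Hence \((\log J)''>(\log J_0)''\) at \(p_0\), which contradicts \((\log\rho)''(p_0)\le 0\). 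So \(\rho\le1\) as soon as the supremum is attained, or as soon as \(\limsup\rho\le1\) at both endpoints.

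\emph{Step 3: the endpoints (main obstacle).} Here I must rule out \(\rho\) increasing towards \(p\to0\) or \(p\to1\) without the supremum being attained. The first thing to try is the integrated form of the inequality. The function \(V(x)-\frac\Lambda2x^2\) is concave, so \(e^{-V}\) dominates a Gaussian of precision \(\Lambda\) times the exponential of a linear function. This gives tail lower bounds \(\mu((-\infty,x])\gtrsim\) a Gaussian-type tail, and hence \(J(p)\le\sqrt\Lambda\,p\sqrt{2\log(1/p)}\,(1+o(1))\) as \(p\downarrow0\).

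A cleaner alternative reduces directly to Caffarelli in dimension one. The monotone map \(S=F_{\gamma_1}^{-1}\circ F_\mu\) from \(\mu\) to \(\gamma_1\) satisfies \(S'(F_\mu^{-1}(p))=I_\mu(p)/I_{\gamma_1}(p)\). Lemma \ref{lem:Caffarelli-preliminary}, applied in \(d=1\) with target \(\gamma_1\) (where \(\delta=1\); its proof only uses the scaling plus \cite{FathiGozlanProdhomme2020}, which allows unbounded support), yields \(\Lip(S)\le\sqrt\Lambda\). That is exactly \eqref{eq:source-profile}. I would present the ODE comparison as the direct proof and invoke this reduction if the endpoint asymptotics become delicate.
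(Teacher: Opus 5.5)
Your Steps~1 and~2 are correct. One has $JJ''=-V''\circ F_\mu^{-1}\ge-\Lambda$, and at an interior maximum of $J/J_0$ exceeding $1$ one gets $(\log(J/J_0))''>0$, which is a contradiction. But this only reduces the lemma to the endpoint statements $\limsup_{p\to0}J/J_0\le1$ and $\limsup_{p\to1}J/J_0\le1$. All of the content sits there, and Step~3 does not prove it.

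A minorant of $e^{-V}$ by a fixed Gaussian times an exponential gives a lower bound on $\mu((-\infty,x])$ as a function of $x$. That is an upper bound on the quantile $x=F_\mu^{-1}(p)$, but it gives no upper bound on the density $f_\mu(x)=J(p)$, so your ``hence'' does not follow. To bound $J(p)$ you must anchor the tangent parabola at $x$ itself, $V(x+t)\le V(x)+V'(x)t+\frac\Lambda2t^2$. Then the unknown slope $a:=V'(x)/\sqrt\Lambda$ enters. The left tail alone gives $J(p)\le\sqrt\Lambda\,p\,\varphi(a)/\Gamma(a)$, which is useless as $a\to-\infty$.

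The missing idea is to integrate over both half-lines,
\[
p\ge\frac{f_\mu(x)}{\sqrt\Lambda}\,\frac{\Gamma(a)}{\varphi(a)},
\qquad
1-p\ge\frac{f_\mu(x)}{\sqrt\Lambda}\,\frac{\overline\Gamma(a)}{\varphi(a)},
\]
and then eliminate $a$ using that $\varphi/\Gamma$ is decreasing and $\varphi/\overline\Gamma$ is increasing. With $z:=\Gamma^{-1}(p)$, the first inequality gives $f_\mu(x)\le\sqrt\Lambda\,p\,\varphi(z)/\Gamma(z)=\sqrt\Lambda\,\varphi(z)$ if $a\ge z$. The second gives the same bound if $a\le z$. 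This is the paper's proof. It yields \eqref{eq:source-profile} at every $p$, not just asymptotically, so once you have it the profile ODE and the maximum principle are unnecessary.

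Your fallback is sound in substance. The paper itself records, just before the lemma, that $S'(F_\mu^{-1}(p))=I_\mu(p)/I_{\gamma_1}(p)$, so the lemma is the one-dimensional case of the Hessian form of Caffarelli's theorem. However, \Cref{lem:Caffarelli-preliminary} cannot be invoked as stated. It requires a compactly supported target and $\mu\in\mathcal P_2(\R)$. Here the target is $\gamma_1$, and the lemma does not assume a finite second moment: for instance, $e^{-V}\propto(1+x^2)^{-3/2}$ satisfies $V''\le3$ but has infinite second moment.

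To fix this, cite \cite{ColomboFigalliJhaveri2017} or \cite{FathiGozlanProdhomme2020} directly and add an approximation. For example, replace $V$ by $V+\varepsilon x^2$, so that $V''\le\Lambda+2\varepsilon$ and $\mu_\varepsilon\in\mathcal P_2(\R)$, and use $I_{\mu_\varepsilon}(p)\to I_\mu(p)$. With that patch the fallback is a complete proof. It does, however, import the $d$-dimensional contraction theorem to prove what the two-tail comparison gives in a few self-contained lines, without any moment hypothesis.
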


\begin{proof}
We fix \(p\in(0,1)\), set \(x:=F_\mu^{-1}(p)\), and define \(a:=V'(x)/\sqrt\Lambda\). For every \(t\in\R\), source semiconcavity gives
\[
V(x+t)\le V(x)+V'(x)t+\frac\Lambda2t^2.
\]
We integrate the lower bound for the density over the two half-lines and obtain
\begin{equation}\label{eq:source-profile-tail-bounds}
p\ge\frac{f_\mu(x)}{\sqrt\Lambda}
\frac{\Gamma(a)}{\varphi(a)},
\qquad
1-p\ge\frac{f_\mu(x)}{\sqrt\Lambda}
\frac{\overline\Gamma(a)}{\varphi(a)}.
\end{equation}
For completeness, let us set
\[
h_-(r):=\frac{\varphi(r)}{\Gamma(r)},
\qquad
h_+(r):=\frac{\varphi(r)}{\overline\Gamma(r)}.
\]
Direct differentiation gives
\[
h_-'(r)=-h_-(r)\bigl(r+h_-(r)\bigr),
\qquad
h_+'(r)=h_+(r)\bigl(h_+(r)-r\bigr).
\]
Suppose that \(r<0\). The identity
\[
\varphi(r)=\int_{-\infty}^r(-t)\varphi(t)\,\dd t
\]
gives \(\varphi(r)>-r\Gamma(r)\). If \(r\ge0\), the inequality \(r+h_-(r)>0\) is immediate. We conclude that \(h_-\) is strictly decreasing. Similarly, when \(r>0\),
\[
\varphi(r)=\int_r^\infty t\varphi(t)\,\dd t
>r\overline\Gamma(r),
\]
while \(h_+(r)-r>0\) is immediate for \(r\le0\). This implies that \(h_+\) is strictly increasing.

Let \(z=\Gamma^{-1}(p)\). If \(a\ge z\), the function \(\varphi/\Gamma\) is decreasing. The first inequality in \eqref{eq:source-profile-tail-bounds} gives
\[
\frac{f_\mu(x)}{\sqrt\Lambda}
\le p\frac{\varphi(a)}{\Gamma(a)}
\le p\frac{\varphi(z)}{\Gamma(z)}
=\varphi(z).
\]
If \(a\le z\), the Gaussian hazard \(\varphi/\overline\Gamma\) is increasing, and the second inequality gives the same conclusion, which proves \eqref{eq:source-profile}. For a Gaussian with variance \(\Lambda^{-1}\), one has \(I_\mu=\sqrt\Lambda\,I_{\gamma_1}\). This shows that the constant is sharp.
\end{proof}

For more details on the classical Gaussian Mills ratio, see \cite{Mills1926}, and for modern sharp bounds, which includes estimates with exponential terms, see \cite{GasullUtzet2014}.

\begin{lemma}[A Gaussian Mills ratio]\label{lem:gaussian-bose}
Let \(c=\sqrt{2\pi}\). For every \(x\ge0\),
\begin{equation}\label{eq:gaussian-bose}
\varphi(x)-x\overline\Gamma(x)
\le\frac{x}{e^{cx}-1},
\end{equation}
where we understand the right-hand side at \(x=0\) by continuity. Equality holds only at \(x=0\).
\end{lemma}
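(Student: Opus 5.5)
Set $F(x):=\varphi(x)-x\overline\Gamma(x)$ and $B(x):=x/(e^{cx}-1)$ with $c=\sqrt{2\pi}$, and $H:=B-F$ on $[0,\infty)$. I will prove $H>0$ on $(0,\infty)$; the equality at $x=0$ then identifies the only equality point.

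First, the local data. Since $\varphi'(x)=-x\varphi(x)$, we get $F'=-\overline\Gamma$ and $F''=\varphi$. Hence $F$ is convex and decreasing, with $F(0)=1/c$, $F'(0)=-1/2$ and $F''(0)=1/c$. Writing $b(y):=y/(e^y-1)=1-y/2+y^2/12-\cdots$, we have $B(x)=c^{-1}b(cx)$. This gives $B(0)=1/c$, $B'(0)=-1/2$ and $B''(0)=c/6$. So $H(0)=H'(0)=0$ and $H''(0)=c/6-1/c>0$, because $c^2=2\pi>6$.

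Second, the behaviour at infinity. Both $F$ and $B$ tend to $0$, and so do $F'=-\overline\Gamma$ and $B'$. Hence $H(x)\to0$ and $H'(x)\to0$ as $x\to\infty$. Moreover, $B''(x)\sim c^2x e^{-cx}$ dominates $\varphi(x)=F''(x)$ for large $x$, so $H''>0$ near $0$ and near $\infty$.

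The core is a sign-pattern argument for $H''=B''-\varphi$. Claim: $H''$ changes sign at most twice on $(0,\infty)$, and if it changes sign then the pattern is $+,-,+$. Granting this, the conclusion follows in two steps.

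\emph{Behaviour of $H'$.} It starts at $0$ and increases. If it never decreases, it could not return to its limit $0$; so it must decrease, cross below zero exactly once, and then increase back to its limit $0$ from below. In particular $H'$ has exactly one zero $x_1\in(0,\infty)$.

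\emph{Behaviour of $H$.} It increases on $(0,x_1)$ and then decreases towards its limit $0$ on $(x_1,\infty)$. Therefore $H>0$ on $(0,\infty)$, which is \eqref{eq:gaussian-bose} with equality only at $x=0$.

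To prove the claim, I would write $B''(x)=c\,b''(cx)$ and use $b''>0$. Then $H''(x)=0$ is equivalent to
\[
\Psi(x):=\log b''(cx)+\frac{x^2}{2}=-\log(2\pi).
\]
If $\Psi$ is convex, this equation has at most two roots. Convexity of $\Psi$ is equivalent to the one-variable inequality
\[
(\log b'')''(y)\ge-\frac1{c^2}=-\frac1{2\pi}\qquad(y>0).
\]

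I expect this Bose-function inequality to be the main obstacle. Here
\[
b''(y)=\frac{e^y\bigl((y-2)e^y+y+2\bigr)}{(e^y-1)^3}.
\]
My first attempt would be to handle two regimes separately. For small $y$, use the Bernoulli series of $b$: it gives $(\log b'')''(0)=-1/5+\text{(higher order)}$, and $-1/5\ge-1/(2\pi)$ holds with little margin. For large $y$, use the expansion $\log b''(y)=\log y-y+O(ye^{-y})$, which gives $(\log b'')''\approx-1/y^2$; this is at least $-1/(2\pi)$ once $y\ge3$. If no clean analytic bound bridges the intermediate region, I would close it with a finite interval-arithmetic check on a compact range of $y$, in the spirit of \Cref{app:certificate}.
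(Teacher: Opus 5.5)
Your outer argument is sound. The data $H(0)=H'(0)=0$, $H''(0)=c/6-1/c>0$ and $H,H'\to0$ at infinity are correct, where $H=B-F$ in your notation. So is the deduction from a $+,-,+$ sign pattern of $H''=B''-\varphi$ to $H>0$ on $(0,\infty)$.

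The gap is in the step meant to produce that sign pattern: the inequality $(\log b'')''(y)\ge-1/(2\pi)$ is false. Your own expansion gives $(\log b'')''(0)=-1/5$, and $-1/5<-1/(2\pi)$ because $2\pi>5$, so you have the comparison backwards. Equivalently, $\Psi''(0)=1+c^2(\log b'')''(0)=1-2\pi/5<0$, so $\Psi$ is strictly concave near the origin and is not convex.

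The failure is not confined to a tiny neighbourhood of $0$. Using $(\log b'')''=g''/g-(g'/g)^2+3e^y/(e^y-1)^2$ with $g(y)=(y-2)e^y+y+2$, one finds $(\log b'')''(1)\approx-0.189$ and $(\log b'')''(2)\approx-0.161$. Both are below $-1/(2\pi)\approx-0.159$, and the inequality only starts to hold near $y\approx2.05$, that is, $x\approx0.82$. This concave region contains the first zero of $H''$, near $x\approx0.64$, so it cannot be ignored. No interval-arithmetic check can bridge an inequality that is false there. Your large-$y$ expansion is also off: in fact $\log b''(y)=\log(y-2)-y+O(e^{-y})$, so the correction to $\log y-y$ is of order $1/y$, and the threshold $y\ge3$ does not follow from it.

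The sign pattern of $H''$ does appear to be true, so your framework can be rescued, but only with an extra idea. Since $\Psi$ is even, with $\Psi'(0)=0$ and $\Psi''(0)<0$, it suffices that $\Psi$ be quasi-convex on $[0,\infty)$. For instance, it is enough that $\Psi''$ change sign exactly once, from $-$ to $+$. Then $\Psi$ is decreasing wherever it is concave, and $\{\Psi\le-\log(2\pi)\}$ is still an interval. This would follow from monotonicity of $(\log b'')''$ on $(0,\infty)$. That is numerically plausible (values $-0.2,-0.189,-0.161,-0.126,-0.094$ at $y=0,1,2,3,4$), but it is a new lemma you would have to prove.

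The paper avoids this entirely by a different route:
\begin{itemize}
\item It computes the logarithmic derivative of $F/B$ and shows it is nonpositive if and only if the Mills ratio $m(x)=\int_0^\infty e^{-xt-t^2/2}\dd t$ dominates $q(x)=\int_0^c(1-t/c)e^{-xt}\dd t$.
\item It proves $m>q$ for $x>0$ by a variation-diminishing argument. The kernel $k(t)=e^{-t^2/2}-(1-t/c)_+$ has zero mass, first moment $1-\pi/3<0$, and sign pattern $+,-,+$.
\item Pairing $k$ with $e^{-xt}$ minus its secant through the two sign changes then gives $m-q>0$.
\end{itemize}
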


\begin{proof}
We set
\[
m(x):=\frac{\overline\Gamma(x)}{\varphi(x)}
=\int_0^\infty e^{-xt-t^2/2}\,\dd t
\]
which is the Gaussian Mills ratio. We also define
\[
q(x):=
\int_0^c\left(1-\frac tc\right)e^{-xt}\,\dd t.
\]
For \(x>0\), direct integration yields
\[
q(x)
=
\frac1x-\frac{1-e^{-cx}}{cx^2}.
\]
We begin by proving \(m(x)\ge q(x)\). Define
\[
k(t):=e^{-t^2/2}-\left(1-\frac tc\right)_+.
\]
It follows that
\[
\int_0^\infty k(t)\,\dd t=0,
\qquad
\int_0^\infty tk(t)\,\dd t=1-\frac\pi3<0.
\]
The function \(k\) has the sign pattern \(+,-,+\). Indeed, on \((0,c)\), the sign of \(k\) is the sign of
\[
F(t):=-\frac{t^2}{2}-\log\left(1-\frac tc\right),
\qquad
F'(t)=\frac1{c-t}-t.
\]
The derivative has exactly two zeros, because \(c^2>4\). Notice that it initially increases and tends to \(+\infty\) as \(t\uparrow c\), while
\[
F(\sqrt2)
=-1-\log\left(1-\frac1{\sqrt\pi}\right)<0.
\]
Indeed, the inequality \(\pi>3\) gives
\[
1-\frac1{\sqrt\pi}
>1-\frac1{\sqrt3}
>\frac38,
\]
while \(e>8/3\) gives \(e^{-1}<3/8\).

Let \(0<\alpha<\beta<c\) be the two points at which the sign changes. For \(x>0\), let \(\ell\) be the affine secant line of the strictly convex function \(t\mapsto e^{-xt}\) through \(\alpha\) and \(\beta\). It follows that \(e^{-xt}-\ell(t)\) is positive outside \((\alpha,\beta)\) and negative inside. Therefore,
\[
\int_0^\infty k(t)(e^{-xt}-\ell(t))\,\dd t>0.
\]
We write \(\ell(t)=\ell_0+\ell_1t\), where \(\ell_1<0\), and obtain
\[
\begin{aligned}
m(x)-q(x)
&=\int_0^\infty k(t)e^{-xt}\,\dd t\\
&>\ell_0\int_0^\infty k(t)\,\dd t
+\ell_1\int_0^\infty t\,k(t)\,\dd t\\
&=\ell_1\left(1-\frac\pi3\right)>0.
\end{aligned}
\]
At \(x=0\), equality follows from the fact that the masses are equal.

Let
\[
H(x):=\varphi(x)-x\overline\Gamma(x)
=\varphi(x)(1-xm(x)),
\]
and set
\[
B_*(x):=
\begin{cases}
\dfrac{x}{e^{cx}-1},&x>0,\\[2mm]
\dfrac1c,&x=0.
\end{cases}
\]
We also have
\[
H(x)=\int_x^\infty(t-x)\varphi(t)\,\dd t>0.
\]
The logarithmic derivative below is well defined. For \(x>0\),
\[
\frac{\dd}{\dd x}\log\frac{H(x)}{B_*(x)}
=-\frac{m(x)}{1-xm(x)}-\frac1x+\frac{c}{1-e^{-cx}}.
\]
The right-hand side is nonpositive if and only if \(m(x)\ge q(x)\). Since
\(H(0)=B_*(0)=1/c\), the ratio \(H/B_*\) is at most one. The strict inequality for \(x>0\) follows from \(m(x)>q(x)\).
\end{proof}

The reduction below to positive affine profiles of the density evaluated at quantiles is the one-dimensional model case with zero curvature of Milman's sharp curvature--dimension--diameter theorem \cite[Corollary 1.4, Case 7]{Milman2015}. The constant affine profile corresponds to the endpoint with a Gaussian source and a uniform target. We compute its sharp Lipschitz constant \(D/\sqrt{2\pi}\) in \cite[proof of Lemma 1.7]{MilmanSlabs2026}, and then give a derivation in quantile coordinates.

\begin{lemma}[Comparison of target profiles]
\label{lem:target-profile}
Let \(\nu\) be a log-concave probability measure whose support is a nondegenerate compact interval of length \(D\). One has
\begin{equation}\label{eq:target-profile}
I_\nu(p)
\ge\frac{\sqrt{2\pi}}D I_{\gamma_1}(p),
\qquad 0<p<1.
\end{equation}
This constant is sharp. Equality can occur only at \(p=1/2\), and equality at \(p=1/2\) necessarily implies that \(\nu\) is uniform on its support.
\end{lemma}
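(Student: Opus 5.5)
The plan is to rewrite the claim entirely in quantile coordinates. Because \(\nu\) is log-concave, \(I_\nu\) is concave, and the support length is recovered as \(\int_0^1 I_\nu(p)^{-1}\,\dd p\). For each fixed \(p\), dominating \(I_\nu\) by its tangent line then reduces the claim to a scalar minimisation over affine profiles. That minimisation is settled by the Mills ratio estimate of \Cref{lem:gaussian-bose}.

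\emph{Step 1 (concavity and length identity).} Write \(\dd\nu=f\,\dd x\) on \((\alpha,\alpha+D)\) with \(f=e^{-W}\) and \(W\) convex. The profile \(I:=I_\nu\) is positive and continuous on \((0,1)\). Where \(W\) is differentiable,
\[
I'(p)=\frac{f'(F_\nu^{-1}(p))}{f(F_\nu^{-1}(p))}=-W'(F_\nu^{-1}(p)),
\]
which is nonincreasing in \(p\). Hence \(I\) is concave (this is Bobkov's observation; in general one argues with one-sided derivatives). The change of variables \(p=F_\nu(x)\) gives
\[
\int_0^1\frac{\dd q}{I(q)}=D.
\]

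\emph{Step 2 (tangent-line reduction).} Fix \(p\in(0,1)\), put \(y:=I(p)\), and let \(\ell(q)=y+s(q-p)\) be a supporting line of the concave function \(I\) at \(p\). Then \(0<I\le\ell\) on \((0,1)\), so \(a:=\ell(0)\ge0\), \(b:=\ell(1)\ge0\), and
\[
D\ge\int_0^1\frac{\dd q}{\ell(q)}.
\]
Set \(b/a=e^\lambda\) (the degenerate cases \(a=0\) or \(b=0\) make the integral infinite and are trivial). A direct computation gives
\[
yD\ge y\int_0^1\frac{\dd q}{\ell}=g_p(\lambda):=\frac{\lambda}{e^\lambda-1}+p\lambda .
\]
It therefore suffices to prove \(g_p(\lambda)\ge\sqrt{2\pi}\,\varphi(\Gamma^{-1}(p))\) for every \(\lambda\in\mathbb R\).

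\emph{Step 3 (scalar inequality, the main step).} Put \(c=\sqrt{2\pi}\) and \(z=\Gamma^{-1}(p)\).

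For \(\lambda=cx\) with \(x\ge0\), \Cref{lem:gaussian-bose} gives
\[
g_p(cx)\ge c\bigl[\varphi(x)-x\overline\Gamma(x)+px\bigr].
\]
The bracket is convex in \(x\), with derivative \(p-\overline\Gamma(x)\). Its minimum over \(\mathbb R\) is therefore attained where \(\overline\Gamma(x)=p\), that is at \(x=-z\). Substituting, the minimum value is \(\varphi(z)\), so \(g_p\ge c\varphi(z)\) on \(\lambda\ge0\).

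For \(\lambda<0\), use the identity \(g_p(-\lambda)=g_{1-p}(\lambda)\). Since \(\Gamma^{-1}(1-p)=-z\) and \(\varphi\) is even, the case \(\lambda\ge0\) applied to \(1-p\) gives the same bound.

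Combining the three steps yields \eqref{eq:target-profile}. I expect this step to be the crux, since it is where the Gaussian profile appears exactly; the rest is structural.

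\emph{Step 4 (equality and sharpness).} Equality in \Cref{lem:gaussian-bose} holds only at \(x=0\). So equality in \eqref{eq:target-profile} forces \(\lambda=0\) and \(c\varphi(z)=g_p(0)=1\), hence \(z=0\) and \(p=1/2\).

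In that case \(\ell\) is constant and \(\int_0^1 1/I=\int_0^1 1/\ell\). Since \(I\le\ell\), this forces \(I\equiv\ell\), so \(I\) is constant and \(\nu\) is uniform on its support.

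Conversely, for the uniform measure \(I\equiv1/D\), which gives equality at \(p=1/2\). This shows the constant is sharp.
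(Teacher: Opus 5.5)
Your proof is correct and takes the same route as the paper: concavity of the quantile profile, the length identity \(\int_0^1 I_\nu^{-1}=D\), domination by a supporting line, reduction to positive affine profiles, the Mills-ratio bound of \Cref{lem:gaussian-bose}, and the same equality analysis. The differences are only in presentation. You use the scale-invariant quantity \(y\int_0^1\ell^{-1}=g_p(\lambda)\) instead of normalising by \(\theta_L\). You also minimise \(\varphi(x)-x\overline\Gamma(x)+px\) over \(x\) at fixed \(p\), where the paper minimises \(L_\lambda-G\) over \(q\) at fixed \(\lambda\); this is the same tangent-line comparison seen from the dual side, and your explicit identity \(g_p(-\lambda)=g_{1-p}(\lambda)\) replaces the paper's appeal to symmetry.
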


\begin{proof}
By scaling, we may take \(D=1\). Set
\[
c:=\sqrt{2\pi},
\]
and let \(f_\nu\) be the canonical continuous representative of the log-concave density on the interior of its support. The concavity of the profile \(I_\nu\) of the density at left quantiles is due to Bobkov \cite{Bobkov1996}. For completeness, we note that \(\log f_\nu\) is concave and hence locally absolutely continuous on the interior of the support. This implies that \(I_\nu\) is locally absolutely continuous on \((0,1)\), and
\[
I_\nu'(p)
=(\log f_\nu)'(F_\nu^{-1}(p))
\]
for almost every \(p\in(0,1)\). The right-hand side is nonincreasing, which implies that \(I_\nu\) is concave. Moreover,
\begin{equation}\label{eq:profile-length}
\int_0^1\frac{\dd q}{I_\nu(q)}=1.
\end{equation}
We fix \(p\in(0,1)\), and let \(L\) be an affine supporting line to the concave profile at \(p\), which implies \(L\ge I_\nu\) on \((0,1)\) and \(L(p)=I_\nu(p)\). Since \(I_\nu>0\) on \((0,1)\), continuity gives \(L(0),L(1)\ge0\). In fact, \(L\) is strictly positive on \([0,1]\).
Suppose, for instance, that \(L(0)=0\). It follows that \(L(q)=\alpha q\) for some
\(\alpha>0\). As \(I_\nu\le L\), we would have
\[
\int_0^1\frac{\dd q}{I_\nu(q)}
\ge
\int_0^1\frac{\dd q}{\alpha q}
=+\infty,
\]
which contradicts \eqref{eq:profile-length}. The case \(L(1)=0\) is the same. Let us set
\[
\theta_L:=\int_0^1L(q)^{-1}\,\dd q\le1.
\]
The function \(\widetilde L:=\theta_L L\) is positive and affine, and
\[
\int_0^1\frac{\dd q}{\widetilde L(q)}=1,
\qquad
\widetilde L(p)\le I_\nu(p).
\]
It suffices to bound from below every positive affine function whose reciprocal integral is one. Such functions are parametrised by \(\lambda\in\R\) as follows. Set
\[
a(\lambda)=\frac{\lambda}{e^\lambda-1}
\quad(\lambda\ne0),
\qquad
a(0):=1,
\]
and
\[
L_\lambda(q)=a(\lambda)+\lambda q,
\qquad
0\le q\le1.
\]
We claim that, for every \(q\in(0,1)\), one has
\begin{equation}\label{eq:affine-profile-majorant}
L_\lambda(q)
\ge G(q):=\sqrt{2\pi}I_{\gamma_1}(q)
=e^{-\Gamma^{-1}(q)^2/2}.
\end{equation}
By symmetry, it suffices to consider \(\lambda\ge0\). The function \(G\) is strictly concave, and \(G'\) decreases continuously from \(+\infty\) to \(-\infty\). It follows that \(L_\lambda-G\) is strictly convex and has a unique minimiser \(q_\lambda\in(0,1)\), characterised by
\[
\lambda
=G'(q_\lambda)
=-\sqrt{2\pi}\,\Gamma^{-1}(q_\lambda).
\]
We write \(\lambda=cx\) and obtain
\[
q_\lambda=\overline\Gamma(x).
\]
At this minimising point,
\[
L_\lambda(q_\lambda)-G(q_\lambda)
=c\left[
\frac{x}{e^{cx}-1}
+x\overline\Gamma(x)-\varphi(x)
\right]
\ge0
\]
by \Cref{lem:gaussian-bose}. This proves \eqref{eq:affine-profile-majorant} and \eqref{eq:target-profile}. Now suppose that equality holds at \(p\). In this case, equality must hold both in \(\widetilde L(p)\le I_\nu(p)\) and in \(\widetilde L(p)\ge G(p)\). The
strict part of \Cref{lem:gaussian-bose} shows that \(p=1/2\) and that the affine parameter is \(\lambda=0\). Moreover, the normalisation factor satisfies \(\theta_L=1\). Since \(L\ge I_\nu\) and
\[
\int_0^1\frac{\dd q}{L(q)}
=
\int_0^1\frac{\dd q}{I_\nu(q)}
=1,
\]
we have \(L=I_\nu\) almost everywhere and, by continuity, everywhere. Thus \(I_\nu\equiv1\), which means that \(\nu\) is uniform on its support. Conversely, equality holds at \(p=1/2\) for the uniform probability measure.
\end{proof}

\begin{theorem}[Support and curvature]
\label{thm:sharp-one-dimensional}
Let \(V:\R\to\R\) be finite, define
\[
Z_\mu:=\int_\R e^{-V(x)}\,\dd x,
\]
assume that \(Z_\mu\in(0,\infty)\), and let
\[
\dd\mu(x)=Z_\mu^{-1}e^{-V(x)}\,\dd x\in\mathcal P_2(\R).
\]
Assume that, for some \(\Lambda>0\),
\[
x\longmapsto\frac{\Lambda x^2}{2}-V(x)
\]
is convex. Let \(\nu\) be log-concave and assume that its support is a nondegenerate compact interval of length \(D\). Let
\[
T:=F_\nu^{-1}\circ F_\mu
\]
be the monotone rearrangement from \(\mu\) to \(\nu\). This implies that \(T\) has a
globally Lipschitz representative with Lipschitz constant
\[
\frac{\sqrt{\Lambda}D}{\sqrt{2\pi}}.
\]
In other terms,
\begin{equation}\label{eq:sharp-one-dimensional}
\|T'\|_{L^\infty(\mu)}
\le\frac{\sqrt\Lambda\,D}{\sqrt{2\pi}}.
\end{equation}
The constant \(1/\sqrt{2\pi}\) is optimal, and is attained by a Gaussian source with variance \(\Lambda^{-1}\) and by the uniform target on an interval.
\end{theorem}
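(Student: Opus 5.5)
The plan is to combine the two profile comparisons, \Cref{lem:source-profile} and \Cref{lem:target-profile}, through the quantile formula for the derivative of the monotone rearrangement. I first treat a smooth source, \(V\in C^2(\R)\) with \(V''\le\Lambda\). Then \(\mu\) has a positive continuous density on \(\R\), and \(F_\mu\) is a \(C^1\) increasing bijection onto \((0,1)\). The target \(\nu\) is log-concave on an interval \(J\) of length \(D\), and its canonical density is positive and continuous on \(\operatorname{int}J\). Hence \(F_\nu^{-1}\) is locally absolutely continuous on \((0,1)\), with derivative \(1/I_\nu(p)\) for every \(p\), since \(f_\nu\) is continuous and positive at the quantile point. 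Writing \(p=F_\mu(x)\), the chain rule gives
\[
T'(x)=\frac{f_\mu(x)}{f_\nu(F_\nu^{-1}(p))}=\frac{I_\mu(p)}{I_\nu(p)}.
\]
By \Cref{lem:source-profile} the numerator is at most \(\sqrt\Lambda\,I_{\gamma_1}(p)\). By \Cref{lem:target-profile} the denominator is at least \(\sqrt{2\pi}\,I_{\gamma_1}(p)/D\). The ratio is therefore at most \(\sqrt\Lambda D/\sqrt{2\pi}\) for every \(x\). Since \(T\) is locally absolutely continuous and \(T'\) is bounded, integrating gives the global Lipschitz bound, and \eqref{eq:sharp-one-dimensional} follows.

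For a general finite \(V\) with \(\Lambda x^2/2-V\) convex, I would smooth the source exactly as in the proof of \Cref{thm:main}. Set \(\mu_t=\mu*\gamma_{1,t}\). By \Cref{lem:Gaussian-source-smoothing}, \(V_t\) is smooth, \(V_t''\le\Lambda\), and \(\mu_t\to\mu\) in \(W_2\). The smooth case makes each monotone map \(T_t=F_\nu^{-1}\circ F_{\mu_t}\) \(\sqrt\Lambda D/\sqrt{2\pi}\)-Lipschitz. I then pass to the limit, taking \(J\) as the compact convex set in \Cref{lem:varying-source-stability}. In one dimension this is even more direct: \(F_{\mu_t}\to F_\mu\) pointwise because \(\mu\) is absolutely continuous, and \(F_\nu^{-1}\) is continuous on \((0,1)\), so \(T_t\to T\) pointwise. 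A uniform Lipschitz bound survives pointwise limits. The representative \(F_\nu^{-1}\circ F_\mu\) is continuous, so it is itself the Lipschitz representative.

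For optimality I would take \(\mu\) Gaussian with variance \(\Lambda^{-1}\) and \(\nu\) uniform on an interval of length \(D\). Then \(T(x)=a+D\,\Gamma(\sqrt\Lambda x)\), so \(T'(0)=D\sqrt\Lambda\,\varphi(0)=\sqrt\Lambda D/\sqrt{2\pi}\). Equivalently, equality holds in both lemmas at \(p=1/2\). The bound is therefore attained, and the constant \(1/\sqrt{2\pi}\) cannot be improved.

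The substantive analytic work is already contained in the two lemmas. The step needing the most care is the regularity bookkeeping: differentiability of \(F_\nu^{-1}\) when the target density may vanish or blow up only at the endpoints, and the approximation step for nonsmooth \(V\). Near the endpoints, the concavity of \(I_\nu\) together with the lower bound \(I_\nu\ge \sqrt{2\pi}I_{\gamma_1}/D>0\) on \((0,1)\) handles local absolute continuity. Pointwise convergence of the maps handles the limit.
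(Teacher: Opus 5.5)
Your proposal is correct and follows the paper's proof. The smooth case uses the quantile formula $T'(F_\mu^{-1}(p))=I_\mu(p)/I_\nu(p)$ together with \Cref{lem:source-profile,lem:target-profile}; the general source is handled by Gaussian smoothing via \Cref{lem:Gaussian-source-smoothing}; and sharpness comes from the Gaussian/uniform example. The only difference is the limit step: the paper passes through compact-range Brenier potentials, \Cref{lem:varying-source-stability} and \Cref{lem:distribution-C11}, whereas you use pointwise convergence $F_{\mu_t}\to F_\mu$ and continuity of $F_\nu^{-1}$ on $(0,1)$. Your route is more elementary and shows directly that the explicit representative $F_\nu^{-1}\circ F_\mu$ is itself Lipschitz.
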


\begin{proof}
Suppose first that \(V\in C^2(\R)\) and \(V''\le\Lambda\). At quantile level \(p\), the change-of-variables identity gives
\[
T'(F_\mu^{-1}(p))
=\frac{I_\mu(p)}{I_\nu(p)}
\]
for almost every \(p\in(0,1)\). We now apply \Cref{lem:source-profile,lem:target-profile} and obtain
\[
T'(x)
\le\frac{\sqrt\Lambda\,D}{\sqrt{2\pi}}
\]
for Lebesgue-almost every \(x\in\mathbb R\), because the source density is
strictly positive. The monotone rearrangement is locally absolutely continuous on \(\mathbb R\). It suffices to integrate the almost-everywhere derivative bound over compact intervals and then vary the endpoints to obtain the global Lipschitz estimate in the smooth case.

Let us now consider a general \(V\) as in the statement, and set
\(\mu_t=\mu*\gamma_{1,t}\). By \Cref{lem:Gaussian-source-smoothing}, the measure \(\mu_t\) has a smooth potential \(V_t\) that satisfies
\[
V_t''\le\frac{\Lambda}{1+t\Lambda}\le\Lambda,
\]
and \(\mu_t\to\mu\) in \(W_2\). Let \(\Phi\) be the normalised compact-range Brenier potential from \(\mu\) to \(\nu\). For every \(t>0\), let \(\Phi_t\) be the normalised compact-range Brenier potential from \(\mu_t\) to \(\nu\). In the smooth case, we have
\[
\Phi_t''\le\frac{\sqrt\Lambda D}{\sqrt{2\pi}}
\]
in distributions. By \Cref{lem:varying-source-stability}, \(\Phi_t\to\Phi\) locally uniformly. The same distributional inequality holds for \(\Phi\). We now apply \Cref{lem:distribution-C11} in dimension one and obtain a globally Lipschitz representative of \(\Phi'\). Since \(\Phi'=T\) \(\mu\)-almost everywhere, this proves the required estimate. Finally, the Gaussian/uniform example proves sharpness.
\end{proof}

\subsection{The sharp negative-curvature exponent}

Our next lemma is an immediate consequence of the one-dimensional estimate for bounded perturbations \cite[Theorem 1.2]{ColomboFigalliJhaveri2017}.

\begin{lemma}[Estimate for bounded perturbations in one dimension]
\label{lem:one-dimensional-perturbation}
Let $I\subset\R$ be an interval. Suppose that $f_0$ is a positive log-concave probability density on $I$, that $p:I\to\R$ is bounded, and that
\[
f_1=e^{-p}f_0
\]
is also a probability density. For \(i\in\{0,1\}\), let
\[
F_i:=F_{f_i\,\dd x}
\]
be the distribution function of the probability measure \(f_i\,\dd x\). Let
\[
R:=F_1^{-1}\circ F_0
\]
be the increasing transport from \(f_0\,\dd x\) to \(f_1\,\dd x\). It follows that
\begin{equation}\label{eq:one-dimensional-perturbation-bound}
R'(x)\le e^{\essosc_I p}
\qquad\text{for almost every }x\in I.
\end{equation}
Moreover, $R$ has an $e^{\essosc_I p}$-Lipschitz representative.
\end{lemma}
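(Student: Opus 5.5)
The plan is to work entirely in quantile coordinates. There the increasing transport has the explicit derivative formula
\[
R'(F_0^{-1}(p))=\frac{f_0(F_0^{-1}(p))}{f_1(F_1^{-1}(p))}=\frac{I_0(p)}{I_1(p)},
\]
where \(I_i\) denotes the quantile profile \eqref{eq:profile-def} of \(f_i\,\dd x\). The goal is therefore to show \(I_1(p)\ge e^{-\essosc_I p}I_0(p)\) for every \(p\in(0,1)\).

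Alternatively, as the statement suggests, one can cite \cite[Theorem 1.2]{ColomboFigalliJhaveri2017} directly. That result says the monotone map from a log-concave density to a bounded perturbation of it is \(e^{\operatorname{osc}p}\)-Lipschitz. The steps below give a self-contained version of the same estimate.

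First I normalise \(p\). Subtracting a constant from \(p\) changes nothing, since normalisation fixes \(f_1\). So I may assume \(\operatorname*{ess\,inf}p=0\) and \(\operatorname*{ess\,sup}p=\omega:=\essosc_I p\). This gives
\[
e^{-\omega}f_0\le f_1\le f_0 \quad\text{a.e.}
\]
One might hope to compare \(F_1\) and \(F_0\) directly. That fails, because mass may have shifted, so \(x_1:=F_1^{-1}(p)\) differs from \(x_0:=F_0^{-1}(p)\). The plan is instead to use log-concavity of \(f_0\) to control \(f_0(x_1)\) relative to \(f_0(x_0)\).

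The key step is to assume \(x_1\ge x_0\) and then use the left tail; the case \(x_1\le x_0\) is symmetric with right tails. From the normalisation,
\[
p=F_1(x_1)\le\int_{-\infty}^{x_1}f_0=F_0(x_1),
\]
and also
\[
F_0(x_1)-p=F_0(x_1)-F_0(x_0)=\int_{x_0}^{x_1}f_0 .
\]
For a log-concave density on an interval, the ratio \(F_0/f_0\) is nondecreasing. This is the hazard-rate monotonicity, i.e.\ concavity of \(\log F_0\). Together with \(f_1(x_1)\ge e^{-\omega}f_0(x_1)\), I get
\[
\frac{I_0(p)}{I_1(p)}\le e^{\omega}\frac{f_0(x_0)}{f_0(x_1)}.
\]
When \(f_0\) is nonincreasing between \(x_0\) and \(x_1\), this ratio can exceed one, so the crude argument has to be refined.

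The refinement uses the exact level-\(p\) relation. By monotonicity of the hazard ratio,
\[
\frac{F_0(x_1)}{f_0(x_1)}\ge \frac{F_0(x_0)}{f_0(x_0)}=\frac{p}{f_0(x_0)},
\]
and the bound \(F_0(x_1)\le e^{\omega}F_1(x_1)=e^{\omega}p\) holds because \(f_0\le e^{\omega}f_1\). Combining these gives
\[
f_0(x_1)\le e^{\omega}f_0(x_0)\cdot\frac{F_0(x_1)}{e^{\omega}p}.
\]
This points in the wrong direction, so I will pair the two tails. If \(x_1\ge x_0\), I use the right tail:
\[
1-p=\int_{x_1}^\infty f_1\ge e^{-\omega}\bigl(1-F_0(x_1)\bigr),
\]
together with the fact that \((1-F_0)/f_0\) is nonincreasing. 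This yields
\[
\frac{1-F_0(x_1)}{f_0(x_1)}\le\frac{1-p}{f_0(x_0)},
\]
and hence \(f_0(x_1)\ge e^{-\omega}f_0(x_0)\cdot\frac{1-F_0(x_1)}{e^{-\omega}(1-p)}\cdots\).

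Carrying this through carefully with \(f_1(x_1)\ge e^{-\omega}f_0(x_1)\) is the delicate point, and it is where I expect the main obstacle. If I cannot get the clean constant \(e^{\omega}\) this way, I would fall back on citing Colombo--Figalli--Jhaveri, whose proof handles exactly this issue.

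Finally, I pass from the almost-everywhere bound to the Lipschitz representative. The map \(R\) is locally absolutely continuous, since both densities are positive on \(I\). Integrating \(R'\le e^{\omega}\) over subintervals then gives the \(e^{\essosc_I p}\)-Lipschitz representative.
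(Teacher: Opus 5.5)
Your framework is the same as the paper's. You work in quantile coordinates with $R'(x_0)=f_0(x_0)/f_1(x_1)$, where $x_1=R(x_0)$. You use the monotonicity of $F_0/f_0$ and $(1-F_0)/f_0$, which is equivalent to the concavity of the quantile profile $I_0$ that the paper uses. You pair each case with the tail on the side to which $x_1$ has moved. But you never close the argument, and two concrete errors are what stop it.

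First, the normalisation is invalid. Shifting $p$ destroys $f_1=e^{-p}f_0$. The bounds $e^{-\omega}f_0\le f_1\le f_0$ would force $f_1=f_0$ a.e., since both densities have unit mass. What is true is this: $m_+:=\operatorname*{ess\,sup}_I p$ and $m_-:=-\operatorname*{ess\,inf}_I p$ are both nonnegative, $m_++m_-=\essosc_I p=:\omega$, and $e^{-m_+}f_0\le f_1\le e^{m_-}f_0$. By spending the whole factor $e^{\omega}$ on $f_1(x_1)\ge e^{-\omega}f_0(x_1)$, you are left needing $f_0(x_0)\le f_0(x_1)$, which is false in general. That is exactly where you got stuck.

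Second, your right-tail inequality points the wrong way. The bounds $1-q\ge e^{-\omega}(1-F_0(x_1))$ and $(1-F_0(x_1))/f_0(x_1)\le(1-q)/f_0(x_0)$ both bound $1-F_0(x_1)$ from above. Together they say nothing about $f_0(x_0)/f_0(x_1)$. Here $q$ is the quantile level; using $p$ for it clashes with the potential.

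The missing idea is to split the budget. The factor $e^{m_+}$ goes to the pointwise comparison at $x_1$, and $e^{m_-}$ goes to the tail masses. If $x_1\ge x_0$, then $1-q=\int_{x_1}^{\sup I}f_1\,\dd y\le e^{m_-}(1-F_0(x_1))$. If $x_1\le x_0$, then $q=F_1(x_1)\le e^{m_-}F_0(x_1)$. Combining these with your monotonicity facts gives
\[
\frac{f_0(x_0)}{f_0(x_1)}\le\frac{1-q}{1-F_0(x_1)}\le e^{m_-}\quad\text{if }x_1\ge x_0,
\qquad
\frac{f_0(x_0)}{f_0(x_1)}\le\frac{q}{F_0(x_1)}\le e^{m_-}\quad\text{if }x_1\le x_0,
\]
hence $R'(x_0)=e^{p(x_1)}f_0(x_0)/f_0(x_1)\le e^{m_+}e^{m_-}=e^{\omega}$. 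This is precisely the paper's proof, written there with $\alpha=F_0(x_0)$, $\zeta=F_0(x_1)$ and the concavity bounds $I_0(\alpha)/I_0(\zeta)\le\alpha/\zeta$ or $(1-\alpha)/(1-\zeta)$.

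Falling back on Colombo--Figalli--Jhaveri is a legitimate reference; the paper itself notes that the lemma follows from their Theorem 1.2. However, the self-contained argument you sketch does not, as written, prove the lemma.
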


\begin{proof}
Let us write
\[
m_+:=\operatorname*{ess\,sup}_I p,
\qquad
m_-:=-\operatorname*{ess\,inf}_I p.
\]
As both densities have total mass one, either $p=0$ almost everywhere or $m_+,m_-\ge0$. In both cases, $m_++m_-=\essosc_I p$.  At a point at which $R$ is differentiable, we put
\[
y:=R(x),
\qquad
\alpha:=F_0(x)=F_1(y),
\qquad
\zeta:=F_0(y).
\]
Since $e^{-p}\le e^{m_-}$,
\[
\alpha=F_1(y)\le e^{m_-}F_0(y)=e^{m_-}\zeta.
\]
We apply the same estimate to the upper tail and obtain
\[
1-\alpha\le e^{m_-}(1-\zeta).
\]
We have
\begin{equation}\label{eq:quantile-comparison}
\zeta\ge e^{-m_-}\alpha,
\qquad
1-\zeta\ge e^{-m_-}(1-\alpha).
\end{equation}

Let \(I_0:=I_{f_0\,\dd x}\) be the profile of the density at left quantiles defined in \eqref{eq:profile-def}, and notice that this profile is concave. Indeed, at almost every
$s$, $I_0'(s)=(\log f_0)'(F_0^{-1}(s))$. As above, this identity follows from the local absolute continuity of the logarithm of the density of the canonical log-concave representative on the interior of its support. The right-hand side is nonincreasing by log-concavity. If $\zeta\le\alpha$, the concavity and nonnegativity of $I_0$ give
\[
\frac{I_0(\alpha)}{I_0(\zeta)}
\le\frac\alpha\zeta
\le e^{m_-}.
\]
If $\zeta\ge\alpha$, the estimate from the right endpoint yields
\[
\frac{I_0(\alpha)}{I_0(\zeta)}
\le\frac{1-\alpha}{1-\zeta}
\le e^{m_-}.
\]
From the change-of-variables identity, we now obtain
\[
R'(x)
=\frac{f_0(x)}{f_1(y)}
=e^{p(y)}\frac{I_0(\alpha)}{I_0(\zeta)}
\le e^{m_++m_-}.
\]
This proves \eqref{eq:one-dimensional-perturbation-bound}. The statement for the Lipschitz representative follows from the absolute continuity of the monotone rearrangement on compact subintervals and passing to the endpoints.
\end{proof}

\begin{theorem}[Support and curvature for semi-log-concave targets]
\label{thm:one-dimensional-full}
Let \(V:\R\to\R\) be finite, define
\[
Z_\mu:=\int_\R e^{-V(x)}\,\dd x,
\]
assume that \(Z_\mu\in(0,\infty)\), and let
\[
\dd\mu(x)=Z_\mu^{-1}e^{-V(x)}\,\dd x\in\mathcal P_2(\R).
\]
Assume that, for some \(\Lambda>0\),
\[
x\longmapsto\frac{\Lambda x^2}{2}-V(x)
\]
is convex. Let \(\nu\) have compact interval support \(K=[a,b]\), let \(D=b-a\), and suppose that \(W:(a,b)\to\mathbb R\) is finite. Define
\[
Z_\nu:=\int_a^b e^{-W(y)}\,\dd y.
\]
Assume that \(Z_\nu\in(0,\infty)\), and that, for some \(\rho\ge0\),
\[
\dd\nu(y)
=
Z_\nu^{-1}e^{-W(y)}\one_{(a,b)}(y)\,\dd y,
\qquad
y\longmapsto W(y)+\frac\rho2y^2
\quad\text{is convex.}
\]
It then holds that the monotone Brenier map $T$ from $\mu$ to $\nu$ has a globally
Lipschitz representative which satisfies
\begin{equation}\label{eq:one-dimensional-full-Lip}
\Lip(T)
\le
\frac1{\sqrt{2\pi}}
\exp\!\left(\frac{\rho D^2}{8}\right)
\sqrt\Lambda\,D.
\end{equation}
\end{theorem}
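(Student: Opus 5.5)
The plan is to factor the one-dimensional monotone map through a log-concave intermediate measure. We split \(T\) into a log-concave transport, controlled by \Cref{thm:sharp-one-dimensional}, followed by a bounded-perturbation transport, controlled by \Cref{lem:one-dimensional-perturbation}. First reduce to a smooth source with \(V''\le\Lambda\) exactly as in the proof of \Cref{thm:sharp-one-dimensional}. Use Gaussian smoothing from \Cref{lem:Gaussian-source-smoothing}, stability from \Cref{lem:varying-source-stability}, and then \Cref{lem:distribution-C11}. The final bound is uniform in \(t\), so it suffices to treat \(V\in C^\infty\).

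\emph{Intermediate measure.} Write \(G(y):=W(y)+\frac\rho2 y^2\), which is convex on \((a,b)\). Let \(c:=(a+b)/2\), and define
\[
p(y):=\frac\rho2\bigl[(y-c)^2-\tfrac{D^2}{4}\bigr]+\text{const}.
\]
Then \(W(y)=G(y)-\frac\rho2(y-c)^2+\text{affine}\), so
\[
e^{-W}=e^{-\widetilde G}\,e^{\frac\rho2(y-c)^2},
\]
where \(\widetilde G\) is convex, because an affine term can be absorbed. Set \(f_0:=Z_0^{-1}e^{-\widetilde G}\one_{(a,b)}\), a log-concave density on an interval of length \(D\). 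Then \(f_1:=\dd\nu/\dd y=e^{-p}f_0\) with \(p=-\frac\rho2(y-c)^2+\text{const}\). On \((a,b)\), the function \((y-c)^2\) ranges over \([0,D^2/4]\), so \(\essosc_{(a,b)}p\le \rho D^2/8\). This centring at the midpoint is the key choice: it gives the exponent \(\rho D^2/8\) rather than \(\rho D^2/2\).

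\emph{Factorisation.} Let \(\nu_0:=f_0\,\dd y\), let \(T_0:=F_{\nu_0}^{-1}\circ F_\mu\), and let \(R:=F_\nu^{-1}\circ F_{\nu_0}\). Monotone rearrangements compose, so \(T=R\circ T_0\) on \(\mathbb R\), using continuity and strict monotonicity of the distribution functions in the interiors. By \Cref{thm:sharp-one-dimensional} applied to \(\nu_0\), \(T_0\) is \(\sqrt\Lambda D/\sqrt{2\pi}\)-Lipschitz. By \Cref{lem:one-dimensional-perturbation} with \(I=(a,b)\), \(R\) has an \(e^{\rho D^2/8}\)-Lipschitz representative. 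Composing the two gives \eqref{eq:one-dimensional-full-Lip}. We then extend to an everywhere-defined representative by continuity, exactly as in the earlier one-dimensional proofs.

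\emph{Main obstacle.} The main technical point is checking the hypotheses of \Cref{lem:one-dimensional-perturbation}. These are positivity of \(f_0\) on \(I\), boundedness of \(p\), and that both \(f_0\) and \(f_1\) are probability densities with the correct normalisations. The normalising constant of \(f_1\) merely shifts \(p\) and does not affect its oscillation. The composition identity \(T=R\circ T_0\) also has to hold everywhere, not just almost everywhere. This follows because all maps are continuous increasing bijections onto the open interval, given that the densities are positive there.
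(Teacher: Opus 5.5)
Your proposal is correct and follows the paper's proof essentially step for step. You centre the quadratic at the midpoint to get the log-concave reference $\overline\nu\propto e^{-W-\frac\rho2|y-c|^2}\one_{(a,b)}$, apply \Cref{thm:sharp-one-dimensional} to $\mu\to\overline\nu$, apply \Cref{lem:one-dimensional-perturbation} with $\operatorname{osc}p=\rho D^2/8$ to $\overline\nu\to\nu$, and compose the monotone maps. Two small remarks: your preliminary source-smoothing step is unnecessary, since \Cref{thm:sharp-one-dimensional} already covers every finite $V$ with $\frac{\Lambda x^2}{2}-V$ convex; and your first display of $p$ has the wrong sign (the correct form, which you use afterwards, is $p=-\frac\rho2(y-c)^2+\text{const}$), although this does not change the oscillation.
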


\begin{proof}[Proof of \Cref{thm:one-dimensional-full}]
We set $m=(a+b)/2$, and define
\[
G(y):=W(y)+\frac\rho2|y-m|^2.
\]
The function $G$ is convex, because it differs from $W(y)+\rho y^2/2$ by an affine function. Set
\[
\overline Z:=\int_a^b e^{-G(y)}\,\dd y\in(0,\infty),
\]
and define
\[
\dd\overline\nu(y)
:=\overline Z^{-1}e^{-G(y)}\one_{(a,b)}(y)\,\dd y.
\]
Note that this is a compactly supported log-concave probability measure. Let $S$ be the monotone Brenier map from $\mu$ to $\overline\nu$. \Cref{thm:sharp-one-dimensional} then gives
\begin{equation}\label{eq:base-one-dimensional-map}
\Lip(S)\le\frac{\sqrt\Lambda\,D}{\sqrt{2\pi}}.
\end{equation}

Let $R$ be the increasing transport from $\overline\nu$ to $\nu$. We may absorb the ratio of the normalising constants into an additive constant \(c_0\in\mathbb R\), and write
\[
\frac{\dd\nu}{\dd\overline\nu}(y)=e^{-p(y)},
\qquad
p(y)=-\frac\rho2|y-m|^2+c_0.
\]
Therefore,
\begin{equation}\label{eq:p-oscillation-interval}
\operatorname{osc}_{(a,b)}p
=\frac{\rho D^2}{8}.
\end{equation}
By \Cref{lem:one-dimensional-perturbation}, we obtain
\[
\Lip(R)\le e^{\rho D^2/8}.
\]
The composition $R\circ S$ is increasing and pushes $\mu$ to $\nu$, so it is the one-dimensional Brenier map. Finally, we combine the last estimate with \eqref{eq:base-one-dimensional-map} and obtain \eqref{eq:one-dimensional-full-Lip}.
\end{proof}

\begin{proposition}[Sharp exponential rate]
\label{prop:sharp-exponential-rate}
Let \(\rho>0\), \(D>0\), and \(\gamma_1\) be the standard Gaussian measure, and let
\[
\dd\nu_{\rho,D}(y)
=
Z_{\rho,D}^{-1}e^{\rho y^2/2}
\one_{[-D/2,D/2]}(y)\,\dd y.
\]
Let \(T_{\rho,D}\) be the monotone Brenier map from \(\gamma_1\) to \(\nu_{\rho,D}\), and set \(r:=\rho D^2\). As \(r\to\infty\) along any parameter family \((\rho,D)\in(0,\infty)^2\),
\begin{equation}\label{eq:sharp-exponential-asymptotic}
T_{\rho,D}'(0)
\sim
\frac{4}{\sqrt{2\pi}\,\rho D}
\exp\!\left(\frac{\rho D^2}{8}\right).
\end{equation}
Suppose that fixed constants \(C>0\), \(N\ge0\), and \(c\ge0\) satisfy
\[
\Lip(T_{\rho,D})
\le
C(1+r)^N e^{cr}D
\qquad
\text{for all }\rho,D>0,
\]
where \(r=\rho D^2\). This implies \(c\ge1/8\).
\end{proposition}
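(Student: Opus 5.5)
The plan is to compute \(T_{\rho,D}'(0)\) in closed form, extract its asymptotics by a Laplace-type argument at the endpoint, and then test the assumed bound along a family with \(D=1\). Since the target density is even and the Gaussian is symmetric, the monotone rearrangement \(T_{\rho,D}=F_{\nu_{\rho,D}}^{-1}\circ\Gamma\) is odd, so \(T_{\rho,D}(0)=0\). Both densities are smooth and positive near the relevant points, so \(T_{\rho,D}\) is \(C^1\) near \(0\). The change-of-variables identity then gives
\[
T_{\rho,D}'(0)=\frac{\varphi(0)}{Z_{\rho,D}^{-1}e^{0}}=\frac{Z_{\rho,D}}{\sqrt{2\pi}},
\qquad
Z_{\rho,D}=\int_{-D/2}^{D/2}e^{\rho y^2/2}\,\dd y .
\]

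Next I rescale by \(y=(D/2)s\), which gives
\[
Z_{\rho,D}=D\int_0^1 e^{\lambda s^2}\,\dd s,
\qquad \lambda:=\frac r8 .
\]
Here \(\lambda\) depends only on \(r\), which is why the asymptotic holds along any family with \(r\to\infty\). I then show \(\int_0^1e^{\lambda s^2}\,\dd s\sim e^{\lambda}/(2\lambda)\) as \(\lambda\to\infty\). Substituting \(s=1-t/\lambda\) gives
\[
\int_0^1e^{\lambda s^2}\,\dd s=\frac{e^{\lambda}}{\lambda}\int_0^\lambda e^{-2t+t^2/\lambda}\,\dd t .
\]
For \(t\le\lambda\) the integrand is at most \(e^{-t}\), so dominated convergence gives the limit \(\int_0^\infty e^{-2t}\,\dd t=1/2\). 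Hence
\[
Z_{\rho,D}\sim \frac{D\,e^{r/8}\cdot 4}{r}=\frac{4}{\rho D}e^{\rho D^2/8},
\]
and this is exactly \eqref{eq:sharp-exponential-asymptotic}.

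For the exponent I first check that \(\nu_{\rho,D}\) falls under \Cref{thm:semilogconcave} with \(\Lambda=1\). Indeed \(W(y)=-\rho y^2/2\), so \(W+\rho|y|^2/2=0\) is convex. Because \(T_{\rho,D}\) is \(C^1\) near \(0\), one has \(\Lip(T_{\rho,D})\ge T_{\rho,D}'(0)\). I then take \(D=1\) and \(\rho=r\to\infty\). The assumed bound and the asymptotic together give
\[
\frac{4}{\sqrt{2\pi}\,r}e^{r/8}(1+o(1))\le C(1+r)^Ne^{cr}.
\]
If \(c<1/8\), the left side divided by the right side grows like \(e^{(1/8-c)r}r^{-1-N}\), which tends to infinity; this is a contradiction, so \(c\ge1/8\). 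No step is a serious obstacle. The only point needing care is that the Laplace asymptotic must be uniform in the family, and the rescaling to a function of \(\lambda=r/8\) alone settles that.
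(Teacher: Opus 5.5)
Your proposal is correct and follows essentially the same route as the paper. You compute \(T_{\rho,D}'(0)=Z_{\rho,D}/\sqrt{2\pi}\) at the symmetric point and use an endpoint substitution with dominated convergence; your \(s=1-t/\lambda\) on \([0,1]\) is the paper's \(s=1/2-u/r\) on \([0,1/2]\) under \(u=4t\), with the same dominating bound. You then compare exponential rates exactly as the paper does, and your extra check that \(\nu_{\rho,D}\) is covered by \Cref{thm:semilogconcave} and your choice \(D=1\) are harmless additions.
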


\begin{proof}
By symmetry, we have \(T_{\rho,D}(0)=0\). The change-of-variables identity in one dimension and the substitution \(y=Ds\) give
\[
T_{\rho,D}'(0)
=
\frac{D}{\sqrt{2\pi}}
\int_{-1/2}^{1/2}\exp\!\left(\frac{rs^2}{2}\right)\,\dd s.
\]
We use symmetry and the substitution $s=1/2-u/r$ to obtain
\[
2\int_0^{1/2}\exp\!\left(\frac{rs^2}{2}\right)\,\dd s
=
\frac{2e^{r/8}}{r}
\int_0^{r/2}
\exp\!\left(-\frac u2+\frac{u^2}{2r}\right)\,\dd u.
\]
For $0\le u\le r/2$,
\[
-\frac u2+\frac{u^2}{2r}\le-\frac u4.
\]
We extend the integrand by zero to $[0,\infty)$. Dominated convergence then gives
\[
\int_0^{r/2}
\exp\!\left(-\frac u2+\frac{u^2}{2r}\right)\,\dd u
\longrightarrow
\int_0^\infty e^{-u/2}\,\dd u=2.
\]
We then obtain
\[
\int_{-1/2}^{1/2}\exp\!\left(\frac{rs^2}{2}\right)\,\dd s
\sim\frac{4e^{r/8}}r,
\]
It follows that
\[
T_{\rho,D}'(0)
\sim
\frac{4}{\sqrt{2\pi}\rho D}
\exp\!\left(\frac{\rho D^2}{8}\right).
\]
Finally, we take logarithms and divide by \(r\), which allows us to conclude that every
estimate with only a polynomial prefactor must satisfy \(c\ge1/8\).
\end{proof}

\begin{remark}[Extension to general dimension]
In the proofs above, we factor the desired transport through a compact log-concave
reference measure. In one dimension, monotone maps compose within the Brenier class. However, for general measures on $\R^d$, the composition of two gradients of convex functions need not be a gradient, and need not even be the quadratic-cost optimal map, which explains why the same factorisation cannot prove the general theorem in arbitrary dimension.
\end{remark}

\appendix
\section{A rational certificate for the quadratic seed constant}
\label{app:quadratic-certificate}

\begin{lemma}[Rational certificate for \(C_{\mathrm{tr}}\)]
\label{lem:Ctr-certificate}
Let \(0<t<1\), and let \(N\ge0\) be an integer. Set
\[
L_N(t):=
2\sum_{k=0}^{N}\frac{t^{2k+1}}{2k+1},
\qquad
R_N(t):=
\frac{2t^{2N+3}}{(2N+3)(1-t^2)}.
\]
We then have
\begin{equation}\label{eq:log-rational-bounds}
L_N(t)
<
\log\frac{1+t}{1-t}
<
L_N(t)+R_N(t).
\end{equation}
Let \(z>0\), and let \(M\ge0\) be an integer such that \(M+2>z\). Set
\[
E_M(z):=\sum_{k=0}^{M}\frac{z^k}{k!},
\qquad
S_M(z):=
\frac{z^{M+1}}{(M+1)!}
\frac{1}{1-z/(M+2)}.
\]
It follows that
\begin{equation}\label{eq:exp-rational-bounds}
E_M(z)<e^z<E_M(z)+S_M(z).
\end{equation}

Let us now specialise these bounds to a truncation order of \(12\), and define
\[
\ell_{551}^{-}
:=
L_{12}\!\left(\frac13\right)
+
L_{12}\!\left(\frac{51}{1051}\right),
\]
\[
\ell_{551}^{+}
:=
\ell_{551}^{-}
+
R_{12}\!\left(\frac13\right)
+
R_{12}\!\left(\frac{51}{1051}\right).
\]
By direct rational arithmetic, we obtain
\[
3.3308272744
<
\frac{551}{250}
+2\ell_{551}^{-}
-\frac{250}{551}
<
\frac{551}{250}
+2\ell_{551}^{+}
-\frac{250}{551}
<
3.3308272745.
\]
We also have
\[
\begin{aligned}
3.3288715319
&<
\frac{1}{
(307/500)(369/1000)(551/250)}
+\frac{1}{2(307/500)^2} \\
&<
3.3288715320.
\end{aligned}
\]
We now introduce the numbers
\[
q^-:=E_{12}\!\left(\frac{369}{1228}\right),
\qquad
q^+:=q^-+S_{12}\!\left(\frac{369}{1228}\right),
\]
which satisfy
\[
1.8275972752
<
\frac{307}{500}\frac{551}{250}q^-
<
\frac{307}{500}\frac{551}{250}q^+
<
1.8275972753.
\]
For the values
\[
a=\frac{307}{500},
\qquad
b=\frac{369}{1000},
\qquad
y=\frac{551}{250},
\]
we infer that \(y_{a,b}<y\), and
\[
ay_{a,b}e^{b/(2a)}
<
aye^{b/(2a)}
<
1.828.
\]
We conclude, in particular, that \(C_{\mathrm{tr}}<1.828\).
\end{lemma}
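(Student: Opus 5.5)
The plan has three parts: prove the two elementary series bounds analytically, reduce the numerical claims to finite rational arithmetic, and then use the monotonicity structure of \eqref{eq:yab-def} to compare \(y_{a,b}\) with \(y=551/250\).

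First, the logarithm bounds \eqref{eq:log-rational-bounds} come from the Taylor expansion
\[
\log\frac{1+t}{1-t}=2\sum_{k\ge0}\frac{t^{2k+1}}{2k+1},\qquad 0<t<1,
\]
which has only positive terms. Hence the partial sum \(L_N(t)\) is a strict lower bound. The tail is bounded by
\[
\frac{2}{2N+3}\sum_{k\ge N+1}t^{2k+1}=R_N(t),
\]
with strict inequality because \(1/(2k+1)<1/(2N+3)\) for \(k>N+1\). The exponential bounds \eqref{eq:exp-rational-bounds} work the same way. The tail of \(\sum_k z^k/k!\) beyond \(M\) is positive, and it is dominated termwise by the geometric series
\[
\frac{z^{M+1}}{(M+1)!}\sum_{j\ge0}\left(\frac{z}{M+2}\right)^j,
\]
since \(z^{M+1+j}/(M+1+j)!\le \frac{z^{M+1}}{(M+1)!}(z/(M+2))^j\). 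This series converges because \(M+2>z\).

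Next I would evaluate \(\Xi(y)\) at \(y=551/250\). The task is to write \(\log y\) as a sum of \(\log\frac{1+t}{1-t}\) terms. Taking \(t=1/3\) gives the factor \(2\), and \(t=51/1051\) gives the factor \(1102/1000=551/500\). Their product is \(551/250\), as needed. Therefore
\[
2\ell^-_{551}<2\log y<2\ell^+_{551},
\]
and \(\Xi(y)=y+2\log y-y^{-1}\) lies in the stated rational interval. The displayed decimal enclosures are finite exact rational computations, and I would cite them as verified by direct arithmetic. Doing that arithmetic is the only real labor and the main obstacle; there is no conceptual difficulty.

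Then I would compare \(y_{a,b}\) with \(y\) using monotonicity. The left side \(\Xi\) of \eqref{eq:yab-def} is increasing, and the right side \(1/(ab\,y)+1/(2a^2)\) is decreasing in \(y\). The enclosures give
\[
\Xi(y)>3.33082>3.32888>\frac{1}{ab\,y}+\frac{1}{2a^2}.
\]
So at \(y\) the left side already exceeds the right side, which forces \(y_{a,b}<y\).

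Finally, \(b/(2a)=369/1228\), so \(e^{b/(2a)}<q^+\) by \eqref{eq:exp-rational-bounds} with \(M=12\). This yields \(a\,y\,e^{b/(2a)}<1.8275972753<1.828\). Since \(y_{a,b}<y\) and \(C_{\mathrm{tr}}\) is an infimum over \(a,b\), this gives \(C_{\mathrm{tr}}\le a\,y_{a,b}e^{b/(2a)}<1.828\).
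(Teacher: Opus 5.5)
Your proposal is correct and follows the paper's proof essentially step for step. You use the same positive Taylor series with a geometric tail majorant for both bounds, the same factorisation $551/250 = 2\cdot(551/500)$ via $t=1/3$ and $t=51/1051$, and the same reduction of the decimal enclosures to exact rational comparisons; you also spell out the monotonicity argument that gives $y_{a,b}<y$, which the paper leaves as ``we infer'' but which is exactly the intended reasoning.
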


\begin{proof}
Notice that the identity
\[
\log\frac{1+t}{1-t}
=
2\sum_{k=0}^{\infty}\frac{t^{2k+1}}{2k+1}
\]
gives the lower estimate in \eqref{eq:log-rational-bounds}. For the remainder, we have
\[
2\sum_{k=N+1}^{\infty}
\frac{t^{2k+1}}{2k+1}
<
\frac{2}{2N+3}
\sum_{k=N+1}^{\infty}t^{2k+1}
=
R_N(t).
\]
For the exponential estimate, we write
\[
e^z-E_M(z)
=
\sum_{k=M+1}^{\infty}\frac{z^k}{k!},
\]
and the ratio of each term to the previous one is at most \(z/(M+2)<1\). The geometric majorant then gives \eqref{eq:exp-rational-bounds}.

Finally, we observe that
\[
\log\frac{551}{250}
=
\log\frac{1+1/3}{1-1/3}
+
\log\frac{1+51/1051}{1-51/1051}.
\]
Most importantly, every decimal above is a terminating rational number. Once we apply the definitions of \(L_{12},R_{12},E_{12},S_{12}\), each remaining comparison can be reduced to an inequality between rational numbers. Finally, cross-multiplication by positive integers finishes the proof of all these inequalities.
\end{proof}

\section{A finite certificate for the refined constant}
\label{app:certificate}

As we mentioned in the introduction, by \Cref{rem:certified-constant}, every conclusion of \Cref{thm:main} holds with the analytic constant \(1.828\) (see also \Cref{app:quadratic-certificate}). For completeness, we provide here the finite certificate that improves this constant to
\[
C_{\mathrm{nq}}=0.587.
\]

The results of \Cref{prop:nonquadratic-bootstrap,prop:inverse-penalty-bootstrap,%
lem:piecewise-inverse-smoothing} reduce the refinement to finitely many scalar inequalities. We provide the complete certificate for this finite family in the supplementary code \footnote{Available here: \href{https://github.com/mkg33/constant}{https://github.com/mkg33/constant}.}, and note that the verifier is part of the proof. Let us sketch the idea. The program interprets every decimal datum as an exact decimal rational, then encloses each transcendental evaluation by directed outward rounding, and verifies the scalar inequalities uniformly on the entire continuous parameter intervals by finite subdivision. Let us now describe the mathematical structure of the certificate and deduce the refined constant.

Recall the notation and hypotheses of \Cref{prop:nonquadratic-bootstrap}, and fix a direction \(e\in\Sph^{d-1}\). We set
\[
u:=\Phi_{ee},
\qquad
L:=\sqrt{\Lambda}\,w_K(e).
\]
For numerical constants \(D_{\mathrm{in}},D_{\mathrm{out}}>0\), we write
\[
D_{\mathrm{in}}\mapsto D_{\mathrm{out}}
\]
whenever
\[
\sup_{\R^d}u\le D_{\mathrm{in}}L
\quad\Longrightarrow\quad
\sup_{\R^d}u\le D_{\mathrm{out}}L.
\]
The certificate consists of the following four finite stages. The arrow shows the constant refinement.
\begin{enumerate}[label=\textup{(\roman*)}]
\item penalties whose second derivatives are Gaussian functions
\[
1.828\mapsto0.6595.
\]
\item penalties defined by rational inverse curves
\[
0.6595\mapsto0.610.
\]
\item a penalty defined by a rational inverse curve with three terms
\[
0.610\mapsto0.605.
\]
\item penalties defined by piecewise-affine inverse curves
\[
0.605\mapsto0.587.
\]
\end{enumerate}
The finite certificates contains the complete sequence of intermediate bootstrap constants, and the corresponding penalty parameters.

For the first stage, we use the normalised family \(\vartheta_{a,k}\), whose members have Gaussian second derivatives, where \(a,k>0\), defined by
\[
\vartheta_{a,k}(0)=\vartheta_{a,k}'(0)=0,
\qquad
\vartheta_{a,k}''(r)
=
a\exp(-kr^2)
\quad(r\in\mathbb R).
\]
We apply this family through the scalar criterion from \Cref{prop:nonquadratic-bootstrap}. For the remaining stages, we use the inverse criterion of \Cref{prop:inverse-penalty-bootstrap}. To this end, let us fix \(D,C,b>0\) and a continuous inverse curve
\[
\mathcal R:[0,1]\to[0,\infty),
\]
and set
\[
G_{D,\mathcal R}(\tau)
:=
\int_0^\tau
\left(\mathcal R(q)-\frac qD\right)\dd q.
\]
Suppose now that \(\mathcal R\in C^1([0,1])\). We define
\[
\overline Y(\tau)
:=
C\mathcal R'(\tau)e^{-bG_{D,\mathcal R}(\tau)}
\]
and set
\begin{equation}\label{eq:final-certificate-residual}
H(\tau)
:=\Xi(\overline Y(\tau))
-\frac{e^{bG_{D,\mathcal R}(\tau)}}{bC}
-\frac{\mathcal R(\tau)^2}{2}.
\end{equation}
For every rational inverse curve in the certificate, we have on
\(0\le\tau\le1\)
\begin{equation}\label{eq:final-certificate-checks}
\mathcal R(\tau)-\frac \tau D\ge0,
\qquad
\overline Y(\tau)>1,
\qquad
H(\tau)>0.
\end{equation}

Let us now consider a piecewise-affine inverse curve \(\mathcal R\). We denote by \(N\ge1\) the number of its affine cells, and write
\[
0=q_0<q_1<\cdots<q_N=1
\]
for the corresponding knots. For \(1\le j\le N\), let \(s_j\) denote the slope on \([q_{j-1},q_j]\). If \(\tau\in[q_{j-1},q_j]\), we set
\[
Y_j(\tau)
:=
Cs_j e^{-bG_{D,\mathcal R}(\tau)},
\qquad
H_j(\tau)
:=
\Xi(Y_j(\tau))
-\frac{e^{bG_{D,\mathcal R}(\tau)}}{bC}
-\frac{\mathcal R(\tau)^2}{2}.
\]
For every piecewise-affine inverse curve in the certificate, the following
inequalities hold on each cell.
\[
\mathcal R(\tau)-\frac \tau D\ge0,
\qquad
Y_j(\tau)>1,
\qquad
H_j(\tau)>0.
\]
At each knot, the inequalities from both neighbouring cells are still valid, and the slopes satisfy
\[
s_j>\frac1D
\qquad(1\le j\le N),
\qquad
s_1\le s_2\le\cdots\le s_N.
\]
These are precisely the hypotheses of \Cref{lem:piecewise-inverse-smoothing}. The purpose of the lemma is to produce an admissible smooth inverse curve with the same constants \(D,C,b\).

\begin{lemma}[Finite certificate for \(C_{\mathrm{nq}}\)]
\label{lem:Cnq-certificate}
The fixed penalties and inverse curves in the finite certificate satisfy all the inequalities stated above. It follows that
\[
C_*\le C_{\mathrm{nq}}=0.587.
\]
\end{lemma}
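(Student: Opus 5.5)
The proof splits into two parts. First, for each stage of the finite certificate, check that every listed scalar inequality holds uniformly on its whole parameter interval. Second, chain the resulting implications
\[
1.828\mapsto0.6595\mapsto0.610\mapsto0.605\mapsto0.587,
\]
starting from \Cref{prop:smooth-estimate-translated} together with \Cref{lem:Ctr-certificate}. This gives \(\sup u\le 0.587L\) in the smooth a priori setting. The chain then passes through \Cref{prop:smooth-target-final}, \Cref{prop:full-dimensional-smooth}, \Cref{prop:affine-hull-smooth} and the proof of \Cref{thm:main} to all data covered by \Cref{thm:main}, so \(C_*\le0.587\) by definition of \(C_*\).

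\textbf{Stage (i).} For each Gaussian-penalty step with parameters \((a,k,b)\) and input \(D\), I would bound \(\mathfrak T_{\vartheta_{a,k},b}(D)\) above by the next constant. I would use the closed forms \eqref{eq:Gaussian-penalty-p} and \eqref{eq:Gaussian-penalty-Legendre}, so that \(G_{D,\vartheta}\), \(p\) and \(\vartheta''\) are explicit in \(\operatorname{erf}\) and \(\exp\). The admissible \(r\)-range \(\{p(r)\le1,\ p(r)\le Dr\}\) is compact, so I would subdivide it into finitely many intervals.

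On each interval:
\begin{enumerate}[label=(\alph*)]
\item Enclose \(\vartheta''\), \(G_{D,\vartheta}\) and \(r^2/2\) by directed-rounding interval arithmetic.
\item Certify an upper bound \(\overline Y\) on the root \(Y_{b,\vartheta}(r)\). By the monotonicity of \(y\mapsto\Xi(y)-1/(b\vartheta''y)\) on \((1,\infty)\), it suffices to check
\[
\Xi(\overline Y)\ge\frac{1}{b\,\vartheta''_{\min}\,\overline Y}+\frac{r_{\max}^2}{2}.
\]
\item Check
\[
\vartheta''_{\max}\,\overline Y\,e^{bG_{\max}}\le D_{\mathrm{out}}.
\]
\end{enumerate}
The lower limit \(\lim\vartheta'>1\), required by \Cref{prop:nonquadratic-bootstrap}, amounts to \(a\sqrt\pi/(2\sqrt k)>1\), which is a single rational check.

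\textbf{Stages (ii) and (iii).} These use rational inverse curves \(\mathcal R\). Here I would verify \eqref{eq:final-certificate-checks} on \([0,1]\) by subdivision and interval enclosures of \(\mathcal R\), \(\mathcal R'\), \(G_{D,\mathcal R}\) (computed in closed form or with rational quadrature and a rigorous remainder), and \(H\). I would also confirm \(\mathcal R(0)=0\) and \(\mathcal R'>0\) exactly. Then \Cref{prop:inverse-penalty-bootstrap} yields each implication.

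\textbf{Stage (iv).} This uses piecewise-affine curves. I would check exactly in rational arithmetic that the slopes satisfy \(1/D<s_1\le\dots\le s_N\). Then \(G_{D,\mathcal R}\) is piecewise quadratic with rational coefficients. I would verify \(Y_j>1\) and \(H_j>0\) cell by cell using interval enclosures of \(\exp\) and \(\log\); the only transcendental evaluations are these, and monotone bounds like those in \Cref{lem:Ctr-certificate} suffice. \Cref{lem:piecewise-inverse-smoothing} then produces the smooth curve needed for \Cref{prop:inverse-penalty-bootstrap}.

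\textbf{Main obstacle.} The hard step is making the uniform-in-parameter verification genuinely rigorous, with strict margins. This is especially delicate near \(\tau=0\) (or \(r=0\)), where \(G\approx0\) and \(H\) may have small margins, and where \(\overline Y\) approaches its infimum. My first approach would be to use Taylor enclosures at the left endpoint, where \(G_{D,\mathcal R}(\tau)=O(\tau^2)\), to handle a small initial cell analytically. I would then use adaptive subdivision elsewhere until each interval enclosure certifies positivity.
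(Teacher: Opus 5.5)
Your proposal is correct and follows the paper's proof. The interval-arithmetic certificate verifies three things: the scalar majorant criterion of \Cref{prop:nonquadratic-bootstrap} for the Gaussian-penalty stage, the checks \eqref{eq:final-certificate-checks} for the rational inverse curves, and the cellwise hypotheses of \Cref{lem:piecewise-inverse-smoothing}; each stage then yields an implication \(D_{\mathrm{in}}\mapsto D_{\mathrm{out}}\), and the chain starting from \(1.828\) (via \Cref{prop:smooth-estimate-translated,lem:Ctr-certificate}) composes to \(0.587\). One detail to tighten: to cover the admissible \(r\)-range by a finite subdivision you need an explicit cutoff \(R\) with \(\vartheta_{a,k}'(R)>1\) (the paper certifies \(\vartheta_{a,k}'(5.2)>1\) and works on \(0\le r\le5.2\)), and this check also makes your separate limit check \(a\sqrt\pi/(2\sqrt k)>1\) redundant.
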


\begin{proof}
For the stage the relies on penalties with Gaussian second derivatives, the certificate verifies the scalar majorant criterion associated with \Cref{prop:nonquadratic-bootstrap} throughout the active interval \(0\le r\le5.2\). It also verifies
\[
\vartheta_{a,k}'(5.2)>1
\]
for every parameter pair \((a,k)\) used at this stage. For the steps based on inverse curves, it verifies \eqref{eq:final-certificate-checks} for the rational curves and the corresponding cellwise inequalities for the piecewise-affine curves. The rational curves fall directly under \Cref{prop:inverse-penalty-bootstrap}, and the same conclusion holds for the piecewise-affine curves after we apply \Cref{lem:piecewise-inverse-smoothing}. We conclude that every curve in the certificate proves the corresponding implication \(D\mapsto C\). Finally, the composition of the finite chain gives the required bound.
\end{proof}

\bibliographystyle{amsplain}
\bibliography{dimfree}

\end{document}